\crefname{equation}{}{}
\newcommand{\frakf}{\mathfrak{f}}
\newcommand{\frakg}{\mathfrak{g}}
\newcommand{\frakh}{\mathfrak{h}}
\newcommand{\bbC}{\mathbb{C}}
\newcommand{\bbE}{\mathbb{E}}
\newcommand{\bbN}{\mathbb{N}}
\newcommand{\bbP}{\mathbb{P}}
\newcommand{\bbR}{\mathbb{R}}
\newcommand{\bfg}{\mathbf{g}}
\newcommand{\bfh}{\mathbf{h}}
\newcommand{\bfu}{\mathbf{u}}
\newcommand{\bfv}{\mathbf{v}}
\newcommand{\bfw}{\mathbf{w}}
\newcommand{\bfE}{\mathbf{E}}
\newcommand{\bfL}{\mathbf{L}}
\newcommand{\bfM}{\mathbf{M}}
\newcommand{\bfP}{\mathbf{P}}
\newcommand{\calg}{\mathcal{g}}
\newcommand{\calA}{\mathcal{A}}
\newcommand{\calB}{\mathcal{B}}
\newcommand{\calE}{\mathcal{E}}
\newcommand{\calF}{\mathcal{F}}
\newcommand{\calG}{\mathcal{G}}
\newcommand{\calH}{\mathcal{H}}
\newcommand{\calI}{\mathcal{I}}
\newcommand{\calL}{\mathcal{L}}
\newcommand{\calM}{\mathcal{M}}
\newcommand{\calN}{\mathcal{N}}
\newcommand{\calR}{\mathcal{R}}
\newcommand{\calS}{\mathcal{S}}
\newcommand{\calT}{\mathcal{T}}
\newcommand{\calX}{\mathcal{X}}
\newcommand{\calY}{\mathcal{Y}}
\newcommand{\calZ}{\mathcal{Z}}
\newcommand{\fHat}{\hat{f}}
\newcommand{\gHat}{\hat{g}}
\newcommand{\VHat}{\hat{V}}
\newcommand{\bfMHat}{\hat{\mathbf{M}}}
\newcommand{\calLHat}{\hat{\mathcal{L}}}
\newcommand{\etaHat}{\hat{\eta}}
\newcommand{\BBar}{\overline{B}}
\newcommand{\UBar}{\overline{U}}
\newcommand{\VBar}{\overline{V}}
\newcommand{\bfEBar}{\overline{\mathbf{E}}}
\newcommand{\bfPBar}{\overline{\mathbf{P}}}
\newcommand{\etaBar}{\overline{\eta}}
\newcommand{\piBar}{\overline{\pi}}
\newcommand{\hTil}{\tilde{h}}
\newcommand{\XTil}{\tilde{X}}
\newcommand{\etaTil}{\tilde{\eta}}
\newcommand{\piTil}{\tilde{\pi}}
\newcommand{\omegaTil}{\tilde{\omega}}
\newcommand{\OmegaTil}{\tilde{\Omega}}
\newcommand{\Var}{\mathrm{Var}}
\renewcommand{\R}{\mathbb{R}}
\renewcommand{\C}{\mathbb{C}}
\renewcommand{\N}{\mathbb{N}}
\renewcommand{\E}{\mathbb{E}}
\newcommand{\BF}{\mathbf{F}}
\newcommand{\cA}{\mathcal{A}}
\newcommand{\cF}{\mathcal{F}}
\newcommand{\cG}{\mathcal{G}}
\newcommand{\cH}{\mathcal{H}}
\newcommand{\cL}{\mathcal{L}}
\newcommand{\cM}{\mathcal{M}}
\newcommand{\cN}{\mathcal{N}}
\newcommand{\cS}{\mathcal{S}}
\newcommand{\cV}{\mathcal{V}}
\newcommand{\cm}{\mathcal{m}}
\newcommand{\cg}{\mathcal{g}}
\newcommand{\cl}{\mathrm{cl}}
\newcommand{\dd}{\mathrm{d}}
\newcommand{\ST}{\mathscr{T}}
\renewcommand{\fH}{\mathfrak{H}}
\def\one{\mathrm{(I)}}
\def\two{\mathrm{(II)}}
\def\three{\mathrm{III}}
\def\four{\mathrm{IV}}
\newcommand{\1}{\mathds{1}}
\def\one{\mathrm{(I)}}
\def\two{\mathrm{(II)}}
\def\three{\mathrm{(III)}}
\def\four{\mathrm{(IV)}}
\newcommand{\half}{\frac{1}{2}}
\newcommand{\thalf}{\tfrac{1}{2}}
\newcommand{\fock}{\Gamma L^2}
\colorlet{darkblue}{blue!90!black}
\colorlet{darkred}{red!90!black}
\colorlet{darkgreen}{green!50!black}
\colorlet{darkyellow}{yellow!90!black}
\DeclareMathOperator*{\rot}{rot}
\DeclareMathOperator*{\ddiv}{div}
\newcommand{\ic}{\mathbf{1}}
\newcommand{\coup}{\gamma}
\def\genAR#1{\calA^R_+[#1]}
\def\genAN#1{\calA^N_+[#1]}
\def\ffock#1{\mu_{#1}}
\newcommand{\vertiii}[1]{{\vert\kern-0.25ex\vert\kern-0.25ex\vert #1 
    \vert\kern-0.25ex\vert\kern-0.25ex\vert}}
\title{An invariance principle for the $2d$ weakly self-repelling Brownian polymer}
\author{Giuseppe Cannizzaro, Harry Giles}
\institute{University of Warwick, CV4 7AL, UK \\ 
\hspace{-0.33cm}\email{giuseppe.cannizzaro@warwick.ac.uk, \\harry.giles@warwick.ac.uk}}
\begin{document}

\maketitle

\begin{abstract}
We investigate the large-scale behaviour of the Self-Repelling Brownian Polymer (SRBP)  in the critical dimension 
$d=2$. The SRBP is a model of self-repelling motion, 
which is formally given by the solution a stochastic differential equation driven by a standard Brownian motion 
and with a drift given by the negative gradient of its own local time. 
As with its discrete counterpart, the ``true'' self-avoiding walk (TSAW) of 
[D.J. Amit, G. Parisi, \& L. Peliti, Asymptotic behaviour of the ``true'' self-avoiding walk, Phys. Rev. B, 1983],  
it is conjectured to be logarithmically superdiffusive, i.e. to be such that its mean-square displacement 
grows as $t(\log t)^\beta$ for $t$ large and some currently unknown $\beta\in(0,1)$.

The main result of the paper is an invariance principle for the SRBP under the weak coupling scaling, 
which corresponds to scaling the SRBP diffusively and simultaneously tuning down 
the strength of the self-interaction in a scale-dependent way. 
The diffusivity for the limiting Brownian motion is explicit and its expression provides 
compelling evidence that the $\beta$ above should be $1/2$. 
Further, we derive the scaling limit of the so-called environment seen by the particle process, 
which formally solves a non-linear singular stochastic PDE of transport-type, 
and prove this is given by the solution of a stochastic linear transport equation 
with enhanced diffusivity. 
\end{abstract}

\setcounter{tocdepth}{2} 
\tableofcontents

\section{Introduction and main results}
\subsection{Introduction and related works}
\label{sec:introduction}
We study a model of self-avoiding motion known as the self-repelling Brownian polymer (SRBP) in the critical dimension, $d=2$. For general dimension $d$, the SRBP is a $\bbR^d$-valued continuous stochastic process $(X_t)_{t \ge 0}$ driven by Brownian motion and repelled by its own local time. In other words, $(X_t)_{t \ge 0}$ has a drift which pushes the process away from regions of space it has previously occupied. 

Ideally, one would like to define the SRBP according to 
\begin{equation}
\label{eq:38}
\dd X_t = \dd B_t- \gamma^2 \nabla L_t(X_t) \dd t , \qquad X_0 = 0\,,
\end{equation}
where $(L_t)_{t \ge 0}$ is the occupation measure of $(X_t)_{t \ge 0}$ defined by
\begin{equ}
 L_t(x) = \int_0^t \delta_0(x-X_s) \dd s , \qquad t\ge0, x \in \bbR^d
 \end{equ}
for $\delta_0 : \bbR^d \rightarrow \bbR$ the Dirac delta at zero, and the {\it coupling constant} $\gamma>0$ which controls the strength of the self-interaction. As written, \eqref{eq:38} is meaningless, and one is led to consider a regularised version which is given by the following SDE
\begin{equ}[e:SRBP1]
\dd X_t = \dd B_t - \gamma^2 \left( \int_0^t \nabla V (X_t-X_s) \dd s\right) \dd t, \qquad X_0 = 0
\end{equ}
where $V : \bbR^{d} \rightarrow \bbR$ is a smooth mollifier. The drift term may be rewritten in terms of the mollified occupation field $V * L_t(X_t)$, and with this in mind, the self-interaction can be described as follows: over an infinitesimal time-step $[t, t+\dd t]$, the particle updates the occupation measure by adding mass at its current location, and the occupation measure influences the particle by providing a (dynamic) scalar potential which induces the drift.

The first instances of self repelling motion date back to the early eighties \cite{AmitPariPeli83_AsymptoticBehavior}, 
when physicists introduced the ``true'' self-avoiding walk (TSAW) as a model capturing the statistics of a growing polymer. 
In short, this is the random walk governed by the non-Markovian transitions given by, for $y$ a neighbour of $X_n$
\begin{equ}
\bbP( X_{n+1} = y | \vec{X}_n) \propto e^{- \beta \big\{ \ell(y ; \vec{X}_n) - \ell(X_n ; \vec{X}_n) \big\}}
\end{equ}
where $\vec{X}_n = (X_0, ..., X_n)$ is the history of the process and $\ell$ is the occupation time, $\ell(y ; \vec{x}_n) \eqdef \sum_{m=0}^n \ic_{x_m}(y)$. 
The TSAW is a discrete cousin of the SRBP, which was independently introduced 
by probabilists shortly thereafter \cite{NorrRogeWill87_SelfavoidingRandom, DurrettRogers92_AsymptoticBehavior}. 
These, and other models of self-avoiding motion \cite{Kesten63_NumberSelf,Lawler80_SelfavoidingRandom} are notoriously difficult to study because of their self-interaction 
and long-term memory, in particular they are {\it not} Markovian. 

A first question regards the large-scale behaviour for the mean squared displacement of $(X_t)_{t \ge 0}$. Heuristically, one might expect diffusive behaviour in higher dimensions, where the self-avoiding ``constraint'' is less restrictive, leaving only the influence of the Brownian motion. Non-rigorous scaling \cite{AlderWainwrigh67_VelocityAutocorrelations, AlderWainwrigh70_DecayVelocity, ForsNelsStep77_LargedistanceLongtime} (see also the appendix of \cite{TothValko12_SuperdiffusiveBounds}) and renormalization group \cite{AmitPariPeli83_AsymptoticBehavior} arguments lead to the following dimension dependent predictions
\begin{align}
\label{eq:36}
\bbE[|X_t|^2] \sim
\begin{cases} 
t^{4/3} & d = 1 \\
t (\log t)^\beta & d = 2 \\
t & d \ge 3 
\end{cases}
\end{align}
for some $\beta \in (0,1)$. In particular, the process is conjectured to be diffusive in high dimensions $d \ge 3$, and superdiffusive in low dimensions $d \in \{1, 2\}$. 

It is in the case $d \ge 3$ where the most is known. Diffusive behaviour was rigorously shown in \cite{HorvTothVeto12_DiffusiveLimits}, and a central limit theorem for the scaled motion derived. 
More precisely, the authors prove that for every given $t\geq 0$, as $\eps$ goes to $0$,
$X^\varepsilon_t \eqdef \varepsilon X_{t/\varepsilon^2}$ converges to a Gaussian 
random variable whose variance $\sigma^2 \ge 1$ is only given implicitly. Even though not explicitly verified 
in the above-mentioned reference, we believe that the variational method of~\cite{TothValko12_SuperdiffusiveBounds} 
can be used to show 
that $\sigma^2 > 1$, implying that the self-interaction term still has an influence on the scaling limit.

In dimension $d=1$, bounds on the superdiffusivity of SRBP are given in \cite{TarrTothValk12_DiffusivityBounds}, 
but a rigorous proof of the exact rates remain open. Moreover, non-Gaussian fluctuations are expected at large scales: 
it is conjectured that under the correct superdiffusive scaling, $\varepsilon^{4/3}X_{t/\varepsilon^2}$, 
the process will converge to the true self-repelling motion \cite{TothWerner98_TrueSelfrepelling}. For the SRBP 
the conjecture is not yet settled but there are a few results in this direction for TSAW 
(see e.g.~\cite{Toth95_TrueSelfAvoiding,TothVeto11_ContinuousTime,NewmanRavishan06_ConvergenceToth}).

Dimension $d = 2$ is the critical dimension as~\eqref{eq:38} is formally scale invariant, 
and very little is known both for the TSAW and the SRBP. 
In \cite{TothValko12_SuperdiffusiveBounds}, superdiffusive bounds are established 
to the effect of $t \log\log t \lesssim \bbE[|X_t|^2] \lesssim t \log t$, which is far from identifying 
the precise value of $\beta$ in~\eqref{eq:36}. In fact, even in the physics literature there is no consensus over 
what $\beta$ should be~\cite{ObukhovPeliti83_RenormalisationTrue, PelitiPietrone87_RandomWalks}, 
although the argument outlined in the appendix of \cite{TothValko12_SuperdiffusiveBounds} 
(which we find most convincing) predicts $\beta = \frac12$. 
\medskip

The influence of the dimension $d$ can already be seen at the level of \eqref{e:SRBP1}. 
To wit, the diffusively rescaled process $(X^\varepsilon_t)_{t \ge 0}$ satisfies 
\begin{equation}
\label{eq:39}
dX^\varepsilon_t = dB^\varepsilon_t - \gamma^2 \varepsilon^{d-2} \left( \int_0^t \nabla V^\varepsilon (X^\varepsilon_t-X^\varepsilon_s) \dd s \right) \dd t, \qquad X^\varepsilon_0 = 0
\end{equation}
where $B^\varepsilon_t \eqdef \varepsilon B_{t/\varepsilon^2}$ is simply another Brownian motion, 
and $V^\varepsilon(x) \eqdef \varepsilon^{-d} V(\varepsilon^{-1} x)$ corresponds to the ``sharpening'' of the function $V$. 
This calculation reveals $d = 2$ as the critical dimension: the self-interaction term is scale invariant, 
and naively appears to be living at the same scale as the noise $(B^\varepsilon_t)_{t \ge 0}$. 
Let us also stress that, since $V^\varepsilon \rightarrow \delta_0$ as $\varepsilon\rightarrow0$, 
if $X^\varepsilon_t$ can be shown to converge, 
then the limiting object would be a natural candidate solution to \eqref{eq:38}. 

Of course, the superdiffusivity results of \cite{HorvTothVeto12_DiffusiveLimits} imply 
that the process will not converge under diffusive scaling. To tame the polymer's growth due to the self-interaction, 
we consider the so-called \textit{weak coupling scaling} which amounts to diffusively rescaling $X$ as in~\eqref{eq:39},
but simultaneously tuning down the coupling constant with $\eps>0$ as 
$\gamma = \gamma(\varepsilon)\sim1/\sqrt{\log|\eps|}$ (see~\eqref{eq:coup} below for the precise definition).  
In this context, the choice of $\gamma$ balances the logarithmic blow up of the Green's function 
in dimension $d = 2$, see Lemma \ref{lem:5}, and the same choice of coupling has been used 
in a whole host of other problems \cite{CSZ1, CSZ, Gu2020, CannErhaToni23_WeakCoupling, DG, CannGubiToni23_GaussianFluctuations, CaraSunZygo23_Critical2d}.  

The main result of this paper is that, in the weak coupling regime, the SRBP behaves diffusively and satisfies 
an (annealed) invariance principle, with a limiting Brownian motion having an {\it explicit} variance $\varsigma^2 > 1$. 
Moreover, the way in which $\varsigma^2$ depends on the coupling constant $\gamma^2$ is consistent with $t \sqrt{\log t}$ superdiffusivity, thus providing compelling evidence that the logarithmic correction of \eqref{eq:36} is $\beta=\half$. 

The conditions exploited in dimensions $d \ge 3$ break down for $d = 2$, and as such we must introduce new techniques. In fact, the classical Kipnis-Varadhan theory cannot cover $d=2$ because the solutions $(u^\lambda)$ of Poisson's equation do not have a limit point in the Sobolev space $\fH$, only a limiting norm (see the discussion following Theorem \ref{thm:1}). We expect that the methods exposed in the present work apply more generally, a first example being 
to the diffusion in the curl of the Gaussian free field (DCGFF) studied 
in~\cite{TothValko12_SuperdiffusiveBounds,CannHaunToni22_SqrtLog,ChatzigeorgiouEtAl23_GaussianFreefield}, 
albeit in the weak coupling regime (see Section \ref{a:nuisance}). In particular, let us point out that 
the non-Markovianity of the SRBP makes it unclear whether the techniques 
of~\cite{ChatzigeorgiouEtAl23_GaussianFreefield} can be applied to the present context. 

\subsection{The model and main results}
\label{sec:model-main-results}

Let us first state the precise assumption, which will be in place hereafter, on the mollifying function $V$ in~\eqref{e:SRBP1}. 

\begin{assumption}\label{a:V}
The function $V \colon \bbR^{d} \rightarrow \bbR$ is a smooth function, decaying at infinity faster than any 
polynomial and such that $\int V(x) \dd x=1$. 
Furthermore, $V$ is rotationally invariant and positive semi-definite, i.e. 
for any matrix $U \in SO(2)$, $V = V \circ U^{-1}$, and 
for any $x_1,\dots,x_n \in \bbR^d$, the matrix $(V(x_i-x_j))_{i, j = 1}^n$ is positive semi-definite. 
\end{assumption}

Let $(E, \calE, (\calE_t)_{t \ge 0}, \bbP)$ be a filtered probability space and $(B_t)_{t \ge 0}$ a 
Brownian motion on it. We define the self-repelling Brownian Polymer 
as the unique solution of the SDE
\begin{equ}[eq:sde]
\dd X_t = \dd B_t - \gamma \omega(X_t) \dd t - \gamma^2 \Big( \int_0^t \nabla V (X_t-X_s) \dd s\Big) \dd t\,,\qquad X_0 = 0 
\end{equ}
where $\gamma>0$ is the coupling constant and $\omega\colon \R^d\to \R^d$ is a smooth gradient (i.e. rotation free) 
vector field which grows at most linearly at infinity. We leave the dependence of $(X_t)_{t\ge0}$ on $\coup$ and $\omega$ implicit.

The vector field $\omega$ plays the role of an environment and in what follows we will choose it at random, 
so that~\eqref{eq:sde} defines an SRBP in {\it random environment}. One may regard $\omega$ as a random non-zero initial condition for the local time field, $V*L_0 = \omega$. Upon setting 
$\Omega$ to be the space smooth gradient 
vector fields which grow at most linearly at infinity, endowed with the cylindrical $\sigma$-algebra $\cF$ 
(see \eqref{e:Omega} for its rigorous definition), 
we take the law $\pi$ of $\omega\in\Omega$ to be that of $\nabla \xi$ 
where $\xi\eqdef \sqrt{V}\ast\Phi$ for $\Phi$ a two-dimensional Gaussian Free Field (GFF)
and $\sqrt{V}$ such that $\sqrt{V}\ast\sqrt{V}=V$ 
(which is well-defined in view of the positive semi-definiteness in Assumption~\ref{a:V}). 
In other words, $\pi$ is the law of a centred Gaussian field whose covariance function is given by 
\begin{equ}[e:CovOmega]
\int\omega_i(x)\omega_j(y)\pi(\dd \omega)\eqdef -\partial_{ij}^2 V\ast G(x-y)\,\qquad x,y\in\R^d
\end{equ}
for $G$ the Green's function of the two-dimensional Laplacian, i.e. $G(x)\eqdef - (2 \pi)^{-1} \log |x|$. 

The reason to introduce the environment $\omega$ in the definition of the SRBP is mainly technical (see below),
and is standard in this context (see~\cite{TothValko12_SuperdiffusiveBounds, HorvTothVeto12_DiffusiveLimits, TarrTothValk12_DiffusivityBounds}). 
That said, we believe that the results stated below also hold 
for $\omega = 0$. 

In the present work, we focus on the case of $d=2$ and consider the large-scale behaviour of the SRBP in the so-called weak coupling regime. That is, for $\eps>0$, setting $X^\eps_t\eqdef \eps X_{t/\eps^2}$ to be the diffusively rescaled solution of~\eqref{eq:sde}, we choose the coupling constant $\gamma$ to be given by 
\begin{equ}[eq:coup]
\gamma=\gamma(\eps)\eqdef \frac{\alpha}{\sqrt{\log(1+\varepsilon^{-2})}}
\end{equ}
where $\alpha$ is a strictly positive constant. 

We define the annealed measure $\bfP \eqdef \pi \otimes \bbP$ on the product space $(\bm{\Omega}, \bm{\calF}) \eqdef (\Omega \times E, \calF \otimes \calE)$. We are now ready to state the first main result of the present work.

\begin{theorem}
\label{thm:1}
Under the weak coupling in \eqref{eq:coup}, the finite dimensional distributions of $(X^\varepsilon_t)_{t\ge0}$ 
converge in probability with respect to $\pi$ to those of an isotropic Brownian motion with diffusivity 
$\varsigma^2(\alpha)\eqdef 1+\sigma^2(\alpha)$, where $\sigma^2(\alpha)$ is given explicitly by
\begin{equation}
\label{eq:87}
\sigma^2(\alpha) \eqdef \sqrt{4 \pi \alpha^2 + 1} - 1.
\end{equation}
Moreover, for $T\ge0$, with respect to the annealed measure $\bfP$, we have an invariance principle on $C([0, T], \bbR^2)$
\begin{equ}
 (X^\varepsilon_t)_{t \in [0, T]} \overset{d}{\rightarrow} \varsigma(\alpha)(W_t)_{t \ge 0}
 \end{equ}
where $(W_t)_{t \ge 0}$ is a standard two-dimensional Brownian motion.
\end{theorem}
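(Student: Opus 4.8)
The starting point is to replace the non-Markovian polymer $X$ by the Markovian \emph{environment seen from the particle}. Writing $\eta_t$ for the field $x\mapsto \nabla\big(\omega+V*L_t\big)(X_t+x)$ (with an appropriate $\gamma$-weighting so that the two drift contributions combine into a single gradient field), one checks that $(\eta_t)_{t\ge0}$ is a Markov process on $\Omega$, that the Gaussian law $\pi$ of \eqref{e:CovOmega} is stationary and ergodic for it, and that
\[
X_t=B_t+\int_0^t \varphi(\eta_s)\,\dd s ,
\]
where the drift $\varphi$ is an \emph{explicit first-chaos} functional of the environment. Thus Theorem~\ref{thm:1} reduces to a central limit theorem — quenched in $\omega$ and annealed at path level — for the additive functional $\eps\int_0^{t/\eps^2}\varphi(\eta_s)\,\dd s$. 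I would run the Kipnis--Varadhan scheme: writing $\gen$ for the generator of $\eta$ and introducing the corrector $u^\eps\eqdef(\eps^2-\gen)^{-1}\varphi$, Itô's formula applied to $u^\eps(\eta_t)$ yields
\[
X^\eps_t \;=\; \eps\,N^\eps_{t/\eps^2}\;-\;\eps\big(u^\eps(\eta_{t/\eps^2})-u^\eps(\eta_0)\big)\;+\;\eps^3\!\int_0^{t/\eps^2}\! u^\eps(\eta_s)\,\dd s ,
\]
where $N^\eps$ is a martingale with stationary increments whose predictable bracket is $\langle N^\eps\rangle_t=\int_0^t \Gamma(\mathrm{id}+u^\eps)(\eta_s)\,\dd s$, with $\Gamma$ the carré-du-champ of $\gen$ and $\mathrm{id}$ the direction carrying the Brownian part.

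The invariance principle then rests on three inputs. First, convergence of the rescaled bracket, $\eps^2\langle N^\eps\rangle_{t/\eps^2}\to \varsigma^2(\alpha)\,t\,\mathrm{Id}$ in $\pi$-probability: by the ergodic theorem for the stationary process $\eta$ this reduces to the convergence of the static quantity $\mathbf{E}_\pi[\Gamma(\mathrm{id}+u^\eps)]$, and since $N^\eps$ is continuous the martingale functional CLT upgrades it to $\eps N^\eps_{\cdot/\eps^2}\to\varsigma(\alpha)W$. Second, asymptotic negligibility of the last two terms, which hinges on $\eps^2\|u^\eps\|_{L^2(\pi)}^2\to0$ — a quantitative version of the fact that $\varphi$ has no spectral atom at $0$ — together with uniform control of $\langle\varphi,u^\eps\rangle$ and of the Dirichlet form $\langle u^\eps,(-\gen_{\mathrm{sym}})u^\eps\rangle$. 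Third, for the annealed statement, tightness in $C([0,T],\bbR^2)$, obtained from the same decomposition via Burkholder--Davis--Gundy and the Aldous criterion for the martingale and a maximal inequality for the corrector term; the quenched fixed-time statement follows from a second-moment estimate showing that the $\omega$-dependent quantities above concentrate around their $\pi$-averages.

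The analytic heart is the identification $\lim_{\eps\to0}\mathbf{E}_\pi[\Gamma(\mathrm{id}+u^\eps)]=\varsigma^2(\alpha)=1+\sigma^2(\alpha)$, and here the Gaussian structure and the weak-coupling scaling \eqref{eq:coup} enter. Decomposing $L^2(\pi)=\bigoplus_{n\ge0}\fH_n$ into Wiener chaoses, the generator splits as $\gen=\gen_{\mathrm{sym}}+\cA^\eps_++\cA^\eps_-$, where $\gen_{\mathrm{sym}}$ acts diagonally (a multiple of the number operator, reflecting the Ornstein--Uhlenbeck nature of the symmetrised dynamics) and $\cA^\eps_\pm\colon\fH_n\to\fH_{n\pm1}$. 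Since $\varphi\in\fH_1$, the resolvent equation $(\eps^2-\gen)u^\eps=\varphi$ is tridiagonal in the chaos index, and projecting onto $\fH_1$ expresses the relevant scalar through an iterated, continued-fraction-type operator built from the $\cA^\eps_\pm$ and the diagonal resolvent of $\gen_{\mathrm{sym}}$. Each ``rung'' carries a factor $\gamma^2$ times a logarithmically divergent momentum integral — precisely the $\log|\eps|$ of the two-dimensional Green's function isolated in Lemma~\ref{lem:5} — so the tuning $\gamma^2\sim\alpha^2/\log(1+\eps^{-2})$ makes each term $O(1)$ and the series resums; a self-consistent evaluation gives $\sigma^2(\sigma^2+2)=4\pi\alpha^2$, i.e.\ $\sigma^2(\alpha)=\sqrt{4\pi\alpha^2+1}-1$. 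Equivalently, one may package this as the convergence of $\gen$ (suitably conjugated) to a limiting operator on Fock space — the generator of the ESP's scaling limit, the linear stochastic transport equation with enhanced diffusivity of the second main result — and read off $\varsigma^2$ from its spectral data.

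The main obstacle is the failure of compactness in $d=2$: under \eqref{eq:coup} the correctors $u^\eps$ are bounded in $\fH$ but do \emph{not} converge there, only their norms do (this is the content of the discussion following the theorem). Consequently one cannot identify the limit by passing to a limiting Poisson equation in $\fH$; the diffusivity must be extracted from a \emph{quantitative}, $\eps$-uniform analysis of the chaos expansion, and the error terms $\eps\,u^\eps$ and $\eps^3\!\int u^\eps$ must be controlled despite $\|u^\eps\|_{L^2(\pi)}$ being large. Establishing the graded tridiagonal structure of $\gen$, pinning down the size of the off-diagonal pieces $\cA^\eps_\pm$ against the diagonal resolvent of $\gen_{\mathrm{sym}}$, and proving that the resulting continued fraction converges to the stated algebraic value is the technical core of the argument.
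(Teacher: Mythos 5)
Your blueprint correctly reproduces the overall architecture of the paper: the passage to the environment-seen-from-the-particle, the chaos decomposition of the generator into a diagonal part plus $\calA_\pm$, the tridiagonal continued-fraction structure produced by the resolvent equation, and even the self-consistent relation $\sigma^2(\sigma^2+2)=4\pi\alpha^2$ that gives \eqref{eq:87}. You also correctly diagnose the $d=2$ obstruction — the correctors are bounded but not convergent in $\fH$, so one cannot identify the limit by a limiting Poisson equation. However, there is a genuine gap in the step where you propose to ``pin down the size of the off-diagonal pieces $\cA^\eps_\pm$ against the diagonal resolvent'' and have the continued fraction resum.

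The gap is the following. For the SRBP, the off-diagonal cross-terms $\langle\phi[i],\phi[j]\rangle$ ($i\neq j$) that appear when you square $\cA_+\,\psi$ against the resolvent are \emph{not} $O(\gamma^2)$: Proposition~\ref{prop:Off} computes $\langle\phi[1],\phi[2]\rangle=-\tfrac{1}{32}+O(\gamma^2)$ already for $\psi$ in the first chaos. This is in sharp contrast with the divergence-free case (the DCGFF, where the scalar products $p\cdot q$ are replaced by cross products $p\times q$), for which they \emph{are} negligible. A plan that treats the off-diagonals uniformly as lower order would therefore produce the wrong diffusivity. The paper circumvents this by splitting each $\calA_\pm$ into a \emph{bulk} operator $\calA^B_\pm$, where $|p+q|\gtrsim\max(|p|,|q|)$ and the off-diagonals really are small (Lemma~\ref{lem:6}), and a \emph{nuisance} operator $\calA^N_\pm$, supported near $p\approx -q$, which contributes $O(1)$ off-diagonals and is tamed only through an exact sign cancellation between the two ``created'' legs (Lemma~\ref{lem:35}); exhibiting this cancellation requires tracking explicit phase factors, not just absolute-value bounds, and in turn forces the reverse-triangle-inequality constraint \eqref{a:RTI} propagated through the hierarchy (Lemma~\ref{lem:RTI}, Corollary~\ref{cor:11}). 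None of this is visible in your proposal, and it cannot be finessed: without the $B/N$ split the error in the Replacement Lemma fails to vanish.

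A related structural difference, which makes the quantitative program of your last paragraph actionable, is that the paper does not work with the genuine resolvent corrector $u^\eps=(\eps^2-\gen)^{-1}\varphi$ at all. Because the continued fraction has no closed form, the authors instead posit an explicit approximate fixed point $\calS\calG^\lambda$ (Lemma~\ref{lem:3}) and build a two-parameter family of observables $v^{\lambda,n}$ by iterating the \emph{replacement equation} restricted to the bulk (Definition~\ref{def:3}); Theorem~\ref{thm:3} then states a set of conditions — weaker than the classical Kipnis--Varadhan hypotheses, as Section~\ref{sec:no-weak-convergent} makes precise by showing $u^\lambda$ has no strong limit in $\fH$ — under which this family yields the invariance principle. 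Finally, note that because the martingale $N^\eps$ is built from an $\eps$-dependent observable, the standard martingale FCLT plus ergodicity does not immediately apply: one needs the triangular martingale CLT (Theorem~\ref{thm:clt}) together with a variance bound on the quadratic variation (condition \eqref{eq:ApproxVariance}, verified in Proposition~\ref{prp:4}), not just convergence of the static mean $\bbE_\pi[\Gamma(\mathrm{id}+u^\eps)]$.
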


\begin{remark}
By \textit{convergence in probability with respect to $\pi$} in the statement of Theorem \ref{thm:1}, it is meant that for all $\theta_1, ..., \theta_n \in \bbR^2$, and for all times $0 \le t_0 \le ... \le t_n$, it holds that as $\varepsilon \rightarrow 0$
\begin{equation}
\label{eq:AR5}
\int \Big| \bbE\Big[e^{\iota\sum_{k=1}^n \theta_k \cdot (X^\varepsilon_{t_k}-X^\varepsilon_{t_{k-1}})} \Big] - e^{-\half \varsigma^2(\alpha)\sum_{k=1}^n |\theta_k|^2 (t_k - t_{k-1})} \Big| \pi(\dd \omega)  \rightarrow 0\,.
\end{equation}
Consequently, the convergence of the marginals takes place in distribution with respect to the annealed measure $\bfP$. 
\end{remark}

We briefly discuss the significance of this result and its proof. 
In contrast to the $d \ge 3$ setting~\cite{HorvTothVeto12_DiffusiveLimits}, Theorem \ref{thm:1} yields 
an explicit expression for the limiting diffusivity. 
Since $\sigma^2(\alpha)>0$ for all $\alpha > 0$, the choice of weak coupling in~\eqref{eq:coup} is meaningful because, 
although $\gamma(\varepsilon) \rightarrow 0$ as $\varepsilon \rightarrow 0$, 
the self-interaction term does not vanish but has a non-trivial influence on the limit process. 
Specifically, as may be seen in the proof of Theorem \ref{thm:3}, in the limiting diffusivity $\varsigma^2 = 1 + \sigma^2(\alpha)$, 
the unit comes from the original Brownian term in \eqref{eq:sde}, 
while the additional diffusivity $\sigma^2(\alpha)$ is a consequence of the drift. 

The form taken by $\sigma^2(\alpha)$ is also interesting, 
most notably because of what it suggests regarding the logarithmic correction for the superdiffusivity in~\eqref{eq:36}. 
Indeed, formally ``undoing'' the weak coupling by setting $\alpha =\alpha(\eps)= \sqrt{\log(1+\varepsilon^{-2})}$ 
(corresponding to the so-called \textit{strong coupling} regime $\coup=O(1)$) and substituting it into~\eqref{eq:87} 
gives $\sigma^2(\alpha) \sim \sqrt{|\log \varepsilon|}$. 
This suggests that for large times $t$, $\bbE[|X_t|^2] \approx t \sqrt{\log t}$, 
providing strong evidence for the conjectured rate of $\beta = 1/2$. 

We emphasise that the square root appearing in the limiting diffusivity $\sigma^2(\alpha)$ is not ``the same'' square root appearing in the definition of $\gamma(\varepsilon)$ in~\eqref{eq:coup}. 
Generally speaking, one expects the choice of $\gamma(\varepsilon)$ to be somewhat universal, 
whereas the limiting diffusivity $\sigma^2(\alpha)$ to be problem specific. 
For example, in the recent work \cite{CannGubiToni23_GaussianFluctuations}, 
the coupling constant is chosen as in \eqref{eq:coup}, 
but the limiting diffusivity obeys a $2/3$-power law, 
which is consistent with the $\log^{2/3}(t)$ result of \cite{Yau04_LogtLaw}. 
We believe our methods would deliver the same $2/3$-power law in the case of the anisotropic version of the SRBP, 
as conjectured in \cite{TothValko12_SuperdiffusiveBounds}.

Even though the present setting is very different from that of a diffusion in a random environment 
(the SRBP is not even Markovian) the techniques we exploit have a similar flavour. 
We introduce a process $(\eta_t)_{t \ge 0}$, referred to as the \textit{environment seen by the particle} 
(see \cite[Chapter 9]{KomoLandOlla12_FluctuationsMarkov} or \cite{HorvTothVeto12_DiffusiveLimits}). 
Its state space is $\Omega$, 
and the process is defined, for $t\ge0$ and $x \in \bbR^2$, according to
\begin{equation}
\label{eq:83}
\eta_t(x) = \omega(x+X_t) + \coup \left(\int_0^t \nabla V (x+X_t-X_s) \dd s\right)\,.
\end{equation}
The advantage of working with $(\eta_t)_{t \ge 0}$ is that it is a Markov process on $\Omega$,  
which has $\pi(d \omega)$ in~\eqref{e:CovOmega} as an invariant measure (see Lemma \ref{lem:25}). 
The reason for introducing $\omega$ in \eqref{eq:83} is that it gives rise to the initial condition $\eta_0 = \omega$, 
and therefore with respect to the annealed measure $\bfP$, $(\eta_t)_{t \ge 0}$ is stationary.  
The environment process plays a central role in the proof of Theorem \ref{thm:1} in view of its connection 
to the SRBP, given by
\begin{equation}
\label{eq:30}
X_t = B_t - \coup \int_0^t \eta_s(0)\dd s\,.
\end{equation}
In other words, the drift term $\coup \int_0^t \eta_s(0)\dd s$ is an additive functional of the environment, 
so that we are in the setting of the Kipnis-Varadhan theory \cite{KipnisVaradhan86_CentralLimit} 
of martingale approximation (see \cite{KomoLandOlla12_FluctuationsMarkov} for a comprehensive study). 
However, our application goes beyond the classical case (see Section \ref{sec:no-weak-convergent}). 
A self-contained exposition of the way in which we use martingale approximation and 
of where and how we generalise their technique is given in Theorem \ref{thm:3}.
\medskip

Our second main theorem focuses on the environment process itself and determines its large scale behaviour. 
To the best of the authors' knowledge, this is the first time a result of this type is derived and its interest 
goes beyond the specific setting of this paper, also in view of its relation to singular stochastic PDEs. 

Indeed, in the formal setting of SRBP in \eqref{eq:38}, with $V$ replaced by the Dirac delta at $0$, 
$\eta$ solves
\begin{equation}
\label{eq:21}
\dd \eta_t = \nabla \eta_t \circ \dd B_t - \coup \left( \nabla \eta_t \cdot \eta_t(0) - \nabla \delta_0\right) \dd t
\end{equation}
where $\circ \dd B_t$ denotes Stratonovich integration. Due to the presence of $\nabla\delta_0$, 
one expects $t\mapsto \eta_t$ to be distribution valued, 
so that neither the quadratic term nor the point-evaluation at the right hand side make sense. 
Morally speaking, our goal is to give a meaning to~\eqref{eq:21} in the stationary weak coupling regime. 
Indeed, the process $(\eta_t)_{t \ge 0}$ in~\eqref{eq:83} is well-defined and satisfies 
a regularised version of \eqref{eq:21} (see~\eqref{eq:117} below). Analogous to the SRBP case, 
passing to the diffusive scaling corresponds to removing the regularisation so that 
the equation solved by the scaled process formally approaches~\eqref{eq:21}.  

To that end, let $(\eta^\varepsilon_t)_{t\ge0}$ be the $\Omega$-valued process defined according to $\eta^\varepsilon_t(x) = \varepsilon^{-1}\eta_{t/\varepsilon^2}(\varepsilon^{-1}x)$. Since we expect the limit to be only a 
generalised function, 
we embed $\Omega$ into a Hilbert space $H^*$ of distributions, 
whose precise definition will be given in Section \ref{sec:Env}, with canonical embedding given by 
$\omega \mapsto \big(g \mapsto  \int_{\bbR^2} \omega(x)\cdot g(x)\dd x \big)$. 
We shall reserve bold symbols for $H^*$-valued elements, e.g. $\bm{\eta}^\varepsilon_t$ denotes the macroscopic 
environment $\eta^\varepsilon_t$ when viewed as a distribution under the canonical embedding.

The limiting process for $(\bm{\eta}^\varepsilon_t)_{t\ge0}$ is the solution $(\bm{\etaBar}_t)_{t \ge 0}$ of the following stochastic linear transport equation (SLTE)
\begin{equation}
\label{eq:131}
\dd \bm{\etaBar}_t = \varsigma \nabla \bm{\etaBar}_t \circ \dd \BBar_t , \quad \bm{\etaBar}_0 \sim \bm{\piBar},
\end{equation}
where $\varsigma>1$ is defined as in Theorem~\ref{thm:1}, and $\bm{\piBar}$ is the law of $\nabla \Phi$, 
for $\Phi$ a GFF (see Definition \ref{def:5}), 
and is sampled independently from the Brownian motion $(\BBar_t)_{t\ge0}$. 
The equation~\eqref{eq:131} is to be interpreted as an infinite dimensional stochastic equation 
in the Hilbert space $H^*$, in the sense of \cite{DaPratoZabczyk14_StochasticEquations}. Even though 
we believe it to be classical,  we will discuss existence and uniqueness in Appendix~\ref{sec:well-posedn-mart}.
We refer to the law of $(\bm{\etaBar}_t)_{t \ge 0}$ as {\it the Brownian transportation of the gradient of the 
GFF}. Such terminology is justified as the process 
$(\bm{\etaBar}_t)_{t \in [0,T]}$ is formally given by
\begin{equ}[e:BTGGFF]
\bm{\etaBar}_t(x) = \bm{\etaBar}_0(x + \sigma \BBar_t)\,, 
\end{equ}
an expression which will be made sense of in Lemma \ref{lem:12} below. 

We are ready to state the second main result of this paper.

\begin{theorem}
\label{thm:2}
Let $\alpha,T>0$. Under the weak coupling in \eqref{eq:coup}, the scaled environment seen by the particle process 
$\eta^\eps$ converges in law on $C([0,T],H^*)$ to the solution of $\bm{\etaBar}$ of~\eqref{eq:131}, 
with $\varsigma$ as in Theorem~\ref{thm:1}. 
\end{theorem}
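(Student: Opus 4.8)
The plan is to sidestep a direct analysis of the singular transport equation~\eqref{eq:21} and instead exploit the explicit formula~\eqref{eq:83} together with the invariance principle of Theorem~\ref{thm:1}. Write $\tau_a$ for the translation $(\tau_a\phi)(\cdot)\eqdef\phi(\cdot+a)$ and set $V^\eps(x)\eqdef\eps^{-2}V(\eps^{-1}x)$. Since $\eta_0=\omega$, diffusively rescaling~\eqref{eq:83} gives, for the function-valued process,
\begin{equ}
\eta^\eps_t=\tau_{X^\eps_t}\omega^\eps+\coup(\eps)\,r^\eps_t\,,\qquad \omega^\eps\eqdef\eta^\eps_0=\eps^{-1}\omega(\eps^{-1}\cdot)\,,\qquad r^\eps_t(x)\eqdef\int_0^t\nabla V^\eps\big(x+X^\eps_t-X^\eps_u\big)\,\dd u\,,
\end{equ}
where $X^\eps$ is the rescaled SRBP; passing to $H^*$ this reads $\bm{\eta}^\eps_t=\tau_{X^\eps_t}\bm{\omega}^\eps+\coup(\eps)\,\bm{r}^\eps_t$. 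I would prove Theorem~\ref{thm:2} in three steps: (i)~the self-interaction remainder $\coup(\eps)\,\bm{r}^\eps$ vanishes in $C([0,T],H^*)$; (ii)~the pair $(X^\eps,\bm{\omega}^\eps)$ converges jointly in law on $C([0,T],\bbR^2)\times H^*$ to $(\varsigma W,\nabla\Phi)$, with $W$ a standard planar Brownian motion independent of the GFF $\Phi$; (iii)~the transport map $(w,\phi)\mapsto\big(t\mapsto\tau_{w_t}\phi\big)$ is continuous from $C([0,T],\bbR^2)\times H^*$ into $C([0,T],H^*)$. Granting these, $\bm{\eta}^\eps=\big(t\mapsto\tau_{X^\eps_t}\bm{\omega}^\eps\big)+o(1)$ in $C([0,T],H^*)$, so the continuous mapping theorem applied to (ii)--(iii) yields $\bm{\eta}^\eps\Rightarrow\big(t\mapsto\tau_{\varsigma W_t}\nabla\Phi\big)$, which by Lemma~\ref{lem:12} is exactly the solution $\bm{\etaBar}$ of the SLTE~\eqref{eq:131} driven by $\BBar\eqdef W$, with initial datum $\bm{\piBar}$ independent of the noise.

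For step~(i): integrating by parts against a smooth compactly supported vector field $g$ and using $\int V^\eps=1$ together with the evenness of $V$ gives
\begin{equ}
\big|\langle\bm{r}^\eps_t,g\rangle\big|=\Big|\int_0^t\big(V^\eps\ast\ddiv g\big)\big(X^\eps_u-X^\eps_t\big)\,\dd u\Big|\le t\,\|\ddiv g\|_\infty\,,
\end{equ}
uniformly in $\eps$ and in the realisation of $(\omega,B)$. Since $H^*$ is chosen in Section~\ref{sec:Env} as a polynomially weighted negative Sobolev space whose predual embeds into $W^{1,\infty}$, and since $r^\eps_t$ is supported in an $O(1)$ neighbourhood of the macroscopic trajectory (so the weight plays no role), this yields $\sup_{t\le T}\|\bm{r}^\eps_t\|_{H^*}\le C_T$ deterministically; multiplying by $\coup(\eps)=\alpha/\sqrt{\log(1+\eps^{-2})}\to0$ from~\eqref{eq:coup} gives the claim.

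For step~(ii): the marginal $X^\eps\Rightarrow\varsigma W$ on $C([0,T],\bbR^2)$ is the annealed invariance principle of Theorem~\ref{thm:1}; the marginal $\bm{\omega}^\eps\Rightarrow\nabla\Phi\sim\bm{\piBar}$ is a Gaussian scaling computation, since by~\eqref{e:CovOmega} the covariance of $\omega^\eps$ equals $\eps^{-2}\,\partial^2_{ij}(V\ast G)(\eps^{-1}\cdot)$, which converges to $\partial^2_{ij}G$ by homogeneity of $G$ and $\int V=1$, matching the covariance of $\nabla\Phi$, while tightness in $H^*$ follows from uniform second-moment bounds in the weighted space. For the joint law with asymptotic independence I would condition on $\omega$: for a cylinder functional $F$ of finitely many path increments, the convergence in probability with respect to $\pi$ in~\eqref{eq:AR5} gives $\bbE[F(X^\eps)\mid\omega]\to\bbE[F(\varsigma W)]$ in $L^1(\pi)$ with a \emph{deterministic} limit, so that, since $G(\bm{\omega}^\eps)$ is $\sigma(\omega)$-measurable and $\bm{\omega}^\eps\Rightarrow\nabla\Phi$, for every bounded continuous $G\colon H^*\to\bbR$
\begin{equ}
\bbE_{\bfP}\big[F(X^\eps)\,G(\bm{\omega}^\eps)\big]=\int G(\bm{\omega}^\eps)\,\bbE[F(X^\eps)\mid\omega]\,\pi(\dd\omega)\;\longrightarrow\;\bbE[F(\varsigma W)]\,\bbE[G(\nabla\Phi)]\,.
\end{equ}
By marginal tightness the pair $(X^\eps,\bm{\omega}^\eps)$ is tight; extending $F$ from exponentials of increments to all of $C_b\big(C([0,T],\bbR^2)\big)$ by a Stone--Weierstrass argument combined with this tightness identifies every joint subsequential limit as $(\varsigma W,\nabla\Phi)$ with $W\perp\Phi$, hence the pair converges. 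Step~(iii) is standard: on the weighted space $H^*$ the translation group is a $C_0$-group and is uniformly bounded over translations in any fixed ball, from which joint continuity of $(w,\phi)\mapsto(t\mapsto\tau_{w_t}\phi)$ follows by a routine density argument using compactness of $\{\tau_{w(t)}\phi:t\in[0,T]\}$.

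The principal obstacle is step~(ii): one needs the joint convergence with asymptotic independence at the level of \emph{paths}, and this is precisely where the strength of Theorem~\ref{thm:1} --- convergence \emph{in probability with respect to $\pi$}, rather than merely annealed convergence --- is indispensable, together with a careful passage from the exponential-of-increments functionals appearing in~\eqref{eq:AR5} to general path functionals. A secondary technical point is the calibration of $H^*$: it must be large enough to contain the linearly growing environments $\omega$ and to make $\bm{\omega}^\eps$ tight with uniform moments, yet small enough (predual embedding into $W^{1,\infty}$) for the crude bound in step~(i) to kill the self-interaction remainder; this tension fixes the weight and the Sobolev exponent. Finally, I note that a more ``SPDE-native'' route is also viable --- establishing tightness of $\bm{\eta}^\eps$ as a solution of the regularised transport equation~\eqref{eq:117} and identifying limit points via a martingale problem for~\eqref{eq:131} --- but there the crux shifts to showing that the singular drift $\coup(\eps)\,\nabla\eta^\eps_t\cdot\eta^\eps_t(0)$ converges to the It\^o--Stratonovich correction $\tfrac12\sigma^2(\alpha)\,\Delta\bm{\etaBar}_t$ together with the matching enhancement $\sigma^2(\alpha)$ of the limiting martingale's quadratic variation, which re-imports the full martingale-approximation analysis behind Theorem~\ref{thm:3}.
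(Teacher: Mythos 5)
Your proposal is correct and takes a genuinely different route from the paper. The paper proves Theorem~\ref{thm:2} independently of Theorem~\ref{thm:1}, by (a)~tightness via the $H^*$-valued It\^o trick (Proposition~\ref{lem:13}), (b)~a well-posed martingale problem for the SLTE (Definition~\ref{def:2}, Proposition~\ref{prp:1}), and (c)~identification of subsequential limits via Theorem~\ref{thm:4}, which in turn re-runs the replacement-equation machinery of Proposition~\ref{p:MainEstimates} with general chaos inputs $\frakh\in\fock_m$, $m=1,2$, not just $\gamma f_1$. You instead exploit the exact representation $\bm{\eta}^\eps_t = T_{X^\eps_t}\bm{\omega}^\eps + \coup(\eps)\,\bm{r}^\eps_t$, observe that the $H^*$-norm of $\bm{r}^\eps_t$ is deterministically $O(t)$ (your integration-by-parts bound $|\bm{r}^\eps_t[g]|\le t\|V\|_{L^1}\|\ddiv g\|_\infty$ combined with the embedding $H = H^{50}_w \hookrightarrow W^{1,\infty}$), and deduce that the self-interaction remainder is killed by $\coup(\eps)\to0$. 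The limit is then obtained by joint convergence $(X^\eps,\bm{\omega}^\eps)\Rightarrow(\varsigma W,\nabla\Phi)$ with asymptotic independence --- which is exactly what the $\pi$-in-probability form~\eqref{eq:AR5} of Theorem~\ref{thm:1} delivers, since $\bbE_B[F(X^\eps)\mid\omega]\to\bbE[F(\varsigma W)]$ in $L^1(\pi)$ with a \emph{deterministic} limit and decouples from any $\sigma(\omega)$-measurable factor --- followed by the continuous mapping theorem through the translation group (whose continuity is effectively Lemma~\ref{lem:31}) and identification of $T_{\varsigma W_t}\nabla\Phi$ as the SLTE solution (Lemma~\ref{lem:12}). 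Interestingly, the paper remarks that ``it is not clear how to go from one to the other''; your argument shows that the direction $\text{Theorem~\ref{thm:1}}\Rightarrow\text{Theorem~\ref{thm:2}}$ \emph{is} essentially a corollary, because the enhanced diffusivity of the environment enters only through the displacement $X^\eps$, while the direct contribution of the dynamically accumulated occupation profile to $\bm{\eta}^\eps$ vanishes under weak coupling. What your approach buys is economy and structural transparency --- it makes the ``Brownian transportation of the GFF'' picture manifest and avoids redoing the generator analysis for $m=2$. What the paper's martingale-problem route buys is robustness: it is SPDE-native, does not lean on the closed-form representation of $\eta$ in terms of $(X,\omega)$, and would generalise to settings where no such formula is available.
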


In view of the definition given in \eqref{eq:83}, and the convergence result of Theorem \ref{thm:1}, 
it is not surprising to see $\bm{\etaBar}_t$ as the large scale description of $\bm{\eta}_t$. 
As shown in the proof of Theorem \ref{thm:2},  
the enhanced diffusivity $\sigma^2(\alpha)$ is produced by the singular part of  \eqref{eq:21}. 
While our two results are consistent, it is not clear how to go from one to the other. Indeed,
although at macroscopic scale, the SRBP is essentially an observable of the environment 
$\eta^\varepsilon$, this observable doesn't make sense for the limit $\bm{\etaBar}_t$, 
because it would involve evaluating a distribution at a point (see \eqref{eq:30}).

At last let us mention that, even though Theorem \ref{thm:2} is very different from Theorem \ref{thm:1}, 
we prove them both with similar techniques. On the one hand, martingales may be used to approximate the SRBP, 
and on the other they may be related to the environment through 
the martingale problem associated to the limiting SPDE. 
It should be possible to use our methods to derive the limiting environment also for other models, 
for example the case of diffusions in divergence free Gaussian vector fields at and above the critical dimension. 

\subsection{Structure of the paper}
We divide the proof of Theorem \ref{thm:1} in two: firstly we deal with the martingale approximation, 
and then we perform a detailed analysis on the generator $\calL$ of $(\eta_t)_{t \ge 0}$. 
Section \ref{sec:envir-idea-proof} is devoted to the first part, 
which is wrapped up in Theorem \ref{thm:3}. Section \ref{sec:analysis-generator} and \ref{sec:appr-solv-gener} 
instead deal with the second:  
we analyse $\calL$ and derive estimates which are needed for both Theorems \ref{thm:1} and \ref{thm:2}. 
In Sections \ref{sec:invariance-principle} and~\ref{sec:Env}, we exploit such estimates and determine the invariance 
principle for the SRBP (Section~\ref{sec:invariance-principle}) 
and the scaling limit for the environment process in the second (Section~\ref{sec:Env}). 
At last, we have appendices. In Appendix \ref{sec:integral-estimates}, we collect bounds and integral computations 
which are used throughout. In Appendix \ref{sec:mart-centr-limit}, we give a triangular central limit theorem 
for martingales, which is crucial in the proof of Theorem \ref{thm:3}. 
Appendix \ref{sec:well-posedn-mart} is dedicated to the well-posedness statement 
for the stochastic linear transport equation. Finally, in Appendix \ref{a:nuisance} we provide some 
insight as to why we were forced to separately consider the so-called nuisance region together with 
a short comparison between the SRBP and the DCGFF.

\subsection*{Notation and Wiener space analysis}

Let us introduce some notation that we shall be using throughout the article. 
For elements $x_1, ..., x_n\in\R^d$, we write $x_{1:n}$ for the vector $x_{1:n} \eqdef (x_1, ..., x_n)$ and 
we extend the notation to any ordered set of indices $A \subset \{1, ..., n\}$, so that, for example, 
$x_{1:n \setminus i} \eqdef (x_1, ..., \cancel{x_i}, ..., x_n)$, $i\in\{1,\dots,n\}$. 
For $A\subset \{1,\dots,n\}$, we denote by $x_{[A]}$ the sum $ \sum_{i\in A} x_i$. 
Moreover, for $x \in \bbR^d$ and $j \in  \{1, ..., d\}$, we write $e_jx$ for the $j$'th coordinate, i.e. 
$e_jx\eqdef e_j\cdot x$ for $\cdot$ the usual scalar product in $\R^d$ and $e_j$ the $j$'th element of the standard 
basis of $\R^d$. 

For a Schwartz function $f \colon \bbR^d \rightarrow \bbR$ we define the Fourier transform $f$ by
\begin{equ}
\fHat(p) \eqdef \frac{1}{(2\pi)^{d/2}}  \int_{\bbR^d} e^{ -\iota p \cdot x } f(x) \dd x\,, 
\end{equ}
for all $p\in\R^d$. 
\medskip

We now rigorously define the random environment $\omega$ for the SRBP.
Let $\Omega$ be the space of smooth vector fields on $\R^2$ of sub-polynomial growth, i.e. 
\begin{equ}[e:Omega]
\Omega \eqdef \left\{ \omega \in C^\infty(\bbR^2, \bbR^2) : \rot \omega = 0, \, \lVert \omega \rVert_{k,i,m}  < \infty, \, \forall k,i,m\right\}
\end{equ}
where $\rot \omega\eqdef \nabla\times \omega$, and the indices $k,i,m$ respectively range over $\N^2,\,\{1,2\},\,\N$, and 
the seminorms above are defined as $ \lVert \omega \rVert_{k,i,m} \eqdef \sup_{x} (1+|x|)^{-1/m} | \partial^k \omega_i(x) | $. 
The topology generated by these seminorms turns $\Omega$ into a Fr\'{e}chet space which we endow  
with the cylindrical $\sigma$-algebra $\calF \eqdef \sigma(\omega(x) : x \in \bbR^2)$. 
Let $\pi$ be the law of the \textit{gradient of the smoothed out Gaussian free field}, i.e. 
the probability measure on $(\Omega,\cF)$ 
under which $(\omega(x) : x \in \bbR^2)$ is a Gaussian process with covariance 
given in \eqref{e:CovOmega} (see~\cite{HorvTothVeto12_DiffusiveLimits} for more on $\pi$). 

According to~\cite[Theorem 2.6]{Janson97_GaussianHilbert}, $L^2(\pi) = \oplus_{n=0}^\infty \calH_n$, 
where the subspaces $\{\calH_n\}_n$ are mutually orthogonal and, for every $n$, 
$\cH_n$ is the so-called $n$-th homogeneous Wiener chaos, while 
$\cH_{\leq n}\eqdef \oplus_{j=1}^n \calH_j$ is the $n$-th inhomogeneous Wiener chaos modulo $\cH_0$.  
The chaos $\cH_n$ is the closure in $L^2(\pi)$ 
of elements of the form 
\begin{equ}[eq:110]
X = \sum_{i_1, ..., i_n \in \{1, 2\}} \int_{(\bbR^2)^n} f_{i_{1:n}}(x_{1:n}) : \omega_{i_1}(x_1) \, \dots \, \omega_{i_n}(x_n) : \dd x_{1:n}
\end{equ}
where $: \omega_{i_1}(x_1) \, \dots \, \omega_{i_n}(x_n) : $ denotes the Wick product of the Gaussian variables $\omega_{i_1}(x_1), \,\dots,\, \omega_{i_n}(x_n)$ and  $\{f_{i_1, ..., i_n} : i_1, ..., i_n \in \{1, 2\}\}$ is a family of Schwartz functions such that 
$f_{i_{\sigma(1:n)}}(x_{1:n}) = f_{i_{1:n}}(x_{\sigma^{-1}(1:n)})$ for all permutation $\sigma\in S_n$. 

In order to work with a more manageable version of the Wiener chaoses, notice that 
the definition of the covariance in~\eqref{e:CovOmega}, Wick's rule~\cite[Theorem 3.12]{Janson97_GaussianHilbert}
and Plancherel's identity ensure that if $X,Y\in\cH_n$ admit the representation in~\eqref{eq:110} 
with respect to the families of kernels $\{f_{i_1, ..., i_n} : i_1, ..., i_n \in \{1, 2\}\}$ and 
$\{g_{i_1, ..., i_n} : i_1, ..., i_n \in \{1, 2\}\}$, then 
\begin{equ}[e:ScalarProd]
\int X(\omega) Y(\omega) \pi(\omega) = \int \frakf(p_{1:n}) \frakg(p_{1:n})  \mu_n(\dd p_{1:n} )
\end{equ}
where $\frakf$ (and similarly $\frakg$) is given by\footnote{so that $\frakf$ is the Fourier transform of the function $(-1)^n \sum_{i_1,\dots i_n}\partial_{i_1}\dots\partial_{i_n}f_{i_1, ..., i_n}$ but we omit the ``hat'' denoting Fourier transforms to lighten the notation.}
\begin{equation}
\label{eq:42}
\frakf(p_{1:n}) = (-\iota)^n \sum_{i_1, ..., i_n \in \{1, 2\}} e_{i_1} p_1 ... e_{i_n} p_n \hat f_{i_1, ..., i_n}(p_{1:n})
\end{equation}
and $\mu_n$ is defined according to
\begin{equ}[e:mun]
\ffock{n}(\dd p_{1:n}) \eqdef n! \left( \prod_{i=1}^n \frac{\VHat(p_i)}{|p_i|^2} \right) \dd p_{1:n}\,.
\end{equ}
We define $\langle\cdot,\cdot\rangle$ to be the scalar product defined by the right hand side of~\eqref{e:ScalarProd}, and we write $\lVert \cdot \rVert$ for the corresponding norm. What~\eqref{e:ScalarProd} shows is that the space $\cH_n$ is isometric 
to the Fock space $\fock_n$ given by 
\begin{equ}[e:Fock]
\fock_n \eqdef \cl\{ \frakf \in L^2_\bbC(\ffock{n}) \colon \frakf(p_{1:n}) = \frakf(p_{\sigma(1:n)}), \frakf(-p) = \overline{\frakf(p)} \} 
\end{equ}
where the closure is taken with respect to $\lVert \cdot \rVert$. We shall set $\fock_{\le n} \eqdef \oplus_{j = 1}^n \fock_j$ so as to omit the constants $\fock_0$.

Since most of our analysis will be based on operators acting on $\fock\eqdef \oplus_n \fock_n$, 
let us single out a class of them which will play an important role for us. 

\begin{definition}\label{def:DO}
An operator $\ST$ on $\fock$ is said to be {\it diagonal} with multiplier $\tau$ 
if, for all $n\in\N$, $\ST(\fock_n)\subset\fock_n$ and $\tau\colon \R^2\to\R$ is such that for all $\psi\in\fock_n$
\begin{equ}
\calT\psi(p_{1:n})=\tau(p_{[1:n]})\psi(p_{1:n})\,,\qquad  p_{1},\dots,p_n\in\R^{2}\,.
\end{equ}
We further say that the operator is  non-negative when $\tau \ge 0$. 
\end{definition}

We write $a \lesssim b$ or $a=O(b)$ to mean that there exists a constant $C>0$, such that $a \le C b$. 
If we want to emphasise the dependence of $C$ on a specific quantity $v$, we write $a\lesssim_v b$ or $a= O_v(b)$.

\section{From the environment seen by the particle to the invariance principle}
\label{sec:envir-idea-proof}

The goal of this section is to identify a set of conditions which imply the central limit theorem as well as 
the invariance principle for the SRBP, as stated in Theorem~\ref{thm:1}. 
These conditions are given in Theorem~\ref{thm:3} and are different (and for most part 
{\it strictly weaker}) than those formulated 
in~\cite[Section 2.6]{KomoLandOlla12_FluctuationsMarkov}
but, as we will see, still sufficient for the result to hold. 
In order to formulate them, we will first present some preliminary results concerning the SRBP  in~\eqref{eq:sde} and 
introduce the generator associated to the environment seen by the particle process. 

\subsection{The SRBP and the environment seen by the particle}
\label{sec:srbp-envir-seen}

In the setting of Section~\ref{sec:model-main-results}, let $(E, \calE, (\calE_t)_{t \ge 0}, \bbP)$ 
be a filtered probability space and $(B_t)_{t \ge 0}$ a Brownian motion on it. 
Let us recall that, for $\gamma>0$, the SRBP in the environment $\omega\in\Omega$ is given by 
\begin{equ}[eq:SRBP2]
\dd X_t = \dd B_t - \gamma \omega(X_t) \dd t - \gamma^2 \Big( \int_0^t \nabla V (X_t-X_s) \dd s\Big) \dd t\,,\qquad X_0 = 0\,. 
\end{equ}
It is classical to see that for any smooth vector-field $\omega$, the SDE is indeed well-posed. 
Associated to it and crucial in the study of its fluctuations, is  
the environment seen by the particle process $\eta$, whose definition was given in~\eqref{eq:83}, 
\begin{equ}[e:envNew]
\eta_t(x) = \omega(x+X_t) + \coup \left(\int_0^t \nabla V (x+X_t-X_s) \dd s\right)\,,\quad x\in\R^2\,,t\geq 0\,.
\end{equ}
Before detailing the connection between $\eta$ and $X$, let us summarise some of the properties of $\eta$
in the following lemmas. 

\begin{lemma}\label{lem:25}
The environment process $(\eta_t)_{t\ge0}$ in~\eqref{e:envNew} is an $\Omega$-valued 
Markov process which solves the vector valued SPDE
\begin{equs}
\dd \eta_t(x) &= \tfrac{1}{2} \Delta \eta_t(x) \dd t - \coup \left( \sum_{i = 1}^2 \partial_i \eta_t(x)\eta_t^i(0) - \nabla V(x)\right) \dd t + \sum_{i = 1}^2 \partial_i \eta_t(x) \dd B_t^i\\
\eta_0(x)&=\omega(x)\,,\label{eq:117}
\end{equs}
for $x\in\R^2$ and $t\geq 0$. 
Furthermore, the law $\pi$ in~\eqref{e:CovOmega} is an invariant measure for $\eta$. 
\end{lemma}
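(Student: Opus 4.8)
The plan is to verify the three claims---that $(\eta_t)_{t\ge0}$ is $\Omega$-valued, that it solves \eqref{eq:117}, and that $\pi$ is invariant---essentially by direct computation from the definition \eqref{e:envNew}, relying on the SDE \eqref{eq:SRBP2} for $X$ and on It\^o's formula. First I would note that since $\omega\in\Omega$ is smooth with sub-polynomial growth and $V$ is Schwartz (Assumption \ref{a:V}), each $\eta_t$ is manifestly a smooth vector field, and the curl-free property is inherited from $\omega$ (the added term $\coup\int_0^t\nabla V(x+X_t-X_s)\dd s$ is itself a gradient in $x$); the sub-polynomial bounds on all derivatives follow from the corresponding bounds on $\omega$ together with the rapid decay of $V$ and its derivatives, so $\eta_t\in\Omega$ for every $t$.

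For the SPDE \eqref{eq:117}, the strategy is to apply It\^o's formula to $t\mapsto \omega^i(x+X_t)$ and to $t\mapsto \coup\int_0^t\partial_i V(x+X_t-X_s)\dd s$ for each fixed $x$ and each component $i$. Writing $Y_t\eqdef\int_0^t\partial_i V(x+X_t-X_s)\dd s$, the differential picks up three contributions: a boundary term $\coup\,\partial_i V(x)\dd t$ from the upper limit of integration at $s=t$ (where $X_t-X_s\to 0$), the transport terms coming from how $X_t$ enters the integrand, and the It\^o correction. Using \eqref{eq:SRBP2} in the form $\dd X_t=\dd B_t-\coup\,\eta_t(0)\dd t$ (which is exactly the identity \eqref{eq:30}, obtained by recognising that $\omega(X_t)+\coup\int_0^t\nabla V(X_t-X_s)\dd s=\eta_t(0)$), the drift $-\coup\,\eta_t(0)$ produces the nonlinear transport term $-\coup\sum_i\partial_i\eta_t(x)\eta_t^i(0)$, the Brownian part produces $\sum_i\partial_i\eta_t(x)\dd B_t^i$, and the quadratic variation of $B$ produces $\tfrac12\Delta\eta_t(x)\dd t$; collecting these and adding the boundary term $\coup\nabla V(x)$ gives precisely \eqref{eq:117}. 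The Markov property then follows because \eqref{eq:117} is an autonomous (albeit singular-looking, but here regularised and smooth) evolution equation driven by the Brownian motion $B$: the future of $\eta$ depends on the past only through $\eta_t$ itself, since the awkward history term $\int_0^t\nabla V(x+X_t-X_s)\dd s$ has been absorbed into the current state $\eta_t$.

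For invariance of $\pi$, I would compute the generator $\calL$ acting on a suitable class of cylinder functions on $\Omega$ and check that $\int \calL F\,\dd\pi=0$. From \eqref{eq:117} the generator splits as $\calL=\calL_0+\coup\,\calB$, where $\calL_0$ is the generator of the ``free'' part $\dd\eta_t=\tfrac12\Delta\eta_t\dd t+\sum_i\partial_i\eta_t\dd B_t^i$ (a transport-by-Brownian-motion plus Laplacian term, i.e. an Ornstein--Uhlenbeck-type operator for which $\pi$---the law of $\nabla$ of a smoothed GFF---is reversible, essentially because $\pi$ is a translation-invariant Gaussian measure and $\tfrac12\Delta$ is the right compensator), and $\calB$ encodes the nonlinear drift $-\sum_i\partial_i\eta(x)\eta^i(0)+\nabla V(x)$. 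The heart of the matter is showing $\int\calB F\,\dd\pi=0$: this is an integration-by-parts/cancellation on Gaussian space, and the term $+\nabla V(x)$ is precisely what is needed to cancel the divergence arising when one integrates the transport term $\partial_i\eta(x)\eta^i(0)$ against the Gaussian density, using that the covariance of $\pi$ from \eqref{e:CovOmega} has $V$ built into it. Concretely one tests against exponential/Wick-polynomial functionals and uses Gaussian integration by parts (the identity $\int \eta^i(0)G(\eta)\dd\pi=\int \bbE[\eta^i(0)\eta^j(y)]\,\tfrac{\delta G}{\delta\eta^j(y)}\dd y\,\dd\pi$).

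\textbf{Main obstacle.} The routine part is the It\^o computation; the delicate point I expect to be the main obstacle is the invariance of $\pi$---specifically, making the cancellation $\int\calB F\,\dd\pi=0$ rigorous on a dense enough class of test functionals and justifying the Gaussian integration by parts, together with checking the boundary term at $s=t$ carefully (it is legitimate because $\nabla V$ is bounded and continuous, so the contribution of the moving upper limit is genuinely $\coup\,\nabla V(x)\dd t$ with no hidden It\^o correction). One should also be slightly careful that the cylinder functions used form a core for $\calL$, or else argue invariance directly at the level of the semigroup via \eqref{eq:117} and stationarity of the increments of $B$.
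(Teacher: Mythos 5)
Your proposal is correct and matches the approach the paper intends, though the paper itself gives essentially no detail: it states that the SPDE derivation is ``an easy consequence of It\^o's formula and a stochastic Fubini's theorem'' (the latter being needed to justify interchanging the time integral $\int_0^t\dd s$ with the stochastic integral when differentiating $t\mapsto\gamma\int_0^t\nabla V(x+X_t-X_s)\dd s$, a step you sketch but do not flag explicitly), and defers the invariance of $\pi$ to the arguments of~\cite[Section~3.3]{HorvTothVeto12_DiffusiveLimits}. Your It\^o computation for the SPDE is correct; in particular the boundary term $\gamma\nabla V(x)\dd t$ from the moving upper limit and the drift $-\gamma\eta_t(0)$ via identity~\eqref{eq:30} do produce exactly the terms in~\eqref{eq:117}. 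Your description of the invariance argument as Gaussian integration by parts with $+\nabla V$ cancelling the Gaussian divergence is exactly the content of the cited reference; in the Fock-space language of Lemma~\ref{lem:26} this is the decomposition $\calL=-\calS+\calA$ with $\calS$ self-adjoint and $\calA$ antisymmetric, both annihilating constants, which yields $\int\calL F\,\dd\pi=0$ directly. The only cosmetic inaccuracy is calling the free transport-by-Brownian-motion generator ``Ornstein--Uhlenbeck-type''; the relevant property is simply that it is self-adjoint with respect to the translation-invariant Gaussian measure $\pi$, which you in fact use.
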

\begin{proof}
The first part of the statement is an easy consequence of It\^o's formula  and a stochastic Fubini's theorem~\cite[Theorem 4.33]{DaPratoZabczyk14_StochasticEquations}. The second instead follows arguments similar to those in~\cite[Section 3.3]{HorvTothVeto12_DiffusiveLimits}.
\end{proof}

\begin{remark}
\label{rmk:1}
Let us briefly comment on the right hand side of the SPDE in~\eqref{eq:117}. 
The Laplacian is simply an It\^{o} correction term. It is best considered grouped with the noise as, together, 
they produce the transport term in Stratonovich form (see \eqref{eq:21}) which comes from the Brownian shift. 
The nonlinear term in parenthesis combines the contribution from the drift of $(X_t)_{t \ge 0}$ 
with the growth of the dynamic profile from the occupation over an infinitesimal time step. 
As can be seen by a direct computation, the correction $-\nabla V$ corresponds to 
the Wick renormalization of the nonlinearity with respect to the measure $\pi$. The SPDE is singular because the solution is distribution valued at large scales (see Section \ref{sec:Env}), in which case the RHS becomes ill defined, both due to the nonlinearity, and because we evaluate the distribution at the origin. 
\end{remark}

Since the process $\eta$ is Markov, it has an infinitesimal generator $\calL$ whose action on cylinder functions 
can be obtained by applying It\^{o}'s formula, using~\eqref{eq:117} and singling out the drift (and martingale) part. 
In the next lemma, we see how this action translates to an action on the $L^2$ space associated to $\pi$, 
and more precisely on the Fock space $\fock$ in~\eqref{e:Fock}. 

\begin{lemma}
\label{lem:26}
The generator $\calL$ of the environment seen by the particle process $\eta$ in~\eqref{e:envNew}, 
viewed as an unbounded operator on $L^2(\pi)\cong\fock$, can be decomposed as 
$\calL = -\calS + (\calA_+ + \calA_-)$, where, for any $n\geq 1$, $\cS$, $\calA_\pm$ respectively map $\fock_n$ 
to $\fock_n$, $\fock_{n\pm1}$. For $\psi\in\fock_n$, they are defined as 
\begin{equs}[eq:ops]
\calS \psi(p_{1:n}) &= \thalf |p_{[1:n]}|^2 \psi(p_{1:n})\,, \\
\calA_+ \psi(p_{1:n+1}) &= \frac{\coup}{n+1} \sum_{i=1}^{n+1} p_i \cdot p_{[1:(n+1)\setminus i]} \, \psi(p_{1:(n+1)\setminus i} )\,, \\
\calA_- \psi(p_{1:n-1}) &= \coup n\int\frac{\VHat(q)}{|q|^2} q \cdot p_{[1:n-1]} \psi(q,p_{1:n-1}) \dd q\,, \\
\end{equs}
and satisfy $\calS|_{\calH_0} = \calA_+|_{\calH_0} = \calA_-|_{\calH_0\oplus\calH_1} = 0$. 
The operator $\cS$ is self-adjoint while $(\calA_\pm)^* = - \calA_\mp$ so that, in particular, 
$\calA \eqdef \calA_+ + \calA_-$ is skew self-adjoint.  

Moreover, upon defining the operator $\nabla_i \colon \fock\to\fock$, $i=1,2$, to be the (Fourier transform of the) 
usual derivative operator 
in the $i$-th direction, i.e. $\nabla_i \psi(p_{1:n}) = \iota (e_ip_1+...+e_ip_n) \psi(p_{1:n})$ for  
$\psi\in\fock_n$, 
the following integration by parts formula holds
\begin{equ}[eq:IBP]
\sum_{i=1}^2 \int (\nabla_i u(\omega))^2 \pi(d\omega) = 2\langle u, \calS u\rangle\,.
\end{equ}
\end{lemma}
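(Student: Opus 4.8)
The plan is to compute $\calL$ by applying It\^o's formula to $u(\eta_t)$ for $u$ ranging over a core of the generator --- e.g. cylinder functionals, or functionals with finite chaos expansion --- using the SPDE~\eqref{eq:117} of Lemma~\ref{lem:25} and isolating the drift (cf.\ the analogous computation in~\cite{HorvTothVeto12_DiffusiveLimits}). Writing $Du$, $D^2u$ for the first and second derivatives of $u$ and, in this one formula only, $\langle\cdot,\cdot\rangle$ for the duality pairing, this produces
\[
\calL u(\omega) = \big\langle Du(\omega),\tfrac12\Delta\omega\big\rangle + \tfrac12\sum_{i=1}^2\big\langle D^2u(\omega);\partial_i\omega,\partial_i\omega\big\rangle - \coup\,\big\langle Du(\omega),\textstyle\sum_{i=1}^2\Wick{\partial_i\omega\,\omega^i(0)}\big\rangle\,,
\]
where, as in Remark~\ref{rmk:1}, the constant $-\nabla V$ of~\eqref{eq:117} is exactly the $\pi$-Wick renormalisation constant of $\sum_i\partial_i\omega\,\omega^i(0)$, so the nonlinearity is genuinely Wick-ordered. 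The first two terms are precisely the generator of the Brownian translation flow $\omega\mapsto\tau_{B_t}\omega$, $\tau_yg\eqdef g(\cdot+y)$; under the isometry $\calH_n\cong\fock_n$ of~\eqref{e:ScalarProd} the operator $\tau_y$ acts on $\fock_n$ as multiplication by $e^{\iota\,y\cdot p_{[1:n]}}$, so its generator is multiplication by $-\tfrac12|p_{[1:n]}|^2$. Hence this part of $\calL$ equals $-\calS$ with $\calS$ as in~\eqref{eq:ops}; in particular $\calS$ is diagonal in the sense of Definition~\ref{def:DO} with the nonnegative multiplier $\tfrac12|\cdot|^2$, hence self-adjoint and nonnegative, and vanishes on $\calH_0$.

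Next I would translate the remaining term $u\mapsto-\coup\big\langle Du(\omega),\sum_i\Wick{\partial_i\omega\,\omega^i(0)}\big\rangle$ through the Fock correspondence via the standard second-quantisation dictionary: $\omega^i(x)$ corresponds to a creation plus an annihilation operator, each $\partial_i$ contributes a momentum factor, $Du$ is one further annihilation, and the weight $\prod_j\VHat(p_j)/|p_j|^2$ in $\mu_n$ of~\eqref{e:mun} fixes their normalisations. The resulting operator splits into a chaos-raising and a chaos-lowering part, the potential chaos-\emph{preserving} part being exactly the one removed by the Wick ordering (this is the content of Remark~\ref{rmk:1}). Matching kernels, and keeping track of the combinatorial constants --- the factor $(n+1)^{-1}$ from symmetrisation for the raising operator and the factor $n$ from the single contraction for the lowering one --- identifies these pieces with $\calA_+\colon\fock_n\to\fock_{n+1}$ and $\calA_-\colon\fock_n\to\fock_{n-1}$ as in~\eqref{eq:ops}, and the vanishings $\calA_+|_{\calH_0}=\calA_-|_{\calH_0\oplus\calH_1}=0$ follow at once since the relevant sums $p_{[1:n]}$ are empty.

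There remain the adjointness relations and~\eqref{eq:IBP}. Self-adjointness of $\calS$ was noted above. For $(\calA_+)^*=-\calA_-$ I would expand $\langle\calA_+\psi,\phi\rangle$ for $\psi\in\fock_n,\ \phi\in\fock_{n+1}$ directly from~\eqref{e:ScalarProd}: by permutation symmetry of $\phi$ and of $\mu_{n+1}$ the $n+1$ summands of $\calA_+\psi$ contribute equally, the factors $(n+1)^{-1}$ and $n+1$ cancel, and relabelling the ``new'' momentum as the integration variable of $\calA_-$ yields $-\langle\psi,\calA_-\phi\rangle$, the sign being forced by the constraint $\frakf(-p)=\overline{\frakf(p)}$ in~\eqref{e:Fock} together with the $p\mapsto-p$-symmetry of $\mu_n$; hence $\calA\eqdef\calA_++\calA_-$ is skew-adjoint. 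Finally~\eqref{eq:IBP} is immediate: on the $n$-th chaos $\nabla_iu$ has Fock kernel $\iota(e_ip_{[1:n]})\frakf(p_{1:n})$ and is real-valued, so
\[
\sum_{i=1}^2\int(\nabla_iu(\omega))^2\,\pi(\dd\omega)=\sum_{i=1}^2\|\nabla_iu\|^2=\int\Big(\sum_{i=1}^2(e_ip_{[1:n]})^2\Big)|\frakf(p_{1:n})|^2\,\mu_n(\dd p_{1:n})=\int|p_{[1:n]}|^2\,|\frakf|^2\,\mu_n(\dd p_{1:n})=2\langle u,\calS u\rangle\,,
\]
extended to general $u$ by summing over chaoses using orthogonality.

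The main effort, and the most delicate point, is the second paragraph: pushing the Wick-renormalised nonlinearity through the second-quantisation correspondence, and in particular verifying that the Wick subtraction cancels exactly the chaos-preserving contribution, while correctly tracking the combinatorial constants. The adjoint computation, though routine, also requires care with the complex-conjugation conventions implicit in~\eqref{e:ScalarProd}.
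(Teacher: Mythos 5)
Your overall strategy matches the paper's (and the paper itself only cites the analogous Wiener--chaos computation of~\cite{HorvTothVeto12_DiffusiveLimits}, so the level of detail expected is modest): apply It\^o's formula to $u(\eta_t)$ via the SPDE of Lemma~\ref{lem:25}, identify the pure transport part with $-\calS$ through the translation group, and second-quantise the nonlinear drift into $\calA_\pm$. The identification of $-\calS$, the vanishing of $\calA_\pm$ on low chaoses, and the derivation of~\eqref{eq:IBP} are all fine as sketched. Your adjointness argument (exploiting the permutation symmetry of $\phi$ and $\mu_{n+1}$ to collapse the $n+1$ terms) is also the right mechanism.

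There is, however, a genuine conceptual slip in the middle paragraph. You claim that in the second-quantisation picture the nonlinearity produces a chaos-raising, a chaos-lowering, and a chaos-\emph{preserving} part, the last being the one removed by the Wick renormalisation. This is not what happens. Tracking your own dictionary --- $Du$ an annihilation, each $\omega$-factor a creation plus an annihilation --- the interaction term has net chaos shifts $+1,\,-1,\,-3$; there is no shift $0$ at all, so there is no chaos-preserving part for the Wick ordering to remove. What actually occurs is twofold. First, the would-be $\fock_{n-3}$ component vanishes: this is most cleanly seen by using the chain-rule identity $\int Du(\omega)(x)\cdot\partial_i\omega(x)\,\dd x=\partial_iu(\omega)$, so that the $x$-integrated factor $\partial_i\omega(x)$ does not lower chaos at all, and $\calL_1u=-\coup\sum_i\omega_i(0)\,\partial_iu+\coup\langle Du,\nabla V\rangle$ manifestly lives in $\fock_{n+1}\oplus\fock_{n-1}$. (In the raw kernel computation the vanishing relies on the symmetry of the chaos kernel together with the antisymmetry of $\partial_x C(x-y)$ under $x\leftrightarrow y$; your proposal does not address this.) Second, the Wick subtraction $-\nabla V$ does \emph{not} act on a shift-$0$ part: it contributes to the $\fock_{n-1}$ component, cancelling the ``diagonal'' one-contraction term $\propto\int\hat V(q)\psi(q,p_{1:n-1})\,\dd q$ coming from $\omega_i(0)\partial_iu$, so that what is left is exactly the $\calA_-$ of~\eqref{eq:ops}. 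So the role of the Wick correction is to adjust the lowering operator, not to kill a degree-preserving operator. Remark~\ref{rmk:1}, which you cite, is about the renormalisation of the \emph{drift} of the SPDE (a degree-$2$-in-$\omega$ quantity whose constant part is subtracted), not about a degree-$0$ piece of the generator. Fixing this paragraph --- replacing the erroneous ``chaos-preserving'' claim by the two points above --- would make the sketch correct.
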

\begin{proof}
The statement follows Wiener chaos computations analogous to those performed 
in~\cite[Section 3.3]{HorvTothVeto12_DiffusiveLimits}. 
\end{proof}

\begin{remark}
There are two differences to note when comparing our setting with \cite{HorvTothVeto12_DiffusiveLimits}. 
First, in our case, although we do not write it explicitly, the generator $\calL=\calL^\coup$ depends on the coupling constant $\coup$. 
Second, for $d \ge 3$ it makes sense to exploit more significantly the gradient nature of the potential $\omega$, 
i.e. the fact that $\omega = \nabla \xi$, by choosing, as environment profile, the process 
$(\etaTil_t)_{t \ge 0}$ defined according to 
\begin{equ}
\etaTil_t(x) = \xi(x+X_t) + \coup \left( \int_0^t V (x+X_t-X_s) ds\right) 
\end{equ}
This is the approach of \cite{HorvTothVeto12_DiffusiveLimits} and corresponds to 
the \textit{potential as seen by the particle}, the relation with the above being $\eta_t = \nabla \etaTil_t$. 
Notice that we {\it cannot} take this approach as, even if smoothed out, for the field $\xi=\sqrt{V}\ast\Phi$ 
with $\Phi$ a two-dimensional GFF, pointwise evaluation is meaningless, i.e. $\tilde\eta$ is 
not well-defined in $d=2$. 
\end{remark}

As a consequence of the previous statements, with respect to the annealed measure $\bfP\eqdef\pi \otimes \bbP$ on 
the product space $(\bm{\Omega}, \bm{\calF}) \eqdef (\Omega \times E, \calF \otimes \calE)$, 
$\eta$ is a stationary Markov process corresponding to the natural filtration 
$(\bm{\calF}_t)_{t\ge0}$, where $\bm{\calF}_t = \sigma(\eta_s : s \le t)$. 
Moreover, as mentioned in the introduction, we can write the drift of the SRBP $X$ in~\eqref{eq:SRBP2} 
as an additive functional of $\eta$. More precisely, we have  
\begin{equ}[eq:AFM]
X_t = B_t - \coup \int_0^t f(\eta_s) \dd s
\end{equ}
for $f=(f_1,f_2)\colon \Omega\to\R^2$ given by 
\begin{equ}[e:fdef]
f_i(\omega) = \omega_i(0)\,,\qquad i=1,2\,,
\end{equ}
so that in particular, $f_i\in\calH_1$ and its kernel in $\fock_1$ equals $\frakf_i(p) = -\iota e_ip / (2 \pi)$. 

The representation in~\eqref{eq:AFM} is essential for our work. As a first application, we will see how to use it 
to show tightness of the SRBP under the weak coupling scaling.

\subsection{The It\^{o} trick and tightness}
\label{sec:mart-appr}
The main tool to determine tightness is the so-called It\^o trick, first appeared in~\cite{GubinelliJara13_RegularizationNoise}, 
and since then applied in a variety of other contexts. We recall here its statement adapted to the current setting. 
A complete proof of a slightly more general formulation is given in Lemma~\ref{lem:2}. 

\begin{lemma}[It\^{o} trick]\label{lem:1}
Let $n \in \bbN$ and $h \in \calH_{\le n}$ be an element of the $n$-th inhomogeneous chaos. 
Then, for any $T > 0$, $p \ge 1,\lambda>0$, it holds that 
\begin{equ}[eq:itotrick]
\bfE\Big[ \sup_{t \in [0,T]} \Big\lvert \int_0^t h(\eta_s) ds \Big\rvert^p \Big]^{\frac1p} \lesssim_{n, p} (T^{\half}+\lambda^{\half} T)\lVert (\lambda+\calS)^{-\half}h\rVert\,.
\end{equ}
Moreover, for $p = 2$, the estimate is uniform in $n \in \bbN$.
\end{lemma}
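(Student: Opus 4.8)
The plan is to reduce the estimate, via the resolvent of $\calS$, to two pieces: one treated by the forward--backward martingale decomposition at the heart of the It\^o trick, and one by a crude pathwise bound; Gaussian hypercontractivity then upgrades everything from $L^2$ to $L^p$. Fix $\lambda>0$ and set $u\eqdef(\lambda+\calS)^{-1}h$. Since $\calS$ is a non-negative diagonal operator (Definition~\ref{def:DO}) it preserves each chaos, so $u\in\calH_{\le n}$ and $h=\lambda u+\calS u$, whence
\begin{equ}
\int_0^t h(\eta_s)\,\dd s=\lambda\int_0^t u(\eta_s)\,\dd s+\int_0^t\calS u(\eta_s)\,\dd s\,.
\end{equ}
The first summand is bounded pathwise by $\lambda\int_0^T|u(\eta_s)|\,\dd s$; taking $L^p(\bfP)$ norms, using stationarity of $\eta$ under $\bfP$ and Gaussian hypercontractivity (which applies as $u\in\calH_{\le n}$, see~\cite{Janson97_GaussianHilbert}), one gets $\lesssim_{n,p}\lambda T\lVert u\rVert$.

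For the term $\int_0^\cdot\calS u(\eta_s)\,\dd s$ I would run the It\^o trick. For $u$ with Schwartz kernels, It\^o's formula applied to $u(\eta_t)$ via the SPDE~\eqref{eq:117} gives $u(\eta_t)-u(\eta_0)=\int_0^t\calL u(\eta_s)\,\dd s+M_t$, with $M_t=\int_0^t\sum_{i=1}^2\nabla_i u(\eta_s)\,\dd B^i_s$, the martingale part being identified through the fact that the noise in~\eqref{eq:117} is $\sum_i\partial_i\eta_t\,\dd B_t^i$ and that $\nabla_i$ of Lemma~\ref{lem:26} is the associated directional derivative on $\fock$. Running the stationary process backwards over $[0,T]$ replaces $\calL=-\calS+\calA$ by $\calL^*=-\calS-\calA$ (Lemma~\ref{lem:26}) and produces a second martingale $\hat M$ in the reversed filtration; combining the two representations, the antisymmetric contributions $\int_0^\cdot\calA u(\eta_s)\,\dd s$ cancel and one is left with
\begin{equ}
2\int_0^t\calS u(\eta_s)\,\dd s=M_t+\big(\hat M_T-\hat M_{T-t}\big)\,,\qquad t\in[0,T]\,.
\end{equ}
By stationarity, $\langle M\rangle_T=\int_0^T\sum_i(\nabla_i u)^2(\eta_s)\,\dd s$ has $\bfE[\langle M\rangle_T]=T\sum_i\int(\nabla_i u)^2\,\dd\pi=2T\langle u,\calS u\rangle$, the last step being the integration by parts formula~\eqref{eq:IBP}, and similarly for $\hat M$. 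Doob's and the Burkholder--Davis--Gundy inequalities then bound the $L^p(\bfP)$ norm of $\sup_{t\le T}|\int_0^t\calS u(\eta_s)\,\dd s|$ by a constant times $\lVert\langle M\rangle_T\rVert_{L^{p/2}}^{1/2}+\lVert\langle\hat M\rangle_T\rVert_{L^{p/2}}^{1/2}$; since $\sum_i(\nabla_i u)^2\in\calH_{\le 2n}$, Minkowski's integral inequality and hypercontractivity give $\lVert\langle M\rangle_T\rVert_{L^{p/2}}\le T\lVert\sum_i(\nabla_i u)^2\rVert_{L^{p/2}(\pi)}\lesssim_{n,p}T\langle u,\calS u\rangle$, so this term is $\lesssim_{n,p}T^{1/2}\lVert\calS^{1/2}u\rVert$.

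To conclude, by spectral calculus for the self-adjoint non-negative $\calS$, with $u=(\lambda+\calS)^{-1}h$ one has $\lVert\calS^{1/2}u\rVert\le\lVert(\lambda+\calS)^{-1/2}h\rVert$ (from $\sqrt{s}/(\lambda+s)\le(\lambda+s)^{-1/2}$) and $\lVert u\rVert\le\lambda^{-1/2}\lVert(\lambda+\calS)^{-1/2}h\rVert$; adding the two contributions yields $(T^{1/2}+\lambda^{1/2}T)\lVert(\lambda+\calS)^{-1/2}h\rVert$. All identities and estimates are first proved for $h$ with Schwartz kernels, where the It\^o formula, the operators $\calL,\calA$ on $\fock$ and the time reversal are unproblematic, and then extended to all $h\in\calH_{\le n}$ by density, both sides being continuous in $h$. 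For $p=2$ no hypercontractivity is invoked: $\lVert\cdot\rVert_{L^2(\pi)}=\lVert\cdot\rVert$, the identity $\bfE[\langle M\rangle_T]=2T\langle u,\calS u\rangle$ is exact, and the $L^2$ martingale bounds have universal constants, so the estimate is uniform in $n$.

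The delicate point is the forward--backward step: one must set up a single time-reversed martingale $\hat M$ over $[0,T]$, check that it combines with $M$ to give the displayed identity for the \emph{running} integral (not merely at $t=T$), and control $\sup_{t\le T}|\hat M_T-\hat M_{T-t}|$ via Doob's inequality in the backward filtration; this, together with the accompanying domain issues for the unbounded $\calL$ acting on $\fock$, is essentially the only genuinely technical ingredient -- the rest is spectral calculus and standard chaos estimates.
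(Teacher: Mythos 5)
Your proposal is correct and follows essentially the same route as the paper's proof (which is spelled out for the $H^*$-valued variant, Lemma~\ref{lem:2}, and transfers verbatim to the scalar case): set $u=(\lambda+\calS)^{-1}h$, treat $\lambda\!\int_0^\cdot u(\eta_s)\,\dd s$ crudely via stationarity and hypercontractivity, write $2\int_0^t\calS u(\eta_s)\,\dd s = M_t+\hat M_T-\hat M_{T-t}$ using the forward and backward Dynkin martingales, and close with Burkholder--Davis--Gundy, hypercontractivity, and the spectral bounds $\lVert\calS^{1/2}u\rVert\le\lVert(\lambda+\calS)^{-1/2}h\rVert$, $\lVert u\rVert\le\lambda^{-1/2}\lVert(\lambda+\calS)^{-1/2}h\rVert$. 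One small remark in your favour: your sign $\calL^*=-\calS-\calA$ is the correct one (the paper has an apparent typo $\calS-\calA$ at that point), and your observation that the $p=2$ case avoids hypercontractivity and thus yields $n$-uniform constants matches the paper exactly.
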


We are now ready to state and prove the main result of this subsection. 

\begin{proposition}\label{p:Tightness}
Let $(X_t)_{t\geq 0}$ be the SRBP, i.e. the solution to~\eqref{eq:SRBP2}. 
For $t\geq 0$, set $X^\eps_t\eqdef \eps  X_{t/\eps^2}$ and take $\coup=\coup(\eps)$ as in~\eqref{eq:coup}. 
Then, under the annealed measure $\bfP$, for any $T>0$ fixed, the sequence $\{(X^\eps_t)_{t \in [0,T]}\colon \eps\in(0,1)\}$ is tight 
in the space $C([0,T], \R^2)$. 
\end{proposition}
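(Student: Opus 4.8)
The plan is to use the additive-functional representation~\eqref{eq:AFM}, the It\^o trick of Lemma~\ref{lem:1}, a logarithmic resolvent estimate for $\calS$ on $\fock_1$, and Kolmogorov's tightness criterion. First I would rescale~\eqref{eq:AFM}: a change of variables gives $X^\eps_t = \eps B_{t/\eps^2} - \eps\coup\int_0^{t/\eps^2} f(\eta_s)\,\dd s =: \tilde B^\eps_t - D^\eps_t$, where $\tilde B^\eps_t\eqdef\eps B_{t/\eps^2}$ is a Brownian motion — hence trivially tight in $C([0,T],\R^2)$ — and $f\in\calH_1$ is as in~\eqref{e:fdef}. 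Since $X^\eps_0 = 0$, it then suffices to establish a Kolmogorov-type increment bound $\bfE[|D^\eps_t - D^\eps_s|^p]\lesssim_{p,T}(t-s)^{p/2}$, uniformly in $\eps\in(0,1)$, for some $p>2$.

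To obtain this, I would fix $0\le s\le t\le T$ and use that, by Lemma~\ref{lem:25}, $\eta$ is stationary under $\bfP$, so $D^\eps_t - D^\eps_s$ has the same law as $\eps\coup\int_0^{(t-s)/\eps^2} f(\eta_u)\,\dd u$. Applying Lemma~\ref{lem:1} with $n=1$, $h=f$, time horizon $(t-s)/\eps^2$ and regularisation parameter $\lambda=\eps^2$, and simplifying the prefactor $\eps\coup\big(((t-s)/\eps^2)^{1/2}+\eps(t-s)/\eps^2\big)=\coup\big((t-s)^{1/2}+(t-s)\big)$, yields
\[
\bfE\big[|D^\eps_t - D^\eps_s|^p\big]^{1/p} \lesssim_p \coup\,\big((t-s)^{1/2}+(t-s)\big)\,\big\lVert(\eps^2+\calS)^{-1/2}f\big\rVert\,.
\]
The next step is to estimate the resolvent norm. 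By Lemma~\ref{lem:26}, $\calS$ acts on $\fock_1$ as multiplication by $\tfrac12|p|^2$, while $f_i$ has kernel $-\iota e_ip/(2\pi)$ with $\sum_i|e_ip|^2=|p|^2$; hence by~\eqref{e:ScalarProd}--\eqref{e:mun},
\[
\big\lVert(\eps^2+\calS)^{-1/2}f\big\rVert^2 = \frac{1}{(2\pi)^2}\int_{\R^2}\frac{\VHat(p)}{\eps^2+|p|^2/2}\,\dd p \;\lesssim\; \log(1+\eps^{-2})\,,
\]
where the last bound is elementary once one uses that $\VHat$ is non-negative (Bochner together with the positive semi-definiteness in Assumption~\ref{a:V}), bounded, and rapidly decaying — this is where Lemma~\ref{lem:5} enters. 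Since $\coup=\coup(\eps)$ in~\eqref{eq:coup} is chosen precisely so that $\coup^2\log(1+\eps^{-2})=\alpha^2$, we get $\coup\lVert(\eps^2+\calS)^{-1/2}f\rVert\lesssim\alpha$ uniformly in $\eps\in(0,1)$, and therefore $\bfE[|D^\eps_t-D^\eps_s|^p]\lesssim_{p,\alpha,T}(t-s)^{p/2}$ for all $0\le s\le t\le T$. Combining with $\bfE[|\tilde B^\eps_t-\tilde B^\eps_s|^p]=C_p(t-s)^{p/2}$, this gives $\bfE[|X^\eps_t-X^\eps_s|^p]\lesssim_{p,\alpha,T}(t-s)^{p/2}$; taking $p=4$ (so the exponent is $2=1+1$) and recalling $X^\eps_0=0$, the Kolmogorov criterion yields tightness of $\{(X^\eps_t)_{t\in[0,T]}:\eps\in(0,1)\}$ in $C([0,T],\R^2)$.

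The only genuine difficulty is the interplay in the third step: the resolvent norm $\lVert(\lambda+\calS)^{-1/2}f\rVert^2$ diverges like $\log(1/\lambda)$ as $\lambda\to0$, so to keep the bound finite one must not take $\lambda$ too small, while the rescaling forces $\lambda$ to be of order $\eps^2$ for the two error terms in the It\^o trick to balance; the point is that with exactly this choice the divergence is precisely cancelled by the decay $\coup(\eps)^2\sim 1/\log(\eps^{-2})$ of the coupling constant, which is of course the raison d'\^etre of the weak-coupling scaling~\eqref{eq:coup}. Everything else — the change of variables, the use of stationarity, the chaos computation of $\lVert(\eps^2+\calS)^{-1/2}f\rVert^2$, and the verification of the Kolmogorov criterion — is routine.
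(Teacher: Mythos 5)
Your proposal is correct and follows essentially the same route as the paper's proof: decompose $X^\eps$ into the Brownian term and the additive functional of the environment, invoke the It\^o trick (Lemma~\ref{lem:1}) with $\lambda=\eps^2$, use the logarithmic resolvent bound that Lemma~\ref{lem:5} packages together with the weak-coupling scaling~\eqref{eq:coup}, and conclude via Kolmogorov's criterion. The only cosmetic difference is that you estimate the increment $D^\eps_t-D^\eps_s$ explicitly via stationarity of $\eta$ rather than reducing abstractly to the moment of $|X^\eps_t|$ as the paper does, but the mechanism is identical.
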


The main step in the proof is a control over the drift at the right hand side of~\eqref{eq:AFM}, which, 
in light of the It\^o trick, reduces to a regularity estimate of the functional $f$ in~\eqref{e:fdef}. 
As the next lemma states, this is precisely the point at which the weak coupling scaling enters our analysis.  

\begin{lemma}\label{lem:5}
Let $f$ be the functional in~\eqref{e:fdef}. For $\lambda>0$, let $\coup=\coup(\lambda)$ be defined as 
\begin{equ}[eq:Weak]
\coup = \frac{\alpha}{\sqrt{\log(1+\lambda^{-1})}}
\end{equ}
for $\alpha>0$. Then, for $i=1,2$, uniformly over $\lambda$, the following estimate holds
\begin{equ}[e:hone]
\lVert(\lambda + \calS)^{-\half}\coup f_i\rVert^2 \lesssim 1\,.
\end{equ}
\end{lemma}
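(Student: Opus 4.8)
The plan is to compute $\lVert(\lambda+\calS)^{-1/2}\coup f_i\rVert^2$ directly using the explicit Fock-space norm on $\fock_1$ and the explicit kernel $\frakf_i(p)=-\iota e_ip/(2\pi)$ identified just above. Since $f_i\in\calH_1$, only the $n=1$ component of the Fock space is relevant, on which $\calS$ acts as multiplication by $\thalf|p|^2$ (from \eqref{eq:ops}) and $\mu_1(\dd p)=\VHat(p)|p|^{-2}\dd p$ (from \eqref{e:mun}). Hence
\begin{equ}
\lVert(\lambda+\calS)^{-\half}\coup f_i\rVert^2=\coup^2\int_{\R^2}\frac{|\frakf_i(p)|^2}{\lambda+\thalf|p|^2}\,\mu_1(\dd p)=\frac{\coup^2}{(2\pi)^2}\int_{\R^2}\frac{(e_ip)^2}{\lambda+\thalf|p|^2}\,\frac{\VHat(p)}{|p|^2}\,\dd p\,.
\end{equ}
So the whole statement reduces to a scalar integral estimate, and the claim \eqref{e:hone} is equivalent to showing that this integral is $\lesssim \coup^{-2}=\alpha^{-2}\log(1+\lambda^{-1})$, i.e. that the integral grows at most logarithmically as $\lambda\to0$.

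The key step is then the integral bound: since $(e_ip)^2\le|p|^2$, one has
\begin{equ}
\int_{\R^2}\frac{(e_ip)^2}{\lambda+\thalf|p|^2}\,\frac{\VHat(p)}{|p|^2}\,\dd p\le\int_{\R^2}\frac{\VHat(p)}{\lambda+\thalf|p|^2}\,\dd p\,.
\end{equ}
I would split this into the region $|p|\le1$ and $|p|>1$. On $|p|>1$, Assumption~\ref{a:V} guarantees $\VHat$ is Schwartz (in particular bounded and rapidly decaying), and $\lambda+\thalf|p|^2\gtrsim|p|^2$, so that piece is bounded uniformly in $\lambda$. On $|p|\le1$, again $\VHat$ is bounded, $\VHat(0)=\int V=1$, and passing to polar coordinates gives $\int_0^1\frac{r}{\lambda+\thalf r^2}\,\dd r\asymp\log(1+\lambda^{-1})$ up to constants; the angular integral contributes a fixed factor $2\pi$. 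Combining, the integral is $\le C(1+\log(1+\lambda^{-1}))\le C'\log(1+\lambda^{-1})$ for $\lambda$ small (and trivially bounded for $\lambda$ bounded away from $0$), so multiplying by $\coup^2=\alpha^2/\log(1+\lambda^{-1})$ yields the uniform bound $\lesssim\alpha^2\lesssim1$. This logarithmic divergence is precisely the blow-up of the Green's function in $d=2$ that the weak coupling choice \eqref{eq:Weak} is designed to cancel, as anticipated in the introduction.

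There is no serious obstacle here: the only mild point of care is making the polar-coordinate estimate $\int_0^1 r(\lambda+\thalf r^2)^{-1}\dd r=\log(1+(2\lambda)^{-1})$ (up to a harmless constant) clean, and noting that such integral computations are presumably already collected in Appendix~\ref{sec:integral-estimates}, which I would cite rather than redo. One should also remark that the bound is genuinely two-sided in $\lambda$ small (the estimate $(e_ip)^2\le|p|^2$ is lossless after angular averaging, up to the factor $1/2$ from one coordinate out of two), which is what makes $\sigma^2(\alpha)$ in Theorem~\ref{thm:1} nontrivial — but for \eqref{e:hone} only the upper bound is needed.
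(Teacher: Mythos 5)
Your proposal is correct and follows essentially the same route as the paper: compute the $\fock_1$ norm explicitly using $\mu_1$ and the kernel of $f_i$, bound $(e_ip)^2\le|p|^2$, split the resulting integral over $\{|p|\le1\}$ and its complement, and observe that the logarithmic divergence of the small-$|p|$ piece is exactly cancelled by $\coup^2=\alpha^2/\log(1+\lambda^{-1})$. The only cosmetic difference is that the paper drops the overall $(2\pi)^{-2}$ constant (harmless for a $\lesssim$ bound) and states the small-$|p|$ integral as an equality $\coup^2\log(1+\lambda^{-1})=\alpha^2$ rather than an asymptotic; the argument is otherwise identical.
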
 
\begin{proof}
For $i=1,2$, by~\eqref{e:ScalarProd} and the definition of $f$ in~\eqref{e:fdef}, 
the norm at the left hand side of~\eqref{eq:Weak} equals
\begin{equ}
\lVert(\lambda + \calS)^{-\half}\coup f_i\rVert^2 = \int\frac{\VHat(p)}{|p|^2}\frac{|-\iota \coup e_ip|^2}{\lambda+\half|p|^2} \dd p \leq \coup^2 \int_{\bbR^2} \frac{\VHat(p)}{\lambda+\half|p|^2} \dd p
\end{equ}
We split the integral over the regions $|p|\leq 1$ and its complement. For the latter, 
since $\hat V$ is integrable, we obtain a bound of order $\gamma^2$. For the former, we have 
\begin{equ}[e:UBWeak]
\coup^2 \int_{|p| \le 1}\frac{\VHat(p)}{\lambda+\half|p|^2} \dd p \lesssim \coup^2 \int_0^1 \frac{r}{\lambda+\half r^2} \dd r = \coup^2 \log(1+\tfrac{1}{\lambda}) = \alpha^2\,,
\end{equ}
where in the last step we used the definition of $\gamma$ in~\eqref{eq:Weak}. 
\end{proof}

\begin{remark}\label{rem:H-1Fail}
Notice that the standard $H_{-1}$ bound in~\cite[Theorem 2.7]{KomoLandOlla12_FluctuationsMarkov} (see also \cite{KozmaToth17_CentralLimit}) does not hold in the present setting. Indeed, it is not hard to see that $\lVert\calS^{-\half}\coup f_i\rVert^2 = \infty$ for $i=1,2$.
This is precisely the reason why the coupling constant $\coup$ was chosen as in~\eqref{eq:Weak}. 
\end{remark}

\begin{proof}[of Proposition~\ref{p:Tightness}]
To prove the result, we will use Kolmogorov's criterion \cite[Theorem 23.7]{Kallenberg21_FoundationsModern}, which requires us to control the $p$-th moment of the increments 
$(X^\eps_t)_{t\geq 0}$ for some $p > 2$. By stationarity, it is enough to show that the $p$-th moment 
of $| X^\varepsilon_t |$ is controlled by $t^{p/2}$ for every $t\in[0,T]$ and some $p>2$. Notice that 
\begin{equ}[eq:Fdef]
\bfE | X^\varepsilon_t |^p\lesssim_p \bfE | B^\varepsilon_t|^p+\bfE \Big| \eps \int_0^{t/\eps^2} \coup f(\eta_s) \dd s \Big|^p=:\bfE | B^\eps_t|^p+\bfE | F^\eps_t |^p\,,
\end{equ}
where $B^\eps_t\eqdef \eps B_{t/\eps^2}$ and $F^\eps$ is defined via the left hand side. Now, 
the first term is clearly bounded by $t^{p/2}$. For the other, upon choosing $\lambda=\eps^2$, 
the It\^o trick in Lemma~\ref{lem:1} gives
\begin{equ}
\bfE | F^\eps_t |^p\lesssim_{T,p}  t^{p/2} \lVert(\eps^2 + \calS)^{-\half}\coup f_i\rVert^p\lesssim t^{p/2}\,,
\end{equ}
 where the last step follows by Lemma~\ref{lem:5}. 
\end{proof}

\subsection{Beyond the classical conditions: Theorem \ref{thm:3}}
\label{sec:BCC}

The representation in~\eqref{eq:AFM} suggests that the central limit theorem and the invariance principle 
for the SRBP in~\eqref{eq:SRBP2} stated in Theorem~\ref{thm:1} follow once we show 
they hold for the (additive) functional of the environment $\eta$ in~\eqref{e:envNew} given by 
\begin{equ}[eq:AFM2]
 \coup \int_0^t f(\eta_s) \dd s\,.
\end{equ}
There is a large body of work on the Kipnis-Varadhan program for obtaining such type of results  
for quantities as those in~\eqref{eq:AFM2} (see \cite{KomoLandOlla12_FluctuationsMarkov} for an overview). 
Heuristically, the idea is to consider the observable given by the solution to the Poisson equation 
$-\calL u = \coup f$ and note that, by It\^o's formula we have
\begin{equ}[e:Dynkin]
u(\eta_t)-u(\eta_0)-\int_0^t \cL u(\eta_s)\dd s= M_t(u)
\end{equ}
where $(M_t(u))_t$ is the martingale given by 
\begin{equ}[e:Mart]
M_t(u) = \sum_{i=1}^2 \int_0^t \nabla_i u(\eta_s) \dd B^i_s\,,\qquad \langle M(u)\rangle_t = \sum_{i=1}^2 \int_0^t (\nabla_i u(\eta_s))^2 \dd s
\end{equ}
and $\nabla_i$ is the operator defined in Lemma~\ref{lem:26}. 
Since $u$ solves the Poisson equation,~\eqref{e:Dynkin} provides an alternative representation 
for~\eqref{eq:AFM2} in terms of boundary terms, which one expects to be negligible in the diffusive rescaling, 
and the martingale $M(u)$. In other words, such representation reduces the proof of Theorem~\ref{thm:1} 
to that of proving an analogous statement for the martingale $M(u)$. 

The problem with the above strategy is that it is hard to determine a solution for the Poisson equation since 
wthe generator $\cL$ is an unbounded operator which is not self-adjoint (and is not invertible!). Therefore, instead, it is natural to consider the above argument with $u^\lambda$ in place of $u$, where $u^\lambda$ solves the resolvent equation $(\lambda-\calL)u^\lambda = \coup f$. 
As explored in \cite[Chapter 2]{KomoLandOlla12_FluctuationsMarkov}, one wishes to identify suitable conditions for the family $u^\lambda$ in the limit $\lambda\to 0$ such that this approximation argument succeeds. In the case of the SRBP in dimension 
$d\geq 3$, the condition exploited is the so-called {\it graded sector condition} of \cite{SethVaraYau00_DiffusiveLimit} which fails at criticality, i.e. for $d=2$ (see Section~\ref{sec:grad-sect-cond}, below for its precise definition).
The novelty of our approach is to introduce an alternative family of approximate solutions
 and identify a new set of conditions which still ensure that the invariance principle holds.  
 
While the construction of the family of approximate solutions is detailed in the following sections and 
represents the bulk of the paper, in the next theorem we state the conditions mentioned above and prove 
that indeed they are sufficient for Theorem~\ref{thm:1} to hold. 

\begin{theorem}
\label{thm:3}
Let $\cV=\{v^{\lambda,n} \colon \lambda \in (0,1), n \in \bbN\}\subset \text{dom}(\calL) $ be a family of observables 
such that for every $n \in \bbN$, $\{v^{\lambda,n} \colon \lambda \in (0,1)\} \subset \fock_{\le n}$. 
For $\lambda\in(0,1)$ and $n\in\N$, define $\sigma_{\lambda,n}^2 > 0$ and the random variable $q^{\lambda,n}$
according to 
\begin{equ}[e:gsigma]
q^{\lambda,n}(\omega) \eqdef \sum_{i=1}^2 (\nabla_i v^{\lambda,n}(\omega))^2\,,\qquad \sigma_{\lambda,n}^2\eqdef \bbE_\pi[ q^{\lambda,n} ] = 2\lVert \calS^\half v^{\lambda,n} \rVert^2\,.
\end{equ}
If the family $\cV$ satisfies 
\begin{equs}
\lim_{\lambda\rightarrow0} \lambda\lVert v^{\lambda,n} \rVert^2 &= 0, \qquad \forall n \in \bbN, \label{eq:L2}\\
\lim_{n\rightarrow\infty}\limsup_{\lambda\rightarrow0}\lVert (\lambda+\calS)^{-\half} \big[(\lambda-\calL)v^{\lambda,n} - \coup f_1 \big]\rVert^2 &= 0\,,\label{eq:ApproxH1}
\end{equs}
as well as
\begin{equs}
\lim_{n\rightarrow\infty}\limsup_{\lambda\rightarrow0} | \sigma_{\lambda,n}^2 - \sigma^2(\alpha) | &=0\,, \label{eq:ApproxMean}\\
\lim_{\lambda\rightarrow0} \lambda \lVert (\lambda+\calS)^{-\half}\big(q^{\lambda,n} - \sigma_{\lambda,n}^2\big) \rVert^2 &= 0, \qquad \forall n \in \bbN,\label{eq:ApproxVariance}
\end{equs}
then the conclusions of Theorem \ref{thm:1} hold. 
\end{theorem}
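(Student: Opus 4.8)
The plan is to run a Kipnis--Varadhan-style martingale approximation for the additive functional in~\eqref{eq:AFM}, using the family $\cV$ in place of genuine resolvent solutions, and then to pass to the scaling limit via a triangular martingale central limit theorem. Fix $\lambda=\eps^2$ and $\coup=\coup(\eps)$ as in~\eqref{eq:coup}. Since $\pi$ and $\calL$ are rotationally invariant and, for every fixed $\theta\in\R^2$, $\theta\cdot X$ is the first coordinate of an SRBP in a rotated environment (again of law $\pi$), it suffices to prove the one-dimensional statement for $X^{\eps,1}$: the joint finite-dimensional convergence of $X^\eps$ then follows by Cram\'er--Wold, because a linear combination $\sum_j\phi_j\cdot X^\eps_{t_j}$ of increments at finitely many fixed times and directions is the terminal value of a scalar continuous martingale obtained by integrating a piecewise-constant $\R^2$-valued integrand against the (vector) approximating martingale, whose quadratic variation decomposes over disjoint time intervals into directional pieces each controlled by the one-dimensional analysis.

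\emph{Martingale decomposition and error control.} Applying It\^o's formula to $v^{\lambda,n}\in\mathrm{dom}(\calL)$ along $\eta$ (see~\eqref{e:Dynkin}--\eqref{e:Mart}) and writing $\rho^{\lambda,n}\eqdef(\lambda-\calL)v^{\lambda,n}-\coup f_1$, one obtains
\[
\coup\int_0^t f_1(\eta_s)\,\dd s \;=\; M_t(v^{\lambda,n}) + \big(v^{\lambda,n}(\eta_0)-v^{\lambda,n}(\eta_t)\big) - \int_0^t \rho^{\lambda,n}(\eta_s)\,\dd s + \lambda\int_0^t v^{\lambda,n}(\eta_s)\,\dd s .
\]
Substituting into~\eqref{eq:AFM} and rescaling diffusively with $\lambda=\eps^2$ gives $X^{\eps,1}_t = \widetilde N^{\eps,n}_t - \eps\,\mathcal R^{\eps,n}_t$, where $\widetilde N^{\eps,n}_t\eqdef\eps\big(B^1_{t/\eps^2}-M_{t/\eps^2}(v^{\eps^2,n})\big)$ is a continuous $\bfP$-martingale and $\mathcal R^{\eps,n}$ collects the remaining terms. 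The boundary term vanishes in $L^2(\bfP)$ at each fixed time by stationarity of $\eta$, since $\bfE|\eps v^{\eps^2,n}(\eta_t)|^2=\lambda\lVert v^{\lambda,n}\rVert^2\to0$ by~\eqref{eq:L2}. For the two time-integral remainders we use the rescaled form of the It\^o trick (Lemma~\ref{lem:1} with $p=2$ and $\lambda=\eps^2$), namely $\bfE\big[\sup_{t\le T}\big|\eps\int_0^{t/\eps^2}h(\eta_s)\,\dd s\big|^2\big]^{1/2}\lesssim_T\lVert(\lambda+\calS)^{-1/2}h\rVert$, applied to $h=\rho^{\lambda,n}$ and to $h=\eps^2 v^{\lambda,n}$ (using $\lVert(\lambda+\calS)^{-1/2}v^{\lambda,n}\rVert\le\lambda^{-1/2}\lVert v^{\lambda,n}\rVert$): the first tends to $0$ in the iterated limit $\eps\to0$ then $n\to\infty$ by~\eqref{eq:ApproxH1}, the second for each fixed $n$ by~\eqref{eq:L2}. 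Hence $X^{\eps,1}_t-\widetilde N^{\eps,n}_t\to0$ in probability, uniformly on $[0,T]$ for the integral parts, in that iterated limit.

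\emph{Quadratic variation.} From~\eqref{e:Mart} and $\langle B^1\rangle_t=t$ one computes $\langle\widetilde N^{\eps,n}\rangle_t = t - 2\eps^2\int_0^{t/\eps^2}\nabla_1 v^{\eps^2,n}(\eta_s)\,\dd s + \eps^2\int_0^{t/\eps^2}q^{\eps^2,n}(\eta_s)\,\dd s$, with $q^{\lambda,n}$ as in~\eqref{e:gsigma}. The cross term vanishes for each fixed $n$: the rescaled It\^o trick together with the pointwise Fourier bound giving $\lVert(\lambda+\calS)^{-1/2}\nabla_1 w\rVert\le\sqrt2\,\lVert w\rVert$ bounds it by $\lesssim_T\eps\lVert v^{\eps^2,n}\rVert\to0$ by~\eqref{eq:L2}. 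Writing $q^{\lambda,n}=\sigma_{\lambda,n}^2+(q^{\lambda,n}-\sigma_{\lambda,n}^2)$, the constant part contributes $t\,\sigma_{\eps^2,n}^2$, which approaches $t\,\sigma^2(\alpha)$ in the iterated limit by~\eqref{eq:ApproxMean}, while the centred part (an element of $\calH_{\le 2n}$) is bounded by the It\^o trick by $\lesssim_T\big(\lambda\lVert(\lambda+\calS)^{-1/2}(q^{\lambda,n}-\sigma_{\lambda,n}^2)\rVert^2\big)^{1/2}\to0$ for each fixed $n$ by~\eqref{eq:ApproxVariance}. Consequently $\langle\widetilde N^{\eps,n}\rangle_t - t(1+\sigma_{\eps^2,n}^2)\to0$ uniformly on $[0,T]$ in probability as $\eps\to0$, with $1+\sigma_{\eps^2,n}^2$ lying, for $\eps$ small, in an interval around $\varsigma^2(\alpha)=1+\sigma^2(\alpha)$ whose width tends to $0$ as $n\to\infty$.

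\emph{Conclusion and main obstacle.} For fixed $n$, the triangular continuous-martingale central limit theorem of Appendix~\ref{sec:mart-centr-limit} --- whose only input is the near-determinism of $\langle\widetilde N^{\eps,n}\rangle$ established above --- shows that every subsequential weak limit of $\widetilde N^{\eps,n}$ (tight because $\widetilde N^{\eps,n}=X^{\eps,1}+o_\eps(1)$ and $X^\eps$ is tight by Proposition~\ref{p:Tightness}) is a Brownian motion whose variance lies within $o_n(1)$ of $\varsigma^2$. Combining this with the decomposition, letting $\eps\to0$ and then $n\to\infty$, and invoking Cram\'er--Wold and rotation invariance as above, yields the finite-dimensional convergence of $X^\eps$ to $\varsigma W$ under $\bfP$; together with tightness this gives the annealed invariance principle, and the convergence in probability with respect to $\pi$ of~\eqref{eq:AR5} is then obtained by the same scheme applied to the two-replica system (two SRBPs sharing $\omega$ but driven by independent Brownian motions), which gives $\bbE_\pi|\varphi^\eps|^2\to|c|^2$ for $\varphi^\eps$ the quenched characteristic function of the increments and $c$ the Gaussian limit, whence $\varphi^\eps\to c$ in $L^2(\pi)$ and a fortiori in $L^1(\pi)$. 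The crux --- and the reason for the unusual shape of~\eqref{eq:ApproxH1}--\eqref{eq:ApproxMean} --- is that these two conditions hold only in the iterated limit ($\limsup_{\lambda\to0}$ followed by $n\to\infty$), so for fixed $n$ neither the error in the decomposition nor the limiting variance actually vanishes or stabilises; one must therefore interleave the martingale CLT with a $\delta$--$n$--$\eps$ argument (pick $n$ large depending on a tolerance $\delta$, send $\eps\to0$, then $\delta\to0$) and verify that $\delta$-errors in the quadratic variation translate into $\delta$-errors in the limiting law. This is precisely where the scheme departs from the classical graded-sector framework, which would demand the stronger $n$-uniform versions of~\eqref{eq:ApproxH1}--\eqref{eq:ApproxMean}.
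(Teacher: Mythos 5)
Your proposal reproduces the core martingale-approximation machinery of the paper's Step 1 essentially verbatim: the Dynkin decomposition with error $\rho^{\lambda,n}=(\lambda-\calL)v^{\lambda,n}-\coup f_1$, the It\^o-trick bound on the time-integral remainders, the bound $\lambda\lVert v^{\lambda,n}\rVert^2$ for the boundary terms, and the three-way split of the quadratic variation into constant, centred, and cross pieces controlled by~\eqref{eq:ApproxMean}, \eqref{eq:ApproxVariance}, and~\eqref{eq:L2}. This part is correct and matches the paper. The differences appear in how you close the argument, and both closing moves have genuine gaps.

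First, you misread the strength of Theorem~\ref{thm:clt}: its conclusion~\eqref{eq:99} is the $L^1(\pi)$-convergence of the \emph{conditional} characteristic function $\bfE[e^{\iota\theta\calM^\eps_t}\mid\bm\calF_0]$, which is exactly the semi-quenched statement. You instead treat it as furnishing only annealed convergence (``every subsequential weak limit \dots is a Brownian motion''), then propose a two-replica argument to recover the $\pi$-probability statement~\eqref{eq:AR5}. That detour is not merely redundant; it is incomplete as stated, because $\E_\pi|\varphi^\eps|^2$ equals the annealed characteristic function of a two-replica difference $X^{(1)}-X^{(2)}$ with a shared environment but independent Brownian drivers. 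Establishing that this pair has a joint Gaussian scaling limit (with variance $2\varsigma^2$) requires analysing the two-replica environment on $\Omega\times\Omega$, and the hypotheses~\eqref{eq:L2}--\eqref{eq:ApproxVariance} are statements about a single copy of $\calL$ only. One would need new lemmas for the product generator; these are not consequences of the given conditions.

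Second, your Cram\'er--Wold route for the multi-time, multi-direction convergence does not work with the tool you cite. You want to write $\sum_j\phi_j\cdot(X^\eps_{t_j}-X^\eps_{t_{j-1}})$ as (approximately) the terminal value of $\int\psi\,\dd\widetilde N^{\eps}$ with $\psi$ piecewise constant and then apply Theorem~\ref{thm:clt}. But that theorem is stated for martingales with \emph{stationary increments}; the stochastic integral against a piecewise-constant integrand has increments whose law changes from one interval to the next, so the hypothesis fails. Moreover, making sense of the vector approximating martingale requires controlling the off-diagonal quadratic covariation $\langle M_1,M_2\rangle_t=\sum_i\int_0^t\nabla_i v_1\,\nabla_i v_2\,\dd s$ (where $v_1,v_2$ approximate $f_1,f_2$), and none of the theorem's conditions speak to this cross term; you would need a variant of~\eqref{eq:ApproxVariance} for the product $\nabla_i v_1\,\nabla_i v_2$, which the hypotheses do not provide. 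The paper avoids both issues: Step 2 uses $SO(2)$-covariance of the SDE to reduce any direction to the first coordinate (no vector martingale, hence no cross term), and Step 3 handles multiple times by an induction on the increments using the Markov property and stationarity of $\eta$, applying the scalar CLT to one stationary-increment martingale at a time. If you want to keep your Cram\'er--Wold route you would need to replace Theorem~\ref{thm:clt} by a continuous-martingale CLT requiring only convergence of predictable quadratic variations (not stationarity of increments), and you would need an additional cross-variance estimate---neither of which is available from the stated hypotheses.

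A minor omission: you verify the QV estimates feeding~\eqref{eq:78} but never check the moment condition~\eqref{eq:82}, which in the paper relies on Gaussian hypercontractivity and hence crucially on the fixed-chaos hypothesis $v^{\lambda,n}\in\fock_{\le n}$.
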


\begin{remark}
We note the choice of $f_1$ in~\eqref{eq:ApproxH1} is just a convention used to minimise the number of different notations, 
and could have been replaced by $f_2$ or any other linear combination of the two. The reason why in this context 
the first coordinate suffices, is that the model is isotropic so that 
the joint law of the two coordinates can be deduced from either of the two. 
\end{remark}

We will shortly give the proof of the previous statement, but first let us briefly comment on 
the meaning of the quantities in~\eqref{e:gsigma} and the conditions~\eqref{eq:L2}-\eqref{eq:ApproxVariance}. 
As can be immediately deduced from~\eqref{e:Mart}, $q^{\lambda,n}$ is the integrand of the martingale 
in~\eqref{e:Dynkin} associated to $v^{\lambda,n}$ while $\sigma^2_{\lambda,n}$ satisfies
\begin{equ}
\bfE[M_t(v^{\lambda,n})^2] = t \sigma^2_{\lambda,n} = 2t \lVert \calS^\half v^{\lambda,n} \rVert^2
\end{equ}
where the second equality is a consequence of the integration by parts formula in~\eqref{eq:IBP}. In light of this, 
it is clear that~\eqref{eq:ApproxVariance} ensures that the variance of the quadratic variation of the martingales $M_t(v^{\lambda,n})$ 
is going to zero, so that Martingale CLT is indeed applicable, while~\eqref{eq:ApproxMean} identifies the limiting 
diffusivity. As for~\cref{eq:L2,eq:ApproxH1}, they respectively ensure that 
the boundary terms in~\eqref{e:Dynkin} vanish in the diffusive scaling and that the martingales $M_t(v^{\lambda,n})$ 
represent a good approximation for~\eqref{eq:AFM2}. 
\medskip

\begin{remark}\label{rem:DiffKLO}
There are relevant differences between ours and the setting of the SRBP in $d\geq 3$, or other more standard examples in which the Kipnis-Varadhan program has been applied. 
In these contexts, one usually considers a {\it fixed} generator $\calL$ and a {\it fixed} functional, 
while we must handle a family of generators and functionals which depend on the scaling parameter $\eps \in (0, 1)$. 
This prevents us from using certain functional analytical arguments, 
such as Mazur's theorem \cite[Lemma 2.16]{KomoLandOlla12_FluctuationsMarkov}
which ensures the existence of  
a strong limit point $u \in L^2(\pi)$ satisfying $\lim_{\lambda\rightarrow0} \lVert \calS^\half (u^\lambda - u) \rVert = 0$. 
The problem here is not merely technical, but substantial and it reflects the different nature of the problem at hand. 
Indeed, as we will detail in Section \ref{sec:no-weak-convergent}, a strong limit point simply {\it does not exist} 
but this is not needed for the invariance principle. 
\end{remark}

We will now give the proof Theorem~\ref{thm:3}. After that, in Section~\ref{sec:analysis-generator} 
we will construct the family $\{v^{\lambda,n}\}_{\lambda,n}$ and then verify that it satisfies 
conditions \cref{eq:L2,eq:ApproxH1,eq:ApproxMean,eq:ApproxVariance} in Sections~\ref{sec:appr-solv-gener} 
and~\ref{sec:invariance-principle}. 

\begin{proof}[of Theorem \ref{thm:3}]
For brevity, throughout the proof we write $\varsigma^2 = 1+\sigma^2(\alpha)$. 
Let $(\bm{\Omega}, \bm{\calF}, (\bm{\calF}_t)_{t \ge 0}, \bfP)$ be the annealed filtered space defined above~\eqref{eq:AFM}.  

We split the proof in four steps. In the first, we determine the one-time semi-quenched CLT 
for the first coordinate of the SRBP. This is the step in which all the conditions~\eqref{eq:L2}-\eqref{eq:ApproxVariance} 
are exploited. The second extends the CLT to the second coordinate while the third to multiple times. At last, 
as a consequence of the above and tightness, we obtain the annealed invariance principle. 
\medskip

\noindent\textit{Step 1.} Let $X$ be the SRBP in~\eqref{eq:SRBP2} and fix $t\geq 0$. 
By definition, the semi-quenched CLT for the first coordinate follows once we show that for all $\theta \in \bbR$ 
\begin{equation}
\label{eq:108}
\lim_{\varepsilon \rightarrow 0} \bfE\Big|\bfE[e^{\iota \theta e_1X^\varepsilon_t}|\bm{\calF}_0]-e^{-\half\varsigma^2\theta^2t}\Big| = 0\,.
\end{equation}
where $X^\eps$ is the diffusively rescaled SRBP, i.e. $X^\eps_t\eqdef \eps X_{t/\eps^2}$, and $e_1 X_t^\eps$ denotes 
its first coordinate. 

Fix $n \in \bbN$ and consider the Dynkin martingale $M(v^{\varepsilon^2,n})$ in~\eqref{e:Dynkin} 
associated to the observable $v^{\lambda,n} \in \text{dom}(\calL)$ in the statement, 
with $\lambda=\varepsilon^2$. 
Let $\varsigma^2_\varepsilon \eqdef \bfE[(M_1(v^{\varepsilon^2,n}) + e_1B_1)^2]$ and denote the 
diffusively rescaled martingale and Brownian motion as 
$M^\varepsilon_t \eqdef \varepsilon M_{t/\varepsilon^2}(v^{\varepsilon^2,n})$, $B^\eps\eqdef \eps B_{t/\eps^2}$.  
Then, we can bound
\begin{equ}
\bfE\Big|\bfE[e^{\iota \theta e_1X^\varepsilon_t}|\bm{\calF}_0]-e^{-\half\varsigma^2\theta^2t}\Big| \le \one + \two + \three
\end{equ}
where $\one, \two, \three$ are defined according to 
\begin{align*}
\one &\eqdef \bfE\Big|e^{\iota \theta e_1X^\varepsilon_t} - e^{\iota \theta (M^\varepsilon_t + e_1B^\varepsilon_t)}\Big|\,, \\
\two &\eqdef \bfE\Big|\bfE[e^{\iota \theta (M^\varepsilon_t + e_1B^\varepsilon_t)}|\bm{\calF}_0]-e^{-\half\varsigma_\varepsilon^2\theta^2t}\Big|\,, \\
\three &\eqdef \Big|e^{-\half\varsigma_\varepsilon^2\theta^2t} - e^{-\half\varsigma^2\theta^2t}\Big|\,,
\end{align*}
and we will treat each of them separately. 

First, we control $\one$. With $F_t^\eps = \eps\int_0^{t/\eps^2} \gamma f(\eta_s)\dd s$ as defined in \ref{eq:Fdef}, we have
$e_1X^\varepsilon_t = e_1F^\varepsilon_t+e_1B^\varepsilon_t$. Then, 
\begin{equ}
\one\leq \theta \big( \bfE [(M^\varepsilon_t - e_1F^\varepsilon_t)^2] \big)^\half
\end{equ}
and it suffices to show that the right hand side converges to $0$ in the double limit for $\eps\to 0$ first and 
$n\to\infty$ after. Applying Dynkin's formula to the observable $v^{\eps^2,n}$, we deduce that 
\begin{equation}
\label{eq:146}
\begin{split}
M^\varepsilon_t - e_1F^\varepsilon_t = \varepsilon v^{\varepsilon^2,n}(\eta_{t/\varepsilon^2}) - \varepsilon v^{\varepsilon^2,n}(\eta_0) - \varepsilon^3\int_0^{t/\varepsilon^2} v^{\varepsilon^2,n}(\eta_s) \dd s \\
+ \varepsilon \int_0^{t/\varepsilon^2} \big( (\varepsilon^2-\calL) v^{\varepsilon^2,n}(\eta_s) - \coup f_1\big) \dd s\,.
\end{split}
\end{equation}
For the first three terms at the right hand side, we use stationarity,~\eqref{e:ScalarProd} and Jensen's inequality, so that 
\begin{equs}
\bfE\Big|\varepsilon v^{\varepsilon^2,n}(\eta_{t/\varepsilon^2}) - \varepsilon v^{\varepsilon^2,n}(\eta_0) - \varepsilon^3\int_0^{t/\varepsilon^2} v^{\varepsilon^2,n}(\eta_s) \dd s\Big|^2\lesssim (1+t^2)\,\eps^2\|v^{\varepsilon^2,n}\|^2
\end{equs}
which vanishes thanks to~\eqref{eq:L2}. For the other term, we apply the It\^o trick~\eqref{eq:itotrick} with $p=2$ and 
$\lambda=\eps^2$, which gives
\begin{equation*}
\begin{split}
\bfE\Big| \varepsilon \int_0^{t/\varepsilon^2}  \big[ (\varepsilon^2-\calL) &v^{\varepsilon^2,n}(\eta_s) - \coup f_1\big] \dd s \Big|^2  \\
&\lesssim(t+t^2)\lVert (\varepsilon^2+\calS)^{-\half} \big[ (\varepsilon^2-\calL) v^{\varepsilon^2,n} - \coup f_1\big] \rVert^2
\end{split}
\end{equation*}
whose convergence to $0$ is guaranteed by~\eqref{eq:ApproxH1}. 

For $\two$, we apply the triangular version of the 
Martingale CLT given in Theorem~\ref{thm:clt} to the martingale $\calM^{(\varepsilon)} = M(v^{\eps^2,n}) + e_1B$, 
whose scaled version is $\calM^\eps\eqdef\eps\calM^{(\eps)}_{t/\eps^2}=M^\eps+e_1 B^\eps$, 
for which we need to verify conditions~\eqref{eq:97}-\eqref{eq:78}.  
Notice first that, by~\eqref{e:Mart} and It\^o's isometry, the quadratic variation of $\calM^{(\varepsilon)}$ is given by 
\begin{equs}[e:QV]
\langle \calM^{(\varepsilon)} \rangle_t &= \int_0^t q^{\varepsilon^2,n}(\eta_s) \dd s + 2 \int_0^t \nabla_1v^{\varepsilon^2,n}(\eta_s) \dd s + t\,,\\
 \bfE\big[ \langle \calM^{(\varepsilon)} \rangle_t \big] &= t \sigma_{\varepsilon^2,n}^2 + t
\end{equs}
where we used that $\bbE_\pi[\nabla_1 v^{\varepsilon^2,n}]=0$. 
Since $\sigma_{\varepsilon^2,n}^2$ is bounded uniformly in $\eps,n$ in view of~\eqref{eq:ApproxMean},
~\eqref{eq:97} holds. To check~\eqref{eq:82}, we brutally estimate 
\begin{equs}
\bfE[\langle \calM^{(\varepsilon)}\rangle_1^2]&\lesssim \bfE\big[\langle M(v^{\eps^2,n})\rangle_1^2\big] +1\\
&\lesssim \bfE\Big[ \int_0^1 \big(q^{\varepsilon^2,n}(\eta_s)\big)^2 ds\Big]+1 \lesssim_n \bbE_\pi[q^{\varepsilon^2,n}] +1= \sigma_{\varepsilon^2,n}^2+1
\end{equs}
where in the second step we used Jensen's inequality, in the third stationarity and in the last 
Gaussian hypercontractivity \cite[Theorem 5.10]{Janson97_GaussianHilbert}. Hence,~\eqref{eq:82} follows once again by~\eqref{eq:ApproxMean}. 
For~\eqref{eq:78}, by~\eqref{e:QV} and~\eqref{eq:itotrick} we have  
\begin{equs}
\sup_{s\leq t/\eps^2}\eps^4\Var \langle\calM^{(\varepsilon)}\rangle_s&=\sup_{s\leq t/\eps^2}\eps^4\bfE\Big|\int_0^s [q^{\varepsilon^2,n}(\eta_r)-\sigma^2_{\eps^2,n}] \dd r + 2 \int_0^s \nabla_1v^{\varepsilon^2,n}(\eta_r) \dd r\Big|^2\\
&\lesssim t\eps^2 \Big( \lVert (\varepsilon^2+\calS)^{-\half} (q^{\varepsilon^2,n} - \sigma_{\varepsilon^2,n}^2) \rVert^2 +  \lVert (\varepsilon^2+\calS)^{-\half} \nabla_1v^{\varepsilon^2,n} \rVert^2\Big)\\
&\leq  t\eps^2 \lVert (\varepsilon^2+\calS)^{-\half} (q^{\varepsilon^2,n} - \sigma_{\varepsilon^2,n}^2) \rVert^2+t\eps^2\lVert v^{\varepsilon^2,n} \rVert^2
\end{equs} 
and the right hand side converges to $0$ thanks to~\eqref{eq:ApproxVariance} and~\eqref{eq:L2}. 

At last, we are left with $\three$ which in turn is a consequence of~\eqref{e:QV} and~\eqref{eq:ApproxMean}. 
\medskip

\noindent\textit{Step 2.} Here, we use isotropy to translate the result of step $1$ to 
the joint coordinate process in $\bbR^2$. More precisely,  
for any rotation matrix $U\in SO(2)$, if $X$ solves~\eqref{eq:SRBP2} 
in the environment $\omega$ and with driver $B$, then 
$\tilde X\eqdef U X$ solves again~\eqref{eq:SRBP2} but with 
$(\tilde\omega,\tilde B)\eqdef (U\omega \circ U^{-1}, UB)$ in place of $(\omega,B)$. 
Hence, for $\theta=(\theta_1,\theta_2)\in\R^2$, letting $U \in SO(2)$ be a matrix sending 
the canonical basis element $e_1 \in \bbR^2$ to $(\theta_1/\theta,\theta_2/\theta)$, 
we obtain
\begin{equ} 
\bfE[e^{\iota \theta \cdot X^\eps_{t}}|\bm{\calF}_0]=\bfE[e^{\iota |\theta| e_1U^{-1}\tilde X^\eps_{t}}|\bm{\calF}_0]\eqlaw \bfE[e^{\iota |\theta| e_1X^\eps_{t}}|\bm{\calF}_0]
\end{equ}
where in the last step, we used rotational 
invariance of both $\omega$ and $B$. 
We can therefore use the previous step to conclude
\begin{equation}
\label{eq:3}
\lim_{\varepsilon \rightarrow 0} \bfE|\bfE[e^{\iota \theta\cdot X^\varepsilon_t}|\bm{\calF}_0]-e^{-\half\varsigma^2|\theta|^2t}| = 0\,.
\end{equation}
\medskip

\noindent\textit{Step 3.} We show semi-quenched convergence for the finite dimensional distributions, 
which is an easy consequence of the following claim. 
Let $\theta\in \bbR^2, 0 \le s \le t$ and $\{Y^\varepsilon\}_{\varepsilon > 0}$ a collection of random variables 
such that for every $\eps>0$, $Y^\eps$ is $\bm{\calF}_{s/\varepsilon^2}$-measurable, then
\begin{equation}
\label{eq:28}
\lim_{\varepsilon \rightarrow 0} \bfE|\bfE[e^{i Y^\varepsilon} \big( e^{\iota \theta\cdot (X^\varepsilon_t - X^\varepsilon_s)} - e^{-\half\varsigma^2|\theta|^2(t-s)} \big) | \bm{\calF}_0] | = 0\,.
\end{equation}
Indeed, we have 
\begin{align*}
\bfE|\bfE[e^{i Y^\varepsilon} \big( &e^{\iota \theta\cdot (X^\varepsilon_t - X^\varepsilon_s)} - e^{-\half\varsigma^2|\theta|^2(t-s)} \big) | \bm{\calF}_0] | \\
&= \bfE|\bfE[\bfE[e^{i Y^\varepsilon} \big( e^{\iota \theta\cdot (X^\varepsilon_t - X^\varepsilon_s)} - e^{-\half\varsigma^2|\theta|^2(t-s)} \big) | \bm{\calF}_{s/\varepsilon^2}] | \bm{\calF}_0] |\\
&\le \bfE|\bfE[ \big( e^{\iota \theta\cdot (X^\varepsilon_t - X^\varepsilon_s)} - e^{-\half\varsigma^2|\theta|^2(t-s)} \big) | \bm{\calF}_{s/\varepsilon^2}] |\,.
\end{align*}
By stationarity of $(\eta_t)_{t \ge 0}$ and independence of $(B^\varepsilon_t - B^\varepsilon_s)_{t \ge s}$ and $\bm{\calF}_{s/\varepsilon^2}$, under the annealed law $\bfP$ we have
\begin{equ}
\bfE[ \big( e^{\iota \theta\cdot (X^\varepsilon_t - X^\varepsilon_s)} - e^{-\half\varsigma^2|\theta|^2(t-s)} \big) | \bm{\calF}_{s/\varepsilon^2}] \eqlaw \bfE[ \big( e^{\iota \theta\cdot X^\varepsilon_{t-s}} - e^{-\half\varsigma^2|\theta|^2(t-s)} \big) | \bm{\calF}_0] 
\end{equ}
so that~\eqref{eq:28} follows from~\eqref{eq:3}. 

Now, to prove~\eqref{eq:AR5} we proceed by induction on the number of increments $n$. 
For $n=1$, the result is a consequence of~\eqref{eq:28} under the choice $Y^\varepsilon = 0$. 
For the general case, let $\theta_1,\dots,\theta_n\in\R^2$ and $0\leq t_0\leq\dots\leq t_n$ and 
note that the left hand side of~\eqref{eq:AR5} equals 
\begin{equs}
&e^{-\half \varsigma^2|\theta_n|^2(t_n-t_{n-1})} \int \Big| \bbE\Big[e^{\iota\sum_{k=1}^{n-1} \theta_k \cdot (X^\varepsilon_{t_k}-X^\varepsilon_{t_{k-1}})} \Big] - e^{-\half \varsigma^2\sum_{k=1}^{n-1} |\theta_k|^2 (t_k - t_{k-1})} \Big| \pi(\dd \omega) \\
&\quad +\int \Big| \bbE\Big[e^{\iota\sum_{k=1}^{n-1} \theta_k \cdot (X^\varepsilon_{t_k}-X^\varepsilon_{t_{k-1}})} \Big( e^{\iota \theta_n \cdot (X^\varepsilon_{t_n}-X^\varepsilon_{t_{n-1}})}- e^{\half \varsigma^2|\theta_n|^2 (t_n - t_{n-1})} \Big)\Big]\Big| \pi(\dd \omega)\,.
\end{equs}
Therefore, in the limit $\eps\to0$, the first summand vanishes by the induction hypothesis, 
while the second by~\eqref{eq:28}. 
\medskip

\noindent\textit{Step 4.} At last, the semi-quenched convergence of the finite dimensional distributions proved in 
the previous step implies convergence of the finite dimensional distributions with respect to the annealed measure $\bfP$. Combining this with the tightness proved in Proposition \ref{p:Tightness}, the invariance principle follows. 
\end{proof}

\section{A good family of observables}
\label{sec:analysis-generator}

The goal of this section is to introduce the family of observables $\{v^{\lambda,n}\}_{\lambda,n}$ 
in Theorem~\ref{thm:3}. To motivate it, let $\psi\in\fock_1$ be given, and recall that, 
for the Kipnis-Varadhan program,
one would like to consider the solution $u^\lambda$ of the resolvent equation, $(\lambda-\calL) u^\lambda = \psi$. 
In order to derive its properties, the approach followed in~\cite{KomoLandOlla12_FluctuationsMarkov} consists 
of truncating the generator at a given chaos $n\in\bbN$ sufficiently high, obtaining estimates which are uniform 
in $n$, and then passing to the limit. To be precise, for fixed $n$, one looks at 
the solution $u^{\lambda,n} \in \fock_{\le n}$ of 
\begin{equation}
\label{eq:5}
(\lambda - \Pi_n \calL \Pi_n) u^{\lambda,n} = \psi
\end{equation}
where $\Pi_n \colon \fock \rightarrow \fock_{\le n}$ is the canonical projection operator. 
Upon writing it in its chaos components,~\eqref{eq:5} reduces to a finite system of $n$ linear equations 
and its solution is given by $u^{\lambda,n}=\sum_{j=0}^n u^{\lambda,n}_j\in\fock_{\le n}$, 
where we set $u^{\lambda,n}_0=0$. 
The $j$-th component $u^{\lambda,n}_j$ can be computed recursively according to 
\begin{equation}
\label{eq:111}
u^{\lambda,n}_j = (\lambda+\calS+\calT^\lambda_{n-j})^{-1}\big( \calA_+ u^{\lambda,n}_{j-1} +\Pi_j \psi \big)
\end{equation}
where $\Pi_j\psi=0$ for all $j\neq 1$, and the operators $\calT^\lambda_j$ are defined according to 
\begin{equation}
\label{eq:109}
\calT_0\equiv 0\,,\qquad \text{and}\qquad \calT^\lambda_{j+1} = -\calA_-(\lambda+\calS+\calT^\lambda_{j})^{-1}\calA_+\,.
\end{equation}
Formally, one expects that, by taking $n\to\infty$, for every $j\in\N$, $u_j^{\lambda,n}$ 
converges to the $j$-th component  
of the solution $u^\lambda$ to the resolvent equation, and the latter should, heuristically, have the same structure as the right 
hand side of~\eqref{eq:111}
but with the operator $\calT^\lambda_{n-j}$ replaced by the operator $\calT^\lambda$ given by the fixed point of the 
relation in~\eqref{eq:109}, i.e. $\calT^\lambda$ should satisfy
\begin{equ}[e:FPO]
\calT^\lambda = -\calA_-(\lambda+\calS+\calT^\lambda)^{-1}\calA_+\,.
\end{equ} 
The idea, first explored in~\cite{CannGubiToni23_GaussianFluctuations} in the context of the Stochastic Burgers Equation 
in the critical dimension $d=2$, is that, even though it might be difficult to directly study $\calT^\lambda$ and, in particular, 
its behaviour for $\lambda$ small, it is possible to derive an approximate fixed point for~\eqref{e:FPO} 
which can then be used to define the family $\{v^{\lambda,n}\}_{\lambda,n}$ (see Definition~\ref{def:3}). 
The existence and properties of such an approximate fixed point are the content of the so-called {\it replacement lemma}, 
that we will shortly state and prove. That said, compared to~\cite{CannGubiToni23_GaussianFluctuations}, 
in the present context we face an additional difficulty. Indeed, the replacement only holds for the 
operators $\calA_\pm$ restricted to a (sufficiently large) region, which we will call the 
{\it bulk region}. The residual region, which will be referred to as the {\it nuisance region}, 
needs to be handled in an entirely different way and turns out to be small only because of 
a non-trivial cancellation of terms. 

While the analysis of the nuisance region is deferred to the next section, 
the rest of this section is organised as follows. 
At first, we derive some preliminary estimates on the operators $\calA_\pm$ (a generalised version of 
the so-called graded sector condition), then rigorously introduce bulk and nuisance region, 
state and prove the replacement lemma and conclude with the definition of the family $\{v^{\lambda,n}\}_{\lambda,n}$. 

\subsection{An (alternative) graded sector condition}
\label{sec:grad-sect-cond}

The bulk of the work in~\cite{HorvTothVeto12_DiffusiveLimits} for $d\geq 3$, 
and the reason why the Kipnis-Varadhan program can be directly applied, 
is the verification of the so-called graded sector condition \cite{SethVaraYau00_DiffusiveLimit} which requires that, when restricted to $\fock_n$, 
the operator $\cA_+$ satisfies 
$ \lVert \calS^{-\half} \calA_+ \calS^{-\half} \rVert_{\fock_n \rightarrow \fock_{n+1}} \le c(n+1)^\beta $, 
for some $\beta\in(0,1)$ (in~\cite{HorvTothVeto12_DiffusiveLimits}, $\beta=1/2$). 
As can be directly checked, in dimension $d=2$, the graded sector condition fails in that   
the operator norm is simply unbounded. To bypass the problem, we introduce a mass $\lambda>0$ 
and look at $ \lVert (\lambda+\calS)^{-\half} \calA_+ \calS^{-\half} \rVert_{\fock_n \rightarrow \fock_{n+1}}$. 
Our goal is to prove that, upon choosing $\gamma$ as in~\eqref{eq:Weak}, the previous is indeed bounded 
and satisfies the same bound as in $d\geq 3$. 
We will formulate a more general version of this result, but to do so, we need to introduce a ``local'' version  
$\calA^R$ of $\calA$. 

Let $R$ be a measurable subset of $(\R^2)^2$ and, throughout the paper, we will denote by $R(p,q)$, $p,q\in\R^2$, 
the characteristic function associated to $R$, i.e. 
\begin{equ}[e:convReg]
R(p,q)\eqdef \1_R(p,q)\,,\qquad p,q\in\R^2\,.
\end{equ}
We define the operator $\calA^R \eqdef \calA^R_+ + \calA^R_-$, where 
$\calA^R_+, \calA^R_-$ are such that $\calA^R_+|_{\calH_0} = \calA^R_-|_{\calH_0\oplus\calH_1} = 0$ and 
act on $\psi \in \fock_n$ according to 
\begin{equs}[eq:AR] 
\calA^R_+ \psi (p_{1:n+1})&\eqdef \frac{\coup}{n+1} \sum_{i=1}^{n+1} R(p_i, p_{[1:(n+1)\setminus i]})\, p_i \cdot  p_{[1:(n+1)\setminus i]} \, \psi(p_{1:(n+1)\setminus i} )\,, \\
\calA^R_- \psi (p_{1:n-1}) &\eqdef \coup n\int\frac{\VHat(q)}{|q|^2} R(q,  p_{[1:n-1]}) \,q \cdot p_{[1:n-1]}\,\psi(q,p_{1:n-1}) \dd q\,.
\end{equs}
These operators satisfy the same adjoint relationship as before, i.e. $(\calA^R_\pm)^* = -\calA^R_\mp$, 
and coincide with $\calA_\pm$ in~\eqref{eq:ops} provided $R=(\R^2)^2$. 
One should think of the regional restriction as a mean for redefining the scalar product $p \cdot q$ as $(p \cdot q) R(p, q)$ 
and we shall later choose the region $R$ in such a way that the adjusted scalar product has an improved behaviour. 

We further split the operator $\calA_+$ into its constituent parts, a procedure whose crucial role in our analysis 
will become clearer in the next section. Notice first that the definition of the operators in~\eqref{eq:AR} naturally 
extend to elements in $L^2_\bbC(\ffock{n})$ 
that are not necessarily symmetric with respect to permutation of their variables. 
Slightly abusing notations, we will still write $\lVert\cdot\rVert, \langle \cdot, \cdot \rangle$ for the norm 
and inner product on the larger space $L^2_\bbC(\mu)\eqdef\oplus_{n=0}^\infty L^2_\bbC(\ffock{n})$. 
For $\psi \in L^2_\bbC(\mu_n)$, the action of $\calA^R_+$ on $\psi$ can be decomposed as 
$\calA^R_+\psi = \sum_{i=1}^{n+1} \genAR{i}\psi$ 
where $\genAR{i}$ corresponds to the $i$'th constituent in the sum for $\calA^R_+$, i.e. 
\begin{equ}[eq:ARi]
\genAR{i} \psi \eqdef \frac{\coup}{n+1} p_i \cdot p_{[1:(n+1)\setminus i]} R(p_i, p_{[1:(n+1)\setminus i]}) \psi(p_{1:(n+1)\setminus i} )\,.
\end{equ}
The function $\genAR{i} \psi$ is not in general symmetric, even in the case where $\psi$ is.

\begin{lemma}\label{lem:DiOff}
Let $\calT$ be the extension to $L^2_\C(\mu)$ of a non-negative diagonal operator with multiplier $\tau$ 
(see Definition~\ref{def:DO}). 
Let $R$ be a measurable subset of $(\R^2)^2$ and, 
for $\lambda>0$, $n\in\N$ and $\psi\in \fock_n$, set 
$\phi^R\eqdef (\lambda+\calS+\calT)^{-\half} \calA^R_+ \calS^{-\half} \psi$ and 
\begin{equ}[e:phii]
\phi^R[i]\eqdef (\lambda+\calS+\calT)^{-\half} \genAR{i} \calS^{-\half} \psi\,,\qquad i=1,\dots,n+1\,.
\end{equ}
Then, the norm of $\phi^R$ can be decomposed as 
\begin{equ}[eq:DiOff]
\lVert\phi^R\rVert^2 = \langle\psi, \cM^\lambda \psi\rangle + \sum_{ i\ne i^\prime=1}^{n+1} \langle\phi^R[i], \phi^R[i']\rangle\,,
\end{equ}
where the second sum will be referred to as the off-diagonal term, while the first, as the diagonal. 
In the above, $\cM^\lambda$ is the diagonal non-negative operator on $L^2_\C(\mu)$ with multiplier $\cm^\lambda$ 
given by 
\begin{equ}[eq:DiKer]
\cm^\lambda(q)= 2\coup^2 \int_{\bbR^2} \frac{\VHat(p) \cos^2(\theta) R(p,q)}{\lambda+\half|p+q|^2 + \tau(p+q)} \dd p
\end{equ}
where for $p \in \bbR^d$, we write $\theta = \theta(p,q)$ for the angle between $p$ and $q$.
\end{lemma}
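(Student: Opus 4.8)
The plan is to expand $\lVert\phi^R\rVert^2$ using the decomposition $\calA^R_+=\sum_{i=1}^{n+1}\genAR{i}$ recorded just above~\eqref{eq:ARi}, which by linearity of $(\lambda+\calS+\calT)^{-\half}$ gives $\phi^R=\sum_{i=1}^{n+1}\phi^R[i]$ in $L^2_\C(\ffock{n+1})$, so that
\begin{equ}
\lVert\phi^R\rVert^2=\sum_{i=1}^{n+1}\lVert\phi^R[i]\rVert^2+\sum_{i\ne i'=1}^{n+1}\langle\phi^R[i],\phi^R[i']\rangle\,.
\end{equ}
The second summand is by definition the off-diagonal term of~\eqref{eq:DiOff}, so the entire content of the lemma is the identity $\sum_{i=1}^{n+1}\lVert\phi^R[i]\rVert^2=\langle\psi,\cM^\lambda\psi\rangle$, which I would prove by a direct computation in Fourier variables.

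First I would reduce the diagonal sum to a single term. Although $\genAR{i}\calS^{-\half}\psi$ need not be permutation-symmetric, the norm $\lVert\phi^R[i]\rVert^2$ is an integral of $|\phi^R[i]|^2$ against the fully symmetric measure $\ffock{n+1}$; relabelling the integration variables by the transposition $p_i\leftrightarrow p_{n+1}$ and using the invariance of $\ffock{n+1}$ under it together with the symmetry of $\psi$ (hence of $\calS^{-\half}\psi$) shows that $\lVert\phi^R[i]\rVert^2$ does not depend on $i$. Hence $\sum_{i=1}^{n+1}\lVert\phi^R[i]\rVert^2=(n+1)\lVert\phi^R[n+1]\rVert^2$, and it remains to evaluate the last quantity.

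Then I would write $\tilde\psi\eqdef\calS^{-\half}\psi$, so that $\tilde\psi(p_{1:n})=\sqrt2\,|p_{[1:n]}|^{-1}\psi(p_{1:n})$, and note that the kernel of $\phi^R[n+1]$ is
\begin{equ}
\phi^R[n+1](p_{1:n+1})=\frac{\coup}{n+1}\cdot\frac{\big(p_{n+1}\cdot p_{[1:n]}\big)\,R(p_{n+1},p_{[1:n]})}{\big(\lambda+\thalf|p_{[1:n+1]}|^2+\tau(p_{[1:n+1]})\big)^{\half}}\,\tilde\psi(p_{1:n})\,.
\end{equ}
Squaring and integrating against $\ffock{n+1}=(n+1)!\prod_{j=1}^{n+1}\VHat(p_j)|p_j|^{-2}\dd p_{1:n+1}$, I would set $p\eqdef p_{n+1}$ and $q\eqdef p_{[1:n]}$ and carry out the $p$-integral first: using $(p\cdot q)^2=|p|^2|q|^2\cos^2(\theta)$ together with $|\tilde\psi(p_{1:n})|^2=2|q|^{-2}|\psi(p_{1:n})|^2$, the factor $|p|^{-2}$ from the measure cancels, so that the inner integral equals $\int\frac{2\,\VHat(p)\cos^2(\theta)\,R(p,q)}{\lambda+\thalf|p+q|^2+\tau(p+q)}\dd p=\coup^{-2}\cm^\lambda(q)$ with $\cm^\lambda$ as in~\eqref{eq:DiKer}. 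Since the prefactor $(\coup/(n+1))^2$ cancels the $\coup^{-2}$, and since $\ffock{n+1}$ carries a $(n+1)!$ whereas $\langle\psi,\cM^\lambda\psi\rangle=n!\int\cm^\lambda(p_{[1:n]})|\psi(p_{1:n})|^2\prod_{j=1}^n\VHat(p_j)|p_j|^{-2}\dd p_{1:n}$ carries only an $n!$, one obtains $\lVert\phi^R[n+1]\rVert^2=\tfrac{(n+1)!}{(n+1)^2\,n!}\langle\psi,\cM^\lambda\psi\rangle=\tfrac{1}{n+1}\langle\psi,\cM^\lambda\psi\rangle$; multiplying by $n+1$ gives~\eqref{eq:DiOff}.

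I do not expect a genuine obstacle here, as the argument is essentially bookkeeping. The two points that do require care are the relabelling step — where one must check that the lack of permutation-symmetry of $\genAR{i}\calS^{-\half}\psi$ does not obstruct the reduction of the diagonal sum to a single term — and the careful tracking of the combinatorial and $\coup$-dependent constants (the $1/(n+1)$ in $\genAR{i}$, the $(n+1)!$ versus $n!$ of the two Fock measures, and the $\sqrt2$ from $\calS^{-\half}$), whose product leaves precisely the prefactor $2\coup^2$ in the multiplier $\cm^\lambda$ of~\eqref{eq:DiKer}.
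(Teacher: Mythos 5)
Your proof is correct and follows the same route as the paper: decompose $\phi^R=\sum_i\phi^R[i]$, split the squared norm into diagonal and off-diagonal sums, observe that the $\lVert\phi^R[i]\rVert^2$ are all equal by relabelling, and then evaluate $(n+1)\lVert\phi^R[n+1]\rVert^2$ by integrating out $p_{n+1}$ to produce the multiplier $\cm^\lambda$. Your tracking of the constants (the $\sqrt2$ from $\calS^{-\half}$, the $\coup/(n+1)$, and the $(n+1)!/n!$ between the Fock measures) is accurate.
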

\begin{proof}
Thanks to the notations introduced above, $\phi^R$ can be written as 
$\phi^R = \sum_{i=1}^{n+1} \phi^R[i]$, which implies that 
\begin{equ}
\lVert\phi^R\rVert^2 = \sum_{i=1}^{n+1}\lVert\phi^R[i]\rVert^2 + \sum_{ i\ne i^\prime=1}^{n+1} \langle\phi^R[i], \phi^R[i']\rangle\,.
\end{equ}
While the second summand corresponds to the off-diagonal term, for the first note that 
$\phi^R[i]$ can be explicitly written as 
\begin{equation}\label{eq:AR8}
\phi^R[i](p_{1:n+1})=  \frac{\sqrt{2}\coup}{n+1} \frac{p_i \cdot p_{[1:n+1\setminus i]} R(p_i, p_{[1:n+1\setminus i]}) \psi(p_{1:n+1\setminus i}) }{(\lambda+\half|p_{[1:n+1]}|^2+\tau(p_{[1:n+1]}))^{\half} |p_{[1:n+1\setminus i]}|}\,.
\end{equation}
A simple change of variables shows that $\lVert\phi^R[i]\rVert^2$ is independent of $i$, so that 
the diagonal term equals
\begin{equs}
&\sum_{i=1}^{n+1}\lVert\phi^R[i]\rVert^2=(n+1)\lVert\phi^R[n+1]\rVert^2\\
&=\frac{2\gamma^2}{n+1}\int \frac{ |p_{n+1} \cdot p_{[1:n]}|^2 R(p_{n+1}, p_{[1:n]}) |\psi(p_{1:n})|^2 }{(\lambda+\half|p_{[1:(n+1)]}|^2+\tau(p_{[1:(n+1)]})) |p_{[1:n]}|^2} \mu_{n+1}(\dd p_{1:n+1})\\
&=\int \mu_n(\dd p_{1:n}) |\psi(p_{1:n})|^2 \Big(2\gamma^2\int\dd p\frac{\VHat(p) \cos^2(\theta) R(p,p_{[1:n]})}{\lambda+\half|p+p_{[1:n]}|^2 + \tau(p+p_{[1:n]})}\Big)
\end{equs}
where in the last step we expanded the scalar product, set $\theta=\theta(p,p_{[1:n]})$, and 
replaced $\mu_{n+1}$ with $\mu_n$ (see~\eqref{e:mun}). Since the quantity in parenthesis 
equals $\cm^\lambda(p_{[1:n]})$, the proof is concluded. 
\end{proof}

We are now ready to give our generalised version of the graded sector condition. 

\begin{lemma}[The Graded Sector Condition]\label{lem:32}
In the same setting of Lemma~\ref{lem:DiOff}, upon choosing $\gamma$ according to~\eqref{eq:Weak}, 
the following bounds hold uniformly over $\lambda\in(0,1)$,
\begin{align}
\lVert(\lambda+\calS+\calT)^{-\half} \calA^R_+ \calS^{-\half}\rVert_{\fock_n \rightarrow \fock_{n+1}}^2 &\lesssim n+1\,, \label{eq:AR7}\\
\lVert \calS^{-\half} \calA^R_- (\lambda+\calS+\calT)^{-\half} \rVert_{\fock_{n+1} \rightarrow \fock_n}^2 &\lesssim n+1\,. \label{eq:AR9}
\end{align}
\end{lemma}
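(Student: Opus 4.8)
The plan: first, \eqref{eq:AR9} follows from \eqref{eq:AR7} by duality. Since $\calS$ and $\calS+\calT$ are self-adjoint and $(\calA^R_\pm)^*=-\calA^R_\mp$, the operator $\calS^{-1/2}\calA^R_-(\lambda+\calS+\calT)^{-1/2}\colon\fock_{n+1}\to\fock_n$ is, up to sign, the adjoint of $(\lambda+\calS+\calT)^{-1/2}\calA^R_+\calS^{-1/2}\colon\fock_n\to\fock_{n+1}$, so the two operator norms agree. It thus suffices to fix $\psi\in\fock_n$, put $\phi^R\eqdef(\lambda+\calS+\calT)^{-1/2}\calA^R_+\calS^{-1/2}\psi$ and show $\lVert\phi^R\rVert^2\lesssim(n+1)\lVert\psi\rVert^2$ uniformly in $\lambda\in(0,1)$. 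For this I would invoke Lemma~\ref{lem:DiOff} and its proof, which give $\lVert\phi^R\rVert^2=\sum_{i=1}^{n+1}\lVert\phi^R[i]\rVert^2+\sum_{i\ne i'}\langle\phi^R[i],\phi^R[i']\rangle$ together with $\sum_{i=1}^{n+1}\lVert\phi^R[i]\rVert^2=\langle\psi,\cM^\lambda\psi\rangle$, so that everything reduces to the diagonal multiplier $\cm^\lambda$ of \eqref{eq:DiKer}.

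The crux is to show $\langle\psi,\cM^\lambda\psi\rangle\lesssim\lVert\psi\rVert^2$ uniformly in $\lambda$. In \eqref{eq:DiKer} I would discard the nonnegative factors $\cos^2\theta$, $R(p,q)$ and $\tau(p+q)$ — legitimate since $\VHat\ge0$ by the positive semi-definiteness in Assumption~\ref{a:V} — to obtain $\cm^\lambda(q)\le 2\gamma^2\int_{\R^2}\VHat(p)\,(\lambda+\tfrac12|p+q|^2)^{-1}\,\dd p$. The point is that this is bounded uniformly in $q$ by $C\gamma^2\log(1+\lambda^{-1})$: splitting the integral at $|p+q|=1$, the region $|p+q|>1$ contributes $\lesssim\lVert\VHat\rVert_{L^1}$ and the region $|p+q|\le1$ contributes $\lesssim\lVert\VHat\rVert_\infty\int_{|u|\le1}(\lambda+\tfrac12|u|^2)^{-1}\dd u\lesssim\log(1+\lambda^{-1})$, exactly the computation in the proof of Lemma~\ref{lem:5} (and among the integral estimates of Appendix~\ref{sec:integral-estimates}). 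Since $\gamma$ is chosen as in \eqref{eq:Weak}, $\gamma^2\log(1+\lambda^{-1})=\alpha^2$, whence $\cm^\lambda(q)\lesssim\alpha^2$ and $\langle\psi,\cM^\lambda\psi\rangle\lesssim\alpha^2\lVert\psi\rVert^2$.

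The off-diagonal sum is then controlled, crudely but sufficiently, by Cauchy--Schwarz: $\big|\sum_{i\ne i'}\langle\phi^R[i],\phi^R[i']\rangle\big|\le\sum_{i\ne i'}\lVert\phi^R[i]\rVert\,\lVert\phi^R[i']\rVert=\big(\sum_{i=1}^{n+1}\lVert\phi^R[i]\rVert\big)^2-\sum_{i=1}^{n+1}\lVert\phi^R[i]\rVert^2\le n\sum_{i=1}^{n+1}\lVert\phi^R[i]\rVert^2=n\langle\psi,\cM^\lambda\psi\rangle$, using $(\sum_{i=1}^{n+1}a_i)^2\le(n+1)\sum_{i=1}^{n+1}a_i^2$. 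Combining with the previous step, $\lVert\phi^R\rVert^2\le(n+1)\langle\psi,\cM^\lambda\psi\rangle\lesssim(n+1)\alpha^2\lVert\psi\rVert^2\lesssim(n+1)\lVert\psi\rVert^2$, which is \eqref{eq:AR7}. The only genuinely delicate ingredient is the uniform-in-$q$, uniform-in-$\lambda$ bound on the momentum integral above: this is precisely where the weak-coupling choice \eqref{eq:Weak} of $\gamma$ enters, turning the would-be logarithmic divergence of the critical Green's function into the constant $\alpha^2$. The growth in $n+1$ is a harmless artifact of the $n+1$ ways in which $\calA^R_+$ can insert a new momentum, and the regional cutoff $R$ plays no role at this level — it matters only for the sharper replacement estimates of the next section.
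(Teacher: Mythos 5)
Your proof is correct and follows essentially the same route as the paper: decompose via Lemma~\ref{lem:DiOff}, bound the off-diagonal sum by Cauchy--Schwarz to get the overall factor $n+1$, reduce to the multiplier $\cm^\lambda$, drop $\cos^2\theta$, $R$, and $\tau$, and invoke the uniform-in-$q$ bound~\eqref{eq:16} (whose proof is the same splitting at $|p+q|=1$ that you spell out), with~\eqref{eq:AR9} then following by duality. You make explicit two points the paper leaves implicit — the elementary $(n+1)$-term Cauchy--Schwarz inequality behind the off-diagonal bound and the fact that $\VHat\ge0$ by positive semi-definiteness — but there is no difference in substance.
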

\begin{proof}
In order to establish~\eqref{eq:AR7}, we look at~\eqref{eq:DiOff} and apply Cauchy-Schwarz to the off-diagonal 
term, from which we deduce
\begin{equs}
\lVert(\lambda+\calS+\calT)^{-\half} \calA^R_+ \calS^{-\half}\psi\rVert^2\leq (n+1) \langle\psi, \cM^\lambda \psi\rangle
\end{equs}
where $\cM^\lambda$ is the non-negative diagonal operator with multiplier $\cm^\lambda$ given in~\eqref{eq:DiKer}. 
Bounding the indicator function $R(\cdot,\cdot)$ and the cosine by $1$ and using that $\tau$ 
is non-negative, we obtain
\begin{equ}
\langle\psi, \cM^\lambda \psi\rangle\lesssim \int \mu_n(\dd p_{1:n}) |\psi(p_{1:n})|^2 \Big(\gamma^2\int\dd p\frac{\hat V(p) }{\lambda+\half|p_{[1:n]}+p|^2} \Big)\,.
\end{equ}
An argument similar to that in~\eqref{e:UBWeak} shows that uniformly in $q \in \bbR^2$ and $\lambda\in(0,1)$, we have
\begin{equation}
\label{eq:16}
\gamma^2\int\dd p\frac{\hat V(p) }{\lambda+\half|q+p|^2} \lesssim 1
\end{equation}
so that~\eqref{eq:AR7} follows at once. 
To conclude the proof it remains to show~\eqref{eq:AR9}, which in turn can be deduced from~\eqref{eq:AR7} by duality. 
\end{proof}

\subsection{The replacement lemma and the family $\{v^{n,\lambda}\}_{n,\lambda}$}
\label{sec:bulk-region-b}

As we are interested in the large scales of the environment (and of the SRBP), 
it is no surprise that the relevant behaviour comes from small Fourier modes and, more 
specifically, from those modes at which $\cS$ is small, i.e. $|p_{[1:n]}|=|\sum p_i|\approx 0$. 
There are two ways in which this quantity can be small, either all the modes are small (bulk region)
or they are order one but cancel each others out (nuisance region). 

More precisely, for $\kappa\in(0,1)$, we define the {\it bulk region} $B^\kappa$ 
and its complement, the {\it nuisance region}, $N^\kappa$ as
\begin{equ}[e:BNregions]
B^\kappa \eqdef \{(p,q)\in(\R^2)^2\colon |p+q| \ge \kappa|q| \} \quad\text{and}\quad N^\kappa \eqdef (B^\kappa)^c\,.
\end{equ}
Note that these regions are not symmetric in $p$ and $q$, and that, by triangle inequality, we have 
\begin{equs}[e:BNregionsBound]
B^\kappa &\subseteq \{(p,q)\colon|p+q| \ge \max\{ \kappa|q| , \tfrac{\kappa}{1+\kappa}|p|\} \}\\
N^\kappa &\subseteq \{(p,q)\colon|p+q| < \min\{\kappa|q|, \tfrac{\kappa}{1-\kappa}|p|\}, \, \tfrac{1}{1+\kappa} |p| < |q| < \tfrac{1}{1-\kappa}|p| \} 
\end{equs}
In particular, in the nuisance region, $|p+q| \lesssim_\kappa \min \{|p|, |q|\}$ and in the bulk region, 
$|p+q| \gtrsim_\kappa \max\{|p|, |q|\}$. 
As a convention, we mostly consider $\kappa = \tfrac{1}{3}$, 
and we write $B, N$ for $B^{\frac{1}{3}},N^{\frac{1}{3}}$ respectively. 
\medskip

Before stating the replacement lemma, we need to introduce the approximate fixed point operator of~\eqref{e:FPO} which 
is a diagonal operator (see Definition~\ref{def:DO}) given by an order one perturbation of $\calS$. 
To do so, let $g$ and $\ell^\lambda$ be the functions on $[0,\infty)$ defined according to 
\begin{equ}[e:g]
g(y)\eqdef \sqrt{4\pi y +1}-1\,,\qquad\text{and}\qquad\ell^\lambda(y)\eqdef \gamma^2 \log\Big(1+\frac{1}{\lambda+y}\Big)\,.
\end{equ}
Then, we set $\calG^\lambda$ to be operator on $\fock$ given by 
\begin{equ}[e:ApproxFPO]
\calG^\lambda\eqdef g(\ell^\lambda(\calS))
\end{equ}
so that in particular $\calG^\lambda$ is non-negative and diagonal with multiplier 
$\cg^\lambda(p)\eqdef g\circ\ell^\lambda(\tfrac12|p|^2)$, $p\in\R^2$. 

\begin{lemma}[Replacement Lemma]
\label{lem:3}
Let $\cG^\lambda$ be the non-negative diagonal operator on $\fock$ given by~\eqref{e:ApproxFPO} 
with multiplier $\cg^\lambda$. 
Then, there exists a constant $C>0$ such that for all $\lambda\in(0,1)$, $n\in\N$ 
and $\psi_1,\psi_2\in\fock_n$, we have 
\begin{equation}
\label{eq:7}
|\langle\psi_1, \calS^{-\frac12}\calR_\lambda\calS^{-\frac12}\psi_2\rangle| \leq C \coup^2 n \lVert  \psi_1 \rVert \lVert \psi_2 \rVert
\end{equation}
where $\calR_\lambda$ is the operator given by 
\begin{equ}\label{e:Rlambda}
 \calR_\lambda \eqdef  -\calA^B_-(\lambda+\calS+\calS\calG^\lambda)^{-1}\calA^B_+ - \calS\calG^\lambda 
\end{equ}
and $B$ is the bulk region defined in~\eqref{e:BNregions}. 
\end{lemma}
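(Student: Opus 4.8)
The plan is to bring the bilinear form $\langle\psi_1,\calS^{-\frac12}\calR_\lambda\calS^{-\frac12}\psi_2\rangle$ into a shape covered by Lemma~\ref{lem:DiOff}, and then to show that its \emph{diagonal} part nearly cancels $\calG^\lambda$ while its \emph{off-diagonal} part is of lower order. Since $g\ge0$ and $\ell^\lambda\ge0$ on $[0,\infty)$, the operator $\calG^\lambda$ from~\eqref{e:ApproxFPO} is non-negative and diagonal with $0\le\cg^\lambda\le g(\ell^\lambda(0))=\sigma^2(\alpha)$; hence $\calS\calG^\lambda$ is non-negative diagonal and $\lambda+\calS+\calS\calG^\lambda$ is invertible. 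First I would use $(\calA^B_\pm)^*=-\calA^B_\mp$ and $\calS^{-\frac12}(\calS\calG^\lambda)\calS^{-\frac12}=\calG^\lambda$ to write
\begin{equ}
\langle\psi_1,\calS^{-\frac12}\calR_\lambda\calS^{-\frac12}\psi_2\rangle=\langle\phi_1,\phi_2\rangle-\langle\psi_1,\calG^\lambda\psi_2\rangle\,,\qquad\phi_j\eqdef(\lambda+\calS+\calS\calG^\lambda)^{-\frac12}\calA^B_+\calS^{-\frac12}\psi_j\,,
\end{equ}
and then apply the bilinear version of Lemma~\ref{lem:DiOff} (obtained by polarising the stated identity) with $\calT=\calS\calG^\lambda$ and $R=B$ to get
\begin{equ}
\langle\psi_1,\calS^{-\frac12}\calR_\lambda\calS^{-\frac12}\psi_2\rangle=\langle\psi_1,(\cM^\lambda-\calG^\lambda)\psi_2\rangle+\sum_{i\ne i'=1}^{n+1}\langle\phi^B_1[i],\phi^B_2[i']\rangle\,,
\end{equ}
where $\cM^\lambda$ is the diagonal operator with multiplier $\cm^\lambda$ from~\eqref{eq:DiKer} (taken with $R=B$ and $\tau=\tfrac12|\cdot|^2\cg^\lambda$) and $\phi^B_j[i]$ is the constituent as in~\eqref{e:phii} and~\eqref{eq:AR8}, built from $\psi_j$. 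It remains to bound the two terms.

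For the \emph{diagonal} term, since $\cM^\lambda-\calG^\lambda$ is diagonal it contributes at most $\sup_{q}|\cm^\lambda(q)-\cg^\lambda(q)|\,\|\psi_1\|\|\psi_2\|$, so the task reduces to proving $\sup_{\lambda\in(0,1),\,q\in\bbR^2}|\cm^\lambda(q)-\cg^\lambda(q)|\lesssim\gamma^2$. This is exactly where the form of $g$ in~\eqref{e:g} is used: with the above $\tau$ one has $\tfrac12|r|^2+\tau(r)=\tfrac12|r|^2(1+\cg^\lambda(r))=\tfrac12|r|^2\sqrt{4\pi\ell^\lambda(\tfrac12|r|^2)+1}$, and after the change of variables $r=p+q$ in~\eqref{eq:DiKer}, using that $\VHat$ is rotationally invariant to carry out the angular integral and Schwartz to drop $|r|\gtrsim1$ at cost $O(\gamma^2)$, one is left with a one-dimensional integral $\propto\gamma^2\int\VHat(\cdot)\bigl(\lambda+s\sqrt{4\pi\ell^\lambda(s)+1}\bigr)^{-1}\dd s$ in $s=\tfrac12|r|^2$, the bulk constraint restricting $s\gtrsim|q|^2$. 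Since $\tfrac{\dd}{\dd s}\bigl[(1+g(\ell^\lambda(s)))^2\bigr]=4\pi\tfrac{\dd}{\dd s}\ell^\lambda(s)$ and $\ell^\lambda(s)=\gamma^2\log(1+\tfrac1{\lambda+s})$, this integral telescopes to $g(\ell^\lambda(\tfrac12|q|^2))=\cg^\lambda(q)$ up to $O(\gamma^2)$; the error comes from replacing $\VHat(r-q)$ by a constant and $\ell^\lambda(\tfrac12|r|^2)$ by $\ell^\lambda(\tfrac12|q|^2)$ over $|q|\lesssim|r|\lesssim1$, and from the gap between $\tfrac12(\tfrac13|q|)^2$ and $\tfrac12|q|^2$ as lower endpoint (each a bounded multiplicative change in the argument of $\ell^\lambda$, hence a factor $1+O(\gamma^2)$). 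The restriction to the bulk region is essential here: on the complementary set $|p+q|<\tfrac13|q|$ the denominator of~\eqref{eq:DiKer} is small and that region contributes at order $1$ rather than $\gamma^2$ — this is the nuisance region, handled separately in the next section.

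The \emph{off-diagonal} sum is the main obstacle. The crude bound — Cauchy--Schwarz together with $\|\phi^B_j[i]\|^2=\tfrac1{n+1}\langle\psi_j,\cM^\lambda\psi_j\rangle\lesssim\tfrac1{n+1}\|\psi_j\|^2$, exactly as in the proof of Lemma~\ref{lem:32} via~\eqref{eq:16} — gives only $\bigl|\sum_{i\ne i'}\langle\phi^B_1[i],\phi^B_2[i']\rangle\bigr|\lesssim n\|\psi_1\|\|\psi_2\|$, which lacks the factor $\gamma^2$. The loss occurs because that estimate spends the coupling $\gamma^2$ on the logarithmically divergent integral $\gamma^2\int\VHat(p)|p|^{-2}\dd p$; to recover it one must retain the denominator. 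Concretely, fixing $i=n$, $i'=n+1$ and writing $\langle\phi^B_1[n],\phi^B_2[n+1]\rangle$ out via~\eqref{eq:AR8} as an integral over $p_{1:n+1}$, the momenta $p_n$ and $p_{n+1}$ enter only as ``loop'' variables integrated against $\VHat(p_n)\VHat(p_{n+1})|p_n|^{-2}|p_{n+1}|^{-2}$ with the coupled weight $\bigl(\lambda+\tfrac12|p_{[1:n+1]}|^2+\tau(p_{[1:n+1]})\bigr)^{-1}$, while the bulk indicators force $|p_n|,|p_{n+1}|\lesssim|p_{[1:n+1]}|$ on the support; carrying out one of the two loop integrals by~\eqref{eq:16} produces a bounded factor at the cost of one power of $\gamma^2$, and — thanks to the geometric simplifications afforded by $B$ — the remaining loop integral collapses to an $n$-independent $L^2$-pairing of $\psi_1,\psi_2$ against a \emph{bounded} (not divergent) kernel, so that $\bigl|\sum_{i\ne i'}\langle\phi^B_1[i],\phi^B_2[i']\rangle\bigr|\lesssim\gamma^2 n\|\psi_1\|\|\psi_2\|$; combined with the diagonal estimate this yields~\eqref{eq:7} with a universal constant. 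I expect this step — beating Cauchy--Schwarz by a power of $\gamma^2$ by keeping the coupling attached to a convergent momentum integral — to be the technically hardest point, the analogue (now with the two-dimensional geometry of the bulk region) of the off-diagonal analysis carried out for the critical stochastic Burgers equation in~\cite{CannGubiToni23_GaussianFluctuations}.
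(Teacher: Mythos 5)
Your decomposition, the use of the adjoint identity and (the polarised version of) Lemma~\ref{lem:DiOff}, the reduction of the diagonal term to the sup bound $\sup_q|\cm^\lambda(q)-\cg^\lambda(q)|\lesssim\gamma^2$ via the consistency relation $g'=2\pi/(1+g)$, and the identification of the off-diagonal term as the place to gain $\gamma^2$ over crude Cauchy--Schwarz are exactly the paper's proof (your diagonal sketch is essentially Lemma~\ref{lem:ReplEst} and your off-diagonal goal is Lemma~\ref{lem:6}). One point in the off-diagonal sketch is not quite right as stated: fixing $i=n$, $i'=n+1$, the variables $p_n$ and $p_{n+1}$ are \emph{not} pure loop variables — $\psi_2$ depends on $p_n$ through $\psi_2(p_{1:n})$ and $\psi_1$ on $p_{n+1}$ through $\psi_1(p_{1:n+1\setminus n})$ — so one cannot simply integrate them out; the paper first decouples $\psi_1$ and $\psi_2$ by a weighted Cauchy--Schwarz with the compensating factor $|p_{[1:n+1\setminus\ell]}|/|p_{[1:n+1\setminus(\{1,2\}\setminus\ell)]}|$, after which the remaining loop integral is controlled by Lemma~\ref{lem:4} (not~\eqref{eq:16}). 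This is a detail within the same strategy rather than a different route.
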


Before proving the statement, let us make a few remarks and see how to use it to define the family of observables 
$\{v^{\lambda,n}\}_{\lambda,n}$. 
An immediate corollary of~\eqref{eq:7} is that the operator norm of $\calR_\lambda$ in~\eqref{e:Rlambda} on $\fock_n$ 
satisfies 
\begin{equ}\label{e:eq7Op}
\lVert \calS^{-\half} \calR_\lambda \calS^{-\half} \rVert_{\fock_n \rightarrow \fock_n}^2 \lesssim \coup^2 n
\end{equ}
which, since we assume $\gamma$ to be chosen according to weak coupling, i.e. as in~\eqref{eq:Weak}, 
implies that for $n$ fixed, the left hand side vanishes as $\lambda\rightarrow0$. 
In other words, on any given chaos, $\calS\calG^\lambda$ is an {\it approximate fixed point} to~\eqref{e:FPO} 
uniformly over $\lambda$, at least when replacing $\calA_\pm$ with $\calA_\pm^B$. 
As we will see, this is enough for our purposes and, in line with the heuristic provided 
at the beginning of the section, we can give the definition of the family of observables we will be considering 
hereafter. 

\begin{definition}[Replacement Equation]
\label{def:3}
For $\lambda\in(0,1)$, $m\leq n\in\N$ and $\frakf\in\fock_{m}$, we define 
$v^{\lambda,n} = \sum_{j=m}^n v^{\lambda,n}_j\in \fock_{\leq n}$ to be the solution of the {\it replacement equation} 
with input $\frakf$, which is given by 
\begin{equation}
\label{eq:114}
v^{\lambda,n}_j = (\lambda+\calS+\calS\calG^\lambda)^{-1} \big( \calA^B_+ v^{\lambda,n}_{j-1} + \Pi_j \frakf \big)\,,\qquad j=m,\dots,n
\end{equation}
with the convention that $v^\lambda_{m-1} = 0$. 
\end{definition}

We now turn to the proof of Lemma~\ref{lem:3} which crucially relies on the fact that, in the bulk region, 
the off-diagonal terms in~\eqref{eq:DiOff} are small. This is the content of the next result. 

\begin{lemma}
\label{lem:6}
Let $\calT$ be a non-negative diagonal operator with multiplier $\tau$.  For $n \in \bbN$, $\psi_1, \psi_2 \in \fock_n$ and 
$i,j=1,\dots,n+1$, define $\phi_1^B[i],\phi_2^B[j]$ according to~\eqref{e:phii} with $\psi$ replaced by 
$\psi_1$ and $\psi_2$ respectively and with the choice $R = B$. Then, uniformly over $\lambda\in(0,1)$, we have 
\begin{equ}
\Big\lvert \sum_{i\neq j} \langle \phi_1^B[i],\,\phi_2^B[j]\rangle\Big\rvert \lesssim  \coup^2 n  \lVert\psi_1\rVert \lVert\psi_2\rVert \,.
 \end{equ}
\end{lemma}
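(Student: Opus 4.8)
The plan is: reduce the $n(n+1)$ off-diagonal terms to a single scalar product by symmetry, write that scalar product as an explicit integral in which $\psi_1,\psi_2$ appear in two ``mismatched'' slots, strip $\psi_1,\psi_2$ off by two nested Cauchy--Schwarz inequalities, and reduce to one scalar kernel estimate, uniform in $\lambda$, which is where the bulk restriction is essential. Crucially this route must keep the coupling $\coup^2$ visible rather than absorbing it (as one would when bounding $\lVert\phi_1^B[i]\rVert\lVert\phi_2^B[j]\rVert$ via Lemma~\ref{lem:DiOff}), since the target bound is of order $\coup^2 n$, genuinely smaller than the $O(n)$ one gets for the diagonal.

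\emph{Symmetrisation.} Since $\ffock{n+1}$ is permutation invariant and $\psi_1,\psi_2$ are symmetric, relabelling the integration variables shows $\langle\phi_1^B[i],\phi_2^B[j]\rangle$ depends only on whether $i=j$; as there are $n(n+1)$ ordered pairs with $i\neq j$,
\[
\sum_{i\neq j}\langle\phi_1^B[i],\phi_2^B[j]\rangle \;=\; n(n+1)\,\langle\phi_1^B[n],\phi_2^B[n+1]\rangle\,,
\]
so it suffices to prove $|\langle\phi_1^B[n],\phi_2^B[n+1]\rangle|\lesssim \tfrac{\coup^2}{n+1}\lVert\psi_1\rVert\lVert\psi_2\rVert$.

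\emph{Reduction to a kernel bound.} Plugging the formula~\eqref{eq:AR8} for $\phi^B[i]$ into the scalar product and factorising $\ffock{n+1}(\dd p_{1:n+1}) = n(n+1)\,\ffock{n-1}(\dd p_{1:n-1})\tfrac{\VHat(q)\VHat(q')}{|q|^2|q'|^2}\dd q\,\dd q'$ with $q\eqdef p_n$, $q'\eqdef p_{n+1}$, $r\eqdef p_{[1:n-1]}$, one finds that $\langle\phi_1^B[n],\phi_2^B[n+1]\rangle$ equals $\tfrac{2\coup^2 n}{n+1}$ times
\[
\int \ffock{n-1}(\dd p_{1:n-1})\int \frac{\VHat(q)\VHat(q')}{|q|^2|q'|^2}\,W(r,q,q')\,\overline{\psi_1(p_{1:n-1},q')}\,\psi_2(p_{1:n-1},q)\,\dd q\,\dd q'\,,
\]
where $W(r,q,q')\eqdef \tfrac{(q\cdot(r+q'))(q'\cdot(r+q))\,B(q,r+q')B(q',r+q)}{(\lambda+\tfrac12|r+q+q'|^2)|r+q'||r+q|}$ (the operator $\calT$ only adds a non-negative term to the denominator, which I discard). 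For fixed $p_{1:n-1}$, apply Cauchy--Schwarz in $(q,q')$ with weight $\tfrac{\VHat(q)\VHat(q')}{|q|^2|q'|^2}|W|$; since $W$ is symmetric in its last two arguments, both resulting one-variable factors involve $\int\tfrac{\VHat}{|\cdot|^2}|W|$. A further Cauchy--Schwarz in $p_{1:n-1}$ against $\ffock{n-1}$, together with $\lVert\psi_i\rVert^2 = n\int\ffock{n-1}(\dd p_{1:n-1})\int\tfrac{\VHat(q)}{|q|^2}|\psi_i(p_{1:n-1},q)|^2\dd q$, gives
\[
|\langle\phi_1^B[n],\phi_2^B[n+1]\rangle| \;\le\; \frac{2\coup^2}{n+1}\,K\,\lVert\psi_1\rVert\lVert\psi_2\rVert\,,\qquad K\eqdef \sup_{r,p\in\bbR^2}\int_{\bbR^2}\frac{\VHat(q)}{|q|^2}\,|W(r,q,p)|\,\dd q\,.
\]
Everything therefore reduces to $K\lesssim 1$, uniformly over $\lambda\in(0,1)$.

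\emph{The kernel estimate (the hard part).} Bounding the two cosines by $1$ gives $\tfrac{\VHat(q)}{|q|^2}|W(r,q,p)| \le \tfrac{\VHat(q)\,|p|}{|q|(\lambda+\tfrac12|r+q+p|^2)}B(q,r+p)B(p,r+q)$. The bulk constraints, rewritten as in~\eqref{e:BNregionsBound}, yield on the relevant set $|r+q+p|\gtrsim\max\{|r+q|,|r+p|,|q|,|p|,|r|\}$, which kills every would-be divergence: for large momenta the rapid decay of $\VHat$ forces $|q|$ (hence also $|p|$) to be bounded where the integrand is non-negligible; for $q$ near $0$ the factor $\VHat(q)/|q|$ is integrable in two dimensions, and $|p|\lesssim|r+q+p|$ turns $\tfrac{|p|}{\lambda+\tfrac12|r+q+p|^2}$ into something $\lesssim|r+q+p|^{-1}$, which one controls by a dyadic decomposition in $|r+q+p|$ (equivalently, by splitting the $q$-integral at the scale $\sqrt\lambda$). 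The delicate sub-case is when $q$, $r+p$ and $r+q+p$ are all of order $\sqrt\lambda$: here the $\lambda$-regularisation is precisely what saves the small-$q$ contribution from producing a $\log(1/\lambda)$, and this only works because $B(q,r+p)B(p,r+q)$ pins the momenta together — in the nuisance region $N$ the same integral does carry a logarithm, which is the very reason that region must be treated separately. Collecting the pieces yields $K\lesssim 1$ uniformly in $\lambda$, which together with the two previous steps proves the lemma.
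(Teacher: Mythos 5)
Your proposal is correct and follows essentially the same route as the paper: symmetrize to a single scalar product, strip $\psi_1,\psi_2$ with Cauchy--Schwarz, and reduce to a scalar kernel estimate that is saved by the bulk restriction. The paper arrives at the same place by Cauchy--Schwarz on $\mu_{n+1}$ with a weight $|p_{[1:n+1\setminus\ell]}|/|p_{[1:n+1\setminus(\{1,2\}\setminus\ell)]}|$ that cancels across the two factors; your variant uses the kernel $|W|$ itself as the weight and then an extra trivial Cauchy--Schwarz in $p_{1:n-1}$. These are cosmetic differences.

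One small but worthwhile improvement: your kernel estimate is correct but unnecessarily hand-wavy. Rather than the dyadic decomposition, note that the two bulk constraints $B(q,r+p)=B(p,r+q)=1$ give, via~\eqref{e:BNregionsBound}, $|r+q+p|\gtrsim\max\{|q|,|p|\}$, hence $\lambda+\tfrac12|r+q+p|^2\gtrsim\lambda+|q|^2+|p|^2$; then
\begin{equ}
K \;\lesssim\; \sup_{p}\, |p|\int_{\bbR^2}\frac{\VHat(q)}{|q|\,(\lambda+|q|^2+|p|^2)}\,\dd q \;\lesssim\;1
\end{equ}
is exactly Lemma~\ref{lem:4} with the parameter $r$ there set to $0$, uniformly in $\lambda\in(0,1)$. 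This is in fact what the paper's proof does once it has observed $|p_{[1:n+1]}|^2\gtrsim|p_1|^2+|p_{[2:n+1]}|^2$ on $B$, and it avoids the ``delicate sub-case'' discussion entirely, while your structural observation about the nuisance region remains valid and is the right intuition for why the split is necessary.
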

\begin{proof}
A simple change of variables shows that 
the scalar product in the sum does not depend on the specific values of $i$ and $j$, as long as they are different. 
Hence, 
\begin{equ}
\sum_{i\neq j} \langle \phi_1^B[i],\,\phi_2^B[j]\rangle=n(n+1)\langle \phi_1^B[1],\,\phi_2^B[2]\rangle 
\end{equ}
and we are left to control the absolute value of the right hand side. 
Thanks to the explicit expression in~\eqref{eq:AR8}, we have 
\begin{equs}
n(n+1)\big\lvert\langle \phi_1^B[1],\,&\phi_2^B[2]\rangle\big\rvert \leq 2\gamma^2 \int \Phi_1[1](p_{1:n+1})\Phi_2[2](p_{1:n+1})\mu_{n+1}(\dd p_{1:n+1})\\
&\leq 2\gamma^2 \prod_{\ell=1,2}\Big( \int \frac{|p_{[1:n+1\setminus \ell]}|}{|p_{[1:n+1\setminus (\{1,2\}\setminus \ell)]}|} \Phi_\ell[\ell]^2 \mu_{n+1}(\dd p_{1:n+1}) \Big)^{1/2} 
\end{equs}
where, in the second step we applied Cauchy-Schwarz, and in the first the functions $\Phi_\ell[i]$ for $\ell=1,2$ and 
$i=1,\dots,n+1$ are given by
\begin{equ}
\Phi_\ell[i](p_{1:n+1})\eqdef \frac{|p_i| B(p_i, p_{[1:n+1\setminus i]}) }{(\lambda+|p_{1:n+1}|^2)^{\half}} |\psi_\ell(p_{1:n+1\setminus i})|\,.
\end{equ}
Since the two factors above can be treated in exactly the same way, we consider only the case $\ell=1$. 
Note that if $(p_1,p_{[2:n+1]})\in(\R^2)^2$ are such that $B(p_1, p_{[2:n+1]})=1$ then necessarily
\begin{equation*}
|p_{[1:n+1]}|^2 \gtrsim |p_1|^2+|p_{[2:(n+1)]}|^2. 
\end{equation*}
Therefore, we get 
\begin{equs}
&\int \frac{|p_{[2:n+1]}|}{|p_{[1:n+1\setminus 2]}|} \Phi_1[1](p_{1:n+1})^2 \mu_{n+1}(\dd p_{1:n+1}) \\
&\lesssim \int \frac{|p_{[2:n+1]}|}{|p_{[1:n+1\setminus 2]}|} \frac{|p_1|^2|\psi_1(p_{2:(n+1)})|^2}{\lambda+|p_1|^2+|p_{[2:n+1]}|^2}\mu_{n+1}(\dd p_{1:n+1}) \\
&= (n+1) \int\mu_{n}(\dd p_{2:n+1}) |\psi_1(p_{2:n+1})|^2  \int \frac{\VHat(p_1)|p_{[2:n+1]}| \dd p_1}{|p_{[1:n+1\setminus 2]}| (\lambda+|p_1|^2+|p_{[2:(n+1)]}|^2)} 
\end{equs}
and, by Lemma~\ref{lem:4}, it follows that the inner integral is uniformly bounded in $\lambda$ and $p_{[2:n+1]}$, 
from which the statement follows at once. 
\end{proof}

\begin{proof}[of Lemma~\ref{lem:3}]

For $\ell=1,2$ and $i=1,...,n+1$, let $\phi^B_\ell[i]\in\fock_{n+1}$ be as in the proof of Lemma \ref{lem:6} with $\calT = \calS\calG^\lambda$. 
Notice that, since $(\calA^B_+)^* = -\calA^B_-$, the left-hand side of~\eqref{eq:7} is given by 
\begin{equs}
|\langle\psi_1, \calS^{-\frac12}\calR_\lambda\calS^{-\frac12}\psi_2\rangle|&=|\langle\psi_1,\big[-\calS^{-\frac12}\calA^B_-(\lambda+\calS+\calS\calG^\lambda)^{-1}\calA^B_+\calS^{-\frac12} - \calG^\lambda\big]\psi_2\rangle|\\
&=|\langle\phi^B_1,\phi^B_2\rangle-\langle\psi_1, \calG^\lambda\psi_2\rangle|\\
&\leq|\langle \psi_1, \cM^\lambda\psi_2\rangle-\langle\psi_1, \calG^\lambda\psi_2\rangle|+\sum_{i\neq j} |\langle \phi_1^B[i],\,\phi_2^B[j]\rangle|
\end{equs}
where in the second step we used that $\calA_-=-\calA_+^*$ and the definition of $\phi^B$ in Lemma~\ref{lem:DiOff} 
with $\ST=\cG^\lambda$ and $R$ being the bulk region $B=B^{\frac13}$ in~\eqref{e:BNregions}, 
while in the last,~\eqref{eq:DiOff} for $\cM^\lambda$ the diagonal operator 
with multiplier $\cm^\lambda$ defined as in~\eqref{eq:DiKer} with $\tau$ 
given by $\tau(p)=\tfrac12|p|^2\cg^\lambda(p)$, $p\in\R^2$ (see~\eqref{e:ApproxFPO}). 
Now, Lemma~\ref{lem:6} implies that the off-diagonal term is bounded above by the right hand side of~\eqref{eq:7}. 
For the other, it suffices to show that 
\begin{equ}\label{eq:10}
\sup_{p}|\cm^\lambda(p)-\cg^\lambda(p)|\lesssim \gamma^2
\end{equ}
which in turn is proven in Lemma~\ref{lem:ReplEst}. 
Therefore, the proof of the replacement lemma is concluded. 
\end{proof}

\section{Estimates for the approximate solutions}
\label{sec:appr-solv-gener}

In this section, we derive crucial estimates on the solution to the replacement equation $v^{\lambda,n}$ 
given in Definition \ref{def:3} with {\it generic} input $\frakf\in\fock_{m}$, $m\in\N$. 
These estimates are essential not only to verify conditions~\eqref{eq:L2} and~\eqref{eq:ApproxH1} of Theorem \ref{thm:3}, 
where the family $(v^{\lambda,n} : \lambda \in (0,1), n \in \bbN)$ corresponds to the choice $\frakf = \coup f_1$, 
but also in the proof of Theorem \ref{thm:2}, for which a more general $\frakf$ will be needed. 

More precisely, our goal is to prove the following proposition. We say that a function $\frakf\in\fock_m$ satisfies the reverse triangle inequality with respect to $c\in(0,\half)$, when it holds that
\begin{equ}\label{a:RTI}
\text{supp}\{\frakf\} \subseteq \Big\{\Big|\sum_{j=1}^{m}p_j\Big| \ge c\sum_{j=1}^{m}|p_j|\Big\}\,.
\end{equ}

\begin{proposition}\label{p:MainEstimates}
For $m\in\N$, $c\in(0,\half)$ and $n\geq m$, there exists a constant $C=C(n)>0$ and $\bar\lambda=\bar\lambda(c,n)\in(0,1)$ such that for all $\lambda\in (0,\bar\lambda)$ and for all $\frakf\in\fock_m$ satisfying the reverse triangle inequality with respect to $c$, it holds that 
\begin{align}
\lambda\|v^{\lambda,n} - (\lambda + \calS + \calS\calG^\lambda)^{-1} \frakf \|^2&\lesssim \gamma^2 n \|(\lambda+\calS)^{-\half}\frakf\|^2\label{eq:L2Bound}\\
\lVert (\lambda+\calS)^{-\half} \big[ (\lambda-\calL)v^{\lambda, n} - \frakf \big] \rVert^2&\lesssim n\vertiii{ \frakf }^2 +\Big(\frac1{n}+\gamma^2 C\Big) \|(\lambda+\calS)^{-\half}\frakf\|^2\label{eq:ApproxH1Bound}
\end{align}
where $v^{\lambda,n}$ is the solution of the replacement equation in Definition~\ref{def:3} with input $\frakf$, and the seminorm on the right hand side is defined as 
\begin{equs}[e:VanNorm]
\vertiii{ \frakf }^2 \eqdef& \lVert (\lambda+\calS)^{-\half} \calA^N  (\lambda + \calS + \calS\calG^\lambda)^{-1} \frakf \rVert^2 \\
&+ \|(\lambda+\calS)^{-\half}\calA^B_-(\lambda+\calS+\calS\calG^\lambda)^{-1}\frakf\|^2.
\end{equs}
\end{proposition}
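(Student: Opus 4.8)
Write $K_\lambda\eqdef(\lambda+\calS+\calS\calG^\lambda)^{-1}$. Unfolding the recursion \eqref{eq:114}, the solution of the replacement equation with input $\frakf\in\fock_m$ is $v^{\lambda,n}=\sum_{j=m}^n v^{\lambda,n}_j$ with $v^{\lambda,n}_m=K_\lambda\frakf$ and $v^{\lambda,n}_j=(K_\lambda\calA^B_+)^{\,j-m}K_\lambda\frakf\in\fock_j$ for $m<j\le n$; since these components live in mutually orthogonal chaoses, all the $L^2$-norms below split into sums over $j$. I would begin by recording two preliminary facts. First, the \emph{a priori} bounds obtained by iterating the graded sector condition \eqref{eq:AR7} with $\calT=\calS\calG^\lambda$: from $v^{\lambda,n}_j=K_\lambda\calA^B_+ v^{\lambda,n}_{j-1}$ one gets $\|(\lambda+\calS+\calS\calG^\lambda)^{1/2}v^{\lambda,n}_j\|\lesssim\sqrt{j}\,\|\calS^{1/2}v^{\lambda,n}_{j-1}\|$, whence, together with $\|\calS^{1/2}v^{\lambda,n}_m\|\le\|(\lambda+\calS+\calS\calG^\lambda)^{-1/2}\frakf\|\le\|(\lambda+\calS)^{-1/2}\frakf\|$, the uniform-in-$\lambda$ estimate $\|\calS^{1/2}v^{\lambda,n}_j\|^2\le C(n)\|(\lambda+\calS)^{-1/2}\frakf\|^2$ for every $j\le n$. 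Second, the reverse triangle inequality \eqref{a:RTI} is preserved by the recursion: since $\calA^B_+$ couples a new momentum $p_i$ to the sum $p_{[1:k+1\setminus i]}$ only through the bulk region $B$, where $|p_{[1:k+1]}|$ is comparable to $|p_{[1:k+1\setminus i]}|$, one checks by induction that $v^{\lambda,n}_j$ satisfies \eqref{a:RTI} with a constant $c_j=c_j(c)>0$. This localisation of the supports is what makes the finer estimates work and is the source of the dependence of $C$ and $\bar\lambda$ on $c$ (through $c_n$) and on $n$.

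The estimate \eqref{eq:L2Bound} concerns $v^{\lambda,n}-K_\lambda\frakf=\sum_{j>m}v^{\lambda,n}_j$, and the key point is that each term with $j>m$ carries an extra factor of $\coup$ coming from $\calA^B_+$. To extract it I would bound $\lambda\|v^{\lambda,n}_j\|^2$ using the operator inequality $\lambda(\lambda+\calS+\calS\calG^\lambda)^{-2}\le\tfrac14(\calS+\calS\calG^\lambda)^{-1}$, so that $\lambda\|v^{\lambda,n}_j\|^2\le\tfrac14\|(\calS+\calS\calG^\lambda)^{-1/2}\calA^B_+ v^{\lambda,n}_{j-1}\|^2$, and then estimate the right-hand side using the integral bounds of Appendix \ref{sec:integral-estimates} together with the support localisation of $v^{\lambda,n}_{j-1}$; here — unlike in the energy estimates, where $\coup^2$ is compensated by a logarithm — the $\coup^2$-prefactor of $\calA^B_+$ survives, producing the claimed factor in \eqref{eq:L2Bound}. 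Summing the orthogonal pieces over $j=m+1,\dots,n$ then yields $\lambda\|v^{\lambda,n}-K_\lambda\frakf\|^2\lesssim\coup^2 n\,\|(\lambda+\calS)^{-1/2}\frakf\|^2$.

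For \eqref{eq:ApproxH1Bound} I would expand the residual $\calE\eqdef(\lambda-\calL)v^{\lambda,n}-\frakf$ chaos by chaos using $\calL=-\calS+\calA_++\calA_-$ and $\calA_\pm=\calA^B_\pm+\calA^N_\pm$. At each level the replacement equation \eqref{eq:114} absorbs the $\calS$-component together with the $\calA^B_+$-component, and the identity $\calA^B_- v^{\lambda,n}_{k+1}=-(\calR_\lambda+\calS\calG^\lambda)v^{\lambda,n}_k$ — read off \eqref{e:Rlambda}, and with $v^{\lambda,n}_{n+1}$ replaced by the virtual iterate $K_\lambda\calA^B_+ v^{\lambda,n}_n$ at the truncation level — turns the $\calA^B_-$-component into a $\calR_\lambda$-term. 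What survives on each chaos is a $\calR_\lambda v^{\lambda,n}_k$-term, nuisance-region terms $\calA^N_\pm v^{\lambda,n}_\bullet$, and, at the bottom chaos $m-1$, the term $\calA^B_- v^{\lambda,n}_m$. I would then estimate chaos by chaos in $\|(\lambda+\calS)^{-1/2}\,\cdot\,\|$, using $(\lambda+\calS)^{-1}\le\calS^{-1}$ and $\|(\lambda+\calS)^{-1/2}(\lambda+\calS+\calS\calG^\lambda)^{1/2}\|\le\varsigma$ (valid since the multiplier $\cg^\lambda$ of $\calG^\lambda$ is $\le\sigma^2(\alpha)$): the $\calR_\lambda v^{\lambda,n}_k$-terms are handled by the Replacement Lemma \eqref{eq:7}, $\|(\lambda+\calS)^{-1/2}\calR_\lambda v^{\lambda,n}_k\|\le\|\calS^{-1/2}\calR_\lambda\calS^{-1/2}\|\,\|\calS^{1/2}v^{\lambda,n}_k\|\lesssim\coup^2 n\,\|\calS^{1/2}v^{\lambda,n}_k\|$, and by the \emph{a priori} bounds contribute $\lesssim\coup^2 C(n)\|(\lambda+\calS)^{-1/2}\frakf\|^2$; the nuisance terms $\calA^N_\pm v^{\lambda,n}_k$ — by the nuisance-region estimates together with the support localisation \eqref{a:RTI} — are each controlled by $\vertiii{\frakf}^2$ up to $\coup^2$-corrections, so that summing over the $O(n)$ levels produces the $n\vertiii{\frakf}^2$ and, through the crude level-counting and the treatment of the truncation, the residual $\tfrac1n$-term; the remaining term $\calA^B_- v^{\lambda,n}_m=\calA^B_- K_\lambda\frakf$ is by definition the second summand of $\vertiii{\frakf}^2$.

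The step I expect to be the main obstacle is precisely this chaos-by-chaos accounting of $\calE$: checking that at every level, including the truncation level $n$, the bulk pieces telescope against the replacement equation and leave behind only $\calR_\lambda$-terms, nuisance terms and boundary terms; and, above all, showing that the nuisance contributions $\calA^N_\pm v^{\lambda,n}_k$ — which \emph{a priori}, via the graded sector condition, are of the same order as the main term — are in fact controlled by $\vertiii{\frakf}^2$ and $\coup^2$-small quantities, which requires combining the \emph{a priori} bounds with the propagation of \eqref{a:RTI}. As throughout, the whole scheme rests on the Replacement Lemma \ref{lem:3}: it is what makes $\calS\calG^\lambda$ a genuine approximate fixed point of \eqref{e:FPO}, with error operator $\calR_\lambda$ of norm $O(\coup^2 n)$ on each chaos, so that — thanks to the weak coupling \eqref{eq:Weak} — all the $\coup^2$-small contributions above are actually small in the limit $\lambda\to0$.
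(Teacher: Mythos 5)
Your overall scheme is aligned with the paper's — unfold the recursion, use Lemma~\ref{lem:3} to turn $\calA^B_-$ into $\calR_\lambda$ plus $\calS\calG^\lambda$, propagate the reverse-triangle inequality (this matches Lemma~\ref{lem:RTI} and Corollary~\ref{cor:11}) so that the nuisance estimates of Section~\ref{sec:contr-nusi-regi} apply, and feed everything into $(\lambda+\calS)^{-1/2}[\cdot]$. The paper collapses your "chaos-by-chaos accounting" into the single global identity \eqref{eq:27}, but this is a cosmetic difference.

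The genuine gap is in the source of the $\tfrac1n$-factor in \eqref{eq:ApproxH1Bound}. Your first preliminary fact iterates the graded sector condition \eqref{eq:AR7} and produces $\|\calS^{1/2}v^{\lambda,n}_j\|^2 \le C(n)\|(\lambda+\calS)^{-1/2}\frakf\|^2$, but iterating $\|(\lambda+\calS+\calS\calG^\lambda)^{1/2}v_j\|\lesssim\sqrt{j}\,\|\calS^{1/2}v_{j-1}\|$ gives $C(n)\sim n!/(m-1)!$ — a constant that is uniform in $\lambda$ but \emph{grows} with $j$, so it carries no information about decay of the top chaos. Yet the truncation term contributes
\begin{equation*}
\lVert (\lambda+\calS)^{-\half}(\calS\calG^\lambda+\calA^B_+)v^{\lambda,n}_n\rVert^2 \lesssim (n+1)\,\lVert(\lambda+\calS)^{\half}v^{\lambda,n}_n\rVert^2\,,
\end{equation*}
so to land the claimed $\tfrac1n\|(\lambda+\calS)^{-1/2}\frakf\|^2$ you need $\|(\lambda+\calS)^{1/2}v^{\lambda,n}_n\|^2\lesssim n^{-2}\|(\lambda+\calS)^{-1/2}\frakf\|^2 + \vertiii{\frakf}^2$. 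This polynomial decay in the chaos index cannot come from "crude level-counting"; it is exactly the content of the weighted a priori estimate \eqref{eq:77} with $k\ge1$ (Lemma~\ref{lem:17}), which the paper obtains by comparing $v^{\lambda,n}$ to the truncated resolvent solution $u^{\lambda,n}$ of \eqref{eq:5} and importing the decay from Lemma~\ref{lem:18}. Without that comparison your argument produces a bound $C(n)\|(\lambda+\calS)^{-1/2}\frakf\|^2$ for the truncation term, with $C(n)$ not small, which does not let $n\to\infty$ control the error — this is precisely what \eqref{eq:ApproxH1} in Theorem~\ref{thm:3} needs. The same $k=1$ estimate also feeds the $\calR_\lambda v^{\lambda,n-1}$ bound; the $k=0$ version there would be fine for $C(n)\gamma^2$, but the top-chaos term is where the gap bites.

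A smaller point on \eqref{eq:L2Bound}: the operator inequality $\lambda(\lambda+\calS+\calS\calG^\lambda)^{-2}\le\tfrac14(\calS+\calS\calG^\lambda)^{-1}$ is true, but applying it \emph{before} integrating over the fresh momentum is lossy. After it, one is reduced to bounding $\|(\calS+\calS\calG^\lambda)^{-1/2}\calA^B_+ v_{j-1}\|^2$, whose diagonal kernel is $\sim\gamma^2\int_{|p+q|\ge\kappa|q|}\VHat(p)|p+q|^{-2}\dd p \sim \gamma^2\log(1/|q|)$, which is not $O(\gamma^2)$ uniformly in $q$. The paper's Lemma~\ref{lem:L2control} keeps $\lambda$ under the integral and bounds $\lambda\int\VHat(p)(\lambda+|p+q|^2)^{-2}\dd p\lesssim1$ \emph{uniformly in $q$} before touching the measure; the uniform bound is what makes the $\gamma^2$ survive without a logarithmic loss. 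Your intuition about where the $\gamma^2$ comes from is right, but the specific intermediate step would need to be reorganised to avoid that log.
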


Before delving into the details, let us make a few comments. 
As we will see in Section~\ref{sec:remainder-terms}, the derivation of~\eqref{eq:L2Bound} 
is significantly simpler as it is a direct consequence of~\eqref{eq:114} and the explicit 
Fourier representation of the approximate fixed point operator $\calS\calG^\lambda$ 
(see Lemma~\ref{lem:L2control} below). 
On the other hand,~\eqref{eq:ApproxH1Bound} requires a much finer control. 
To see this, note that 
\begin{equation}
\label{eq:27}
\begin{split}
(\lambda-\calL)v^{\lambda,n}-\frakf= & - \calA^B_-(\lambda+\calS+\calS\calG^\lambda)^{-1}\frakf \\
&+ \calR_\lambda v^{\lambda,n-1}-(\calS\calG^\lambda + \calA^B_+)v^{\lambda,n}_n-\calA^Nv^{\lambda,n}
\end{split}
\end{equation}
where $\calR_\lambda$ is given by~\eqref{e:Rlambda}.  
While the first term at the right hand side only depends on $\frakf$ (and is thus inserted in the norm~\eqref{e:VanNorm}) 
the others require a special treatment. To control $\calR_\lambda v^{\lambda,n-1}$, we use the   
Replacement Lemma~\ref{lem:3}. In view of the graded sector condition~\eqref{eq:AR7}, 
$(\calS\calG^\lambda + \calA^B_+)v^{\lambda,n}_n$ is bounded but to see its smallness we need to leverage 
the fact that $v^{\lambda,n}_n\in\fock_n$, i.e. show that its norm 
decays (at least polynomially) with $n$. 

The term which creates the most difficulties though is $\calA^Nv^{\lambda,n}$, whose analysis requires a deeper  
understanding of how $\calA$ restricted to the nuisance region $N$ acts on $v^{\lambda,n}$. In the language of Lemma~\ref{lem:DiOff}, 
it is {\it not the case that the off-diagonal terms are lower order}. Contrary to what happens for $\calA^B$, 
diagonal and off-diagonals have the same order (see Appendix~\ref{a:nuisance}). That said, we will show that $\|(\lambda+\calS)^{-1/2}\calA^N\phi\|$ is indeed negligible due to cancellation of these terms, but only if $\phi$ has a suitable structure which ultimately lead us to impose condition~\eqref{a:RTI} on $\frakf$. 

\medskip

The rest of the section is organised as follows. In Section~\ref{sec:contr-nusi-regi}, we study the 
nuisance region and derive a suitable bound on $\calA^Nv^{\lambda,n}$. This is then 
used in Section~\ref{sec:apriori} to obtain weighted a priori estimates on the norms of $v^{\lambda,n}$. 
At last, in Section~\ref{sec:remainder-terms}, we put together the results in the previous sections and 
complete the proof of Proposition~\ref{p:MainEstimates}.

\subsection{Controlling the nuisance region $N$}
\label{sec:contr-nusi-regi}

The main result of this section is the following, which we shall later apply to the case $\psi = \calS^\half v^{\lambda,n}_j$.

\begin{lemma}
\label{lem:nuisance}
Let $\lambda \in (0,1)$ and $n \in \bbN, n \ge 2$. Assume $\psi \in \fock_{n-1}$ satisfies~\eqref{a:RTI} 
with respect to some $c\in(0,\tfrac12)$. 
Let $\phi^B = (\lambda+\calS+\calS\calG^\lambda)^{-\half}\calA^B_+ \calS^{-\half}\psi$ be as in 
Lemma \ref{lem:DiOff} under the choice $R = B$ and $\calT = \calS\calG^\lambda$. 
Then, for $\sigma\in\{+,-\}$ it holds that 
\begin{equation}
\label{eq:AR0}
\lVert (\lambda+\calS)^{-\half} \calA_\sigma^N (\lambda+\calS+\calS\calG^\lambda)^{-\half} \phi^B \rVert^2 \lesssim \coup^2 n^2   c^{-4} \lVert \psi \rVert^2\,.
\end{equation}
\end{lemma}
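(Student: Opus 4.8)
Write $\psi' \eqdef \calS^{-\half}\psi \in \fock_{n-1}$; since $\calS^{-\half}$ is diagonal in the total momentum $p_{[1:n-1]}$, $\psi'$ still satisfies \eqref{a:RTI} with the same constant $c$, and $\phi^B = (\lambda+\calS+\calS\calG^\lambda)^{-\half}\calA^B_+\psi'$. Thus the quantity to bound is $\|(\lambda+\calS)^{-\half}\calA^N_\sigma(\lambda+\calS+\calS\calG^\lambda)^{-1}\calA^B_+\psi'\|^2$. The plan is to open everything up in Fourier variables: decompose $\calA^B_+ = \sum_k\genAB{k}$ into the constituents of \eqref{eq:ARi}, and likewise split $\calA^N_\sigma$ (for $\sigma=+$ via $\sum_i\genAN{i}$; for $\sigma=-$ there is a single integral but it expands against the $\genAB{k}$'s). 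This turns $\calA^N_\sigma(\lambda+\calS+\calS\calG^\lambda)^{-1}\calA^B_+\psi'$ into a sum of at most $O(n^2)$ explicit terms, each a product of momentum factors, the indicators $N(\cdot,\cdot)$ and $B(\cdot,\cdot)$, the multiplier $d(p)^2 \eqdef (\lambda+\thalf|p|^2(1+\cg^\lambda(p)))^{-1}$ (recall \eqref{e:ApproxFPO}), and a translate of $\psi'$; after a change of variables the summands fall into boundedly many equivalence classes, so Cauchy--Schwarz over them costs a factor $O(n^2)$, which is exactly the $n^2$ in \eqref{eq:AR0}. It then remains to bound a single representative term by $\coup^2 c^{-4}\|\psi\|^2$ up to harmless combinatorial factors coming from $\ffock{n}$.

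\textbf{The case $\sigma=-$.} Here the first step is to isolate the ``diagonal'' contribution, in which the momentum created by $\calA^B_+$ is the very one integrated out by $\calA^N_-$: this summand carries the factor $N(q,p_{[1:n-1]})\,B(q,p_{[1:n-1]}) \equiv 0$, because $B^{1/3}$ and $N^{1/3}$ partition $(\R^2)^2$ (see \eqref{e:BNregions}), so it vanishes identically. The remaining $n-1$ cross terms are of the form $\coup^2\int \frac{\VHat(q)}{|q|^2}N(q,p_{[1:n-1]})(q\cdot p_{[1:n-1]})\,d(q+p_{[1:n-1]})^2\,B(p_j,q+p_{[1:n-1\setminus j]})(p_j\cdot(q+p_{[1:n-1\setminus j]}))\,\psi'(q,p_{1:n-1\setminus j})\,\dd q$. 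For each I would apply Cauchy--Schwarz in $q$, pairing $(\VHat(q)/|q|^2)^{\half}|\psi(q,\cdot)|$ against the rest, so that the $q$-integral of the first factor reconstructs a $\|\psi\|$-type quantity, and then estimate the $q$-integral of the remaining factors. This last integral is controlled using the nuisance geometry $|q+p_{[1:n-1]}| \lesssim \min(|q|,|p_{[1:n-1]}|)$, the companion bound $|q+p_{[1:n-1\setminus j]}| \le 3|q+p_{[1:n-1]}|$ forced by $B$, the explicit form of $\cg^\lambda = g\circ\ell^\lambda(\thalf|\cdot|^2)$ near the origin (where, thanks to weak coupling, $\ell^\lambda \approx \alpha^2$, so $1+\cg^\lambda$ is bounded above and below), and the integral estimates of Appendix~\ref{sec:integral-estimates} --- in particular the bound \eqref{eq:16}, which absorbs one factor $\coup^2$ into a logarithm, and Lemma~\ref{lem:4}. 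The hypothesis \eqref{a:RTI} enters precisely where one needs $|p_{[1:n-1\setminus j]}| \ge c\sum_{\ell\neq j}|p_\ell|$ to control the ratios produced by the change of variables, and this is what generates the $c^{-4}$.

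\textbf{The case $\sigma=+$.} Now $\calA^B_+$ and $\calA^N_+$ both raise the chaos, so there is no ``create-then-destroy'' collision and no single vanishing term; instead $\calA^N_+ (\lambda+\calS+\calS\calG^\lambda)^{-1}\calA^B_+\psi'$ is a double sum over distinct indices $i\ne k$ and one must show each term is individually small. The mechanism is that the indicator $N(p_i,p_{[1:n+1\setminus i]})$ forces $|p_{[1:n+1]}| < \tfrac13|p_{[1:n+1\setminus i]}|$, making the weight $(\lambda+\thalf|p_{[1:n+1]}|^2)^{-1}$ large exactly where $|p_{[1:n+1\setminus i]}|$ is large, and this is compensated by the middle resolvent $d(p_{[1:n+1\setminus i]})^2$ together with the geometry of $B(p_k,p_{[1:n+1\setminus\{i,k\}]})$ and of \eqref{a:RTI}, which keep $p_k$ and the inner momenta comparable. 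Concretely one integrates out the two momenta $p_i,p_k$ not appearing in $\psi$ against $\VHat/|\cdot|^2$: the $\psi'$-factor $|p_{[1:n+1\setminus\{i,k\}]}|^{-2}$ cancels against $|p_k\cdot p_{[1:n+1\setminus\{i,k\}]}|^2/|p_k|^2$, and the residual $p_i$-integral is again of the type handled by \eqref{eq:16} and Appendix~\ref{sec:integral-estimates}, producing $\coup^2 c^{-4}\|\psi\|^2$. (Alternatively, this case can be reduced to a $\calA^B_-(\lambda+\calS+\calS\calG^\lambda)^{-1}\calA^N_-$-estimate by duality, writing the norm as a supremum of $|\langle \calS^{-\half}\calA^B_-(\lambda+\calS+\calS\calG^\lambda)^{-1}\calA^N_-(\lambda+\calS)^{-\half}g,\ \psi\rangle|$ over $\|g\|\le1$.)

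\textbf{Main obstacle.} The algebraic observations --- the constituent expansion and, for $\sigma=-$, the vanishing of the diagonal term --- are immediate; the real work is the quantitative part. One must track the $n!$-type factors in $\ffock{n}$ so that the final bound genuinely features $\|\psi\|$ on $\fock_{n-1}$ (and not a mismatched norm), keep $c$ visible through every change of variables so it appears only as $c^{-4}$, and verify that each scalar integral converges uniformly in $\lambda\in(0,1)$ and $n$. The delicate point is that, in the nuisance region, the $d$-factor from the middle resolvent behaves like $\lambda^{-1}$, so a crude modulus bound would blow up; one has to exploit the precise interplay between the $N$-constraint on $q+p_{[1:n-1]}$, the decay of $\cg^\lambda$ at large momenta, and the support condition \eqref{a:RTI}. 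I expect this --- rather than any conceptual step --- to be the bottleneck, and in particular the $\sigma=+$ case, where there is no outright cancellation and smallness must be bought entirely from the nuisance constraint against $d^2$; the contrast with the benign off-diagonal behaviour in the bulk region is exactly what is laid out in Appendix~\ref{a:nuisance}.
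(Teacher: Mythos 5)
Your setup --- expanding $\calA^B_+$ into the constituents $\genAB{k}$ of \eqref{eq:ARi}, doing the same for $\calA^N_+$, and then controlling the resulting double sum by change of variables and Cauchy--Schwarz --- is indeed the framework of the paper's proof for $\sigma=+$, and your observations about the $\mu_n$-bookkeeping and the role of \eqref{a:RTI} in producing the $c^{-4}$ are sound. For $\sigma=-$ you take a genuinely different route: the vanishing of the ``create-then-destroy'' term (the momentum created by $\genAB{k}$ immediately integrated out by $\calA^N_-$, whose indicator $B(q,p_{[1:n-1]})\,N(q,p_{[1:n-1]})\equiv 0$) is a correct and pleasant observation, but the paper avoids the constituent expansion altogether for this case: it simply applies $\calA^N_-$ to $\phi^B$ as a black box, restricts to the region allowed by Lemma~\ref{lem:RTI}, integrates the resulting $q$-kernel over the nuisance annulus to get $c^{-2}$, and closes with the graded sector condition \eqref{eq:AR9} to bound $\|\phi^B\|\lesssim\sqrt{n}\,\|\psi\|$. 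Your variant would very likely work but requires more bookkeeping to reconstitute $\|\psi\|$ from the per-constituent pieces; the paper's version is shorter and sharper.

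However, for $\sigma=+$ there is a real gap. You assert that ``one must show each term is individually small'' and, in the closing paragraph, that in the $\sigma=+$ case ``there is no outright cancellation and smallness must be bought entirely from the nuisance constraint against $d^2$.'' This is false, and the missing point is precisely what makes the nuisance region dangerous. Decompose $\chi^N = \sum_{i,j}\chi^N[i,j]$ as you propose; the pairings $\langle\chi^N[1,2],\chi^N[1,j]\rangle$, $\langle\chi^N[1,2],\chi^N[2,j]\rangle$, $\langle\chi^N[1,2],\chi^N[j,k]\rangle$ with $j\ge3$ \emph{are} individually small (they are the paper's terms $\one,\two,\three$), but the ``fully aligned'' contribution $\langle\chi^N[1,2],\chi^N[1,2]+\chi^N[2,1]\rangle$ is not: a crude modulus bound on the two pieces separately produces an integral of the shape $\gamma^2\int \hat V(p_2)\,\tfrac{\log(1+|p_{[2:n+1]}|^2/\lambda)}{\lambda+|p_2|^2+|p_{[3:n+1]}|^2}\,\dd p_2$, which for small $|p_{[3:n+1]}|$ is of order $\gamma^2\log^2(1/\lambda)\sim\gamma^{-2}$ --- divergent rather than $O(1)$. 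The paper's Lemma~\ref{lem:35} (Appendix~\ref{app:EstNregion}) shows that the \emph{sum} of the two kernels inside the absolute value of $\calI$ in \eqref{e:IBoundFour} is $O(1)$ uniformly: in the nuisance region $p_1\approx -p_2$, so $p_1\cdot p_{[2:n+1]}\approx p_1\cdot p_2$ and $p_1\cdot p_{[3:n+1]}\approx -p_2\cdot p_{[3:n+1]}$, and the two summands nearly cancel; the residue is of order $|p_1+p_2|$, which is precisely what the nuisance constraint and the weight $(\lambda+|p_{[1:n+1]}|^2)^{-1}$ can absorb. Your proposal for $\sigma=+$ does not contain this cancellation, and a strategy that bounds the pairings term by term cannot yield \eqref{eq:AR0}. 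Your alternative duality reduction does not rescue this either: rewriting the norm as $\sup_{\|g\|\le1}|\langle\calS^{-\half}\calA^B_-(\lambda+\calS+\calS\calG^\lambda)^{-1}\calA^N_-(\lambda+\calS)^{-\half}g,\psi\rangle|$ and then estimating $\|T^*g\|$ uniformly in $g$ simply reproduces $\|T\|$, and you lose the ability to invoke the reverse triangle inequality \eqref{a:RTI}, which lives on $\psi$, not on the arbitrary $g$.
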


The bound in \eqref{eq:AR0} resembles a double application of the Graded Sector Condition Lemma \ref{lem:32} 
(hence the quadratic term $n^2$ at the right hand side), 
but with one crucial difference: we have gained a factor depending on the vanishing coupling constant $\coup$ 
which ensures that the contribution is indeed negligible. 

The reverse triangle inequality condition is not restrictive since, as we will see in Corollary~\ref{cor:11}, 
for $v^{\lambda,n}$ this is naturally induced via the repeated restriction to the bulk region $B$ 
in its recursive definition (see~\eqref{eq:114}) and the corresponding assumption placed on $\frakf$.

\begin{proof}[of Lemma~\ref{lem:nuisance} for $\sigma=+$]
Let $\chi^N \eqdef (\lambda+\calS)^{-\half} \calA^N_+ (\lambda+\calS+\calS\calG^\lambda)^{-\half} \phi^B$. 
We now expand both $\calA^N_+$ and $\calA^B_+$, in the definition of $\phi^B$, in terms of their 
constituents. More precisely, we write $\chi^N = \sum_{i,j} \chi^N[i, j]$, the sum running over 
$i\neq j\in\{1,\dots,n+1\}$, with $\chi^N[i, j]$ the term in which $p_i$ was created by $\calA^N_+$ and $p_j$ by 
$\calA^B_+$ and defined according to
\begin{equ}[e:chiNij]
\chi^N[i, j] \eqdef (\lambda+\calS)^{-\half} \genAN{i}(\lambda+\calS+\calS\calG^\lambda)^{-\half} \phi^B[j_i]
\end{equ}
where $j_i\eqdef j$ if $i>j$ and $j_i\eqdef j-1$ if $i\leq j$, and $\phi^B[\cdot]$ given by~\eqref{e:phii}. 
For later use, note that $\chi^N[i,j]$ and $\phi^B[j_i]$ have an explicit representation, which is 
\begin{align}
\chi^N[i, j](p_{1:n+1}) &= \frac{\coup}{n+1} \frac{N(p_i, p_{[1:n+1\setminus i]})(p_i \cdot p_{[1:n+1\setminus i]})\phi^B[j_i](p_{1:n+1\setminus i})}{(\lambda+\half|p_{[1:n+1]}|^2)^{\half}(\lambda+\half|p_{[1:n+1\setminus i]}|^2[1+\cg^\lambda])^\half}  \label{eq:54} \\
\phi^B[j_i](p_{1:n+1\setminus i}) &=  \frac{\sqrt{2}\coup}{n} \frac{B(p_j, p_{[1:n+1\setminus i,j]})(p_j \cdot p_{[1:n+1\setminus i,j]})\psi(p_{1:n+1\setminus i,j})}{(\lambda+\half|p_{[1:n+1\setminus i]}|^2[1+\cg^\lambda])^\half|p_{[1:n+1\setminus i,j]}|}  \label{eq:55}
\end{align}
where, to shorten the notation, we omitted the argument of $\cg^\lambda$. Furthermore, the following basic 
bounds, which will be used throughout the proof, hold
\begin{align}
|\chi^N[i, j](p_{1:n+1})| &\lesssim \frac{\coup}{n+1} \frac{N(p_i, p_{[1:n+1\setminus i]})|p_i| |\phi^B[j_i](p_{1:n+1\setminus i})|}{(\lambda+|p_{[1:n+1]}|^2)^\half}\,, \label{eq:56} \\
|\phi^B[j_i](p_{1:n+1\setminus i})| &\lesssim  \frac{\coup}{n} \frac{B(p_j, p_{[1:n+1\setminus i,j]})|p_j| |\psi(p_{1:n+1\setminus i,j})|}{(\lambda+|p_j|^2 + |p_{[1:n+1\setminus i,j]}|^2)^\half}\,. \label{eq:57}
\end{align}
where \eqref{eq:57} follows from invoking the bulk region $B$.

Our goal is to estimate the norm of $\chi^N$. By a simple change of variables, we have 
\begin{equation}
\label{eq:15}
\lVert \chi^N \rVert^2 = n(n+1)\big[ \one + \two + \three + \four \big]
\end{equation}
where
\begin{align*}
\one &= \sum_{j = 3}^{n+1}\langle\chi^N[1,2], \chi^N[1,j] \rangle\,,\qquad\qquad\two = \sum_{j = 3}^{n+1}\langle\chi^N[1,2], \chi^N[2,j] \rangle\,, \\
\three &= \sum_{j =3}^{n+1}\sum_{k\neq j, k=1}^{n+1} \langle\chi^N[1,2], \chi^N[j,k] \rangle\,, \,\,\, \four = \langle\chi^N[1,2], \chi^N[1,2]+\chi^N[2,1]\rangle. 
\end{align*}
We will bound each of the terms $\one-\four$ separately and ultimately deduce that~\eqref{eq:AR0} holds for $\calA^N_+$. 
\medskip

\noindent{\it Bound on $\one$ and $\two$.} We claim that, uniformly over $\lambda\in(0,1)$ and $n\in\N$, we have  
\begin{equation} \label{eq:44}
\one\vee\two\lesssim  \frac{\coup^2}{n} \lVert \psi \rVert^2\,.
\end{equation}
To see~\eqref{eq:44} for $\one$, we bound $\chi^N[1,2]$ and $ \chi^N[1,j]$ in their inner product by~\eqref{eq:56} and 
the indicator of the nuisance region by $1$, thus obtaining
\begin{equs}
&|\langle\chi^N[1,2], \chi^N[1,j] \rangle|\\
&\lesssim \frac{\gamma^2}{(n+1)^2}\int \frac{|p_1|^2 |\phi^B[1](p_{2:n+1})\phi^B[j-1](p_{2:n+1})| }{(\lambda+|p_{[1:n+1]}|^2)} \mu_{n+1}( \dd p _{1:n+1})\,,\\
&\leq \frac{1}{(n+1)}\int  |\phi^B[1](p_{2:n+1})\phi^B[j-1](p_{2:n+1})|\mu_n(\dd p_{1:n}) 
\end{equs}
where we used the definition of $\mu_n$ in~\eqref{e:mun}, and bounded the integral over $p_1$ as in~\eqref{eq:16}. 
Since $j> 2$, the last expression can be bounded as the off-diagonal term in Lemma~\ref{lem:6} 
and~\eqref{eq:44} easily follows upon summing up over $j$. 

We now turn to $\two$. Let $R\subset \R^{2(n+1)}$ be the region in which both 
$N(p_1, p_{[2:n+1]})$ and $N(p_2, p_{[1:n+1\setminus 2]})$ are $1$. Then, applying~\eqref{eq:56} and~\eqref{eq:57} 
to $\chi^N[1,2]$ and $ \chi^N[2,j]$ we see that their scalar product is bounded by 
\begin{equ}
\frac{\coup^4}{n^2(n+1)^2} \int_R \frac{|p_{[3:n+1]}| |p_{[1:n+1\setminus 2,j]}|}{\lambda+|p_{[1:n+1]}|^2} \Phi(p_{1:n+1}) \Phi^\prime(p_{1:n+1}) \mu_{n+1}(\dd p _{1:n+1})
\end{equ}
where $\Phi$ and $\Phi'$ are given by
\begin{equs}
\Phi(p_{1:n+1}) &\eqdef \frac{|p_1||p_2| |\psi(p_{3:n+1})|}{|p_{[1:n+1\setminus 2,j]}| (\lambda+|p_2|^2+|p_{[3:n+1]}|^2)^\half}\,, \\
\Phi^\prime(p_{1:n+1}) &\eqdef \frac{|p_2||p_j| |\psi(p_{1:n+1\setminus 2,j})|}{|p_{[3:n+1]}|(\lambda+|p_j|^2+|p_{[1:n+1\setminus 2,j]}|^2)^\half}\,.
\end{equs}
By Cauchy-Schwartz, we obtain two terms. The square of the term associated to $\Phi^\prime$ is 
\begin{equs}
&\frac{\coup^4}{n^2(n+1)^2} \int_R \frac{|p_2|^2|p_j|^2|p_{[1:n+1\setminus 2,j]}||\psi(p_{1:n+1\setminus 2,j})|^2\mu_{n+1}(\dd p _{1:n+1})}{(\lambda+|p_{[1:n+1]}|^2)|p_{[3:n+1]}|(\lambda+|p_j|^2+|p_{[1:n+1\setminus 2,j]}|^2)} \\
&\lesssim \frac{\coup^2}{n(n+1)} \int |\psi(p_{1:n-1})|^2\mu_{n-1}(\dd p _{1:n-1})\int\frac{ |p_{[1:n-1]}|\hat V(q)\dd q}{|q+r|(\lambda+|q|^2+|p_{[1:n-1]}|^2)} \\
&\lesssim \frac{\gamma^2}{n^2} \|\psi\|^2\label{eq:62}
\end{equs}
where in the first step, we used the definition of $\mu_n$ in~\eqref{e:mun}, applied~\eqref{eq:16} to the integral over $p_2$ 
and then changed variables ($p_j\mapsto q$ and $p_{1:n+1\setminus 2,j}\mapsto p_{1:n-1}$) and, to shorten the 
notation, set $r = p_{[2:n-1]}$, while in the second we applied Lemma \ref{lem:4} to the inner integral.  

The square of the term involving $\Phi$ is given by 
\begin{equation}
\label{eq:61}
\frac{\coup^4}{n^2(n+1)^2} \int_R \frac{|p_1|^2|p_2|^2|p_{[3:n+1]}||\psi(p_{3:n+1})|^2\mu_{n+1}(\dd p _{1:n+1})}{(\lambda+|p_{[1:n+1]}|^2)|p_{[1:n+1\setminus 2,j]}|(\lambda+|p_2|^2+|p_{[3:n+1]}|^2)}\,.
\end{equation}
To bound it we would like to lowerbound $|p_2|^2 + |p_{[3:n+1]}|^2 \gtrsim |p_1|^2 + |p_{[3:n+1]}|^2$ so as to give an upper bound for the last denominator.
This is clearly possible for $|p_{[3:n+1]}| > \frac{1}{3}|p_1|$. If instead $|p_{[3:n+1]}| \le \frac{1}{3}|p_1|$, 
we can use the fact that, by definition, $N(p_1,p_{2:n+1})=1$ on $R$, which guarantees that 
$|p_1+p_2|-|p_{[3:n+1]}| \le \frac{1}{2}|p_1|$, from which we deduce $||p_1|-|p_2|| \le \frac{5}{6}|p_1|$ 
and therefore conclude $|p_2|\gtrsim |p_1|$. 
As a consequence,~\eqref{eq:61} is bounded above by a constant times 
\begin{equ}
\frac{\coup^4}{n^2(n+1)^2} \int_R \frac{|p_1|^2|p_2|^2|p_{[3:n+1]}||\psi(p_{3:n+1})|^2\mu_{n+1}(\dd p _{1:n+1})}{(\lambda+|p_{[1:n+1]}|^2)|p_{[1:n+1\setminus 2,j]}|(\lambda+|p_1|^2+|p_{[3:n+1]}|^2)}\,.
\end{equ}
First, we integrate in $p_2$, using~\eqref{eq:16}, and then over $p_1$, arguing as in~\eqref{eq:62}. 
Putting these bounds together, we have shown that also $\two$ satisfies~\eqref{eq:44}. 
\medskip

\noindent{\it Bound on $\three$.} We claim that, uniformly over $\lambda\in(0,1)$ and $n\in\N$, we have
\begin{equation} \label{eq:59}
\three\lesssim c^{-4} \coup^2 \lVert \psi \rVert^2\,,
\end{equation}
where $c$ is the constant for which~\eqref{a:RTI} holds. Thanks to~\eqref{eq:56} and~\eqref{eq:57}, 
we can upper bound the scalar product between $\chi^N[1,2]$ and $\chi^N[j,k]$, for $j\in\{3,\dots, n+1\}$ 
and $k\in\{1,\dots,n+1\}\setminus \{j\}$, via 
\begin{equs}
\langle\chi^N[1,2], \chi^N[j,k] \rangle\lesssim \frac{\coup^4}{n^2(n+1)^2} \int_{R\cap R'} \frac{\Phi(p_{1:n+1}) \Phi^\prime(p_{1:n+1}) }{\lambda+|p_{[1:n+1]}|^2}  \mu_{n+1}(\dd p _{1:n+1})
\end{equs}
where $R\subset \R^{2(n+1)}$ is the region in which 
\begin{equation}
\label{eq:65}
N(p_1, p_{[2:n+1]}) B(p_2, p_{[3:n+1]}) \1_{\{|p_{[3:n+1]}| \ge c \sum_{i=3}^{n+1}|p_i|\}}\equiv 1
\end{equation}
with $R^\prime$ defined analogously, and $\Phi$ is defined as 
\begin{equs}
\Phi(p_{1:n+1}) = \frac{|p_1||p_2| |\psi(p_{3:n+1})|}{(\lambda+|p_2|^2+ |p_{[3:n+1]}|^2)^\half}
\end{equs}
while $\Phi'$ has the same expression as $\Phi$ but with $1,2$ replaced by $j,k$ and $[3:n+1]$ by $[1:n+1\setminus j,k]$. 
Note that our choice of $R$ is justified thanks to the reverse triangle inequality assumption placed on $\psi$. 
Using $2\Phi\Phi'\leq \Phi^2 + (\Phi')^2$, we obtain two terms which can be similarly 
bounded, so we will only focus on the one depending on $\Phi$. 
This is   
\begin{equation}\label{eq:64} 
\frac{\coup^4}{n(n+1)} \int_{R \cap R^\prime}  \frac{|\psi(p_{3:n+1})|^2 \VHat(p_2)\VHat(p_1) \dd p_1 \dd p_2\mu_{n-1}(\dd p _{3:n+1})}{(\lambda+|p_{[1:n+1]}|^2)(\lambda+|p_2|^2+|p_{[3:n+1]}|^2)}\,.   
\end{equation}
On $R$, the reverse triangular inequality ensures that 
$|p_{[3:n+1]}| \ge  c |p_j|$, and similarly, on $R'$, 
$c |p_2| \le  |p_{[1:n+1\setminus j, k]}| \lesssim |p_{[1:n+1\setminus j]}| \le |p_{[1:n+1]}| + |p_j|$. Since further, $N(p_j, p_{[1:n+1\setminus j]})=1$, 
we deduce that $|p_2|\lesssim c^{-1} |p_j|$. Therefore,~\eqref{eq:64} can be bounded above 
by 
\begin{equs}
&\frac{\coup^4}{n(n+1)} \int \frac{\1_{\{|p_2|\lesssim c^{-1}|p_j|\}}}{\lambda + c^2|p_j|^2}  \frac{|\psi(p_{3:n+1})|^2 \VHat(p_2)\VHat(p_1) \dd p_1 \dd p_2\mu_{n-1}(\dd p _{3:n+1})}{(\lambda+|p_{[1:n+1]}|^2)}\\
&\lesssim  \frac{\coup^2}{n(n+1)} \int  \frac{ |\psi(p_{3:n+1})|^2}{\lambda+c^2|p_j|^2} \Big(  \int_{|p_2| \lesssim c^{-1} |p_j|} \dd p _2\Big) \mu_{n-1}(\dd p _{3:n+1}) \lesssim c^{-4}\frac{\gamma^2}{n^2}\|\psi\|^2
\end{equs}
where we first integrated in $p_1$ using~\eqref{eq:16}. Hence,~\eqref{eq:59} easily follows.  
\medskip

\noindent{\it Bound on $\four$.} Among those treated so far, this is the most interesting term since it is 
the one where a non-trivial cancellation takes place and therefore a finer analysis, which 
uses the explicit form of $\chi^N[1,2]$ and $\chi^N[2,1]$ is needed. 
By~\eqref{eq:54},~\eqref{eq:55} and the explicit expression for the measure $\mu^N$ in~\eqref{e:mun}, 
we can write
\begin{equ}[e:BoundFour]
\four  \lesssim \frac{\coup^2}{n(n+1)} \int \mu_{n-1}(\dd p_{3:n+1}) |\psi(p_{3:n+1})|^2 \calI(p_{3:n+1})  
\end{equ}
where $\calI(p_{3:n+1}) $ is defined according to  
\begin{equs}[e:IBoundFour]
\calI(p_{3:n+1}) &\eqdef \coup^2\int \dd p_{1:2} \frac{\hat V(p_1)\hat V(p_2)}{|p_1||p_2||p_{[3:n+1]}|} \frac{N(p_1,p_{[2:n+1]}) B(p_2, p_{[3:n+1]})}{(\lambda +|p_{[1:n+1]}|^2)(\lambda +|p_2|^2 +|p_{[3:n+1]}|^2)^\half}\\
&\times \Big|\frac{N(p_1,p_{[2:n+1]}) B(p_2, p_{[3:n+1]}) (p_1\cdot p_{[2:n+1]})(p_2\cdot p_{[3:n+1]})}{\lambda +\half |p_{[2:n+1]}|^2[1+\cg^\lambda(p_{[2:n+1]})]}\\
&\qquad+\frac{N(p_2,p_{[1:n+1\setminus 2]}) B(p_1, p_{[3:n+1]}) (p_2\cdot p_{[1:n+1\setminus2]})(p_1\cdot p_{[3:n+1]})}{\lambda +\half |p_{[1:n+1\setminus 2]}|^2[1+\cg^\lambda(p_{[1:n+1\setminus 2]})]}\Big| 
\end{equs}
and~\eqref{e:BoundFour} follows by applying~\eqref{eq:56} and~\eqref{eq:57} to the first $\chi^N[1,2]$.  
As a consequence of Lemma~\ref{lem:35} in Appendix~\ref{app:EstNregion}, $\calI$ is bounded uniformly 
over $p_{3:n+1}\in \R^{2(n+1)}$, which then implies that 
\begin{equation}\label{eq:19}
\four \lesssim \frac{\gamma^2}{n^2}\|\psi\|^2\,.
\end{equation}

\noindent{\it Conclusion.} By collecting the bounds obtained in~\eqref{eq:44},~\eqref{eq:59} and~\eqref{eq:19}, 
and plugging them into~\eqref{eq:15},~\eqref{eq:AR0} for $\sigma=+$ follows at once. 
\end{proof}

Before proving Lemma~\ref{lem:nuisance} for $\sigma=-$, we show the following basic lemma, which ensures that 
the reverse triangle inequality is propagated by the operator $\calA^B_+$. 

\begin{lemma}\label{lem:RTI}
If $\psi\in\fock_{n-1}$ satisfies the reverse triangle with respect to $c\in(0,\tfrac12)$ (see~\eqref{a:RTI}), 
then so does $\phi^B$ with respect to $\tfrac{c}6$. 
\end{lemma}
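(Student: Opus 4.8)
The plan is to reduce Lemma~\ref{lem:RTI} to an elementary support computation. Since $\phi^B=(\lambda+\calS+\calS\calG^\lambda)^{-\half}\calA^B_+\calS^{-\half}\psi$ and both $\calS^{-\half}$ and $(\lambda+\calS+\calS\calG^\lambda)^{-\half}$ act by pointwise multiplication by a function of the total momentum (they are diagonal in the sense of Definition~\ref{def:DO}), neither of them enlarges the support of a function; in particular $\calS^{-\half}\psi$ still obeys~\eqref{a:RTI} with respect to $c$. Hence it suffices to prove that $\calA^B_+$ sends any $\fock_{n-1}$-function obeying~\eqref{a:RTI} with constant $c$ to a function obeying~\eqref{a:RTI} with constant $\tfrac c6$.

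Next I would write out $\calA^B_+\psi(p_{1:n})=\tfrac{\coup}{n}\sum_{i=1}^{n}B(p_i,p_{[1:n\setminus i]})\,(p_i\cdot p_{[1:n\setminus i]})\,\psi(p_{1:n\setminus i})$ from~\eqref{eq:AR}. The $i$-th summand is supported where $B(p_i,p_{[1:n\setminus i]})=1$ and $\psi(p_{1:n\setminus i})\neq0$; since the target region $\{|p_{[1:n]}|\ge\tfrac c6\sum_{j=1}^n|p_j|\}$ is symmetric under permutations of $p_1,\dots,p_n$, it is enough to bound a single summand. On its support, the bulk bound~\eqref{e:BNregionsBound} with $\kappa=\tfrac13$ (applied with $p=p_i$, $q=p_{[1:n\setminus i]}$, so $p+q=p_{[1:n]}$) gives both $|p_{[1:n]}|\ge\tfrac13|p_{[1:n\setminus i]}|$ and $|p_{[1:n]}|\ge\tfrac14|p_i|$, while~\eqref{a:RTI} for $\psi$ gives $\sum_{j\neq i}|p_j|\le c^{-1}|p_{[1:n\setminus i]}|$. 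Combining these, $\sum_{j=1}^n|p_j|=|p_i|+\sum_{j\neq i}|p_j|\le(4+3c^{-1})|p_{[1:n]}|\le 6c^{-1}|p_{[1:n]}|$, where the last step uses $c<\tfrac12<\tfrac34$, i.e. $4\le 3c^{-1}$. This is exactly~\eqref{a:RTI} for $\calA^B_+\psi$ with constant $\tfrac c6$, and hence for $\phi^B$.

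The computation is genuinely short, so there is no serious obstacle; the only point that must not be overlooked is to use \emph{both} halves of the bulk inclusion in~\eqref{e:BNregionsBound} — not only $|p_{[1:n]}|\gtrsim|p_{[1:n\setminus i]}|$, which together with~\eqref{a:RTI} for $\psi$ controls $\sum_{j\neq i}|p_j|$, but also $|p_{[1:n]}|\gtrsim|p_i|$ — in order to bound the full sum $\sum_{j=1}^n|p_j|$ rather than just the partial sum over $j\neq i$. The bookkeeping of the constant ($\tfrac c6$, valid precisely because $c<\tfrac12$) is then immediate, and one notes in passing that $\tfrac c6\in(0,\tfrac12)$ so that the conclusion is again of the form~\eqref{a:RTI}.
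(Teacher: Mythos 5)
Your proof is correct and follows essentially the same route as the paper's: both reduce to a support computation on the constituents $\phi^B[j]$, use both halves of the bulk inclusion~\eqref{e:BNregionsBound} with $\kappa=\tfrac13$ (yielding $|p_{[1:n]}|\ge\tfrac13|p_{[1:n\setminus j]}|$ and $|p_{[1:n]}|\ge\tfrac14|p_j|$), and combine with~\eqref{a:RTI} for $\psi$, the only cosmetic difference being that the paper averages the two bulk bounds into $|p_{[1:n]}|\ge\tfrac16|p_{[1:n\setminus j]}|+\tfrac18|p_j|$ and then uses $\tfrac{c}{6}\le\tfrac18$, whereas you add the resulting inequalities directly.
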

\begin{proof}
Recall the definition of $\phi^B$ in Lemma~\ref{lem:DiOff}, and the fact that $\phi^B = \sum_{j=1}^n \phi^B[j]$, 
with $\phi^B[j]$ as in~\eqref{e:phii} and the bulk region in~\eqref{e:BNregions} with $\kappa=\tfrac13$. 
Then, for each $j$ we have 
\begin{align*}
\text{supp}\{\phi^B[j] \} &\subseteq \Big\{|p_{[1:n]}| \ge \tfrac16|p_{[1:n\setminus j]}|+\tfrac18|p_j| \,,\,|p_{[1:n \setminus j]}| \ge c \sum_{i=1, i \ne j}^n |p_i| \Big\}
\end{align*}
so that, by the triangle inequality and the fact that $\frac{c}{6} \le \frac{1}{8}$, the statement follows. 
\end{proof}

\begin{proof}[of Lemma~\ref{lem:nuisance} for $\sigma=-$] 

By definition of $\calA^N_-$ in~\eqref{eq:AR}, we have the bound
\begin{equation*}
|(\lambda+\calS)^{-\half} \calA^N_- (\lambda+\calS+\calS\calG^\lambda)^{-\half} \phi^B (p_{1:n-1})| \lesssim \coup n  \int_R \frac{\VHat(q)}{|q|^2} \frac{|q| |\phi^B(q, p_{1:n-1})|}{|q+p_{[1:n-1]}|} \dd q
\end{equation*}
where, thanks to Lemma~\ref{lem:RTI}, we can take $R\subset \R^{2n}$ as the region in which  
$N(q, p_{[1:n-1]}) = 1$ intersected with 
$\{ |q+p_{[1:n-1]}| \ge \tfrac{c}{6} \big( |q| +  \sum_{j=1}^{n-1} |p_j| \big) \}$.  
Defining $R'$ analogously to $R$, we get 
\begin{equs}
&\| (\lambda+\calS)^{-\half} \calA_-^N (\lambda+\calS+\calS\calG^\lambda)^{-\half} \phi^B \|^2\label{e:A-StL}\\
&\leq \coup^2 n^2\int_{R \cap R^\prime} \mu_{n-1}(p_{1:n-1})\frac{ |\phi^B(q^\prime, p_{1:n-1})|}{|q+p_{[1:n-1]}||q^\prime|}  \frac{|\phi^B(q, p_{1:n-1})|}{|q^\prime+p_{[1:n-1]}||q|}\VHat(q)\VHat(q^\prime)\dd q \dd q^\prime\\
&\lesssim \coup^2 n^2\int \mu_{n-1}(p_{1:n-1})\int_R\frac{\VHat(q)\dd q}{|q+p_{[1:n-1]}|^2}  \int_{R^\prime}  |\phi^B(q^\prime, p_{1:n-1})|^2\frac{\VHat(q^\prime)\dd q^\prime}{|q^\prime|^2}
\end{equs}
where in the last step we used $2ab\le a^2+b^2$. Now, thanks to the region $R$, 
the integral over $q$ can be estimated as 
 \begin{align*}
\int_R\frac{\VHat(q)\dd q }{|q+p_{[1:n-1]}|^2} \lesssim \frac{1}{c^2\Big( \sum_{i=1}^{n-1} |p_i| \Big)^{2}} \int _{\{|q+p_{[1:n-1]}| \le \tfrac{1}{3}|p_{[1:n-1]}| \}} \dd q \lesssim c^{-2}\,.
\end{align*}
From this and the definition of $\mu_n$ in~\eqref{e:mun}, we deduce that~\eqref{e:A-StL} is bounded above by 
\begin{equ}
c^{-2} \coup^2 n\|\phi^B\|
\lesssim c^{-2}\coup^2 n^2\|\psi\|^2
\end{equ}
where the last step is a consequence of~\eqref{eq:AR9}. 
\end{proof}

We conclude this subsection by showing that the solution of the replacement equation 
$v^{\lambda,n}$ in Definition~\ref{def:3} satisfies the assumptions of Lemma~\ref{lem:nuisance}.

\begin{corollary}\label{cor:11}
Let $m\in\N$, $\frakf\in\fock_m$ and, for $\lambda\in(0,1)$ and $n\geq m$, let $v^{\lambda,n}$ be the solution 
to the replacement equation in Definition~\ref{def:3}. If $\frakf$ satisfies~\eqref{a:RTI} for some 
$c\in(0,\tfrac12)$, then so does $v^{\lambda,n}_j$, for every $j=m,\dots,n$, with $c$ replaced by $c \,6^{m-j}$. 

In particular, there exists a constant $C_N=C_N(c,n)>0$ such that 
\begin{equation}
\label{eq:17}
\lVert (\lambda+\calS)^{-\half} \calA^N v^{\lambda,n} \rVert^2 \leq C_N \coup^2 \lVert \calS^\half v^{\lambda, n} \rVert^2 +  \vertiii{ \frakf }^2
\end{equation}
where the norm on $\frakf$ at the right hand side is that in~\eqref{e:VanNorm}. The $C_N$ above 
satisfies $C_N\lesssim c^{-4} n^2 6^{4n}$. 
\end{corollary}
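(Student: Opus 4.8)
I would prove the two assertions of Corollary~\ref{cor:11} in turn: first the propagation of the reverse triangle inequality~\eqref{a:RTI} through the recursion~\eqref{eq:114}, and then the estimate~\eqref{eq:17}, which follows by a chaos‑by‑chaos application of Lemma~\ref{lem:nuisance}. For the first part, I would induct on $j$ from $m$ to $n$. Each operator $(\lambda+\calS+\calS\calG^\lambda)^{-1}$ is diagonal in the sense of Definition~\ref{def:DO} with a strictly positive multiplier (recall~\eqref{e:ApproxFPO}), hence preserves the support of a kernel; consequently $\mathrm{supp}\{v^{\lambda,n}_j\}=\mathrm{supp}\{\calA^B_+v^{\lambda,n}_{j-1}+\Pi_j\frakf\}$. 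For $j=m$ this is $\mathrm{supp}\{\frakf\}$, which obeys~\eqref{a:RTI} with $c=c\,6^{m-m}$. For $j>m$ we have $\Pi_j\frakf=0$, so the support is that of $\calA^B_+v^{\lambda,n}_{j-1}$, and here I would invoke Lemma~\ref{lem:RTI}: its proof shows, purely via the bulk–region geometry~\eqref{e:BNregionsBound} and the triangle inequality, that whenever $\psi$ satisfies~\eqref{a:RTI} with some $c'\in(0,\tfrac12)$, the kernel $\calA^B_+\psi$ satisfies it with $c'/6$ (the diagonal factors entering the definition of $\phi^B$ are support‑preserving and play no role here). Applying this with $\psi=v^{\lambda,n}_{j-1}$ and $c'=c\,6^{m-(j-1)}\le c<\tfrac12$ yields~\eqref{a:RTI} for $v^{\lambda,n}_j$ with constant $c\,6^{m-j}$, closing the induction.

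For~\eqref{eq:17} I would start from the chaos decomposition $v^{\lambda,n}=\sum_{j=m}^n v^{\lambda,n}_j$ with $v^{\lambda,n}_j\in\fock_j$. Since $\calA^N_+v^{\lambda,n}_j\in\fock_{j+1}$, $\calA^N_-v^{\lambda,n}_j\in\fock_{j-1}$, and $(\lambda+\calS)^{-\half}$ is diagonal, mutual orthogonality of the Wiener chaoses gives
\[
\lVert(\lambda+\calS)^{-\half}\calA^N v^{\lambda,n}\rVert^2\ \lesssim\ \sum_{j=m}^n\lVert(\lambda+\calS)^{-\half}\calA^N v^{\lambda,n}_j\rVert^2 .
\]
The $j=m$ term equals $\lVert(\lambda+\calS)^{-\half}\calA^N(\lambda+\calS+\calS\calG^\lambda)^{-1}\frakf\rVert^2$, which is exactly the first summand of $\vertiii{\frakf}^2$ in~\eqref{e:VanNorm}.

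For $j\ge m+1$ (note $m\ge1$, so $j\ge2$) we have $\Pi_j\frakf=0$; setting $\psi\eqdef\calS^\half v^{\lambda,n}_{j-1}\in\fock_{j-1}$ and $\phi^B\eqdef(\lambda+\calS+\calS\calG^\lambda)^{-\half}\calA^B_+\calS^{-\half}\psi$ as in Lemma~\ref{lem:DiOff} with $R=B$, a direct computation gives
\[
v^{\lambda,n}_j=(\lambda+\calS+\calS\calG^\lambda)^{-1}\calA^B_+v^{\lambda,n}_{j-1}=(\lambda+\calS+\calS\calG^\lambda)^{-\half}\phi^B,
\]
which is precisely the configuration of Lemma~\ref{lem:nuisance}. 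Moreover, by the first part and since $\calS^\half$ preserves supports up to a null set, $\psi$ satisfies~\eqref{a:RTI} with $c\,6^{m-j+1}$. Applying Lemma~\ref{lem:nuisance} with its parameter $n$ set to $j$, for $\sigma=+$ and $\sigma=-$ separately, and summing the two orthogonal contributions yields
\[
\lVert(\lambda+\calS)^{-\half}\calA^N v^{\lambda,n}_j\rVert^2\ \lesssim\ \coup^2 c^{-4}\,j^2 6^{4(j-m-1)}\,\lVert\calS^\half v^{\lambda,n}_{j-1}\rVert^2 .
\]
Using $\lVert\calS^\half v^{\lambda,n}_{j-1}\rVert^2\le\lVert\calS^\half v^{\lambda,n}\rVert^2$ (orthogonality again) and $\sum_{j=m+1}^n j^2 6^{4(j-m-1)}\lesssim n^2 6^{4n}$ — the ratio of consecutive terms exceeds $6^4$, so the sum is comparable to its last term — I obtain $\sum_{j\ge m+1}\lVert(\lambda+\calS)^{-\half}\calA^N v^{\lambda,n}_j\rVert^2\lesssim\coup^2 c^{-4} n^2 6^{4n}\lVert\calS^\half v^{\lambda,n}\rVert^2$; combined with the $j=m$ term this proves~\eqref{eq:17} with $C_N\lesssim c^{-4}n^2 6^{4n}$.

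The genuinely hard content does not lie in this corollary, which merely repackages Lemmas~\ref{lem:RTI} and~\ref{lem:nuisance}; the analytic difficulty is absorbed into Lemma~\ref{lem:nuisance}, in particular the non‑trivial cancellation between $\chi^N[1,2]$ and $\chi^N[2,1]$ handled through Lemma~\ref{lem:35}. The points that do require care here are: (i) checking that the repeated restriction to the bulk region degrades the reverse‑triangle constant by exactly the factor $6$ at each recursive step, so that $v^{\lambda,n}_j$ inherits the constant $c\,6^{m-j}$; (ii) identifying each $v^{\lambda,n}_j$ with $j>m$ as an object of the form $(\lambda+\calS+\calS\calG^\lambda)^{-\half}\phi^B$ to which Lemma~\ref{lem:nuisance} literally applies; and (iii) bookkeeping the $6^{4(j-m)}$ weights so that the geometric sum collapses to the claimed $n^2 6^{4n}$ rather than producing an extra power of $n$.
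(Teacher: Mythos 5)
Your proof is correct and follows essentially the same route as the paper: induction on $j$ via the support-preserving diagonal operator and Lemma~\ref{lem:RTI} for the first claim, and chaos-by-chaos application of Lemma~\ref{lem:nuisance} (with $\psi=\calS^{1/2}v^{\lambda,n}_{j-1}$ and the degraded constant $c\,6^{m-j+1}$) followed by the geometric bookkeeping for~\eqref{eq:17}. Your exponent $6^{4(j-m-1)}$ is in fact the correct one (the paper's displayed $6^{4(m-j)}$ is a sign typo), and the only latent sloppiness — the implicit constant factor when passing from $\lVert\sum_j\calA^N v^{\lambda,n}_j\rVert^2$ to $\sum_j\lVert\calA^N v^{\lambda,n}_j\rVert^2$, which formally disturbs the coefficient $1$ on $\vertiii{\frakf}^2$ — is shared with the paper's own argument and is harmless since $\vertiii{\frakf}^2$ only ever appears downstream with an $O(n)$ prefactor anyway.
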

\begin{proof}
The proof follows by induction over $j$. By~\eqref{eq:114}, for $j=m$, we clearly have 
$\supp\{v^{\lambda,n}_m\}=\supp\{\frakf\}$. Assume the result holds for all $m\leq i\leq j$. 
Set $\psi\eqdef\calS^{\frac12}v^{\lambda,n}_j$ and note that the definition of $\phi^B$ in Lemma~\ref{lem:DiOff} 
implies that 
\begin{equ}[e:vphi]
(\lambda+\calS+\calS\calG^\lambda)^{\half}v^{\lambda,n}_{j+1} = \phi^B\,.
\end{equ} 
Now, $\lambda+\calS+\calS\calG^\lambda$ is a diagonal operator, hence it is does not alter 
the support of the functions to which it is applied. Therefore,   
$\supp\{v^{\lambda,n}_{j+1}\}=\supp\{\phi^B\}$ and the reverse triangle inequality is then 
implied by Lemma~\ref{lem:RTI}. 

Concerning~\eqref{eq:17}, we apply Lemma~\ref{lem:nuisance} to each of the $v^{\lambda,n}_{j}$. 
The term $\vertiii{ \frakf }$ covers the case $j=m$. For $j>m$, 
using the notation in~\eqref{e:vphi} we have 
\begin{equs}
\|(\lambda+\calS)^{-\half}\calA^N v^{\lambda,n}_{j+1}\|&=\|(\lambda+\calS)^{-\half}\calA^N (\lambda+\calS+\calS\calG^\lambda)^{-\half} \phi^B\|\\
&\lesssim \gamma^2 j^2 c^{-4} 6^{4(m-j)}\|\calS^\half v^{\lambda,n}_j\|\leq C_N \gamma^2 \|\calS^\half v^{\lambda,n}_j\|
\end{equs}
so that, adding the above bounds over $j$,~\eqref{eq:17} follows at once. 
\end{proof}

\subsection{A priori estimates}\label{sec:apriori}

In order to prove Proposition~\ref{p:MainEstimates}, we need a priori estimates on the solution $v^{\lambda,n}$
which guarantee a polynomial decay in the chaos. These estimates 
are better expressed in terms of the so-called number operator $\calN : \fock \rightarrow \fock$ 
which is defined as $\calN \psi = n \psi$ for $\psi\in\fock_n$. 

First we state the desired property for the solution $u^{\lambda, n}$ of the truncated resolvent equation in~\eqref{eq:5}. 
The proof of the next lemma is completely analogous to that in~\cite[Proposition 2.8]{CannGubiToni23_GaussianFluctuations} and therefore omitted. 

\begin{lemma}
\label{lem:18}
For $m,k\in\N$ there exists a constant $C = C(m,k)>0$ such that for all $\lambda\in(0,1)$, $\frakf \in \fock_m$, and $n\geq m$ it holds that
\begin{equation}
\label{eq:41}
\lVert \calN^k (\lambda + \calS)^\half u^{\lambda, n} \rVert^2 \le C \lVert (\lambda + \calS)^{-\half} \frakf \rVert^2\,.
\end{equation}
where $u^{\lambda,n}$ is the solution of the truncated resolvent equation in~\eqref{eq:5} with input $\frakf$. 
\end{lemma}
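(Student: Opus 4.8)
The statement to prove is Lemma~\ref{lem:18}: weighted bounds $\lVert \calN^k(\lambda+\calS)^{1/2}u^{\lambda,n}\rVert^2 \lesssim_{m,k} \lVert(\lambda+\calS)^{-1/2}\frakf\rVert^2$ on the solution of the truncated resolvent equation~\eqref{eq:5}. The paper itself says the proof is analogous to~\cite[Proposition~2.8]{CannGubiToni23_GaussianFluctuations}, so what I want to sketch is the natural energy-estimate argument.

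Let me think about what the truncated resolvent equation gives us. We have $(\lambda - \Pi_n\calL\Pi_n)u^{\lambda,n} = \frakf$, i.e. $(\lambda + \Pi_n\calS\Pi_n - \Pi_n\calA\Pi_n)u^{\lambda,n} = \frakf$ with $\calA = \calA_+ + \calA_-$ skew-symmetric. The standard first step is the basic energy estimate: pair the equation with $u^{\lambda,n}$ and use skew-symmetry of $\calA$ to kill the antisymmetric part, giving $\lambda\lVert u^{\lambda,n}\rVert^2 + \lVert\calS^{1/2}u^{\lambda,n}\rVert^2 = \langle u^{\lambda,n},\frakf\rangle \le \lVert(\lambda+\calS)^{1/2}u^{\lambda,n}\rVert\,\lVert(\lambda+\calS)^{-1/2}\frakf\rVert$, hence $\lVert(\lambda+\calS)^{1/2}u^{\lambda,n}\rVert^2 \le \lVert(\lambda+\calS)^{-1/2}\frakf\rVert^2$. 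This is the $k=0$ case.

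For $k \ge 1$ the idea is to pair the equation instead against $\calN^{2k}u^{\lambda,n}$ (which preserves each chaos). Now the point is that $\calN$ does \emph{not} commute with $\calA_\pm$ — indeed $\calA_\pm$ shift the chaos degree by $\pm 1$ — so the commutator $[\calN^{2k},\calA]$ is no longer skew-symmetric and produces genuine error terms. Writing $\langle \calN^{2k}u^{\lambda,n}, (\lambda+\Pi_n\calS\Pi_n)u^{\lambda,n}\rangle = \langle \calN^{2k}u^{\lambda,n},\frakf\rangle + \langle \calN^{2k}u^{\lambda,n},\Pi_n\calA\Pi_n u^{\lambda,n}\rangle$, the left side controls $\lVert\calN^k(\lambda+\calS)^{1/2}u^{\lambda,n}\rVert^2$ from below, the first right-hand term is handled by Cauchy--Schwarz against $\frakf$ (which lives in $\fock_m$, so $\calN^{2k}$ acts as the constant $m^{2k}$ there — this is where the $m$-dependence of the constant enters), and the crucial term is the commutator term. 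The key structural input is the Graded Sector Condition, Lemma~\ref{lem:32}, applied with $\calT = 0$ and $R = (\R^2)^2$: $\lVert(\lambda+\calS)^{-1/2}\calA_+\calS^{-1/2}\rVert_{\fock_j\to\fock_{j+1}}^2 \lesssim j+1$ and its dual for $\calA_-$, \emph{together with} the fact that under weak coupling we have gained nothing worse than $j+1$ per application. The commutator term, after inserting $\calS^{1/2}\calS^{-1/2}$ appropriately and using the chaos grading, becomes a telescoping sum over chaos levels of terms like $\langle \calN^{2k}(\lambda+\calS)^{1/2}u_j, (\lambda+\calS)^{-1/2}\calA_+ u_{j-1}\rangle$ with bounded coefficients; each is estimated by the graded sector bound times a Young-type splitting $2ab \le \delta a^2 + \delta^{-1}b^2$, so that the ``diagonal'' piece $\lVert\calN^k(\lambda+\calS)^{1/2}u^{\lambda,n}\rVert^2$ appears on the right with a small constant and can be absorbed, leaving a clean bound in terms of $\lVert\calN^{k-1/2}(\lambda+\calS)^{1/2}u^{\lambda,n}\rVert^2$ — which is handled by induction on $k$ (in half-integer steps, or equivalently one runs the estimate with $k$ and $k-1$ and interpolates). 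The base case $k=0$ is the energy estimate above.

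The main obstacle, and the only genuinely delicate point, is making the absorption argument work uniformly in $n$: one must check that the constants produced by the graded sector condition (which carry factors of $j$ that grow with the chaos level) combine correctly with the weights $\calN^{2k}$ so that the ``bad'' direction can actually be absorbed into the left-hand side without the constant blowing up as $n \to \infty$. This is exactly why the estimate is phrased with the \emph{number operator} weights rather than a crude chaos-by-chaos bound: the weight $\calN^k$ is tuned precisely so that $[\calN^{2k},\calA]$ is controlled by $\calN^{2k-1}\calA$-type quantities with the extra power of $\calN$ compensating the $j+1$ from Lemma~\ref{lem:32}. Once that bookkeeping is set up, the rest is the routine Cauchy--Schwarz/Young machinery, which is why I would also simply cite~\cite[Proposition~2.8]{CannGubiToni23_GaussianFluctuations} for the details and only indicate the modifications needed (here $\calL$ depends on $\gamma = \gamma(\lambda)$ through weak coupling, but since all the graded sector bounds in Lemma~\ref{lem:32} are already uniform in $\lambda$ under~\eqref{eq:Weak}, no new difficulty arises).
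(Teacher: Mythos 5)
Your sketch is correct and reproduces exactly the argument the paper relies on (the paper itself omits the proof, pointing to~\cite[Proposition~2.8]{CannGubiToni23_GaussianFluctuations}): test the truncated system chaos-by-chaos against $j^{2k}u_j$, observe that the skew-symmetric part of $\calA$ no longer cancels but leaves the telescoped factor $j^{2k}-(j-1)^{2k}=O(j^{2k-1})$, bound the resulting off-diagonal term via Lemma~\ref{lem:32} (uniform in $\lambda$ under weak coupling), absorb the leading $\lVert\calN^k(\lambda+\calS)^{1/2}u\rVert^2$ piece by Young's inequality, and induct on $k$ in half-integer steps from the $k=0$ energy estimate. The one loose phrase is that the extra $\calN$-power does not ``compensate'' the $\sqrt{j+1}$ from the graded sector condition exactly; rather, the commutator costs $j^{2k-1}$ and the graded sector bound costs $\sqrt{j}$, so after Young one is left with a weight $j^{2k-1}$ rather than $j^{2k}$ — a strict half-power gain, which is precisely what makes the half-integer induction close.
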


In the next lemma, we port over the estimate of Lemma \ref{lem:18} from $u^{\lambda, n}$ to 
the solution $v^{\lambda, n}$ of the replacement equation in Definition~\ref{def:3}. 

\begin{lemma}\label{lem:17}
For $m,k\in\N$ there exists a constant $C = C(m,k)>0$ such that for all $n\geq m, c \in (0,1)$ there exists $\bar\lambda=\bar\lambda(k,n,c)\in(0,1)$, such that 
for every $\lambda<\bar\lambda$, and for every $\frakf \in \fock_m$ satisfying the assumptions of Corollary~\ref{cor:11}, it holds that
\begin{equation}
\label{eq:77}
\lVert \calN^k (\lambda + \calS)^\half v^{\lambda, n} \rVert^2 \lesssim C \big( \lVert (\lambda + \calS)^{-\half} \frakf\rVert^2 + (n^{2k}-1) \vertiii{ \frakf }^2 \big)\,,
\end{equation}
where $v^{\lambda, n}$ be the solution of the replacement equation in Definition~\ref{def:3} with input $\frakf$, and $\vertiii{ \cdot }$ is defined in~\eqref{e:VanNorm}.
\end{lemma}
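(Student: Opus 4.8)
The plan is to transfer the weighted estimate~\eqref{eq:41} from the truncated resolvent solution $u^{\lambda,n}$ to the replacement solution $v^{\lambda,n}$ by comparing the two recursive schemes~\eqref{eq:111} and~\eqref{eq:114}. Both solutions live in $\fock_{\le n}$ and are built componentwise: $u^{\lambda,n}_j$ uses the exact iterated operator $\calT^\lambda_{n-j}$ and the full operators $\calA_\pm$, whereas $v^{\lambda,n}_j$ uses the approximate fixed point $\calS\calG^\lambda$ and the bulk-restricted operators $\calA^B_\pm$. Since Lemma~\ref{lem:18} already supplies the bound $\lVert \calN^k (\lambda+\calS)^\half u^{\lambda,n}\rVert^2 \le C \lVert (\lambda+\calS)^{-\half}\frakf\rVert^2$, it suffices to estimate the difference $w^{\lambda,n} \eqdef v^{\lambda,n} - u^{\lambda,n}$ in the same weighted norm and show it is controlled by the right-hand side of~\eqref{eq:77}, with the extra $(n^{2k}-1)\vertiii{\frakf}^2$ term absorbing the bulk/nuisance discrepancy and the replacement error. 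Because the norm of $\calN^k \calS^\half(\cdot)$ restricted to $\fock_{\le n}$ only ever multiplies by factors $\le n^{2k}$, polynomial-in-$n$ losses are harmless and one only needs the $\lambda$-uniform operator bounds already available.

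First I would set up the algebraic identity satisfied by $v^{\lambda,n}$: applying $(\lambda - \Pi_n\calL\Pi_n)$ and using $\calL = -\calS + \calA_+ + \calA_-$ together with the definition~\eqref{eq:114}, one obtains an equation of the form $(\lambda - \Pi_n\calL\Pi_n)v^{\lambda,n} = \frakf + E^{\lambda,n}$ where the error $E^{\lambda,n}$ collects (a) the replacement discrepancy $\calR_\lambda$ acting on lower chaoses, controlled by the Replacement Lemma~\ref{lem:3} (equivalently~\eqref{e:eq7Op}, which gives a factor $\coup^2 n$ that vanishes as $\lambda\to0$ for fixed $n$ under weak coupling), (b) the nuisance contribution $\calA^N v^{\lambda,n}$, controlled by Corollary~\ref{cor:11} (which is why the hypothesis of Corollary~\ref{cor:11} on $\frakf$ is imposed and why $\vertiii{\frakf}^2$ appears), and (c) the boundary truncation term $(\calS\calG^\lambda + \calA^B_+)v^{\lambda,n}_n$ living in $\fock_{n+1}$, controlled by the graded sector condition~\eqref{eq:AR7} together with the polynomial decay of $\lVert\calS^\half v^{\lambda,n}_n\rVert$ in $n$. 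Subtracting the resolvent equation for $u^{\lambda,n}$ then yields $(\lambda - \Pi_n\calL\Pi_n)w^{\lambda,n} = E^{\lambda,n}$.

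Next I would run an energy estimate on $w^{\lambda,n}$, chaos by chaos, exactly mirroring the recursive structure: testing the $j$-th component equation against $(\lambda+\calS)w^{\lambda,n}_j$ and using the skew-adjointness $(\calA_\pm)^* = -\calA_\mp$ to cancel the off-diagonal contributions, one gets $\lVert(\lambda+\calS)^\half w^{\lambda,n}_j\rVert^2$ bounded in terms of $\lVert(\lambda+\calS)^{-\half}(\calA_+ w^{\lambda,n}_{j-1})\rVert^2$, handled by the graded sector condition~\eqref{eq:AR7} (giving a factor $j$), plus the $j$-th piece of $E^{\lambda,n}$. Iterating down from $j=n$ to $j=m$ and inserting the weight $\calN^{2k}$ — which contributes at most $n^{2k}$ across all chaoses — produces the claimed bound, with the $\bar\lambda = \bar\lambda(k,n,c)$ threshold arising precisely because the replacement error~\eqref{e:eq7Op} and the nuisance constant $C_N$ in Corollary~\ref{cor:11}, although $n$-dependent, carry a prefactor $\coup^2 = \coup^2(\lambda) \to 0$, so one chooses $\bar\lambda$ small enough that $\coup^2(\bar\lambda)$ beats the (fixed once $n$, $c$ are fixed) combinatorial constants. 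The main obstacle I anticipate is bookkeeping the error term (c): the top component $v^{\lambda,n}_n$ must be shown to decay polynomially in $n$ so that its image under $\calS\calG^\lambda + \calA^B_+$ is genuinely small — this is where one needs the a priori decay and cannot simply invoke~\eqref{eq:AR7} naively; here one leans on the contractive nature of $(\lambda+\calS+\calS\calG^\lambda)^{-1}\calA^B_+$ on successive chaoses, already implicit in the analogous argument of~\cite[Proposition 2.8]{CannGubiToni23_GaussianFluctuations}, combined with Corollary~\ref{cor:11} to keep the nuisance pieces under control throughout the iteration.
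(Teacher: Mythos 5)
Your high-level strategy is the same as the paper's — compare the replacement solution $v^{\lambda,n}$ to the truncated resolvent solution $u^{\lambda,n}$, transfer the $\calN^k$-weighted control from Lemma~\ref{lem:18}, treat the discrepancy via the Replacement Lemma~\ref{lem:3}, Corollary~\ref{cor:11}, and the boundary truncation, and choose $\bar\lambda(n)$ so that $\coup^2(\lambda)$ beats the $n$-dependent constants. Two points in the execution, though, would not go through as written.

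First, the ``chaos-by-chaos'' energy estimate with ``iterating down from $j=n$ to $j=m$'' would accumulate a factorial loss. The graded sector condition~\eqref{eq:AR7} is a \emph{$\gamma$-free} bound $\lVert(\lambda+\calS+\calT)^{-\half}\calA^B_+\calS^{-\half}\rVert^2_{\fock_j\to\fock_{j+1}}\lesssim j+1$, uniformly in $\lambda$ — the $\gamma$ in $\calA^B_+$ is already spent against the $\log$ from the resolvent. Iterating it per chaos from $j=m$ to $j=n$ therefore produces $\prod_j(j+1)\sim n!$, which cannot be fit into the lemma's $n$-independent constant $C(m,k)$ (the only allowed $n$-growth is the $(n^{2k}-1)\vertiii{\frakf}^2$ term). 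Moreover, per-chaos testing does not see the cancellations you invoke: $\langle w_j,\calA_+w_{j-1}\rangle$ and $\langle w_j,\calA_-w_{j+1}\rangle$ do not vanish individually — skew-adjointness of $\calA$ only gives $\langle w,\calA w\rangle=0$ after summing over \emph{all} $j$. What the paper does instead is a single global energy estimate, testing~\eqref{eq:27} (resp.\ the difference equation) against the \emph{whole} $v^{\lambda,n}$ (resp.\ $v^{\lambda,n}-u^{\lambda,n}$): then orthogonality of chaoses kills $\langle v^{\lambda,n}_n,\calR_\lambda v^{\lambda,n-1}\rangle$ and $\langle v^{\lambda,n},\calA^B_+v^{\lambda,n}_n\rangle$, antisymmetry kills $\langle v^{\lambda,n},\calA^Nv^{\lambda,n}\rangle$, and no recursive iteration over $j$ is ever performed. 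The $\calN^k$ weight on the difference is then bounded trivially by $n^{2k}$ on $\fock_{\le n}$ and applied only at the very end, while the genuine $\calN^k$-control enters solely through Lemma~\ref{lem:18} on $u^{\lambda,n}$, whose constant is $n$-independent.

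Second, the $k=0$ case has to be established directly (not by comparison). For $k=0$ the right-hand side of~\eqref{eq:77} has \emph{no} $\vertiii{\frakf}^2$ contribution, whereas the comparison to $u^{\lambda,n}$ necessarily introduces one (through the nuisance bound~\eqref{eq:17} and the boundary term). The paper obtains the clean $k=0$ bound by testing~\eqref{eq:27} against $v^{\lambda,n}$ itself and absorbing, and this a priori bound $\lVert\calS^\half v^{\lambda,n}\rVert^2\lesssim\lVert(\lambda+\calS)^{-\half}\frakf\rVert^2$ is then used repeatedly inside the $k>0$ estimate — e.g.\ when applying Corollary~\ref{cor:11} to $\calA^Nv^{\lambda,n}$ and when controlling $\lVert(\lambda+\calS)^\half v^{\lambda,n}_n\rVert$. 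Without this bootstrap your argument is circular, and the absorption that would resolve the circularity is exactly what the direct $k=0$ estimate provides.
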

\begin{proof}
We begin by showing~\eqref{eq:77} holds if $k=0$. 
Testing both sides of~\eqref{eq:27} with $v^{\lambda,n}$ and using the Replacement Lemma \ref{lem:3} with 
$\psi_i=\calS^\half v^{\lambda,n-1}$, $i=1,2$, we obtain 
\begin{equation}
\label{eq:24}
\lVert (\lambda+\calS)^{\half} v^{\lambda,n} \rVert^2 \le \langle v^{\lambda,n} , \frakf \rangle + C n \coup^2 \lVert \calS^\half v^{\lambda,n-1} \rVert^2\,.
\end{equation}
The above bound holds since by orthogonality of Fock spaces with different indices, 
$\langle v^{\lambda,n}_n, \calR_\lambda v^{\lambda,n-1} \rangle$, $\langle v^{\lambda,n}, \calA^B_+v^{\lambda,n}_n\rangle$, 
$\langle v^{\lambda,n}, \calA^B_+v^{\lambda,n}_n\rangle$ vanish, 
while $\langle v^{\lambda,n}, \calA^N v^{\lambda,n}\rangle = 0$ since $\calA^N$ is antisymmetric 
and $\langle v^{\lambda,n}, \calS\calG^\lambda v^{\lambda,n}_n\rangle = \langle v^{\lambda,n}_n, \calS\calG^\lambda v^{\lambda,n}_n\rangle \ge 0$. 

For the first summand at the right hand side of~\eqref{eq:24}, we apply Cauchy-Schwarz so that 
\begin{equ}
|\langle v^{\lambda,n} , \frakf \rangle| \le \half\lVert (\lambda+\calS)^{\half} v^{\lambda,n} \rVert^2 + \half\lVert (\lambda+\calS)^{-\half} \frakf \rVert^2\,.
\end{equ}
For the other, we bound $\lVert \calS^\half v^{\lambda,n-1} \rVert\leq \lVert (\lambda+\calS)^\half v^{\lambda,n} \rVert$ 
and then choose $\lambda$ sufficiently small so that $C n \coup^2<1/2$. 
In this way, we can reabsorb it into the left hand side and derive~\eqref{eq:77} for $k=0$. 
\medskip

We turn to the case $k>0$. Let $u^{\lambda, n}$ be the solution of the 
truncated resolvent equation in~\eqref{eq:5} with input $\frakf$. We first want to control 
$\lVert(\lambda + \calS)^\half (v^{\lambda, n} - u^{\lambda, n}) \rVert^2$. 
Using~\eqref{eq:27}, we compute 
\begin{equation*}
\begin{split}
(\lambda - \calL) (v^{\lambda, n} - u^{\lambda, n}) = &-\calA^B_-(\lambda+\calS+\calS\calG^\lambda)^{-1}\frakf + \calR_\lambda v^{\lambda,n-1} \\ &- \calS\calG^\lambda v^{\lambda,n}_n - \calA_+(v^{\lambda,n}_n- u^{\lambda,n}_n) - \calA^N v^{\lambda,n-1}
\end{split}
\end{equation*}
so that, by testing both sides by $v^{\lambda, n} - u^{\lambda, n}$ and using orthogonality of 
Fock spaces with different indices, we get 
\begin{equs}[e:TestDifference]
\lVert(\lambda + \calS)^\half &(v^{\lambda, n} - u^{\lambda, n}) \rVert^2\\
\leq& \langle u^{\lambda,n}_{m-1}, \calA^B_-(\lambda+\calS+\calS\calG^\lambda)^{-1}\frakf\rangle + \langle v^{\lambda, n} - u^{\lambda, n}, \calR_\lambda v^{\lambda,n-1}\rangle\\
&-\langle v^{\lambda, n} - u^{\lambda, n}, \calS\calG^\lambda v^{\lambda,n}_n\rangle+\langle v^{\lambda, n} - u^{\lambda, n}, \calA^N v^{\lambda,n-1} \rangle
\end{equs}
and we separately bound each of the terms at the right hand side. 
For the first, we apply Cauchy-Schwarz and obtain a bound of the form 
\begin{equ}[e:oneWE]
 \|(\lambda +\calS)^{\half} u^{\lambda,n}\|\|(\lambda+\calS)^{-\half}\calA^B_-(\lambda+\calS+\calS\calG^\lambda)^{-1}\frakf\|\lesssim \|(\lambda+\calS)^{-\half}\frakf\|\vertiii{\frakf}
\end{equ}
where we applied~\eqref{eq:41} and the definition of the norm in~\eqref{e:VanNorm}. 
Thanks to the Replacement Lemma~\ref{lem:3}, the second is bounded by 
\begin{align}
C&\coup^2n \lVert(\lambda + \calS)^\half (v^{\lambda, n} - u^{\lambda, n}) \rVert \lVert(\lambda + \calS)^\half v^{\lambda, n} \rVert \nonumber\\
&\lesssim \coup^2n \big( \lVert(\lambda + \calS)^\half (v^{\lambda, n} - u^{\lambda, n}) \rVert^2 + \lVert(\lambda + \calS)^\half \frakf \rVert^2 \big) \label{eq:80}
\end{align}
where, in the last inequality, we used~\eqref{eq:77} with $k=0$. 
The third term satisfies 
\begin{align}
-\langle v^{\lambda, n} - u^{\lambda, n}, \calS\calG^\lambda v^{\lambda,n}_n\rangle \le |\langle u^{\lambda, n}, \calS\calG^\lambda v^{\lambda,n}_n\rangle| &\lesssim \lVert(\lambda + \calS)^\half u^\lambda_n \rVert \lVert(\lambda + \calS)^\half v^{\lambda,n}_n \rVert \nonumber\\
&\lesssim_k n^{-2k} \lVert(\lambda + \calS)^\half \frakf \rVert^2\label{e:threeWE}
\end{align}
where the last line comes from invoking both~\eqref{eq:77} with $k=0$ and Lemma \ref{lem:18}. 
Finally, the fourth term is bounded above by 
\begin{equ}
 \frac{1}{16} \lVert(\lambda + \calS)^\half (v^{\lambda, n} - u^{\lambda, n}) \rVert^2 + 4 \lVert(\lambda + \calS)^{-\half} \calA^N v^{\lambda,n} \rVert^2\,.
\end{equ}
To control the second summand, we apply first~\eqref{eq:17} and then~\eqref{eq:77} with $k=0$, so that 
\begin{equs}[e:nuisa]
\lVert(\lambda + \calS)^{-\half} \calA^N v^{\lambda,n} \rVert^2&\leq C_N\gamma^2 \lVert(\lambda + \calS)^{\half} v^{\lambda,n} \rVert^2+\vertiii{ \frakf }^2\\
&\leq C_N\gamma^2 \|(\lambda+\calS)^{-\half}\frakf\|^2 + \vertiii{ \frakf }^2\,.
\end{equs}
Therefore, the fourth term is bounded by
\begin{equ}[e:fourWE]
 \frac{1}{16} \lVert(\lambda + \calS)^\half (v^{\lambda, n} - u^{\lambda, n}) \rVert^2+C_N\gamma^2 \|(\lambda+\calS)^{-\half}\frakf\| + \vertiii{ \frakf }^2\,.
\end{equ}
By plugging~\eqref{e:oneWE},~\eqref{eq:80},~\eqref{e:threeWE} and~\eqref{e:fourWE} into~\eqref{e:TestDifference}, 
and taking $\lambda$ sufficiently small so to reabsorb all the terms containing 
$\lVert(\lambda + \calS)^\half (v^{\lambda, n} - u^{\lambda, n}) \rVert^2$, we conclude that the latter satisfies
\begin{equs}[e:BoundDifference]
\lVert(\lambda + &\calS)^\half (v^{\lambda, n} - u^{\lambda, n}) \rVert^2\\
&\lesssim (n^{-2k}+C_N\coup^2)\|(\lambda+\cS)^{\half}\frakf\|^2+\vertiii{ \frakf }^2+\|(\lambda+\cS)^{\half}\frakf\|\vertiii{ \frakf }\,.
\end{equs}
Simply bounding 
\begin{equ}
\|\cN^k(\lambda + \calS)^\half v^{\lambda, n} \rVert^2\leq \lVert \cN^k(\lambda + \calS)^\half (v^{\lambda, n} - u^{\lambda, n}) \rVert^2 + \lVert \cN^k(\lambda + \calS)^\half u^{\lambda, n} \rVert^2\,,
\end{equ}
using~\eqref{eq:41} and~\eqref{e:BoundDifference} together with choosing $\lambda$ sufficiently 
small depending on $n$,~\eqref{eq:77} follows. 
\end{proof}

\subsection{Proof of Proposition \ref{p:MainEstimates}}
\label{sec:remainder-terms}

We are now ready to collect the estimates obtained so far and prove Proposition~\ref{p:MainEstimates}. 
First, we state the next lemma which will immediately imply the bound~\eqref{eq:L2Bound}. 

\begin{lemma}\label{lem:L2control}
For $\lambda \in (0,1)$, $n \in \bbN$ and $\psi \in \fock_{n}$ it holds that 
\begin{equation}
\label{eq:124}
\lVert \sqrt{\lambda} (\lambda+\calS+\calS\calG^\lambda)^{-\half} \phi^B \rVert^2 \lesssim n \coup^2 \lVert \psi \rVert^2\,.
\end{equation}
where $\phi^B$ is as in Lemma \ref{lem:DiOff} under the choice $R = B$ and $\calT = \calS\calG^\lambda$. 
\end{lemma}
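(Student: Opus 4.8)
The plan is to reduce everything to the identity
\[
\lVert \sqrt{\lambda}\,(\lambda+\calS+\calS\calG^\lambda)^{-\half}\phi^B\rVert^2 = \lambda\,\lVert(\lambda+\calS+\calS\calG^\lambda)^{-1}\calA^B_+\calS^{-\half}\psi\rVert^2 ,
\]
which holds simply because $(\lambda+\calS+\calS\calG^\lambda)^{-\half}\phi^B = (\lambda+\calS+\calS\calG^\lambda)^{-1}\calA^B_+\calS^{-\half}\psi$. Writing $D$ for the positive diagonal operator $\lambda+\calS+\calS\calG^\lambda$, with multiplier $D(p)=\lambda+\tfrac12|p|^2(1+\cg^\lambda(p))\ge\lambda+\tfrac12|p|^2$, and decomposing $\calA^B_+\calS^{-\half}\psi=\sum_{i=1}^{n+1}\genAB{i}\calS^{-\half}\psi$, I would set $\Psi_i\eqdef D^{-1}\genAB{i}\calS^{-\half}\psi$, so that the quantity to bound equals $\lambda\lVert\sum_i\Psi_i\rVert^2 = \lambda\sum_i\lVert\Psi_i\rVert^2+\lambda\sum_{i\ne i'}\langle\Psi_i,\Psi_{i'}\rangle$, a diagonal plus an off-diagonal term exactly in the spirit of Lemma~\ref{lem:DiOff}.

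For the diagonal term, a change of variables shows $\lVert\Psi_i\rVert^2$ is independent of $i$, and running the $\mu_{n+1}\to\mu_n$ reduction and the angular expansion used in the proof of Lemma~\ref{lem:DiOff} gives $\lambda\sum_i\lVert\Psi_i\rVert^2 = \int\mu_n(\dd p_{1:n})\,|\psi(p_{1:n})|^2\,\lambda\tilde\cm^\lambda(p_{[1:n]})$, where $\tilde\cm^\lambda(q)=2\coup^2\int_{\R^2}\VHat(p)\cos^2\theta(p,q)\,B(p,q)\,D(p+q)^{-2}\dd p$ is the multiplier $\cm^\lambda$ of \eqref{eq:DiKer} but with $D(p+q)$ \emph{squared} because of the extra resolvent; this squared denominator is precisely what produces the gain. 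Indeed, bounding $\cos^2\theta,B\le1$ and $1+\cg^\lambda\ge1$, it suffices to prove $\lambda\int_{\R^2}\VHat(p)B(p,q)(\lambda+\tfrac12|p+q|^2)^{-2}\dd p\lesssim1$ uniformly in $q$ and $\lambda$. This follows by splitting the domain: on $|p|\le1$, where $\VHat$ is bounded, a translation together with $\int_{\R^2}(\lambda+\tfrac12|u|^2)^{-2}\dd u=2\pi/\lambda$ gives a bound $2\pi\lVert\VHat\rVert_\infty$; on $|p|>1$, the condition $B(p,q)=1$ forces $|p+q|\ge\tfrac14|p|$ (see \eqref{e:BNregionsBound}), so the denominator is $\gtrsim|p|^4$ and $\VHat\in L^1$ leaves a residual factor $\lesssim\lambda\le1$. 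Hence $\lambda\tilde\cm^\lambda\lesssim\coup^2$ and the diagonal term is $\lesssim\coup^2\lVert\psi\rVert^2$.

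For the off-diagonal term, note that $\Psi_i=D^{-\half}\phi^B[i]$, with $\phi^B[i]$ the constituents of \eqref{e:phii} for $R=B$ and $\calT=\calS\calG^\lambda$, so that $\lambda\,\Psi_i(p_{1:n+1})\overline{\Psi_{i'}(p_{1:n+1})}=\lambda D(p_{[1:n+1]})^{-1}\,\phi^B[i](p_{1:n+1})\overline{\phi^B[i'](p_{1:n+1})}$. Since $0\le\lambda D(p_{[1:n+1]})^{-1}\le1$, this yields the pointwise bound $|\lambda\Psi_i\overline{\Psi_{i'}}|\le|\phi^B[i]|\,|\phi^B[i']|$ and hence $|\lambda\langle\Psi_i,\Psi_{i'}\rangle|\le\int|\phi^B[i]|\,|\phi^B[i']|\,\dd\mu_{n+1}$. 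Summing over $i\ne i'$ and invoking the computation in the proof of Lemma~\ref{lem:6} — which, through Cauchy--Schwarz and Lemma~\ref{lem:4}, bounds precisely $\sum_{i\ne i'}\int|\phi^B[i]||\phi^B[i']|\dd\mu_{n+1}$ by $\lesssim\coup^2 n\lVert\psi\rVert^2$ — controls the off-diagonal term by $\lesssim\coup^2 n\lVert\psi\rVert^2$. Adding the two contributions proves \eqref{eq:124}.

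The one point worth flagging is that the Graded Sector Condition (Lemma~\ref{lem:32}) on its own would give merely $\lVert\phi^B\rVert^2\lesssim(n+1)\lVert\psi\rVert^2$, with no factor $\coup^2$ to spare: there the relevant $q$-integral $\coup^2\int\VHat(p)D(p+q)^{-1}\dd p$ is only $O(1)$ after the weak-coupling cancellation of \eqref{eq:16}. Multiplying by $\sqrt\lambda$ and absorbing it into the resolvent replaces this by $\lambda\coup^2\int\VHat(p)D(p+q)^{-2}\dd p$, which is $O(\coup^2)$ without any cancellation at all. So the main (and essentially only) obstacle is recognizing this mechanism and doing the bookkeeping for the diagonal term; the off-diagonal term is already subordinate to Lemma~\ref{lem:6}.
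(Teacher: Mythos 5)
Your proof is correct but takes a heavier route than the paper's. You split $\lVert\sqrt{\lambda}(\lambda+\calS+\calS\calG^\lambda)^{-\half}\phi^B\rVert^2$ into diagonal and off-diagonal pieces in the spirit of Lemma~\ref{lem:DiOff}, control the diagonal by a direct computation (your $\lambda$-integral bound is sound), and control the off-diagonal by appealing to the inequality established inside the proof of Lemma~\ref{lem:6}. You are right to appeal to that \emph{proof} rather than to the lemma's statement: what you need is the intermediate bound $\sum_{i\ne i'}\int|\phi^B[i]|\,|\phi^B[i']|\,\dd\mu_{n+1}\lesssim\gamma^2 n\lVert\psi\rVert^2$, which is indeed what the positive-majorant ($\Phi$) step there delivers, not merely the skew-symmetric conclusion. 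The paper's argument is shorter because it never separates diagonal from off-diagonal. It simply uses the crude estimate $\lVert\sum_j a_j\rVert^2 \le (n+1)^2\lVert a_{n+1}\rVert^2$ (Cauchy--Schwarz together with the fact that all $\lVert a_j\rVert$ agree by a change of variables), and then computes the single term $\lVert\sqrt{\lambda}(\lambda+\calS+\calS\calG^\lambda)^{-\half}\phi^B[n+1]\rVert^2 \lesssim \gamma^2(n+1)^{-1}\lVert\psi\rVert^2$ by precisely the mechanism you single out at the end of your proposal: the extra $\lambda$ and resolvent make the inner $q$-integral $O(1)$ with no weak-coupling cancellation required. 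The intrinsic $1/(n+1)$ in the single-term estimate then absorbs one power of the lossy $(n+1)^2$, leaving $(n+1)\gamma^2\lVert\psi\rVert^2$. The bulk-region geometry of Lemma~\ref{lem:6} is therefore never invoked; your split is more precise, but the extra factor of $n$ you save on the diagonal is wasted since the off-diagonal term already dominates.
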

\begin{proof}
Consider the decomposition $\phi^B = \sum_{j=1}^{n+1} \phi^B[j]$, 
with $\phi^B[j]$ as in~\eqref{e:phii}. Applying Cauchy-Schwarz to the off-diagonal terms, we see that 
\begin{equ}
\lVert \sqrt{\lambda} (\lambda+\calS+\calS\calG^\lambda)^{-\half} \phi^B \rVert^2\lesssim n^2\lVert \sqrt{\lambda} (\lambda+\calS+\calS\calG^\lambda)^{-\half} \phi^B[n+1] \rVert^2
\end{equ}
where we also used that, by a simple change of variables 
$\lVert \sqrt{\lambda} (\lambda+\calS+\calS\calG^\lambda)^{-\half} \phi^B[j] \rVert$ is independent of $j$. 
Using the definition of $\phi^B[n+1]$ in~\eqref{eq:AR8} and $\mu_{n+1}$ in~\eqref{e:mun}, 
and the fact that $\calS\calG^\lambda\geq 0$, we obtain 
\begin{equs}
n^2&\lVert \sqrt{\lambda} (\lambda+\calS+\calS\calG^\lambda)^{-\half} \phi^B[n+1] \rVert^2\\
&\lesssim n\gamma^2 \int \mu_n(\dd p_{1:n}) |\psi(p_{1:n})|^2 \Big(\lambda\int \dd p_{n+1}\frac{V(p_{n+1})}{(\lambda+|p_{[1:n+1]}|^2)^2}\Big)\lesssim n\gamma^2 \|\psi\|^2
\end{equs}
the last bound being a consequence of the fact that the quantity in parenthesis is bounded uniformly over $\lambda$. 
\end{proof}

We are now ready to complete the proof of Proposition~\ref{p:MainEstimates}. 

\begin{proof}[of Proposition~\ref{p:MainEstimates}]
Let us begin by showing~\eqref{eq:L2Bound}. For this, notice that, by~\eqref{eq:114}, 
\begin{equ}[e:j=m]
v^{\lambda,n}_m=(\lambda + \calS+\calS\calG^\lambda)^{-1}\frakf
\end{equ} 
so that we only need to focus on $v^{\lambda,n}_j$ for $j>m$. 
As in the proof of Corollary~\ref{cor:11}, 
upon setting $\psi\eqdef\calS^\half v^{\lambda,n}_j$,~\eqref{e:vphi} holds. Therefore, 
\begin{equ}
\lambda \|v^{\lambda,n}_{j+1}\|^2=\lVert \sqrt{\lambda} (\lambda+\calS+\calS\calG^\lambda)^{-\half} \phi^B \rVert^2 \lesssim j \coup^2 \lVert \psi \rVert^2= j\gamma^2 \|\calS^\half v^{\lambda,n}_j\|^2
\end{equ}
where the bound in the middle follows by Lemma~\ref{lem:L2control}. Summing both sides over $j=m,...,n-1$ we get 
\begin{equs}
\lambda \|v^{\lambda,n}-(\lambda + \calS+\calS\calG^\lambda)^{-1}\frakf\|^2\lesssim 
 \gamma^2 n \|(\lambda+\calS)^\half v^{\lambda,n}\|^2\lesssim  \gamma^2 n \|(\lambda+\calS)^{-\half} \frakf\|^2
\end{equs}
the last step being due to~\eqref{eq:77} with $k=0$. Hence,~\eqref{eq:L2Bound} is established.
\medskip

We now turn to~\eqref{eq:ApproxH1Bound}. By~\eqref{eq:27} and~\eqref{e:nuisa}, we 
immediately have 
\begin{equs}
\lVert (\lambda+\calS)^{-\half} \big[ &(\lambda-\calL)v^{\lambda, n} - \frakf \big] \rVert^2\lesssim \vertiii{ \frakf }^2 + C_N\gamma^2 \|(\lambda+\calS)^{-\half}\frakf\|^2\\
&+\lVert (\lambda+\calS)^{-\half}\calR_\lambda v^{\lambda,n-1}\rVert^2+\lVert (\lambda+\calS)^{-\half}(\calS\calG^\lambda + \calA^B_+)v^\lambda_n\rVert^2\,.
\end{equs}
Thanks to Lemma~\ref{lem:32} and the fact that $\calG^\lambda$ is a bounded operator, 
the final term is bounded above by 
\begin{equs}
\lVert (\lambda+\calS)^{-\half} (\calS\calG^\lambda + \calA^B_+)v^{\lambda, n}_n \rVert^2 &\lesssim (n+1) \lVert (\lambda+\calS)^\half v^{\lambda, n}_n\rVert^2 \\
&\lesssim \frac{1}{n} \lVert (\lambda + \calS)^{-\half} \frakf\rVert^2 + (n+1)\vertiii{\frakf}^2
\end{equs}
where in the last line we used~\eqref{eq:77} with $k=1$. For the penultimate term we use~\eqref{e:eq7Op}, so that 
\begin{equs}
\lVert (\lambda+\calS)^{-\half} \calR_\lambda v^{n-1} \rVert
\lesssim \coup^2 n \lVert\calS^\half v^{n-1} \rVert\lesssim \coup^2 n\lVert (\lambda + \calS)^{-\half} \frakf\rVert
\end{equs}
where the last step comes from~\eqref{eq:77} with $k=0$. 
It is then immediate to see that, by collecting all the bounds,~\eqref{eq:ApproxH1Bound} follows. 
\end{proof}

\section{The invariance principle}
\label{sec:invariance-principle}

The aim of this section is to prove Theorem~\ref{thm:1}, i.e. 
the invariance principle for the weakly self-repelling Brownian polymer 
in~\eqref{eq:sde} with the choice of $\gamma$ in~\eqref{eq:coup}, 
for which we will appeal to Theorem~\ref{thm:3}. 
Throughout the section, the family of observables $(v^{\lambda,n}\colon \lambda\in(0,1), n\in\N)$ we consider is 
given by the solution of the replacement equation in Definition~\ref{def:3} with input function $\frakf\eqdef\gamma f_1$, 
the latter being defined in~\eqref{e:fdef}. 

In the next subsections we verify conditions \cref{eq:L2,eq:ApproxH1,eq:ApproxMean} by deriving suitable bounds on the right hand sides of~\eqref{eq:L2Bound} and~\eqref{eq:ApproxH1Bound}. Then we verify~\eqref{eq:ApproxVariance}, for which some additional control is needed. 
We will put these elements together and complete the proof of the main result of the paper. At last, 
we provide some insights as to how (some of) the above mentioned conditions are weaker then 
those in~\cite[Theorem 2.7]{KomoLandOlla12_FluctuationsMarkov}.

\subsection{Limiting diffusivity}

First, we must control the seminorms in~\eqref{e:VanNorm} of $\gamma f_1$. 

\begin{lemma}
\label{lem:9}
For $\lambda \in (0,1)$, and $\frakf = \coup f_1$, it holds that 
\begin{equ}[e:BoundVanNorm]
\Big(\lambda \lVert (\lambda + \calS + \calS\calG^\lambda)^{-1} \frakf \rVert^2\Big)\vee \vertiii{ \frakf }^2 \lesssim \coup^2\,,
\end{equ}
where the norm $\vertiii{\cdot}$ is defined according to~\eqref{e:VanNorm}. 
\end{lemma}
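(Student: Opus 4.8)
Since $\frakf=\coup f_1$ sits in the single chaos $\fock_1$, with kernel $\frakf(p)=-\iota\coup e_1p/(2\pi)$ (see~\eqref{e:fdef} and the line below~\eqref{eq:AFM}), everything can be computed explicitly. The operator $\lambda+\calS+\calS\calG^\lambda$ is diagonal on $\fock_1$ with multiplier $D^\lambda(|p|)\eqdef \lambda+\tfrac12|p|^2\big(1+\cg^\lambda(p)\big)$, so $w^\lambda\eqdef(\lambda+\calS+\calS\calG^\lambda)^{-1}\frakf\in\fock_1$ has kernel $w^\lambda(p)=-\iota\coup e_1p/\big(2\pi D^\lambda(|p|)\big)$. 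Note $D^\lambda(|p|)\ge\lambda+\tfrac12|p|^2$ (as $1+\cg^\lambda\ge1$) and that $w^\lambda$ is odd, $w^\lambda(-p)=-w^\lambda(p)$. For the first bound I would expand $\lVert w^\lambda\rVert^2$ via~\eqref{e:ScalarProd}--\eqref{e:mun}, use $(e_1p)^2\le|p|^2$ and $D^\lambda(|p|)\ge\lambda+\tfrac12|p|^2$ to get $\lambda\lVert w^\lambda\rVert^2\lesssim\coup^2\lambda\int\VHat(p)(\lambda+\tfrac12|p|^2)^{-2}\,\dd p$, and conclude $\lambda\lVert(\lambda+\calS+\calS\calG^\lambda)^{-1}\frakf\rVert^2\lesssim\coup^2$ from $\lambda\int_0^\infty r(\lambda+\tfrac12 r^2)^{-2}\,\dd r=1$ together with boundedness and integrability of $\VHat$.

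For $\vertiii{\frakf}^2$, the $\calA^B_-$ term vanishes identically: $w^\lambda\in\fock_1$, on which $\calA_-$, $\calA^B_-$ and $\calA^N_-$ all vanish (Lemma~\ref{lem:26} and~\eqref{eq:AR}); hence $\vertiii{\frakf}^2=\lVert(\lambda+\calS)^{-\half}\calA^N_+w^\lambda\rVert^2$. By~\eqref{eq:AR}, $\calA^N_+w^\lambda(p_1,p_2)=\tfrac\coup2(p_1\cdot p_2)\big[N(p_1,p_2)w^\lambda(p_2)+N(p_2,p_1)w^\lambda(p_1)\big]$, so by~\eqref{e:mun}
\begin{equ}
\vertiii{\frakf}^2=\int\frac{2\VHat(p_1)\VHat(p_2)}{|p_1|^2|p_2|^2}\,\frac{\big|\calA^N_+w^\lambda(p_1,p_2)\big|^2}{\lambda+\tfrac12|p_1+p_2|^2}\,\dd p_1\,\dd p_2\,.
\end{equ}
The naive bound $|N(p_1,p_2)w^\lambda(p_2)+N(p_2,p_1)w^\lambda(p_1)|\le|w^\lambda(p_1)|+|w^\lambda(p_2)|$ only yields $\vertiii{\frakf}^2\lesssim1$, which is not enough: the point is a cancellation when $p_1\approx-p_2$.

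The key input is a uniform gradient bound $|\nabla w^\lambda(p)|\lesssim_\alpha\coup\,(\lambda+|p|^2)^{-1}$. Writing $D^\lambda(r)=\lambda+\Phi(\tfrac12 r^2)$ with $\Phi(y)=y\big(1+g(\ell^\lambda(y))\big)$ and $g,\ell^\lambda$ as in~\eqref{e:g}, one has $|\Phi'(y)|\le1+g(\ell^\lambda(y))+y\,g'(\ell^\lambda(y))\,|(\ell^\lambda)'(y)|$; here $g'\le2\pi$, while $y|(\ell^\lambda)'(y)|=\coup^2 y\big/\big((\lambda+y)(\lambda+y+1)\big)\le\coup^2$, and, crucially, the choice of $\coup$ in~\eqref{eq:Weak} gives $\ell^\lambda(y)\le\coup^2\log(1+\lambda^{-1})=\alpha^2$, hence $g(\ell^\lambda(y))\le g(\alpha^2)=\sigma^2(\alpha)$ by~\eqref{eq:87}. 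Thus $|\Phi'(y)|\lesssim_\alpha1$, so $|(D^\lambda)'(r)|\lesssim_\alpha r$, and the gradient bound follows using $D^\lambda(|p|)\ge\lambda+\tfrac12|p|^2$.

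With this I would split the support of $\calA^N_+w^\lambda$ into the pieces (i) $\{N(p_1,p_2)=N(p_2,p_1)=1\}$, (ii) $\{N(p_1,p_2)=1,\ N(p_2,p_1)=0\}$, and (ii$'$) the image of (ii) under $p_1\leftrightarrow p_2$ (which contributes as much as (ii) by symmetry). On (i) one has $|p_1+p_2|<\tfrac13\min(|p_1|,|p_2|)$, so $|p_1|\sim|p_2|$ and the whole segment $[p_1,-p_2]$ lies in $\{|p|\sim|p_2|\}$; by oddness and the mean value theorem $|w^\lambda(p_1)+w^\lambda(p_2)|=|w^\lambda(p_1)-w^\lambda(-p_2)|\lesssim_\alpha\coup\,|p_1+p_2|(\lambda+|p_2|^2)^{-1}$, so that the factor $|p_1+p_2|^2$ in $|\calA^N_+w^\lambda|^2$ cancels $(\lambda+\tfrac12|p_1+p_2|^2)^{-1}\le2|p_1+p_2|^{-2}$; integrating first in $p_1$ over the ball of radius $\tfrac13|p_2|$ around $-p_2$ (yielding $\lesssim\min\{|p_2|^2,1\}$) and then in $p_2$ via $\int_0^1 r^3(\lambda+\tfrac12 r^2)^{-2}\,\dd r\lesssim\log(1+\lambda^{-1})$ leaves $\coup^4\log(1+\lambda^{-1})=\coup^2\alpha^2\lesssim\coup^2$, the range $|p_2|\gtrsim1$ being $\lesssim\coup^4$ by the rapid decay of $\VHat$ (nonnegative by the positive semi-definiteness of $V$). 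On (ii) the bracket is just $w^\lambda(p_2)$, for which only the crude bound $|w^\lambda(p_2)|\lesssim\coup|p_2|(\lambda+|p_2|^2)^{-1}$ is available, but the constraints now force $\tfrac23|p_2|<|p_1|<|p_2|$ and $|p_1+p_2|\gtrsim|p_2|$, so $\lambda+\tfrac12|p_1+p_2|^2\gtrsim\lambda+|p_2|^2$ and the small-denominator problem disappears; integrating $p_1$ over the resulting annulus and then using $\int_0^1 r^5(\lambda+r^2)^{-3}\,\dd r\lesssim\log(1+\lambda^{-1})$ again gives $\coup^4\log(1+\lambda^{-1})=\coup^2\alpha^2\lesssim\coup^2$. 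Summing the three pieces yields $\vertiii{\frakf}^2\lesssim\coup^2$. The main obstacle is region (i): without the cancellation one loses an extra $\log(1+\lambda^{-1})$, and making the cancellation quantitative forces the gradient estimate above, whose proof in turn rests on the weak-coupling bound $\ell^\lambda\le\alpha^2$ (and on the factor $\lambda+y+1$ in $(\ell^\lambda)'$) to control $\cg^\lambda$ and its derivative; this is a lighter version of the cancellation exploited in the most delicate term of the proof of Lemma~\ref{lem:nuisance}.
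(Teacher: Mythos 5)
Your proof is correct and follows essentially the same route as the paper: bound the resolvent term directly, note that $\calA^B_-$ annihilates $\fock_1$, then control the $\calA^N_+$ contribution by exploiting the oddness of $w^\lambda=(\lambda+\calS+\calS\calG^\lambda)^{-1}\frakf$ via a uniform gradient bound $|\nabla w^\lambda(p)|\lesssim\coup(\lambda+|p|^2)^{-1}$, splitting the nuisance region into a symmetric core where the cancellation is exploited and a symmetric-difference margin handled crudely. The only cosmetic differences are that you partition $\{N(p_1,p_2)=1\}\cup\{N(p_2,p_1)=1\}$ directly into your regions (i)/(ii)/(ii$'$) instead of the paper's replace-$N(p_2,p_1)$-by-$N(p_1,p_2)$ device followed by the mean-value argument on $N(p_1,p_2)=1$ (both equivalent, and both leave an $O(\coup^4\log(1+\lambda^{-1}))=O(\coup^2)$ tail), and that you actually spell out the verification of the gradient bound via $\ell^\lambda\le\alpha^2$, $g'\le2\pi$ and $y|(\ell^\lambda)'(y)|\le\coup^2$, whereas the paper dismisses this as ``can be easily seen''.
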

\begin{proof} 
Let us begin by controlling the first term in the maximum. A simple computation gives that 
\begin{equ}
\lambda \lVert (\lambda + \calS + \calS\calG^\lambda)^{-1} \frakf \rVert^2 \lesssim \coup^2 \lambda\int \frac{\VHat(p)}{|p|^2} \frac{|p|^2}{(\lambda+\half|p|^2)^2} \dd p  \lesssim \coup^2\,.
\end{equ}
Turning to the second term, note that since $\frakf \in \fock_1$, Lemma~\ref{lem:26} and the definition 
of $\calA^R_-$ in~\eqref{eq:AR} imply that 
$\calA^R_- (\lambda + \calS + \calS\calG^\lambda)^{-1} \frakf = 0$. Hence, we only need to control 
\begin{align}
& \vertiii{ \frakf }^2=\lVert (\lambda+\calS)^{-\half} \calA^N_+ (\lambda + \calS + \calS\calG^\lambda)^{-1} \frakf \rVert^2 \label{eq:45} \\
&\lesssim \coup^2 \int  \frac{\dd p _{1:2}\VHat(p_1)\VHat(p_2)}{\lambda + \half|p_1+p_2|^2} \Big\lvert\frac{N(p_1,p_2) \frakf(p_2)}{\lambda+\half|p_2|^2[1+\calg^\lambda(p_2)]} + \frac{N(p_2,p_1)\frakf(p_1)}{\lambda+\half|p_1|^2[1+\calg^\lambda (p_1)]}\Big\rvert^2 \,. \nonumber
\end{align}
We want to replace $N(p_2,p_1)$ with $N(p_1,p_2)$. For this, we add and subtract the corresponding term 
inside the modulus in the previous expression and bound
\begin{equs}[e:Intermediate]
\coup^2 \int  \frac{\VHat(p_1)\VHat(p_2)}{\lambda + \half|p_1+p_2|^2} &\frac{\tilde N(p_1,p_2)|\frakf(p_1)|^2}{(\lambda+\half|p_1|^2[1+\calg^\lambda (p_1)])^2}\dd p _{1:2}\\
&\lesssim \coup^4 \int  \frac{\VHat(p_1)\VHat(p_2)}{\lambda + \half|p_1+p_2|^2} \frac{\tilde N(p_1,p_2)|p_1|^2}{(\lambda+\half|p_1|^2)^2}\dd p _{1:2}
\end{equs}
where $\tilde N(p_1,p_2)=|N(p_2,p_1)-N(p_1,p_2)|$. Notice that $\tilde N(p_1,p_2)=1$ provided that $p_1,p_2$ either 
belong to $R_1\eqdef\tfrac13|p_1|\leq|p_1+p_2|<\tfrac13|p_2|$ or $R_2\eqdef\tfrac13|p_1|\leq|p_1+p_2|<\tfrac13|p_2|$. 
We will split the above integral in these two regions, but since the argument is the same, we will only explicitly treat 
the first. On $R_1$, $|p_1+p_2|\gtrsim |p_1|$ and, by triangle inequality $|p_2|<\tfrac32|p_1|$, hence~\eqref{e:Intermediate} 
is bounded above by 
\begin{equs}
\coup^4 \int \dd p_1 \frac{\VHat(p_1)}{(\lambda + \half|p_1|^2)^2}\int_{|p_2|\lesssim|p_1|}\dd p_2\VHat(p_2) \lesssim \coup^2\,.
\end{equs}
As a consequence, we are left with, writing $N = N(p_1,p_2)$,
\begin{equs}
&\coup^2 \int_N  \frac{\dd p _{1:2}\VHat(p_1)\VHat(p_2)}{\lambda + \half|p_1+p_2|^2} \Big\lvert\frac{ \frakf(p_2)}{\lambda+\half|p_2|^2[1+\calg^\lambda(p_2)]} + \frac{\frakf(p_1)}{\lambda+\half|p_1|^2[1+\calg^\lambda (p_1)]}\Big\rvert^2\\
&=\coup^2 \int_N  \frac{\dd p _{1:2}\VHat(p_1)\VHat(p_2)}{\lambda + \half|p_1+p_2|^2} \Big\lvert\frac{ \frakf(p_2)}{\lambda+\half|p_2|^2[1+\calg^\lambda(p_2)]} - \frac{\frakf(-p_1)}{\lambda+\half|p_1|^2[1+\calg^\lambda (p_1)]}\Big\rvert^2
\end{equs}
where we used that $\frakf$ is odd. The map $p\mapsto \frakf(p)/(\lambda+\half|p|^2[1+\calg^\lambda (p)])$ has a gradient that can be easily seen to be bounded in modulus by 
$\gamma (\lambda+|p|^2)^{-1}$, which, by mean value theorem implies 
\begin{equ}
\Big\lvert\frac{\frakf(p_2)}{\lambda+\half|p_2|^2[1+\calg^\lambda(p_2)]} - \frac{\frakf(-p_1)}{\lambda+\half|-p_1|^2[1+\calg^\lambda(-p_1)]}\Big\rvert \lesssim \frac{\coup|p_1+p_2|}{\lambda+|p_2|^2}\,.
\end{equ}
and we further exploited the fact that for every $p$ in the segment connecting $p_2$ and $-p_1$, 
we have $||p|-|p_2||<\tfrac13|p_2|$. 
In conclusion, we have shown that 
\begin{equ}
\vertiii{ \frakf }^2\lesssim \coup^4 \int_N  \frac{\VHat(p_1)\VHat(p_2)}{(\lambda + \half|p_2|^2)^2} \dd p _{1:2}\leq \coup^4 \int \dd p_2  \frac{\VHat(p_2)}{(\lambda + \half|p_2|^2)^2} \int_{|p_1|\lesssim |p_2|} \VHat(p_1)\dd p _{1}
\end{equ}
and the last integral is $O(\gamma^2)$, from which the statement follows at once. 
\end{proof}

Next, we focus on the diffusivity. 

\begin{lemma}\label{lem:ApproxMean}
There exists $\bar\lambda=\bar\lambda(n)\in(0,1)$ such that for every $\lambda<\bar\lambda$
\begin{equ}
 |\lVert \calS^\half v^{\lambda, n} \rVert^2 - \tfrac12 \sigma^2(\alpha)| \lesssim \coup^2n + n^{-2} \,.
\end{equ}
\end{lemma}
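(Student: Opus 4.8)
The plan is to exploit the recursive structure of the replacement equation~\eqref{eq:114} (with input $\frakf=\coup f_1$, so $m=1$) to reduce $\lVert\calS^\half v^{\lambda,n}\rVert^2=\sum_{j=1}^n\lVert\calS^\half v^{\lambda,n}_j\rVert^2$ to an explicit integral involving only the first chaos component $v^{\lambda,n}_1=(\lambda+\calS+\calS\calG^\lambda)^{-1}\frakf$, and then to evaluate that integral. For $j\ge1$ one has $(\lambda+\calS+\calS\calG^\lambda)v^{\lambda,n}_{j+1}=\calA^B_+v^{\lambda,n}_j$ by~\eqref{eq:114}; testing against $v^{\lambda,n}_{j+1}$ gives
\begin{equation*}
\lambda\lVert v^{\lambda,n}_{j+1}\rVert^2+\lVert\calS^\half v^{\lambda,n}_{j+1}\rVert^2+\langle v^{\lambda,n}_{j+1},\calS\calG^\lambda v^{\lambda,n}_{j+1}\rangle=\lVert(\lambda+\calS+\calS\calG^\lambda)^{-\half}\calA^B_+v^{\lambda,n}_j\rVert^2,
\end{equation*}
whose right-hand side is $\lVert\phi^B\rVert^2$ in the notation of Lemma~\ref{lem:DiOff} with $\psi=\calS^\half v^{\lambda,n}_j$, $\calT=\calS\calG^\lambda$, $R=B$. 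By the decomposition~\eqref{eq:DiOff}, the off-diagonal bound of Lemma~\ref{lem:6} (costing a factor $\coup^2 j$) and the replacement estimate~\eqref{eq:10} ($\sup_p|\cm^\lambda-\cg^\lambda|\lesssim\coup^2$), this equals $\langle v^{\lambda,n}_j,\calS\calG^\lambda v^{\lambda,n}_j\rangle$ up to $O(\coup^2 j\lVert\calS^\half v^{\lambda,n}_j\rVert^2)$. Writing $a_j=\lVert\calS^\half v^{\lambda,n}_j\rVert^2$, $b_j=\langle v^{\lambda,n}_j,\calS\calG^\lambda v^{\lambda,n}_j\rangle$, $c_j=\lambda\lVert v^{\lambda,n}_j\rVert^2$, this reads $c_{j+1}+a_{j+1}+b_{j+1}=b_j+O(\coup^2 j\,a_j)$, while for $j=1$ testing $(\lambda+\calS+\calS\calG^\lambda)v^{\lambda,n}_1=\frakf$ against $v^{\lambda,n}_1$ gives $c_1+a_1+b_1=\langle v^{\lambda,n}_1,\frakf\rangle$. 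Summing telescopes to
\begin{equation*}
\lVert\calS^\half v^{\lambda,n}\rVert^2=\langle v^{\lambda,n}_1,\frakf\rangle-\lambda\lVert v^{\lambda,n}\rVert^2-\langle v^{\lambda,n}_n,\calS\calG^\lambda v^{\lambda,n}_n\rangle+O\Big(\coup^2\sum_{j=1}^{n-1}j\,a_j\Big).
\end{equation*}

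The three error terms are negligible. By~\eqref{eq:L2Bound} and Lemma~\ref{lem:9}, $\lambda\lVert v^{\lambda,n}\rVert^2\lesssim\coup^2 n$. Since $\cg^\lambda=g\circ\ell^\lambda(\tfrac12|\cdot|^2)$ is bounded by $g(\ell^\lambda(0))=g(\alpha^2)=\sigma^2(\alpha)$ (the second equality being the weak coupling~\eqref{eq:Weak}, $\ell^\lambda(0)=\coup^2\log(1+\lambda^{-1})=\alpha^2$), the last term is at most $\sigma^2(\alpha)\,a_n$; from~\eqref{eq:77} with $k=1$ and Lemma~\ref{lem:9} one gets $n^2a_n\le\lVert\calN(\lambda+\calS)^\half v^{\lambda,n}\rVert^2\lesssim1+n^2\coup^2$, so $a_n\lesssim n^{-2}+\coup^2$. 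The same estimate bounds $\coup^2\sum_j j\,a_j\le\coup^2\lVert\calN(\lambda+\calS)^\half v^{\lambda,n}\rVert^2\lesssim\coup^2(1+n^2\coup^2)$, which is $\lesssim\coup^2$ once $\lambda<\bar\lambda(n)$ is small enough that $n^2\coup^2\le1$ (recall $\coup=\coup(\lambda)\to0$). Hence $\lVert\calS^\half v^{\lambda,n}\rVert^2=\langle v^{\lambda,n}_1,\frakf\rangle+O(\coup^2 n+n^{-2})$.

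It remains to evaluate $\langle v^{\lambda,n}_1,\frakf\rangle$ explicitly. As $\lambda+\calS+\calS\calG^\lambda$ is diagonal with multiplier $\lambda+\tfrac12|p|^2(1+\cg^\lambda(p))$ and the $\fock_1$-kernel of $\coup f_1$ is an explicit multiple of $p_1$, rotational invariance of $\VHat$ reduces $\langle v^{\lambda,n}_1,\frakf\rangle$ to $c_V\coup^2\int_{\bbR^2}\VHat(p)\big(\lambda+\tfrac12|p|^2(1+\cg^\lambda(p))\big)^{-1}\dd p$ for an explicit constant $c_V$. Using $1+\cg^\lambda(p)=1+g\big(\ell^\lambda(\tfrac12|p|^2)\big)=\sqrt{1+4\pi\coup^2\log(1+(\lambda+\tfrac12|p|^2)^{-1})}$, I would pass to polar coordinates and substitute $t=\tfrac12|p|^2$, then $u=\lambda+t$, then $s=\log(1+u^{-1})$. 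The corrections from replacing $\VHat(p)$ by $\VHat(0)$ (legitimate since $\VHat(p)-\VHat(0)=O(|p|^2)$ near the origin and $\VHat$ decays at infinity) and from the Jacobian factor $e^s/(e^s-1)=1+(e^s-1)^{-1}$ contribute $O(1)$ uniformly in $\lambda$, hence $O(\coup^2)$ after the prefactor. The main term is $\tfrac{c_V'}{\coup^2}\int_0^{\sim\log\lambda^{-1}}(1+4\pi\coup^2 s)^{-1/2}\dd s=\tfrac{c_V'}{2\pi\coup^2}\big(\sqrt{1+4\pi\coup^2\log(1+\lambda^{-1})}-1\big)+O(1)$, and the weak coupling identity $\coup^2\log(1+\lambda^{-1})=\alpha^2$ collapses this to $\tfrac{c_V'}{2\pi\coup^2}\sigma^2(\alpha)+O(1)$. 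Multiplying back by $\coup^2$ and tracking the universal constants (the $4\pi$ in~\eqref{e:g}, the Fourier/covariance normalisations, and the factor $\tfrac12$ from angular averaging of $(e_1p)^2$) leaves exactly $\langle v^{\lambda,n}_1,\frakf\rangle=\tfrac12\sigma^2(\alpha)+O(\coup^2)$, and together with the previous paragraph this gives $|\lVert\calS^\half v^{\lambda,n}\rVert^2-\tfrac12\sigma^2(\alpha)|\lesssim\coup^2 n+n^{-2}$ for $\lambda<\bar\lambda(n)$.

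The main obstacle is the last step: it is the only place where the precise form of $\calG^\lambda$ — in particular the constant $4\pi$ in the definition $g(y)=\sqrt{4\pi y+1}-1$ — and the weak coupling scaling interact, and one must carry out the chain of substitutions while cleanly separating the logarithmically large main contribution from the $O(1)$ remainders so that all universal constants cancel to leave exactly $\tfrac12\sigma^2(\alpha)$. Everything else is bookkeeping resting on Lemmas~\ref{lem:DiOff}, \ref{lem:6}, \ref{lem:9} and \ref{lem:17} and Proposition~\ref{p:MainEstimates}.
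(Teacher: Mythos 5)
Your approach is essentially the same as the paper's: the chaos-by-chaos telescoping you set up (testing $(\lambda+\calS+\calS\calG^\lambda)v^{\lambda,n}_{j+1}=\calA^B_+v^{\lambda,n}_j$ at each level) is an unrolled version of the paper's single testing of~\eqref{eq:27} against $v^{\lambda,n}$, and both reduce to the identity
\begin{equation*}
\lVert\calS^\half v^{\lambda,n}\rVert^2=\langle\frakf,v^{\lambda,n}_1\rangle-\lambda\lVert v^{\lambda,n}\rVert^2-\langle v^{\lambda,n}_n,\calS\calG^\lambda v^{\lambda,n}_n\rangle+O\big(\coup^2\textstyle\sum_j j\,a_j\big)\,,
\end{equation*}
with the same error controls from~\eqref{eq:L2Bound}, Lemma~\ref{lem:9}, \eqref{e:hone} and~\eqref{eq:77}. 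The only place where you outline rather than execute is the computation $\langle\frakf,v^{\lambda,n}_1\rangle=\lVert(\lambda+\calS+\calS\calG^\lambda)^{-\half}\frakf\rVert^2=\tfrac12\sigma^2(\alpha)+O(\coup^2)$, which you correctly identify as the crux; the paper carries this out by restricting to $|p|\le1$, replacing $\VHat$ by $1$, passing to polar coordinates, and — exactly as in the proof of Lemma~\ref{lem:ReplEst} — massaging the denominator to $(\lambda+\rho)(\lambda+\rho+1)(1+\cg^\lambda(\rho))$ so that the substitution $y=\ell^\lambda(\rho)$ together with the consistency relation $\partial g=2\pi/(1+g)$ collapses the integral to $\pi\int_0^{\alpha^2}\frac{\dd y}{1+g(y)}=\tfrac12 g(\alpha^2)=\tfrac12\sigma^2(\alpha)$, confirming the constant you guessed. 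One very minor point: your bound $\lambda\lVert v^{\lambda,n}\rVert^2\lesssim\coup^2 n$ is correct from~\eqref{eq:L2Bound}, and it is absorbed into the stated $\coup^2 n$ rate anyway.
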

\begin{proof}
At first we want to bound the difference between $\lVert \calS^\half v^{\lambda, n} \rVert^2$ and 
$\langle \frakf, v^{\lambda,n}\rangle$. For this, recall that $\frakf\in\fock_1$ so that 
$\calA^R_-(\lambda +\calS + \calS\calG^\lambda)^{-1}\frakf=0$. Hence, 
testing both sides of~\eqref{eq:27} by $v^{\lambda, n}$ 
and arguing as in~\eqref{eq:24}, we deduce 
\begin{equs}
\big|\lVert \calS^\half v^{\lambda, n} \rVert^2-\langle \frakf, v^{\lambda,n}\rangle\big|&\leq \lambda\|v^{\lambda,n}\|^2+C n \gamma^2\|\calS^\half v^{\lambda,n}\| +|\langle v^{\lambda,n}_n, \calS\calG^\lambda v^{\lambda,n}_n\rangle|\\
&\lesssim \gamma^2 +n \gamma^2 + \|\calS^{\half} v^{\lambda,n}_n\|^2\lesssim n \gamma^2 +\frac{1}{n^2}\label{e:FirstDifference}
\end{equs}
where to bound the first term at the right hand side of the first line we used~\eqref{eq:L2Bound} first and 
both~\eqref{e:BoundVanNorm} and~\eqref{e:hone} after, for the second~\eqref{eq:77} with $k=0$, while 
for the last first the fact that $\calG^\lambda$ is bounded, then~\eqref{eq:77} with $k=1$ and at last 
both~\eqref{e:BoundVanNorm} and~\eqref{e:hone}. 
As a consequence, we only need to focus on the scalar product $\langle \frakf, v^{\lambda,n}\rangle$. 
Notice that by orthogonality of Fock spaces with different indices and the definition 
of $v^{\lambda,n}_1$ in~\eqref{eq:114}, we have 
\begin{equ}[e:SecondDifference]
\langle \frakf, v^{\lambda,n}\rangle=\langle \frakf, v^{\lambda,n}_1\rangle=\lVert(\lambda + \calS + \calS\calG^\lambda)^{-\half}\frakf\rVert^2
\end{equ}
and the last term can be explicitly computed
\begin{equs}
\lVert(\lambda &+ \calS + \calS\calG^\lambda)^{-\half}\frakf\rVert^2= \coup^2 \int \frac{\VHat(p)}{|p|^2}\frac{|-\iota e_1p|^2}{\lambda+\half|p|^2[1 + \calg^\lambda(\half|p|^2)]} \dd p \\
&= \frac{ \coup^2}{2} \int \frac{\VHat(p)}{\lambda+\half|p|^2[1 + \calg^\lambda(\half|p|^2)]} \dd p \\
&= \frac{ \coup^2}{2} \int_{|p| \le 1} \frac{\VHat(p)}{\lambda+\half|p|^2(1 + \calg^\lambda(\half|p|^2))} \dd p + O(\coup^2) \\
&= \frac{ \coup^2}{2} \int_{|p| \le 1} \frac{1}{\lambda+\half|p|^2(1 + \calg^\lambda(\half|p|^2))} \dd p  + O(\coup^2) \\
&= \pi \coup^2 \int_0^\half \frac{1}{\lambda+\rho(1 + \calg^\lambda(\rho))} \dd p  + O(\coup^2) \\
&= \pi \coup^2 \int_0^\half \frac{1}{(\lambda+\rho)(\lambda+\rho+1)(1 + \calg^\lambda(\rho))} \dd p  + O(\coup^2) =\half \sigma^2(\alpha) +O(\coup^2) 
\end{equs}
where in the various steps above we used that uniformly on $|p| \le 1$ by the mean value theorem, 
$|\VHat(p) - 1| \lesssim |p|$, passed to polar coordinates, and argued as in the proof of Lemma \ref{lem:ReplEst}. 
The last step is a consequence of~\eqref{eq:87} together with the fact that $\sigma^2(\alpha)=g(\ell^\lambda(0))$. 
The statement then follows by the above,~\eqref{e:SecondDifference} and~\eqref{e:FirstDifference}. 
\end{proof}

\subsection{Variance bounds for the quadratic variation}
\label{sec:vari-bounds-quadr}

In this section, we derive the bound which is necessary to verify \eqref{eq:ApproxVariance}. 

\begin{proposition}
\label{prp:4}
For $n \in \bbN$ and $\lambda\in(0,1)$, let $q^{\lambda,n} \in \fock_{\le2n}$ and $\sigma_{\lambda,n}^2 > 0$ 
be defined according to~\eqref{e:gsigma}. 
Then, there exists $\bar\lambda=\bar\lambda(n)\in(0,1)$ and a constant $C=C(n)>0$ 
such that for all $\lambda<\bar\lambda$, 
we have 
\begin{equ}[e:VarBound]
\lambda \lVert (\lambda+\calS)^{-\half}\big(q^{\lambda,n} - \sigma_{\lambda,n}^2\big) \rVert^2 \leq C\coup^2\,.
\end{equ}
\end{proposition}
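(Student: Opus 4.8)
The plan is to expand $q^{\lambda,n}-\sigma_{\lambda,n}^2$ into Wiener chaos and estimate the resulting pieces one at a time, exploiting two structural features of the replacement equation: each layer $v^{\lambda,n}_j$ in Definition~\ref{def:3} carries an overall prefactor $\gamma^j$, and the weight $\sqrt\lambda(\lambda+\calS)^{-1/2}$ localises to small \emph{total} momentum, so that after rescaling all momenta by $\sqrt\lambda$ the relevant integrals become $\lambda$-independent and convergent, which removes the logarithmic divergences that would otherwise obstruct the bound. Concretely, write $w_i\eqdef\nabla_i v^{\lambda,n}=\sum_j\nabla_i v^{\lambda,n}_j$, so that by~\eqref{e:gsigma} and~\eqref{eq:IBP} one has $q^{\lambda,n}=\sum_i w_i^2$ and $\sigma_{\lambda,n}^2=\sum_i\sum_j\|\nabla_i v^{\lambda,n}_j\|^2$. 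The Wiener chaos product formula (see~\cite{Janson97_GaussianHilbert}, and the analogous computations in~\cite{CannGubiToni23_GaussianFluctuations}) decomposes $\nabla_i v^{\lambda,n}_j\cdot\nabla_i v^{\lambda,n}_k=\sum_{\ell=0}^{\min(j,k)}T^{(i)}_{j,k,\ell}$ with $T^{(i)}_{j,k,\ell}\in\fock_{j+k-2\ell}$, the kernel of $T^{(i)}_{j,k,\ell}$ being obtained by contracting $\ell$ pairs of arguments of the kernels of $\nabla_i v^{\lambda,n}_j$ and $\nabla_i v^{\lambda,n}_k$ against the weight $\prod_a\VHat(q_a)|q_a|^{-2}$ and symmetrising. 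The chaos-zero terms, which occur only for $j=k$ and $\ell=j$, sum to exactly $\sigma_{\lambda,n}^2$, whence
\begin{equ}
q^{\lambda,n}-\sigma_{\lambda,n}^2=\sum_{i=1}^2\ \sum_{\substack{1\le j,k\le n\\ 0\le\ell\le\min(j,k),\ j+k-2\ell\ge1}}T^{(i)}_{j,k,\ell}\,,
\end{equ}
and, $n$ being fixed, it suffices to prove $\lambda\|(\lambda+\calS)^{-1/2}T^{(i)}_{j,k,\ell}\|^2\lesssim_n\gamma^2$ for each such term.

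For this one needs the structure of the kernels. Iterating~\eqref{eq:114}, the kernel of $v^{\lambda,n}_j$ is obtained by applying $(\lambda+\calS+\calS\calG^\lambda)^{-1}\calA^B_+$ a total of $j-1$ times to $v^{\lambda,n}_1=(\lambda+\calS+\calS\calG^\lambda)^{-1}\gamma f_1$; it therefore carries the prefactor $\gamma^j$, it decays in each of its arguments and in its internal total momentum $p_{[1:j]}$ (each $\calA^B_+$ contributes a factor $p\cdot p'$ and a resolvent $(\lambda+\tfrac12|\cdot|^2[1+\cg^\lambda])^{-1}$), and by Corollary~\ref{cor:11} it is supported in a reverse-triangle region, on which $|p_{[1:j]}|\gtrsim\sum_a|p_a|$. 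Applying $\nabla_i$ only adds a bounded multiplier $e_i\cdot p_{[1:j]}$. The crucial bookkeeping observation is that the total momentum of $T^{(i)}_{j,k,\ell}$ equals the \emph{sum} $p^j_{[1:j]}+p^k_{[1:k]}$ of the two internal total momenta, since the contracted variables cancel in pairs; hence $\sqrt\lambda(\lambda+\calS)^{-1/2}$ weights $T^{(i)}_{j,k,\ell}$ precisely by $\sqrt\lambda\,(\lambda+\tfrac12|p^j_{[1:j]}+p^k_{[1:k]}|^2)^{-1/2}$.

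To estimate a single term one inserts its explicit kernel into $\lambda\|(\lambda+\calS)^{-1/2}T^{(i)}_{j,k,\ell}\|^2$, bounds the indicators of $B$ and the cosines by $1$, and uses the pointwise decay of the kernels together with the a priori estimates of Lemma~\ref{lem:17} and the graded sector condition Lemma~\ref{lem:32}, reducing to an integral of ``sunset'' type multiplied by $\gamma^{2(j+k)}$. Each contraction integral $\int\VHat(q)|q|^{-2}(\cdots)\dd q$ converges at $q=0$ because the kernels supply the missing power of $|q|$, and at $q=\infty$ because $\VHat$ is Schwartz, so the only logarithms in $\lambda$ come from internal resolvent integrals against the measure up to scale one; each of these is compensated by the factor $\lambda\,(\lambda+\tfrac12|p^j_{[1:j]}+p^k_{[1:k]}|^2)^{-1}$, which after rescaling every momentum by $\sqrt\lambda$ renders the whole integral $\lambda$-independent and finite. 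This yields $\lambda\|(\lambda+\calS)^{-1/2}T^{(i)}_{j,k,\ell}\|^2\lesssim_n\gamma^{2(j+k)}\le\gamma^2$, since every non-constant term satisfies $j+k\ge2$; summing over the finitely many $(i,j,k,\ell)$ and taking $\lambda$ small (as in Lemma~\ref{lem:17}, to reabsorb lower-order contributions) gives~\eqref{e:VarBound}.

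The main obstacle is precisely this last estimate: one must verify for \emph{every} product term that its $\lambda/(\lambda+\calS)$-weighted integral is $\lesssim_n\gamma^{2(j+k)}$, i.e.\ that the possible logarithmic divergences of $\|T^{(i)}_{j,k,\ell}\|^2$ are always of the infrared-to-scale-one type removed by the $\sqrt\lambda$-rescaling and never outnumber the available powers of $\gamma^2$. Keeping track of this for arbitrary $j,k\le n$ is best organised as an induction over the recursion in Definition~\ref{def:3}, feeding in the graded sector condition at each step, rather than by making the iterated kernels $v^{\lambda,n}_j$ fully explicit; the reverse-triangle support property of Corollary~\ref{cor:11} and the decay of the kernels in the total momentum are exactly what guarantee convergence with the required uniformity in $\lambda$.
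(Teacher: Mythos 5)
Your decomposition matches the paper's: expand $q^{\lambda,n}-\sigma_{\lambda,n}^2$ via the Wiener chaos product formula, observe that the chaos-zero piece is exactly $\sigma_{\lambda,n}^2$, and reduce to bounding $\lambda\lVert(\lambda+\calS)^{-1/2}h_{m,\ell,j,k}\rVert^2$ term by term. You also correctly identify the reverse-triangle support from Corollary~\ref{cor:11} as a structural tool.

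However, there is a genuine gap at the crucial estimate. Your claim that each product term is $\lesssim_n \gamma^{2(j+k)}$ is too optimistic and in fact not true, and the rescaling heuristic you invoke to get it doesn't close the argument. The point is that the explicit prefactor $\gamma^j$ in the kernel of $v^{\lambda,n}_j$ is \emph{not} a net gain: each of the $j-1$ iterated resolvent integrals against $\mu$ produces a $\log(1/\lambda)$ divergence which, by the weak-coupling choice $\gamma^2\sim1/\log(1/\lambda)$, eats a factor $\gamma^2$. This is why $\lVert\calS^{1/2}v^{\lambda,n}_j\rVert$ is $O(1)$ (and why $\sigma^2_{\lambda,n}$ converges to a strictly positive constant rather than to $0$), not $O(\gamma^{j-1})$. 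The single surviving $\gamma^2$ in~\eqref{e:VarBound} comes from the one extra weight $\lambda(\lambda+\calS)^{-1}$, and extracting it requires a careful count of which logarithms are and are not compensated. You also propose to absorb ``each'' internal log with ``the'' factor $\lambda(\lambda+|p^j_{[1:j]}+p^k_{[1:k]}|^2)^{-1}$ by a $\sqrt\lambda$-rescaling, but there is only one such factor while there are potentially $j+k-2$ internal resolvents, so one rescaling cannot compensate all of them.

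What is actually needed is an inductive estimate in the spirit of the paper's Lemma~\ref{lem:23}: one shows by induction on $j$ that
\begin{equ}
\lambda \int \frac{\big|\calS^\half v^{\lambda,n}_j(p_{1:j})\big|^2}{\lambda+|p_{[1:j-r]}+p^\prime|^2}\,\mu_j(\dd p_{1:j}) \lesssim \coup^2
\end{equ}
uniformly over $p^\prime$, where in the inductive step one distinguishes whether the freshly created variable $p_\ell$ enters the ``external'' block $\{1,\dots,j-r\}$ or not: in the first case a two-resolvent integral bound (Lemma~\ref{lem:22}) delivers an $O(1)$ factor and the explicit $\gamma^2$ from $\calA^B_+$ survives; in the second, the fresh $\gamma^2$ is eaten by a log (as in~\eqref{e:UBWeak}) and the induction hypothesis supplies the needed $\gamma^2$. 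You do flag at the end that ``an induction over the recursion'' is the right organisational principle, but without actually setting it up this way the proof is incomplete, and the $\gamma^{2(j+k)}$ claim as stated would mislead a reader into thinking the estimate is much easier than it is.
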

\begin{proof}
We write $q^{\lambda,n}$ in terms of its chaos decomposition $q^{\lambda,n} = \sum_{m=0}^{2n}q^{\lambda,n}_m$ 
and note that $q^{\lambda,n}_0 = \sigma^2_{\lambda,n}$. 
Therefore it is enough to show that each component $q^{\lambda,n}_m$, 
with $m\neq0$, is such that $\lambda \lVert (\lambda+\calS)^{-\half}q^{\lambda,n}_m\rVert^2$ is bounded by 
the right hand side of~\eqref{e:VarBound}. To that end, let $m\in\{1,\dots,2n\}$ and write
\begin{equs} 
q^{\lambda,n}_m = \sum_{l=1}^2 \sum_{j,k = 1}^n \Pi_m \left((\nabla_l v^{\lambda,n}_j)(\nabla_l v^{\lambda,n}_k)\right)=:\sum_{l=1}^2 \sum_{j,k = 1}^n h_{m,\ell,j,k}\,. 
\end{equs}
By triangle inequality, the statement follows once we prove that for each given $m\in\{1,\dots,2n\}$, 
$l\in\{1,2\}$, $j,k\in\{1,\dots,n\}$ we have 
\begin{equ}[e:Bound]
\lambda \lVert (\lambda+\calS)^{-\half} h_{m,\ell,j,k} \rVert^2 \leq C\coup^2
\end{equ}
for some constant $C>0$ which might depend on $m,l,j,k$. Therefore, let $\ell,m,j,k$ be fixed once and for all, assume, 
without loss of generality, $j\geq k$ 
and drop the corresponding subscripts from $h_{m,\ell,j,k}$, i.e. set $h= h_{m,\ell,j,k}$. 
By \cite[Theorem 3.15]{Janson97_GaussianHilbert} $h$ is non-zero only if 
$m = j + k - 2r$ for some $r \in \{1,...,j\}$, 
in which case it is given by the symmetrisation of the function $\hTil$ defined according to
\begin{equation}
\label{eq:88}
\hTil(p_{1:m})\eqdef C_{j,k,r} \int  \nabla_l v_j(q_{1:r},p_{1:j-r}) \nabla_l v_k(q_{1:r},p_{j-r+1:m}) \mu_r(\dd q_{1:r})
\end{equation}
where $v = v^{\lambda, n}$, and $C_{j,k,r}$ is an explicit combinatorial constant. Moreover, since $j\geq k$ and $m\neq0$, it holds that $r<j$. 
Then, 
\begin{equs}
&\lambda \lVert (\lambda+\calS)^{-\half} h\rVert^2 = \lambda \int \frac{h(p_{1:m})^2}{\lambda+|p_{[1:m]}|^2} \mu_m(\dd p) \lesssim \lambda \int \frac{\hTil(p_{1:m})^2}{\lambda+|p_{[1:m]}|^2} \mu_m(\dd p) \\
&\lesssim \lambda \int \frac{1}{\lambda+|p_{[1:m]}|^2} \nabla_l v_j(q_{1:r},p_{1:j-r}) \, \nabla_l v_k(q_{1:r},p_{j-r+1:m}) \label{e:Uffa}\\
& \qquad \qquad \times\nabla_l v_j(q^\prime_{1:r},p_{1:j-r})\,\nabla_l v_k(q^\prime_{1:r},p_{j-r+1:m})\big)  \mu_m(\dd p_{1:m}) \mu_r(\dd q_{1:r}) \mu_r(\dd q^\prime_{1:r})\\
&\lesssim \lambda\int  \frac{\big|\calS^\half v_j(q_{1:r},p_{1:j-r})\big|^2 \big|\calS^\half v_k(q^\prime_{1:r},p_{j-r+1:m})\big|^2}{\lambda+|p_{[1:m]}|^2} \mu_m(\dd p_{1:m}) \mu_r(\dd q_{1:r}) \mu_r(\dd q^\prime_{1:r})
\end{equs}
where we omit the dependence on $m,l,j,k$ and, in the last step, we used $2ab\leq a^2+b^2$, the fact that, 
upon changing variables the two summands are the same and that, by the definition of $\nabla_l$ and $\calS$ 
in Lemma~\ref{lem:26} $|\nabla_l v_j|\leq |\calS^{1/2}v_j|$. 

Modulo a constant depending only on $m, j$ and $r$, we can write the product of the measures 
$\mu_m(\dd p_{1:m}) \mu_r(\dd q_{1:r}) \mu_r(\dd q^\prime_{1:r})$ as $\mu_j(\dd q_{1:r}\dd p_{1:j-r}) \mu_k(\dd q^\prime_{1:r}\dd p_{j-r+1:m})$ and consider first the integral in $\mu_j$. This in turn is controlled 
via Lemma~\eqref{lem:23}, so that~\eqref{e:Uffa} is bounded above by 
\begin{equ}[e:Uffa2]
\coup^2\int \big|\calS^\half v_j(q^\prime_{1:r},p_{j-r+1:m})\big|^2 \mu_{k}(\dd q^\prime_{1:r}\dd p_{j-r+1:m})=\gamma^2\|\calS^\half v_j\|^2
\end{equ}
and, thanks to~\ref{eq:77} with $k=0$ and~\eqref{e:hone}, the norm at right hand side is uniformly bounded, so that 
the proof of the statement is concluded.  
\end{proof}

\begin{lemma}
\label{lem:23}
In the setting of Proposition~\ref{prp:4}, for any $n \in \bbN$, $j \in \{1,...,n\}$, $r \in \{0,...,j-1\}$, for $\lambda$ sufficiently 
small, it holds that
\begin{equation}
\label{eq:92}
\lambda \int   \frac{\big|\calS^\half v^{\lambda,n}_j(p_{1:j})\big|^2}{\lambda+|p_{[1:j-r]}+p^\prime|^2} \mu_j(\dd p _{1:j}) \lesssim \coup^2\,
\end{equation}
uniformly over $p'\in\R^2$. 
\end{lemma}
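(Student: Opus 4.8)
The plan is to fix $j\in\{1,\dots,n\}$, $r\in\{0,\dots,j-1\}$ and $p'\in\R^2$ and, using the permutation symmetry of $v^{\lambda,n}_j$ (so the partial sum in the denominator may be taken over the first $j-r$ coordinates), to bound
$I:=\lambda\int|\calS^\half v^{\lambda,n}_j(p_{1:j})|^2(\lambda+|p_{[1:j-r]}+p'|^2)^{-1}\mu_j(\dd p_{1:j})$ by $C(n)\coup^2$ by unrolling the replacement equation~\eqref{eq:114}. (We are in the setting of Proposition~\ref{prp:4}, so the input of~\eqref{eq:114} is $\frakf=\coup f_1\in\fock_1$, i.e.\ $m=1$; hence $v^{\lambda,n}_1=(\lambda+\calS+\calS\calG^\lambda)^{-1}\coup f_1$ and $v^{\lambda,n}_j=(\lambda+\calS+\calS\calG^\lambda)^{-1}\calA^B_+v^{\lambda,n}_{j-1}$ for $j\ge2$.) \emph{Base case $j=1$ (so $r=0$):} the kernel of $v^{\lambda,n}_1$ is $p\mapsto -\iota\coup e_1p\,[2\pi(\lambda+\tfrac12|p|^2(1+\cg^\lambda(p)))]^{-1}$, so, using $\cg^\lambda\ge0$ and $|e_1p|\le|p|$, $|\calS^\half v^{\lambda,n}_1(p)|^2\lesssim\coup^2|p|^2(\lambda+|p|^2)^{-1}$; cancelling the factor $|p|^{-2}$ from $\mu_1$ gives $I\lesssim\coup^2\lambda\int_{\R^2}\VHat(p)[(\lambda+|p|^2)(\lambda+|p+p'|^2)]^{-1}\dd p\lesssim\coup^2$, uniformly in $p'$, by the two--point bound $\sup_q\int_{\R^2}[(\lambda+|p|^2)(\lambda+|p+q|^2)]^{-1}\dd p\lesssim\lambda^{-1}$ from Appendix~\ref{sec:integral-estimates} (cf.\ Lemma~\ref{lem:4}).

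For $j\ge2$ I would argue recursively. Since $\calS^\half(\lambda+\calS+\calS\calG^\lambda)^{-\half}$ has multiplier $\le1$, one has $|\calS^\half v^{\lambda,n}_j|\le|\phi^B|$ pointwise, with $\phi^B$ as in Lemma~\ref{lem:DiOff} for $\psi=\calS^\half v^{\lambda,n}_{j-1}$, $R=B$, $\calT=\calS\calG^\lambda$. Writing $\phi^B=\sum_i\phi^B[i]$, applying Cauchy--Schwarz, using the bulk--region pointwise estimate~\eqref{eq:57} on each $\phi^B[i]$ (which carries a factor $\coup$ and a ``resolvent denominator'' $(\lambda+|p_i|^2+|p_{[1:j\setminus i]}|^2)^{-1}$), and then iterating the same expansion on the copy of $v^{\lambda,n}_{j-1}$ inside $\psi$, one rewrites $I$ as a finite sum---with $n$--dependent combinatorial constants and one factor $\coup^2$ for each expansion performed---of integrals whose integrand is a product of such resolvent denominators, one per created mode, times a squared lower--chaos kernel $|\calS^\half v^{\lambda,n}_{i_0}(\,\cdot\,)|^2$ (possibly $i_0=1$, where one uses the explicit $\coup f_1$), times the bad denominator $(\lambda+|p_{[1:j-r]}+p'|^2)^{-1}$, with all surviving modes constrained to the reverse--triangle support inherited from $\frakf$ through Corollary~\ref{cor:11}. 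Each such term I would evaluate by integrating out the created modes in the order they were produced, with the following dichotomy. If the mode being integrated is one of the first $j-r$ coordinates---i.e.\ it feeds into $p_{[1:j-r]}+p'$---integrate it against the product of its resolvent denominator and $(\lambda+|p_{[1:j-r]}+p'|^2)^{-1}$, gaining a factor $\lesssim\lambda^{-1}$ by the two--point bound; this cancels the outer $\lambda$, the remaining modes are integrated against $\mu$ to produce $\|\calS^\half v^{\lambda,n}_{i_0}\|^2\lesssim_n 1$ (by~\eqref{eq:77} with $k=0$ and Lemma~\ref{lem:5}), and the term is controlled. If instead the mode being integrated does not feed into $p_{[1:j-r]}+p'$ (which forces $r\ge1$), integrate it against its resolvent denominator alone, gaining only $\lesssim\log(1+\lambda^{-1})$ (one--point bound), transfer---via the reverse--triangle inequality---the decay of the remaining mode--sum onto $|p_{[1:j-r]}|$, and move to the next created mode. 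Since $|S|=j-r\ge1$, by the time the unrolling reaches chaos level $j-r$ every surviving mode feeds into $p_{[1:j-r]}$, so the procedure terminates after at most $r+1\le n$ steps, and along the way the number of logarithmic factors $\log(1+\lambda^{-1})$ it has accumulated is always at least one fewer than the number of factors $\coup^2$; since $\coup^2\log(1+\lambda^{-1})=\alpha^2$ by the weak coupling choice~\eqref{eq:Weak}, every such term is $\lesssim_{\alpha,n}\coup^2$, and summing the finitely many terms yields~\eqref{eq:92}.

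The main obstacle is precisely the ``non--feeding mode'' branch: there the integration of a freshly created mode yields only a logarithm rather than a power of $\lambda^{-1}$, so the outer $\lambda$ is not consumed and one must unroll one level deeper. The compensation works only because each logarithmic loss is matched by a new $\coup^2$ produced by the corresponding $\calA^B_+$, and this has to be tracked globally: a naive induction on $j$ that cashed in a clean bound $\lesssim\coup^2$ at every level would lose a full $\log(1+\lambda^{-1})$ per level and fail, since the base case $j=1$ leaves no logarithmic room to spare. So the delicate part is purely bookkeeping---keeping straight the combinatorial constants, the nested reverse--triangle supports, and the running tally of logarithms against powers of $\coup$---while the scalar inputs are only the elementary one-- and two--point bounds $\int_{\R^2}\VHat(p)(\lambda+|p|^2+b^2)^{-1}\dd p\lesssim\log(1+\lambda^{-1})$ and $\sup_q\int_{\R^2}[(\lambda+|p|^2)(\lambda+|p+q|^2)]^{-1}\dd p\lesssim\lambda^{-1}$, both collected in Appendix~\ref{sec:integral-estimates}.
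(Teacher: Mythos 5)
Your approach is essentially the paper's own. The mechanism is the same dichotomy: at each level the created mode either enters $p_{[1:j-r]}$ (``feeding''), in which case you run the two--point bound and cancel the outer $\lambda$, or it does not, in which case you run the one--point bound $\int \VHat(p)(\lambda+|p+q|^2)^{-1}\dd p\lesssim \gamma^{-2}$ and pay for the logarithm with the $\coup^2$ produced by the \emph{current} level's $\calA^B_+$. The paper organises this as a clean induction on $j$: in the non-feeding branch ($\ell\in\{j-r+1,\dots,j\}$) it integrates $p_\ell$, absorbs the resulting $\log(1+\lambda^{-1})$ into the $\coup^2$ from $\calA^B_+[\ell]$, and then just invokes the induction hypothesis at level $j-1$ (which itself carries a $\coup^2$). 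So your closing remark that ``a naive induction on $j$ that cashed in a clean bound $\lesssim\coup^2$ at every level would lose a full $\log(1+\lambda^{-1})$ per level and fail'' is not right --- that is precisely the induction the paper runs, and it closes because each log is killed at the level that created it, not deferred. Two smaller points: the ``transfer the decay of the remaining mode-sum onto $|p_{[1:j-r]}|$ via the reverse triangle inequality'' step is not needed (after integrating a non-feeding mode the bad denominator is untouched and what remains is literally the statement at level $j-1$); and the two--point estimate you want for the base case is Lemma~\ref{lem:22}, not Lemma~\ref{lem:4} (the latter is the one--point bound with the extra $|p|/|q+r|$ factor).
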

\begin{proof}
To lighten the notation, we set $v=v^{\lambda,n}$ throughout the proof. We prove the result by induction on $j$. For $j=1$, $r=0$. Therefore, by~\eqref{eq:114}, we get 
\begin{equ}
\lambda \int   \frac{\big|\calS^\half v_1(p)\big|^2}{\lambda+|p+p'|^2} \mu_1(\dd p )\lesssim  \lambda \int\frac{\VHat(p)}{|p|^2} \frac{\coup^2|p|^4}{(\lambda+|p+p^\prime|^2)(\lambda+|p|^2)^2} \dd p\lesssim \gamma^2
\end{equ}
where the last bound follows by Lemma \ref{lem:22}. Assume now~\eqref{eq:92} holds for $j-1$, 
and, for $\ell\in\{1,\dots,j\}$, define $v_j[\ell]$ as the right hand side of~\eqref{eq:114} but with $\calA^B_+$ 
replaced by $\calA^B_+[\ell]$, the latter being given in~\eqref{eq:ARi}. 
Clearly, it suffices to show~\eqref{eq:92} with $v_j[\ell]$ in place of $v_j$. 
Then,
\begin{equs}
&\lambda \int   \frac{\big|\calS^\half v_j[\ell](p_{1:j})\big|^2}{\lambda+|p_{[1:j-r]}+p^\prime|^2} \mu_j(\dd p _{1:j})\\
&\lesssim \coup^2\int \mu_{j-1}(\dd p _{1:j\setminus \ell})\big|\calS^\half v_{j-1}(p_{1:j\setminus\ell})\big|^2 \Big(\lambda\int \frac{\VHat(p_\ell) \dd p _\ell}{(\lambda+|p_{[1:j-r]}+p^\prime|^2)(\lambda+|p_{[1:j]}|^2)} \Big) 
\end{equs} 
which holds since $\cg^\lambda$ is non-negative. For $\ell \in \{1,...,j-r\}$, we can upper bound the quantity in parenthesis 
via Lemma~\ref{lem:22} and then argue as in~\eqref{e:Uffa2}. For $\ell\in\{j-r+1,j\}$, we instead bound the 
integral over $p_\ell$ as in~\eqref{e:UBWeak}, so that we are left with 
\begin{equ}
\lambda\int \frac{\big|\calS^\half v_{j-1}(p_{1:j\setminus\ell})\big|^2}{\lambda+|p_{[1:j-r]}+p^\prime|^2} \mu_{j-1}(\dd p _{1:j\setminus \ell})
\end{equ}
and the bound follows by the induction hypothesis. 
\end{proof}

\subsection{Proof of Theorem~\ref{thm:1}, the invariance principle}

We can now complete the proof of Theorem~\ref{thm:1}, for which we only need to collect the bounds 
in the previous subsections. 

\begin{proof}[of Theorem \ref{thm:1}]
The family $(v^{\lambda,n}\colon \lambda\in(0,1),\,n\in\N)$ in Definition~\ref{def:3} with input function 
$\frakf=\coup f_1$ satisfies the conditions of Theorem~\ref{thm:3}, which then imply the result. 
Indeed,~\cref{eq:L2,eq:ApproxH1} follow respectively by~\eqref{eq:L2Bound} and~\eqref{eq:ApproxH1Bound} 
in Proposition~\ref{p:MainEstimates} (note that $\frakf=\gamma f_1$ trivially satisfies~\eqref{a:RTI}) together 
with~\eqref{e:BoundVanNorm} and~\eqref{e:hone}. 
Condition~\eqref{eq:ApproxMean} is a consequence of Lemma~\ref{lem:ApproxMean} while~\eqref{eq:ApproxVariance} 
can be deduced by Proposition~\ref{prp:4}. 
\end{proof}

\subsection{A comparison to the classical Fluctuation theory for Markov Processes}
\label{sec:no-weak-convergent}
This subsection is meant to highlight the novelty of our approach with respect to the classical theory  
presented in~\cite{KomoLandOlla12_FluctuationsMarkov} and complement the discussion in Section~\ref{sec:BCC}. 
In particular, our goal is to show that the conditions~\cref{eq:L2,eq:ApproxH1} imposed on the family 
$(v^{\lambda,n}\colon \lambda\in(0,1),\,n\in\N)$ do not imply the assumptions 
of~\cite[Theorem 2.7]{KomoLandOlla12_FluctuationsMarkov}. 
The main assumption required therein is~\cite[eq. (2.23)]{KomoLandOlla12_FluctuationsMarkov}, 
which in the language of the present paper, reads 
\begin{equ}[e:AssKLO]
\lim_{\lambda\to 0} \lambda\|u^\lambda\|^2=0\qquad\text{and}\qquad \lim_{\lambda\to 0} \|\calS^{\half}(u^\lambda-u)\|=0
\end{equ}
for some some $u\in\fH\eqdef\{\psi\in\fock\colon \|\psi\|_\fH\eqdef\|\calS^{1/2}\psi\|<\infty\}$, 
where $u^\lambda$ is the solution of the resolvent equation $(\lambda-\calL)u^\lambda=\gamma f_1$, 
with $f_1$ given by~\eqref{e:fdef}. 
In the next proposition, we show that only the former among the two limits holds in our setting, while the 
latter does not. 

\begin{proposition}\label{p:USvsKLO}
For $\lambda\in(0,1)$, let $u^\lambda$ be the solution of the resolvent equation $(\lambda-\calL)u^\lambda=\gamma f_1$ 
with $f_1$ defined as in~\eqref{e:fdef}. Then, the first of the two limits in~\eqref{e:AssKLO} holds 
but there exists no $u\in\fH$ for which the second does. 
\end{proposition}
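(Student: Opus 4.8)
\textbf{Proof strategy for Proposition~\ref{p:USvsKLO}.}

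The plan is to treat the two claims separately, since they have quite different flavours. For the first limit, $\lim_{\lambda\to0}\lambda\|u^\lambda\|^2=0$, I would observe that testing the resolvent equation $(\lambda-\calL)u^\lambda=\gamma f_1$ against $u^\lambda$ and using that $\calL=-\calS+\calA$ with $\calA$ skew-adjoint gives the standard identities $\lambda\|u^\lambda\|^2+\|\calS^{\half}u^\lambda\|^2=\langle u^\lambda,\gamma f_1\rangle$ and $\|\calS^{\half}u^\lambda\|^2\le\langle u^\lambda,\gamma f_1\rangle$. Together with Cauchy--Schwarz, $\langle u^\lambda,\gamma f_1\rangle\le\|(\lambda+\calS)^{\half}u^\lambda\|\,\|(\lambda+\calS)^{-\half}\gamma f_1\|$, and Lemma~\ref{lem:5} bounds the last factor uniformly in $\lambda$. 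Hence $\|(\lambda+\calS)^{\half}u^\lambda\|$ is bounded, which yields $\lambda\|u^\lambda\|^2\lesssim 1$ but not yet that it vanishes. To upgrade to a vanishing limit I would use the explicit Fourier representation: since $f_1\in\fock_1$, the first chaos component $u^\lambda_1$ satisfies $u^\lambda_1=(\lambda+\calS+\calT^\lambda)^{-1}\gamma f_1$ where $\calT^\lambda$ is the fixed-point operator of~\eqref{e:FPO}, which is non-negative; more simply, the resolvent identity bounds all chaos components, and a direct computation (as in Lemma~\ref{lem:L2control} or Lemma~\ref{lem:9}, whose proof shows $\lambda\|(\lambda+\calS+\calS\calG^\lambda)^{-1}\gamma f_1\|^2\lesssim\gamma^2$) combined with the a priori estimate $\lambda\|v^{\lambda,n}-(\lambda+\calS+\calS\calG^\lambda)^{-1}\gamma f_1\|^2\lesssim\gamma^2 n\|(\lambda+\calS)^{-\half}\gamma f_1\|^2\lesssim\gamma^2 n$ from~\eqref{eq:L2Bound} shows $\lambda\|v^{\lambda,n}\|^2\lesssim\gamma^2 n$. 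Since $\gamma=\gamma(\lambda)\to0$, this vanishes for each fixed $n$; one then needs a mild argument that $u^\lambda$ is genuinely approximated by $v^{\lambda,n}$ in $\|(\lambda+\calS)^{\half}\cdot\|$ as $n\to\infty$ (which follows by an $H_{-1}$-type comparison using the graded sector condition Lemma~\ref{lem:32}), giving $\lambda\|u^\lambda\|^2\to0$.

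For the second, negative statement, the strategy is to argue by contradiction: suppose $u\in\fH$ with $\|\calS^{\half}(u^\lambda-u)\|\to0$. Then in particular $\|\calS^{\half}u^\lambda\|\to\|\calS^{\half}u\|<\infty$, and the limiting value of $\|\calS^{\half}u^\lambda\|^2$ is therefore finite. But I would compute this limiting value and show it is \emph{finite and positive} --- indeed, by the same chain of identities as above, $\|\calS^{\half}u^\lambda\|^2=\langle u^\lambda,\gamma f_1\rangle-\lambda\|u^\lambda\|^2\to\langle u,\gamma f_1\rangle$ provided the pairing converges; and Lemma~\ref{lem:ApproxMean} together with the approximation $v^{\lambda,n}\approx u^\lambda$ identifies $\lim_\lambda\|\calS^{\half}u^\lambda\|^2=\tfrac12\sigma^2(\alpha)$, which is strictly positive. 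So finiteness of the norm is not itself the obstruction. The real obstruction is that strong $\fH$-convergence would force the first-chaos kernels $\fraku^\lambda_1(p)$ of $u^\lambda$ to converge in the weighted space, and I would show these do \emph{not} form a Cauchy sequence: from~\eqref{eq:114}/\eqref{eq:111}, $\fraku^\lambda_1(p)\approx \gamma f_1(p)/(\lambda+\tfrac12|p|^2[1+\cg^\lambda(\tfrac12|p|^2)])$ (using that $\calS\calG^\lambda$ is an approximate fixed point by the Replacement Lemma~\ref{lem:3}), and as $\lambda\to0$ the denominator degenerates near $p=0$ in a $\lambda$-dependent way --- precisely, $\gamma^2=\alpha^2/\log(1+\lambda^{-1})\to0$ while $\cg^\lambda$ develops a logarithmic profile, so the rescaled kernel has no pointwise (or $L^2_\bbC(\mu_1)$-weighted-by-$\calS$) limit; the mass of $\|\calS^{\half}u^\lambda_1\|^2$ concentrated in $\{|p|\le\delta\}$ is of order $\gamma^2\log(1+\lambda^{-1}/\delta^2)\to$ a nonzero constant depending on $\delta$ even as $\lambda\to0$ for \emph{every} fixed $\delta$, which is incompatible with convergence to a fixed $u\in\fH$ (for which this tail mass would tend to zero as $\delta\to0$ uniformly). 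Quantitatively, I would exhibit two sequences $\lambda_k\to0$ and $\lambda'_k\to0$ along which $\|\calS^{\half}(u^{\lambda_k}-u^{\lambda'_k})\|$ stays bounded below, contradicting the Cauchy criterion implied by~\eqref{e:AssKLO}.

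The main obstacle I anticipate is making the last concentration/non-Cauchy argument rigorous at the level of the full operator $\calL$ rather than its first-chaos truncation: one must control the higher chaos contributions to $u^\lambda$ and ensure they do not conspire to cancel the non-convergence coming from $\fock_1$. This is where the uniform-in-$n$ estimates of the paper --- the graded sector condition (Lemma~\ref{lem:32}), the a priori bounds (Lemma~\ref{lem:17}), and in particular the identification $\lim_{n}\limsup_\lambda\|\calS^{\half}v^{\lambda,n}\|^2=\tfrac12\sigma^2(\alpha)$ --- do the heavy lifting: they let me reduce the analysis of $\|\calS^{\half}u^\lambda\|^2$ and of its spectral distribution in $|p|$ to the explicit first-chaos computation plus a remainder that is $O(\gamma^2 n)+O(n^{-1})$ uniformly, hence negligible after the double limit. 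Once that reduction is in place, the contradiction is a short explicit estimate on the scalar integral $\gamma^2\int_{|p|\le\delta}\VHat(p)/(\lambda+\tfrac12|p|^2[1+\cg^\lambda])\,\dd p$, comparing its behaviour for two well-separated values of $\lambda$.
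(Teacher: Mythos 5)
Your treatment of the first limit $\lambda\|u^\lambda\|^2\to 0$ is essentially the paper's: approximate $u^\lambda$ by $v^{\lambda,n}$ (using an $\fH$-type comparison whose content is precisely the lemma~\eqref{e:ApproxRERE} preceding Proposition~\ref{p:USvsKLO}), then use~\eqref{eq:L2Bound} and take the double limit $\lambda\to 0$, $n\to\infty$. The preliminary energy identity you write down is correct but, as you note yourself, only yields $\lambda\|u^\lambda\|^2\lesssim 1$; the paper also does not use it as a shortcut.

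For the negative claim, your strategy is a dual formulation of what the paper does, and both are valid, but the paper's is shorter and cleaner. You argue by Cauchy violation: show that the spectral mass of $\|\calS^\half u^\lambda\|^2$ concentrates at $\{|p|\le\delta\}$ with non-vanishing weight $\approx\alpha^2/(4\pi c^2)$ as $\lambda\to 0$ for every fixed $\delta$, then exhibit two sequences $\lambda_k,\lambda_k'$ along which $\|\calS^\half(u^{\lambda_k}-u^{\lambda_k'})\|$ stays bounded below. The paper instead proves two facts directly: $\|\calS^\half u^\lambda\|^2\to\tfrac12\sigma^2(\alpha)>0$ and $u^\lambda\to 0$ weakly in $\fH$; since a strong limit would force the weak limit to coincide with it and the norms to converge to its norm, this is already a contradiction without constructing explicit non-Cauchy subsequences. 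The ingredient powering the paper's weak-convergence argument is exactly the low-frequency concentration you identified (they split the test pairing at $\1_{\calS^\half\le\kappa}$ and $\1_{\calS^\half>\kappa}$), so the two proofs rest on the same phenomenon. What the paper's route buys is that it handles all chaos components at once: it never has to argue separately that higher-chaos contributions cannot restore Cauchyness, which is the obstacle you flagged at the end. Your reduction ``the limit norm is $\tfrac12\sigma^2(\alpha)$ plus an $O(\gamma^2 n)+O(n^{-1})$ remainder'' is sound, and your back-of-envelope estimate $\gamma^2\log(1+\lambda^{-1}\delta^{-2})\to\alpha^2$ for fixed $\delta$ is correct, but you would still need to turn the first-chaos observation into a uniform statement about $u^\lambda$ itself; the paper's formulation via weak convergence achieves precisely this in one stroke by testing against an arbitrary fixed $\psi\in\fock_m\cap\fH$, so that orthogonality of chaoses kills all but one component.
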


The proof of the previous is based on the following lemma which shows that, for $n$ large enough and 
$\lambda$ sufficiently small, the solution of the replacement 
equation provides a good approximation.

\begin{lemma}
For $\lambda\in(0,1)$ and $n\in\N$, let $v^{\lambda,n}$ be the solution 
of the replacement equation in Definition~\ref{def:3} both with input $\frakf\eqdef\gamma f_1$. 
Then, there exists a constant $C>0$ and $\bar\lambda=\bar\lambda(n)\in(0,1)$ such that for all $\lambda<\bar\lambda$, 
we have  
\begin{equ}[e:ApproxRERE]
\|(\lambda+\calS)^{\half}(u^\lambda-v^{\lambda,n})\|^2\leq C(n^{-\half} + C_N\gamma)
\end{equ}
where $C_N>0$ is a constant for which~\eqref{eq:17} holds. 
\end{lemma}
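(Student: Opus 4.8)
The plan is to compare $v^{\lambda,n}$ directly with the genuine resolvent solution $u^\lambda$, bypassing the truncated resolvent, and to exploit that the remainder identity~\eqref{eq:27} for $v^{\lambda,n}$ has a right-hand side expressed purely in terms of $v^{\lambda,n}$. First I would note that, since $\calL$ generates a strongly continuous contraction semigroup on $L^2(\pi)$, the element $u^\lambda\eqdef(\lambda-\calL)^{-1}\coup f_1$ lies in $\mathrm{dom}(\calL)\subseteq\mathrm{dom}(\calS^{\half})$ and is real-valued. Setting $\frakf\eqdef\coup f_1\in\fock_1$, the term $-\calA^B_-(\lambda+\calS+\calS\calG^\lambda)^{-1}\frakf$ in~\eqref{eq:27} vanishes because $\calA^B_-$ annihilates $\calH_0\oplus\calH_1$; subtracting the identity $(\lambda-\calL)u^\lambda=\frakf$ then gives
\[
(\lambda-\calL)\big(v^{\lambda,n}-u^\lambda\big)=\calR_\lambda v^{\lambda,n-1}-(\calS\calG^\lambda+\calA^B_+)v^{\lambda,n}_n-\calA^N v^{\lambda,n}\,.
\]

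The central step is the energy estimate. Writing $\Delta\eqdef v^{\lambda,n}-u^\lambda$, pairing the displayed equation with $\Delta$ and using that $\calL=-\calS+\calA$ with $\calA$ skew-self-adjoint (Lemma~\ref{lem:26}), so $\langle\Delta,\calA\Delta\rangle=0$ for the real element $\Delta$, yields $\|(\lambda+\calS)^{\half}\Delta\|^2$ equal to the sum of the three terms $\langle\Delta,\calR_\lambda v^{\lambda,n-1}\rangle$, $-\langle\Delta,(\calS\calG^\lambda+\calA^B_+)v^{\lambda,n}_n\rangle$ and $-\langle\Delta,\calA^N v^{\lambda,n}\rangle$. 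Each I would bound by Cauchy--Schwarz, splitting the pairing with $(\lambda+\calS)^{\pm\half}$, and then absorb the resulting factors $\|(\lambda+\calS)^{\half}\Delta\|$ into the left-hand side via Young's inequality; the key point is that what remains is the $(\lambda+\calS)^{-\half}$-norm of three expressions involving only $v^{\lambda,n}$, to which the a priori estimates of Section~\ref{sec:appr-solv-gener} apply directly. For $\calR_\lambda v^{\lambda,n-1}$ I would use the operator bound~\eqref{e:eq7Op} from the Replacement Lemma together with $\|(\lambda+\calS)^{\half}v^{\lambda,n}\|\lesssim1$, which follows from~\eqref{eq:77} with $k=0$ and the input bound~\eqref{e:hone}; this gives $O(\coup^2 n)$. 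For $(\calS\calG^\lambda+\calA^B_+)v^{\lambda,n}_n$ I would use boundedness of $\calG^\lambda$ and the graded sector condition~\eqref{eq:AR7} to reduce to $(n+1)\|(\lambda+\calS)^{\half}v^{\lambda,n}_n\|^2$, then gain a factor $n^{-2}$ from the top chaos component via~\eqref{eq:77} with $k=1$ (together with~\eqref{e:hone} and~\eqref{e:BoundVanNorm}), which gives $O(n^{-1}+n\coup^2)$. For $\calA^N v^{\lambda,n}$ I would invoke~\eqref{eq:17} from Corollary~\ref{cor:11}, together with $\|\calS^{\half}v^{\lambda,n}\|\lesssim1$ and $\vertiii{\frakf}^2\lesssim\coup^2$ from~\eqref{e:BoundVanNorm}, which gives $O(C_N\coup^2)$. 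Summing the three contributions and noting that $C_N=C_N(c,n)$ may be taken $\ge n$ and that $\coup=\coup(\lambda)\le1$ for $\lambda$ small depending on $n$, the bound $\|(\lambda+\calS)^{\half}\Delta\|^2\lesssim\coup^2 n+n^{-1}+n\coup^2+C_N\coup^2$ collapses to a constant times $n^{-\half}+C_N\coup$, which is~\eqref{e:ApproxRERE}.

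The main obstacle is really just the accounting: the polynomial-in-$n$ losses coming from the graded sector condition applied to the top-chaos term $v^{\lambda,n}_n$ have to be beaten by the chaos-decay a priori estimate~\eqref{eq:77} with $k=1$, and the leftover contributions $O(\coup^2 n)$ and $O(n\coup^2)$ must be absorbed harmlessly into $C_N\coup$, which is legitimate precisely because $C_N$ depends on $n$ and $\coup$ is small once $\lambda$ is. A point to handle with care is the domain and reality issue: $u^\lambda$, and hence $\Delta$, must be real and lie in the form domain $\mathrm{dom}(\calS^{\half})$ for the energy identity and the vanishing of $\langle\Delta,\calA\Delta\rangle$ to be valid; working with the true resolvent rather than a truncation keeps this transparent and, crucially, means no separate estimate of a truncation error $u^\lambda-u^{\lambda,n}$ is ever needed.
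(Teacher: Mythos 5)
Your proposal is correct, and it takes a slightly different but equally valid route from the paper's proof. The paper introduces the truncated resolvent solution $u^{\lambda,n}$ of~\eqref{eq:5} as an intermediate object: it estimates $\|(\lambda+\calS)^{\half}(u^{\lambda}-u^{\lambda,n})\|\lesssim n^{-\half}$ from the identity $(\lambda-\calL)(u^{\lambda}-u^{\lambda,n})=\calA_+u^{\lambda,n}_n$, then invokes the already-derived bound~\eqref{e:BoundDifference} from Lemma~\ref{lem:17} for $\|(\lambda+\calS)^{\half}(u^{\lambda,n}-v^{\lambda,n})\|$, and concludes by the triangle inequality. The advantage of the intermediate step is that $v^{\lambda,n}-u^{\lambda,n}\in\fock_{\le n}$, so the problematic inner product $\langle v^{\lambda,n}-u^{\lambda,n},\calA_+(v^{\lambda,n}_n-u^{\lambda,n}_n)\rangle$ vanishes by chaos orthogonality. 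Your approach skips the truncation entirely and tests the remainder identity directly against $\Delta=v^{\lambda,n}-u^{\lambda}$; you therefore lose the chaos-orthogonality cancellation (since $\Delta$ has a nontrivial $\fock_{n+1}$ component) and must estimate the $\calA^B_+v^{\lambda,n}_n$ term head-on via the graded sector condition~\eqref{eq:AR7}. Since this term is already controlled by the $n^{-2}$ chaos-decay coming from~\eqref{eq:77} with $k=1$, you arrive at the same $n^{-1}+n\gamma^2$ contribution and the same final bound. Your route is arguably cleaner for this particular lemma — it is self-contained and avoids double accounting — whereas the paper's route naturally re-uses~\eqref{e:BoundDifference}, which is already established for the proof of Lemma~\ref{lem:17}. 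Both proofs rely on the same ingredients (the Replacement Lemma~\eqref{e:eq7Op}, the graded sector condition, the nuisance estimate~\eqref{eq:17}, the a priori bound~\eqref{eq:77}, and the input bounds~\eqref{e:hone} and~\eqref{e:BoundVanNorm}), and your bookkeeping of the $n$- and $\gamma$-dependences, including the absorption of the $O(\gamma^2 n)$ terms into $C_N\gamma$ using $C_N\ge n$ and $\gamma\le1$ for $\lambda$ small, is correct.
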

\begin{proof}
Let $u^{\lambda,n}$ be the solution of the truncated generator equation~\eqref{eq:5} with input $\frakf\eqdef\gamma f_1$. 
At first, we show that we can approximate $u^\lambda$ with $u^{\lambda,n}$. 
By definition of $u^\lambda$ and $u^{\lambda,n}$, we have 
$(\lambda-\calL)(u^\lambda-u^{\lambda,n})=\calA_+ u^{\lambda,n}_n$,
so that testing both sides by $u^\lambda-u^{\lambda,n}$ we obtain 
\begin{equs}
\|(\lambda+\calS)^\half(u^\lambda-u^{\lambda,n})\|^2 &=\langle u^\lambda-u^{\lambda,n}, \calA_+ u^{\lambda,n}_n\rangle\\
&\leq \|(\lambda+\calS)^\half(u^\lambda-u^{\lambda,n})\|\|(\lambda+\calS)^{-\half}\calA_+ u^{\lambda,n}_n\|\,.
\end{equs}
Now, by~\eqref{eq:AR7} with $\calT\equiv 0$, we get 
\begin{equs}
\|(\lambda+\calS)^{-\half}\calA_+ u^{\lambda,n}_n\|\lesssim \sqrt{n} \|\calS^\half u^{\lambda,n}_n\|\lesssim n^{-\half}\|\calN(\lambda+\calS)^\half u^{\lambda,n}\|\lesssim n^{-\half}
\end{equs} 
the last step being a consequence of~\eqref{eq:41} with $k=1$, and~\eqref{e:hone}. 
Therefore, we obtain 
\begin{equs}
\|(\lambda+\calS)^\half(u^\lambda-u^{\lambda,n})\|\lesssim \frac{1}{\sqrt{n}}\,.
\end{equs}
Further using~\eqref{e:BoundDifference},~\eqref{e:hone} and~\eqref{e:BoundVanNorm}, we conclude that 
\begin{equs}
\|(\lambda+\calS)^{\half}(u^\lambda-v^{\lambda,n})\|^2&\lesssim \|(\lambda+\calS)^{\half}(u^\lambda-u^{\lambda,n})\|^2 +\|(\lambda+\calS)^{\half}(u^{\lambda,n}-v^{\lambda,n})\|^2\\
&\lesssim n^{-\half} + (n^{-2}+C_N\coup^2)+\vertiii{ \frakf }\lesssim n^{-\half} + C_N\gamma
\end{equs}
which concludes the proof. 
\end{proof}

\begin{proof}[of Proposition~\ref{p:USvsKLO}]
Thanks to~\eqref{e:ApproxRERE}, we immediately obtain
\begin{equ}[e:FirstLim]
\limsup_{\lambda\to 0}\lambda\|u^\lambda\|^2\lesssim \limsup_{\lambda\to 0}\lambda\|u^\lambda-v^{\lambda,n}\|^2+\lambda\|v^{\lambda,n}\|^2\lesssim n^{-\half}+\limsup_{\lambda\to 0}\lambda\|v^{\lambda,n}\|^2\,.
\end{equ}
Since the left hand side is independent of $n$, we can pass to the limit as $n\to\infty$, so that 
the first limit in~\eqref{e:AssKLO} follows by~\eqref{eq:L2}. 
To show that the other limit does not hold, it suffices to prove that, as $\lambda$ goes to $0$, 
$\|u^\lambda\|_{\fH}^2$ converges to $\half\sigma^2(\alpha)>0$ while $u^\lambda$ converges to $0$ weakly in $\fH$. 
For the first, by~\eqref{e:ApproxRERE}, we have 
\begin{equs}[e:SecondLimNorm]
\limsup_{\lambda\to 0}&|\|\calS^\half u^\lambda\|^2-\tfrac12\sigma^2(\alpha)|\\
&\lesssim \limsup_{\lambda\to 0}\|\calS^\half (u^\lambda- v^{\lambda,n})\|^2+|\|\calS^\half v^{\lambda,n}\|^2-\tfrac12\sigma^2(\alpha)|\\
&\lesssim n^{-\half}+\limsup_{\lambda\to 0}|\|\calS^\half v^{\lambda,n}\|^2-\tfrac12\sigma^2(\alpha)|
\end{equs}
and, arguing as for~\eqref{e:FirstLim} and using~\eqref{eq:ApproxMean}, we see that the left hand side 
exists as a limit and such limit is $0$. 
Concerning the weak convergence, by density, it suffices to show that 
for every given $m\in\N$ and $\psi\in\fock_m\cap\fH$, the scalar product in $\fH$ of $u^\lambda$ and $\psi$ vanishes. 
Fixing $m$ and $\psi$ as above, let $\Psi\eqdef\calS^\half\psi$. For $\delta>0$, let $\kappa>0$ be such that $\|\Psi \1_{\calS^\half\leq \kappa}\|\leq \delta$. 
We write
\begin{equ}
\langle u^\lambda, \psi\rangle_\fH=\langle \calS^\half u^\lambda, \Psi\rangle=\langle \calS^\half u^\lambda, \1_{\calS^\half\leq \kappa}\Psi\rangle+\langle \calS^\half u^\lambda, \1_{\calS^\half> \kappa}\Psi\rangle\,.
\end{equ}
For the first summand, we have 
\begin{equ}[e:LastLim1]
\langle \calS^\half u^\lambda, \1_{\calS^\half\leq \kappa}\Psi\rangle\leq \|\calS^\half u^\lambda\|\|\1_{\calS^\half\leq \kappa}\Psi\|\lesssim \delta \sigma(\alpha)
\end{equ}
where we used~\eqref{e:SecondLimNorm} to control the norm of $u^\lambda$. 
For the second instead, we notice first that, by orthogonality of Fock spaces with different indices, 
we can replace $u^\lambda$ with $u^\lambda_m$ and then we bound 
\begin{equs}
\limsup_{\lambda\to 0}|\langle \calS^\half u^\lambda_m, \1_{\calS^\half> \kappa}\Psi\rangle|&\leq \|\Psi\|\limsup_{\lambda\to 0}\| \1_{\calS^\half> \kappa}\calS^\half u^\lambda_m\|\\
&\leq \|\Psi\| \limsup_{\lambda\to 0} \big( \|\calS^\half (u^\lambda-v^{\lambda,n})\|+\| \1_{\calS^\half> \kappa}\calS^\half v^{\lambda,n}_m\|\big) \\
&\lesssim \|\Psi\| ( n^{-\half}+ \limsup_{\lambda\to 0}\| \1_{\calS^\half> \kappa}\calS^\half v^{\lambda,n}_m\|)
\end{equs}
where we used~\eqref{e:ApproxRERE} once again. At this point, by~\eqref{eq:114} and the fact that 
$\cG^\lambda$ is non-negative, we get 
\begin{equs}
\|& \1_{\calS^\half> \kappa}\calS^\half v^{\lambda,n}_m \|^2=\|\1_{\calS^\half> \kappa}\calS^\half (\lambda+\calS+\calS\calG^\lambda)^{-1}\calA^B_+v^{\lambda,n}_{m-1} \|^2\\
&\lesssim n \coup^2 \int\mu_{m-1}(\dd p_{2:m})|p_{[2:m]}|^2 |u^\lambda_{m-1}(p_{2:m})|^2 \int \dd p_1 \frac{\hat V(p_1)\1_{|p_{[1:m]}|>\sqrt{2}\kappa}}{\lambda +|p_{[1:m]}|^2}\\
&\lesssim n\frac{\coup^2}{\kappa^2}\|\calS^\half v^{\lambda,n}_{m-1}\|^2\lesssim n\frac{\coup^2}{\kappa^2}
\end{equs}
where, in the last step we used~\eqref{eq:77} with $k=0$, and~\eqref{e:hone}. Since this last term converges to 
$0$ as $\lambda$ goes to $0$, we conclude that 
\begin{equ}
\limsup_{\lambda\to0}|\langle u^\lambda, \psi\rangle_\fH|\lesssim \delta \sigma(\alpha)+n^{-\half} \|\psi\|_\fH\,.
\end{equ}
As the left hand side is independent of both $\delta$ and $n$, the proof is concluded. 
\end{proof}

\section{Convergence of the environment}
\label{sec:Env}

The goal of this section is to prove Theorem \ref{thm:2}, i.e. that the rescaled environment process $\eta^\eps$
converges to the solution of the stochastic linear transport equation (SLTE)~\eqref{eq:131}. 
As mentioned in the introduction, 
the structure of the argument is similar to that of the proof of Theorem~\ref{thm:1}, namely, 
we will prove tightness via the It\^o trick (see Section~\ref{sec:h-valued-ito-1}) 
and then identify the limit by using a martingale characterisation of the solution of SLTE in Definition~\ref{def:2}, 
for which in turn we will need the results in Section~\ref{sec:appr-solv-gener}. 
That said, since the solution of the SLTE is a distribution, we begin by introducing some preliminary 
tools and definitions better tailored to rigorously determine the convergence in this setting. 
\medskip

At first we need a Hilbert space $H$ whose embedding into the classical Sobolev space $H^k(\R^2,\R^2)$, 
$k=0,1,2$, is Hilbert-Schmidt. For this, let $H=H_w^{50}(\bbR^2,\bbR^2)$ be the weighted Sobolev space 
of $\R^2$-valued functions $g$ on $\R^2$ such that 
\begin{equ}[e:H]
\|g\|^2_H\eqdef \int_{\bbR^2} (1+|x|^2)^2 \big((1-\Delta)^{25}g(x)\cdot (1-\Delta)^{25}g(x)\big) \dd x<\infty
\end{equ}
endowed with the scalar product induced by the above norm, and let $H^\ast$ be its dual. 
Let $(g^i)_{i\in\N}$ be an orthonormal basis of $H$, which, without loss of generality we can take  
be given by $g^i=(1-\Delta)^{-25}(1+|x|^2)^{-1}f^i$ with $(f^i)_{i\in\N}$ Schwarz functions forming 
an orthonormal basis of $L^2(\R^2,\R^2)$.  
Clearly, we have 
\begin{equation}
\label{eq:8}
\sum_{i=1}^\infty \lVert g^i \rVert_{H^k(\bbR^2,\bbR^2)}^2 < \infty \qquad k \in \{0,1,2\}\,.
\end{equation}
We also have the natural embedding $H \hookrightarrow \fock_1$ given by  
$H\ni g \mapsto\frakg \in \fock_1$ for $\frakg$ as in~\eqref{eq:42} with $n=1$. 
Let $\bfL^2(\pi)$ be the space of $H^*$-valued random variables $\bfh$ on $\Omega$  such that 
\begin{equation}
\label{eq:35}
\lVert \bfh \rVert_{\bfL^2(\pi)}^2 = \bbE_\pi[\lVert \bfh(\omega) \rVert_{H^*}^2] =\sum_{i=1}^\infty\bbE_\pi\Big[| \bfh(\omega)[g^i] |^2\Big] 
<\infty\,,
\end{equation}
and, for $n \in \bbN$, let $\bm{\calH}_n$ be the set of $\bfh\in \bfL^2(\pi)$ for which $\bfh[g] \in \calH_n$ for all $g \in H$, and let $\bm{\calH}_{\le n}$ be $\oplus_{j=1}^n \bm{\calH}_j$. With a slight abuse of notation, we will denote with the 
same symbol operators on $L^2(\pi)$ and $\bfL^2(\pi)$, i.e. if $T\colon L^2(\pi) \rightarrow L^2(\pi)$ 
then its action on $\bfL^2(\pi)$ is defined according to $T\bfh(\omega)[g] = T(\bfh[g])(\omega)$ 
for every $g \in H$ and $\bfh\in\bfL^2(\pi)$ such that $\bfh[g]\in\mathrm{dom}(T)$. 
\medskip

In what follows, we want to study the diffusively rescaled environment process given by  
$\eta^\eps_t(x)\eqdef \eps^{-1}\eta_{t/\eps^2}(x/\eps)$ for $\eps>0$. 
To prove its convergence, we need to interpret $\eta^\eps$ as a generalised function, which we will 
denote by $\bm{\eta}^\eps_t=\bfg^\eps(\eta_{t/\eps^2})$ where $\bfg^\eps$ is the element of $\bm{\calH}_1$ defined according to 
\begin{equ}[e:scaling]
\bfg^\eps(\omega)[g]\eqdef \int \omega(x)\cdot g^\eps(x)\dd x\,,\qquad \text{for $g\in H$ and $g^\eps(x)\eqdef \eps g(\eps x)$}. 
\end{equ}
As a last remark, notice that $\bfg^\eps\in\bfL^2(\pi)$ is bounded {\it uniformly} over $\eps$. 
Indeed, slightly more is true since, for $\delta\in[0,2]$, we have 
\begin{equ}[e:gepsB]
\lVert \eps^{-\delta}\calS^{\frac{\delta}{2}}\bfg^\eps \rVert_{\bfL^2(\pi)}^2 
=\sum_{i=1}^\infty\bbE_\pi\big[\big|\eps^{-\delta}\calS^{\frac{\delta}{2}} (\omega[g^{i,\eps}] )\big|^2\big] \lesssim \sum_{i=1}^\infty\|g^i\|^2_{H^{\delta}(\R^2,\R^2)}
\end{equ}
and the right hand side is bounded by~\eqref{eq:8}. 
The estimate in the third step is a consequence of~\eqref{e:ScalarProd}. Indeed, 
setting $\frakg^{i}\in\fock_1$ to be given as in~\eqref{eq:42} with $g^i$ in place of $f$ and $n=1$, and 
$\frakg^{i,\eps}(p)\eqdef\frakg(\eps^{-1}p)$, we have 
\begin{equs}
\bbE_\pi\big[\big|\eps^{-\delta}\calS^{\frac{\delta}{2}} (\omega[g^{i,\eps}]) \big|^2\big]&= \lVert\eps^{-\delta}\calS^{\frac{\delta}{2}}\frakg^{i,\eps} \rVert^2\\
&=\frac{1}{2^\delta}\int \frac{\hat V(p)}{|p|^2}|p/\eps|^{2\delta} |e_1p \hat g^i_1(\eps^{-1}p)+e_2p \hat g^i_2(\eps^{-1}p)|^2\frac{\dd p}{\eps^{2}}\\
&\lesssim  \int |p|^{2\delta}\hat g^i(p)\cdot \hat g^i(p)\dd p = \|g^i\|^2_{H^\delta(\R^2,\R^2)}\,.\label{e:gepsB2}
\end{equs}

\subsection{The $H^*$-valued It\^{o} trick and tightness}
\label{sec:h-valued-ito-1}

In this section, we will prove convergence of the initial condition and tightness of the environment process 
$\bm{\eta}^\eps$ in the space $C_TH^\ast\eqdef C([0,T], H^\ast)$ for any $T>0$. 
To do so, let us first define the limit law of the former which, as argued in the introduction, is given by 
that of the gradient of a two-dimensional GFF.  

\begin{definition}
\label{def:5}
Let $\bm{\piBar}$ be the probability measure on $H^*$ under which $\{\bfh[g] : g \in H\}$ 
is a Gaussian process with covariance 
\begin{equ}
\int \bfh[g_1] \bfh[g_2] \bm{\piBar}(\dd \bfh) = \int_{\bbR^2} \int_{\bbR^2} \ddiv(g_1)(x) \ddiv(g_2)(y) G(x-y) \dd x \dd y
\end{equ}
where $G$ is the Green's function given in \eqref{e:CovOmega}. 
\end{definition}

We are now ready to state the main result of this section. 

\begin{proposition}
\label{lem:13}
Let $(\eta_t)_{t\geq 0}$ be the environment seen by the particle process defined in~\eqref{e:envNew}, 
$\eta^\eps$ be its diffusively rescaled version for $\eps>0$, and $\bm{\eta}^\eps$ 
the associated generalised function. 
Then, under the annealed measure $\bfP$, 
the law of $\bm{\eta}^\eps_0$ converges in $H^\ast$ as $\eps\to 0$ to $\bm{\piBar}$ in Definition~\ref{def:5} 
and, for any $T>0$, the sequence $\{(\bm{\eta}^\varepsilon_t)_{t\in[0,T]} \colon \eps\in(0,1)\}$ is tight in $C_T H^\ast$. 
\end{proposition}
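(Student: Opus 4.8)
\textbf{Proof plan for Proposition~\ref{lem:13}.}

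The plan is to split the statement into its two parts: convergence of the initial condition $\bm{\eta}^\eps_0$ and tightness of the path $(\bm{\eta}^\eps_t)_{t\in[0,T]}$. For the initial condition, recall that under $\bfP$ we have $\eta_0 = \omega$ with law $\pi$, so $\bm{\eta}^\eps_0 = \bfg^\eps(\omega)$, which by~\eqref{e:scaling} is a centred Gaussian field in $H^*$ with covariance obtained by evaluating~\eqref{e:CovOmega} against the rescaled test functions $g^\eps, \tilde g^\eps$. First I would compute this covariance explicitly in Fourier variables, using the identity $\widehat{g^\eps}(p) = \eps^{-1}\hat g(p/\eps)$ and the covariance formula~\eqref{e:ScalarProd}--\eqref{e:mun} for $n=1$; after the change of variables $p\mapsto \eps p$ this becomes $\int \frac{\hat V(\eps p)}{|p|^2}\,\overline{\widehat{\ddiv g_1}(p)}\,\widehat{\ddiv g_2}(p)\,\dd p$. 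Since $\hat V(\eps p)\to \hat V(0)=1$ pointwise and $|\hat V|\le 1$, dominated convergence (the integrand is dominated by $|p|^{-2}|\widehat{\ddiv g_1}(p)||\widehat{\ddiv g_2}(p)|$, integrable because $g_i\in H$ and $\ddiv g_i$ has a zero of the right order, or rather because the $H$-norm controls enough derivatives/decay) gives convergence to $\int \frac{1}{|p|^2}\overline{\widehat{\ddiv g_1}(p)}\widehat{\ddiv g_2}(p)\dd p$, which is exactly the covariance in Definition~\ref{def:5} since $\widehat G(p) = |p|^{-2}$ (up to the $2\pi$ normalisation). Convergence of Gaussian fields with convergent covariances, together with the uniform bound~\eqref{e:gepsB} (which gives tightness of the laws on $H^*$), yields weak convergence of $\bm{\eta}^\eps_0$ to $\bm{\piBar}$ in $H^*$.

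For tightness of the path, the plan is to apply an $H^*$-valued Kolmogorov-type criterion: it suffices to bound, uniformly in $\eps$, a moment of the increment $\bfE\big[\|\bm{\eta}^\eps_t - \bm{\eta}^\eps_s\|_{H^*}^p\big]$ by $C|t-s|^{1+\delta}$ for some $p$ and $\delta>0$, together with tightness of the one-time marginals (which follows as above from~\eqref{e:gepsB}). To estimate increments I would test against the basis: $\|\bm{\eta}^\eps_t-\bm{\eta}^\eps_s\|_{H^*}^2 = \sum_i |(\bm{\eta}^\eps_t-\bm{\eta}^\eps_s)[g^i]|^2$, and for each $i$, $(\bm{\eta}^\eps_t-\bm{\eta}^\eps_s)[g^i] = \eps^{-1}\int (\eta_{t/\eps^2}-\eta_{s/\eps^2})(x)\cdot g^{i,\eps}(x)\,\dd x$. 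Using the SPDE~\eqref{eq:117} for $\eta$ (equivalently, It\^o's formula applied to the cylinder functional $\omega\mapsto \bfg^\eps(\omega)[g^i]$), this increment decomposes into a martingale part coming from the transport noise $\sum_j \partial_j\eta\,\dd B^j$ and a drift part. The martingale part is handled by the Burkholder--Davis--Gundy inequality together with the explicit quadratic variation; the drift part, which contains both the Laplacian term and the singular nonlinear term $\gamma(\sum_j \partial_j\eta\,\eta^j(0)-\nabla V)$, is precisely where the It\^o trick of Lemma~\ref{lem:1} (in the $H^*$-valued form alluded to in Section~\ref{sec:h-valued-ito-1}, i.e. Lemma~\ref{lem:2}) enters: integrating the drift in time against a test function produces an additive functional of $\eta$ lying in a fixed inhomogeneous chaos, whose $p$-th moment is controlled by $(|t-s|^{1/2}+\lambda^{1/2}|t-s|)$ times an $(\lambda+\calS)^{-1/2}$-norm of the relevant functional.

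The key point, and the step I expect to be the main obstacle, is showing that the $(\lambda+\calS)^{-1/2}$-norms appearing in the It\^o-trick bound for the drift are bounded uniformly in $\eps$ after the diffusive rescaling with $\lambda=\eps^2$, and that they are summable over the basis index $i$. Concretely, the drift of $\bm{\eta}^\eps_t[g^i]$ involves $\eps^{-1}$ times the scaled nonlinearity, and one must check that the singular nonlinear term $\gamma(\nabla\eta\cdot\eta(0)-\nabla V)$, tested against $g^{i,\eps}$, produces a functional whose $\|(\eps^2+\calS)^{-1/2}\,\cdot\,\|$-norm is $O(1)$ — this is exactly the same mechanism (the logarithmic divergence balanced by $\gamma^2\sim 1/\log\eps^{-1}$, as in Lemma~\ref{lem:5} and Lemma~\ref{lem:32}) that made the SRBP drift controllable, and it should follow from the estimates~\eqref{eq:16} and~\eqref{e:gepsB}/\eqref{e:gepsB2}, at the cost of using the regularity built into $H$ (the weights $(1+|x|^2)^2$ and the $(1-\Delta)^{25}$, which provide the summability in~\eqref{eq:8} and spare derivatives to absorb factors of $|p|$). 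The Laplacian term $\tfrac12\Delta\eta$ contributes a drift of the form $\eps^{-1}\int \eta\cdot\tfrac12\Delta g^{i,\eps} = \eps\int \eta\cdot\tfrac12(\Delta g^i)^\eps$, which is a chaos-one functional with small norm and causes no trouble; similarly the martingale bracket is controlled by $\sum_i \|g^i\|_{H^1}^2<\infty$. Assembling these, one obtains the uniform increment bound and hence tightness in $C_T H^*$ by the Kolmogorov criterion; I would invoke Lemma~\ref{lem:2} and the a priori estimate $\|(\eps^2+\calS)^{-1/2}\bfg^\eps\|_{\bfL^2(\pi)}<\infty$ of~\eqref{e:gepsB} as the two workhorses, and present the chaos bookkeeping for the nonlinear term as the one genuinely new computation.
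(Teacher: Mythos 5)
Your plan is essentially the same as the paper's proof. The paper applies the $H^*$-valued Dynkin formula~\eqref{eq:119} to $\bfg^\eps$, giving the decomposition $\bm{\eta}^\eps_t - \bm{\eta}^\eps_0 = \int_0^{t/\eps^2}\calL\bfg^\eps(\eta_s)\dd s + \sum_i\int_0^{t/\eps^2}\nabla_i\bfg^\eps(\eta_s)\dd B^i_s$; the drift is controlled by Lemma~\ref{lem:2} with $\lambda=\eps^2$ (after splitting $\calL=-\calS+\calA$ and invoking the graded sector condition Lemma~\ref{lem:32} for the $\calA$-part), and the martingale by the Burkholder--Davis--Gundy inequality, exactly as you outline; the paper then closes via Kolmogorov's criterion with stationarity reducing everything to increments from time~$0$. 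The only cosmetic difference is that the paper treats the initial-condition convergence as "standard and therefore omitted," whereas you spell out the covariance computation in Fourier variables — a reasonable expansion, not a divergence in method. One small piece of bookkeeping worth tightening in your write-up: rather than testing componentwise against the basis and summing, the paper keeps the argument intrinsically $H^*$-valued throughout (the It\^o trick in Lemma~\ref{lem:2} is already in $\bfL^2(\pi)$-form), which avoids an explicit interchange of sum and expectation; but your version amounts to the same thing once the uniform bound~\eqref{e:gepsB} and the summability~\eqref{eq:8} are in hand.
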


As in the case of the SRBP, tightness is an immediate consequence of the It\^o trick which we now 
state in its $H^\ast$-valued version.  

\begin{lemma}[$H^*$-valued It\^{o} trick]
\label{lem:2}
For $n \in \bbN$, $\bfh \in \bm{\calH}_{\le n}$, $T > 0$, $p \ge 1,\lambda>0$, it holds that 
\begin{equation*}
\bfE\Big[ \sup_{t \in [0,T]} \Big\lVert \int_0^t \bfh(\eta_s) \dd s \Big\rVert_{H^*}^p \Big]^{\frac1p} \lesssim_{n, p} (T^{\half}+\lambda^{\half} T)\lVert (\lambda+\calS)^{-\half}\bfh\rVert_{\bfL^2(\pi)}
\end{equation*}
Moreover, in the case $p = 2$, the estimate is uniform in $n \in \bbN$. 
\end{lemma}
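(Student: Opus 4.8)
**Proof proposal for Lemma~\ref{lem:2} (the $H^*$-valued It\^{o} trick).**

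The plan is to reduce to the scalar It\^{o} trick of Lemma~\ref{lem:1} by testing against the orthonormal basis $(g^i)_{i\in\N}$ of $H$, exploiting the Parseval identity $\|\cdot\|_{H^*}^2=\sum_i|\cdot[g^i]|^2$, and keeping careful track of the dependence on the basis through the bound~\eqref{eq:8}. First I would write, for fixed $t$,
\begin{equ}
\Big\lVert \int_0^t \bfh(\eta_s)\dd s\Big\rVert_{H^*}^2=\sum_{i=1}^\infty\Big|\int_0^t \bfh(\eta_s)[g^i]\dd s\Big|^2\,,
\end{equ}
and note that each $\bfh[g^i]\in\calH_{\le n}$ is a genuine scalar element of the inhomogeneous chaos. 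The subtlety is that the supremum over $t$ and the $L^p(\bfP)$ norm must be moved inside the sum over $i$; for this I would first treat $p=2$, where Tonelli lets one exchange $\bfE$, $\sup_t$ and $\sum_i$ freely (all integrands nonnegative after taking sup), and then bootstrap to general $p$.

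For $p=2$: by the scalar It\^{o} trick~\eqref{eq:itotrick} applied to each $h=\bfh[g^i]$ (with the stated uniformity in $n$ at $p=2$),
\begin{equ}
\bfE\Big[\sup_{t\in[0,T]}\Big|\int_0^t \bfh(\eta_s)[g^i]\dd s\Big|^2\Big]\lesssim (T^{\half}+\lambda^{\half}T)^2\,\big\lVert(\lambda+\calS)^{-\half}\bfh[g^i]\big\rVert^2\,.
\end{equ}
Summing over $i$ and using that $\big\lVert(\lambda+\calS)^{-\half}\bfh[g^i]\big\rVert^2$ sums (over $i$) to $\lVert(\lambda+\calS)^{-\half}\bfh\rVert_{\bfL^2(\pi)}^2$ by the definition~\eqref{eq:35} of the $\bfL^2(\pi)$ norm together with the fact that the operators $(\lambda+\calS)^{-\half}$ act on $\bfL^2(\pi)$ componentwise (as recalled in the paragraph containing~\eqref{eq:35}), one gets the claim for $p=2$.

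For general $p\ge 1$ I would invoke Gaussian hypercontractivity: since $\bfh\in\bm{\calH}_{\le n}$ lives in a fixed inhomogeneous chaos, the Dynkin martingale decomposition~\eqref{e:Dynkin}--\eqref{e:Mart} applied to the resolvent observable $u^\lambda_i$ solving $(\lambda-\calL)u^\lambda_i=\bfh[g^i]$ expresses $\int_0^t\bfh(\eta_s)[g^i]\dd s$ as a sum of chaos-bounded boundary terms and a martingale whose bracket is itself chaos-bounded; the $L^p$-$L^2$ comparison from~\cite[Theorem 5.10]{Janson97_GaussianHilbert}, combined with Burkholder--Davis--Gundy for the martingale part, upgrades the $p=2$ estimate to an $L^p$ estimate at the cost of a constant $C_{n,p}$. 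I would then apply Minkowski's inequality in $L^{p/2}(\bfP)$ to the expansion $\sum_i(\cdots)$ to pull the sum out, reducing again to the componentwise bound. (A cleaner route, if one wants to avoid re-deriving the scalar trick: prove the full statement of Lemma~\ref{lem:1} with the claimed $L^p$ bound as a black box, apply it componentwise, and use Minkowski in $L^{p/2}$ — this is the generalisation alluded to in the remark preceding Lemma~\ref{lem:1}, so I would state and prove that slightly more general scalar version first and then deduce the $H^*$-valued one by the summation argument above.)

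The main obstacle is purely bookkeeping rather than conceptual: ensuring the interchange of $\sup_{t\in[0,T]}$, $\bfE[\,\cdot\,]^{1/p}$ and $\sum_{i}$ is legitimate for $p\neq 2$, which is why I would route general $p$ through Minkowski's inequality applied to $\big\|\sum_i X_i\big\|_{L^{p/2}}\le\sum_i\|X_i\|_{L^{p/2}}$ with $X_i=\sup_t|\int_0^t\bfh(\eta_s)[g^i]\dd s|^2$ — but then I would need the summability of $\|X_i\|_{L^{p/2}}^{1/1}$, i.e. of $\big\lVert(\lambda+\calS)^{-\half}\bfh[g^i]\big\rVert^2$, which holds by~\eqref{eq:35}, and the constant $C_{n,p}$ from hypercontractivity is uniform in $i$ because the chaos order $\le n$ does not depend on $i$. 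The uniformity in $n$ at $p=2$ is inherited directly from the corresponding uniformity in Lemma~\ref{lem:1}, since no hypercontractivity constant enters there.
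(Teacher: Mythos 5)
Your plan is to reduce to the scalar It\^{o} trick by expanding in the orthonormal basis $(g^i)_{i}$ and summing, using Minkowski in $L^{p/2}$ to commute the sum with the $L^{p}(\bfP)$ norm. That summation step is legitimate: for $p\ge 2$, $\big\|\sum_i\sup_t|\cdot_i|^2\big\|_{L^{p/2}}\le\sum_i\big\|\sup_t|\cdot_i|^2\big\|_{L^{p/2}}$ and each summand is controlled by the scalar estimate; for $p<2$ one falls back to Jensen. The hypercontractivity constant depends only on the chaos order $\le n$ and not on $i$, and the uniformity in $n$ at $p=2$ carries over. So the reduction strategy is sound \emph{provided} the scalar It\^{o} trick with the claimed $(T^{1/2}+\lambda^{1/2}T)$ constant is in hand. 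This is a genuinely different organisation from the paper, which proves the $H^*$-valued statement directly using the infinite-dimensional Burkholder--Davis--Gundy inequality of Marinelli--R\"ockner, and then obtains Lemma~\ref{lem:1} as a corollary. Note the logical wrinkle: within the paper's own structure, reducing Lemma~\ref{lem:2} to Lemma~\ref{lem:1} would be circular, so your route requires citing the scalar result externally or re-deriving it independently.

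The substantive gap is in the sketch of the scalar proof. You propose applying the forward Dynkin decomposition to the resolvent observable $u^\lambda$ solving $(\lambda-\calL)u^\lambda=h$, and controlling boundary terms, martingale, and finite-variation parts by chaos-boundedness and hypercontractivity. This does not deliver the claimed bound. A standard computation gives $\|(\lambda+\calS)^{1/2}u^\lambda\|\le\|(\lambda+\calS)^{-1/2}h\|$, hence $\|u^\lambda\|\le\lambda^{-1/2}\|(\lambda+\calS)^{-1/2}h\|$, so the boundary terms $u^\lambda(\eta_0)-u^\lambda(\eta_t)$ contribute at scale $\lambda^{-1/2}\|(\lambda+\calS)^{-1/2}h\|$, which is not dominated by $(T^{1/2}+\lambda^{1/2}T)\|(\lambda+\calS)^{-1/2}h\|$ when $\lambda < T^{-1}$. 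The same issue arises if one instead works with the symmetrised resolvent $w^\lambda=(\lambda+\calS)^{-1}h$: the forward Dynkin formula then leaves an uncontrolled term $\int_0^t\calA w^\lambda(\eta_s)\dd s$. The key ingredient missing from your outline --- and the reason this is called a ``trick'' --- is the \emph{forward--backward martingale decomposition}: one also applies Dynkin's formula to the time-reversed (stationary) process whose generator is the adjoint $\calL^*$; the antisymmetric parts cancel when the two are combined and the boundary terms disappear entirely, leaving only the martingales and a small $\lambda$-term. This is exactly what the paper's proof does (with $\bfw^\lambda=(\lambda+\calS)^{-1}\bfh$, the reversed process $\hat\eta_t=\eta_{T-t}$, and the identity $\int_0^t\bfh(\eta_s)\dd s=\lambda\int_0^t\bfw^\lambda(\eta_s)\dd s+\tfrac12\big(\bfM_t+\bfMHat_T-\bfMHat_{T-t}\big)$), and it is precisely what your sketch needs and lacks. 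Once that is incorporated, your basis-expansion route and the paper's direct route become two valid ways to organise the same argument.
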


Central to the proof of the previous statements as well as that of Theorem \ref{thm:2}, are 
$H^\ast$-valued Dynkin martingales, analogous to those in~\eqref{e:Dynkin}. 
That is, for nice enough $\bfu \in \bfL^2(\pi)$, by applying It\^o's formula, it holds that 
\begin{equation}
\label{eq:119}
\bfM_t(\bfu)= \bfu(\eta_t) - \bfu(\eta_0) - \int_0^t \calL \bfu(\eta_s) \dd s
\end{equation}
is the $H^*$-valued martingale given by
\begin{equation}
\label{eq:4}
\bfM_t(\bfu) = \sum_{i=1}^2 \int_0^t \nabla_i \bfu(\eta_s) \dd B^i_s\,, \quad \langle\!\langle \bfM(\bfu)\rangle\!\rangle_t = \sum_{i=1}^2 \int_0^t (\nabla_i \bfu(\eta_s))(\nabla_i \bfu(\eta_s))^* \dd s\,.
\end{equation}
For a thorough introduction on $H^\ast$-valued martingales, we refer the reader to~\cite[Chapter 3]{DaPratoZabczyk14_StochasticEquations}.
We will now first briefly sketch the proof of the It\^o trick and then turn to Proposition~\ref{lem:13}. 

\begin{proof}[of Lemma~\ref{lem:2}]
For $T>0$ fixed, let $(\etaHat_t)_{t \in [0,T]}$ be the reversed process defined by $\etaHat_t \eqdef \eta_{T-t}$. 
It is a standard fact that $\hat\eta$ is again a Markov process with stationary measure $\pi$ and generator 
$\calLHat = (\calL)^*=\calS-\calA$.  In particular, for nice enough $\bfu \in \bfL^2(\pi)$, the 
process $(\bfMHat_t(\bfu))_{t\in[0,T]}$ defined according to the right hand side of~\eqref{eq:119} but with $\hat\eta$ and 
$\calLHat$ in place of $\eta$ and $\calL$ respectively, is a martingale with quadratic variation as in~\eqref{eq:4}. 

For $\lambda\in(0,1)$ and $\bfh$ as in the statement, set 
$\bfw^\lambda = (\lambda + \calS)^{-1} \bfh \in \bm{\calH}_{\le n}$. By adding up
$\bfw^\lambda(\eta_t)+\bfw^\lambda(\hat\eta_T)-\bfw^\lambda(\hat\eta_{T-t})$ in the corresponding 
formulas~\eqref{eq:119}, we obtain 
\begin{equ}
\int_0^t \bfh(\eta_s)\dd s = \lambda \int_0^t \bfw^\lambda(\eta_s)\dd s + \half\Big(\bfM_t(\bfw^\lambda)+\bfMHat_T(\bfw^\lambda) - \bfMHat_{T-t}(\bfw^\lambda)\Big)\,.
\end{equ}
For the martingale part, the three terms can be estimated similarly, so we focus on the first. 
The infinite dimensional version of 
Burkholder-Davis-Gundy inequality~\cite[Theorem 1.1]{MarinelRockner16_MaximalInequalities} 
implies
\begin{equs}
\bfE&\Big[  \sup_{t \in [0,T]} \lVert \bfM_t(\bfw^\lambda) \rVert_{H^*}^p \Big] \lesssim_p\bfE\Big[ \text{Tr} \langle\!\langle\bfM \rangle\!\rangle_T^{p/2} \Big]\\
&= \bfE\Big[ \Big( \sum_{i=1}^2 \int_0^T \lVert \nabla_i \bfw^\lambda(\eta_s) \rVert_{H^*}^2 \dd s \Big)^{p/2} \Big] 
\lesssim_p T^{p/2} \sum_{i=1}^2 \bbE_\pi\big[ \lVert \nabla_i \bfw^\lambda(\omega) \rVert_{H^*}^p\big] \\
&\lesssim_n T^{p/2} \sum_{i=1}^2 \lVert \nabla_i \bfw^\lambda(\omega) \rVert_{\bfL^2(\pi)}^p \lesssim T^{p/2} \lVert (\lambda+\calS)^{-\half}\bfh \rVert_{\bfL^2(\pi)}^p
\end{equs}
where we used Jensen's inequality, stationarity, Gaussian hypercontractivity \cite[Theorem 5.10]{Janson97_GaussianHilbert}, \eqref{eq:35} and the definition of $\bfw^\lambda$. 
Concerning the finite variation term, we follow similar steps and get
\begin{align*}
\bfE&\Big[  \sup_{t \in [0,T]} \lVert  \lambda \int_0^t \bfw^\lambda(\eta_s)ds \rVert_{H^*}^p \Big] \le \lambda^pT^p \bfE\Big[ \Big( \frac{1}{T} \int_0^T \lVert \bfw^\lambda(\eta_s)ds \rVert_{H^*}\Big)^p \Big] \\
&= \lambda^pT^p \bbE_\pi \big[ \lVert \bfw^\lambda(\omega) \rVert_{H^*}^p \big] \lesssim_n \lambda^pT^p \lVert \bfw^\lambda \rVert_{\bfL^2(\pi)}^p \lesssim \lambda^{p/2}T^p \lVert (\lambda+\calS)^{-\half}\bfh \rVert_{\bfL^2(\pi)}^p\,.
\end{align*}
As for the statement concerning the case $p = 2$, we don't need to use hypercontractivity and 
therefore there is no dependence in $n$.
\end{proof}

\begin{proof}[of Proposition~\ref{lem:13}]
Convergence (and therefore tightness) of the initial distribution is standard and therefore omitted. 
Concerning the process, we use Kolmogorov's criterion \cite[Theorem 23.7]{Kallenberg21_FoundationsModern}, 
according to which, and thanks to stationarity, we only need to control the $p$-th moment of 
$\lVert \bm{\eta}^\varepsilon_t-\bm{\eta}^\varepsilon_0\rVert_{H^*}$ for some $p>2$. 
Recalling the definition of $\bfg^\eps$ in~\eqref{e:scaling} and using~\eqref{eq:119} and~\eqref{eq:4}, we have 
\begin{equation}
\label{eq:122}
\bfg^\eps(\eta_{t/\eps^2}) -  \bfg^\eps(\eta_0) = \int_0^{t/\varepsilon^2} \calL \bfg^\varepsilon(\eta_s)\dd s + \sum_{i=1}^2 \int_0^{t/\varepsilon^2} \nabla_i \bfg^\varepsilon(\eta_t) \dd B^i_t\,.
\end{equation}
and we will control each term at the right hand side separately. 
For the first, we use the It\^o trick in Lemma~\ref{lem:2}, which is applicable since $\calL \bfg^\eps\in\bm{\calH}_{\le 2}$, 
and take $\lambda=\eps^2$, so that 
\begin{equs}
\bfE \Big\lVert \int_0^{t/\varepsilon^2} \calL \bfg^\varepsilon(\eta_s)\dd s \Big\rVert_{H^*}^p &\lesssim t^{\frac{p}2}\lVert (\eps^2+\calS)^{-\half} \varepsilon^{-1}\calL \bfg^\varepsilon\rVert_{\bfL^2(\pi)}^p
\lesssim t^{\frac{p}2} \lVert  \varepsilon^{-1}\calS^\half \bfg^\varepsilon\rVert_{\bfL^2(\pi)}^p
\end{equs}
and the right hand side is bounded above by $t^{\frac{p}{2}}$ by~\eqref{e:gepsB}. In the last step above, 
we decomposed $\calL$ into $\calS$ and $\calA$, bounded $(\eps^2+\calS)^{-\half}\calS$ by 
$\calS^\half$ and, for the term containing $\calA$, used that, as in~\eqref{e:gepsB}, we have 
\begin{equs}
\lVert (\eps^2+&\calS)^{-\half} \varepsilon^{-1}\calA \bfg^\varepsilon\rVert_{\bfL^2(\pi)}^2=\sum_{i=1}^\infty\E_\pi\Big[\Big|\Big(\varepsilon^{-1}(\eps^2+\calS)^{-\half}\calA \bfg^\varepsilon(\omega)\Big)[g^i]\Big|^2\Big]\\
&=\sum_{i=1}^\infty\|\varepsilon^{-1}(\eps^2+\calS)^{-\half}\calA \frakg^{i,\eps}\|^2\lesssim \sum_{i=1}^\infty  \|\varepsilon^{-1}\calS^{\half} \frakg^{i,\eps}\|^2=\lVert \eps^{-1}\calS^{\frac{1}{2}}\bfg^\eps \rVert_{\bfL^2(\pi)}^2
\end{equs}
where we further exploited Lemma~\ref{lem:32}. Concerning the martingale, 
we apply Burkholder-Davis-Gundy inequality~\cite[Theorem 1.1]{MarinelRockner16_MaximalInequalities}, and get 
\begin{equ}
\bfE \Big\lVert \sum_{i=1}^2 \int_0^{t/\varepsilon^2} \nabla_i \bfg^\varepsilon(\eta_t) \dd B^i_t \Big\rVert_{H^*}^p \lesssim_{p,T} t^{\frac{p}{2}} \lVert \varepsilon^{-1} \calS^\half \bfg^\varepsilon \rVert_{\bfL^2(\pi)}^p  \lesssim  t^{\frac{p}{2}}
\end{equ}
where again we have invoked \eqref{e:gepsB}. Therefore, the proof is concluded. 
\end{proof}

\subsection{The martingale problem for the stochastic linear transport equation}
\label{sec:mart-probl-stoch}

Given the tightness obtained in the previous section, we are left to uniquely identify the limit points. 
For this, we derive a martingale problem characterisation of the law of the SLTE in~\eqref{eq:131} 
initialised by the gradient of the GFF.

\begin{definition}
\label{def:2}
Let $\bm{\piBar}$ be the measure on $H^*$ in Definition~\ref{def:5} and $T>0$. We say that the probability measure 
$\bfPBar$ on $(C_TH^*,\calB)$, with $\calB$ the canonical Borel $\sigma$-algebra, 
solves the martingale problem for the stochastic linear transport equation~\eqref{eq:131} on $[0,T]$ 
with initial distribution $\bm{\piBar}$ and diffusivity $\varsigma^2>0$, if the canonical process $(\bm{\etaBar}_t)_{t \ge 0}$ 
under $\bfPBar$ satisfies
\begin{enumerate}
\item\label{itm:1} the law of $\bm{\etaBar}_0$ under $\bfPBar$ is $\bm{\piBar}$.  
\item\label{itm:2} $\bfPBar$-a.s. for all $t\in[0,T]$, $\bm{\etaBar}_t \in \text{dom}(\Delta)$ and 
\begin{equ}
\bfEBar\Big[\int_0^T \lVert \Delta \bm{\etaBar}_t \rVert_{H^*} \dd t\Big] \vee \bfEBar\Big[\int_0^T \lVert \partial_i \bm{\etaBar}_t \rVert_{H^*}^2 \dd t\Big] < \infty, \quad \forall i \in \{1,2\}
\end{equ}
where $\Delta,\partial_i$ for $i \in \{1, 2\}$ are defined on $H^*$ by duality.  
\item\label{itm:3} The $H^*$-valued process 
\begin{equation}
\label{eq:112}
\bfM_t \eqdef \bm{\etaBar}_t-\bm{\etaBar}_0-\frac{\varsigma^2}{2} \int_0^t \Delta\bm{\etaBar}_s ds
\end{equation}
is a continuous martingale with respect to the natural filtration of $\bm{\etaBar}$, and its quadratic variation is 
\begin{equation}
\label{eq:79}
\langle\!\langle \bfM \rangle\!\rangle_t = \varsigma^2\sum_{i=1}^2 \int_0^t (\partial_i\bm{\etaBar}_s)(\partial_i\bm{\etaBar}_s)^* \dd s\,.
\end{equation}
\end{enumerate}
\end{definition}

The requirement on the quadratic variation in~\eqref{eq:79} can be replaced by a control over quadratic functionals, 
as the next lemma shows. 

\begin{lemma}
\label{lem:11}
The quadratic variation condition \eqref{eq:79} is equivalent to requiring that for any fixed $g_1,g_2 \in H$  
\begin{equation}\label{eq:2}
\UBar_t \eqdef \, \bm{\etaBar}_t\otimes\bm{\etaBar}_t[g_1\otimes g_2] - \frac{\varsigma^2}{2} \int_0^t \bm{\etaBar}_r\otimes\bm{\etaBar}_r[\Delta(g_1\otimes g_2)]\dd r
\end{equation}
is a martingale with respect to the natural filtration of $\bm{\etaBar}$. 
\end{lemma}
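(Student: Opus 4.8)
The plan is to show the equivalence of the two formulations of the quadratic variation by passing through the standard characterisation of the quadratic variation of an $H^*$-valued martingale in terms of its action on a countable family of test functions, and then using a polarisation argument. First I would recall that, by the theory of Hilbert-space-valued martingales (see \cite[Chapter 3]{DaPratoZabczyk14_StochasticEquations}), the process $\bfM$ defined in~\eqref{eq:112} is a continuous martingale with $H^*$-valued (trace-class operator) quadratic variation $\langle\!\langle\bfM\rangle\!\rangle$ if and only if, for every pair of test functions $g_1,g_2\in H$, the scalar process $\bfM_t[g_1]\bfM_t[g_2] - \langle\!\langle\bfM\rangle\!\rangle_t[g_1\otimes g_2]$ is a (real-valued) martingale, where we write $\langle\!\langle\bfM\rangle\!\rangle_t[g_1\otimes g_2] \eqdef \langle \langle\!\langle\bfM\rangle\!\rangle_t g_1, g_2\rangle_{H^*}$; since $H$ is separable it suffices to check this on a countable dense set, but a fixed arbitrary pair will do. Hence it is enough to show that, under condition~\eqref{itm:2}, the process $\UBar_t$ in~\eqref{eq:2} is a martingale for all $g_1,g_2\in H$ if and only if $\langle\!\langle\bfM\rangle\!\rangle_t = \varsigma^2\sum_{i=1}^2\int_0^t(\partial_i\bm{\etaBar}_s)(\partial_i\bm{\etaBar}_s)^*\dd s$.

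The core computation is an application of It\^o's formula for the product of two real continuous semimartingales. Writing $Y^j_t \eqdef \bm{\etaBar}_t[g_j]$, condition~\eqref{itm:3} together with the definition of $\bfM$ gives $\dd Y^j_t = \tfrac{\varsigma^2}{2}\bm{\etaBar}_t[\Delta g_j]\dd t + \dd \bfM_t[g_j]$, so that
\begin{equ}
\dd(Y^1_tY^2_t) = Y^1_t\,\dd Y^2_t + Y^2_t\,\dd Y^1_t + \dd\langle \bfM[g_1],\bfM[g_2]\rangle_t\,.
\end{equ}
The drift contribution from the two Stratonovich-free terms is $\tfrac{\varsigma^2}{2}\big(\bm{\etaBar}_t[g_1]\bm{\etaBar}_t[\Delta g_2] + \bm{\etaBar}_t[g_2]\bm{\etaBar}_t[\Delta g_1]\big)\dd t$, which is precisely $\tfrac{\varsigma^2}{2}\,\bm{\etaBar}_t\otimes\bm{\etaBar}_t[\Delta(g_1\otimes g_2)]\dd t$ since $\Delta(g_1\otimes g_2) = (\Delta g_1)\otimes g_2 + g_1\otimes(\Delta g_2)$ (no cross term: the two tensor factors depend on disjoint sets of variables, and the mixed second derivatives vanish because $g_1\otimes g_2$ has no $\partial_{x_i}\partial_{y_i}$ dependence — here one must be slightly careful and note that $\Delta$ on the tensor product is the sum of the Laplacians in each block of variables, which is the operator appearing in~\eqref{eq:2}). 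Consequently, $\UBar_t = Y^1_tY^2_t - \tfrac{\varsigma^2}{2}\int_0^t\bm{\etaBar}_r\otimes\bm{\etaBar}_r[\Delta(g_1\otimes g_2)]\dd r$ differs from the local martingale $\int_0^t Y^1_r\,\dd\bfM_r[g_2] + \int_0^t Y^2_r\,\dd\bfM_r[g_1]$ exactly by $\langle\bfM[g_1],\bfM[g_2]\rangle_t$. Therefore $\UBar$ is a martingale if and only if $\langle\bfM[g_1],\bfM[g_2]\rangle_t = \langle\!\langle\bfM\rangle\!\rangle_t[g_1\otimes g_2]$ equals the deterministic-integrand expression $\varsigma^2\sum_{i=1}^2\int_0^t(\partial_i\bm{\etaBar}_r)[g_1](\partial_i\bm{\etaBar}_r)[g_2]\dd r$, which by polarisation over all pairs $g_1,g_2$ is equivalent to~\eqref{eq:79}. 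One also checks the integrability needed to upgrade "local martingale" to "martingale": the bounds in~\eqref{itm:2}, i.e. $\bfEBar\big[\int_0^T\lVert\partial_i\bm{\etaBar}_t\rVert_{H^*}^2\dd t\big]<\infty$, control $\langle\!\langle\bfM\rangle\!\rangle_T$ and hence, via Burkholder-Davis-Gundy, the martingales $\int_0^\cdot Y^j_r\,\dd\bfM_r[g_k]$ (using also that $\sup_{t\le T}|Y^j_t|$ has finite second moment, which follows from~\eqref{itm:2} and~\eqref{eq:112}).

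The step I expect to require the most care is not conceptual but bookkeeping: making rigorous the identification of the drift term as $\bm{\etaBar}_r\otimes\bm{\etaBar}_r[\Delta(g_1\otimes g_2)]$, i.e. justifying that $\Delta$ acting on the tensor product $g_1\otimes g_2\in H\otimes H$ (or rather on a suitable function space on $\R^2\times\R^2$ into which $\bm{\etaBar}_r\otimes\bm{\etaBar}_r$ pairs) decomposes as the sum of the block Laplacians, and that condition~\eqref{itm:2} — which is phrased for $\Delta$ on $H^*$ — is enough to make sense of and to control $\int_0^t\bm{\etaBar}_r\otimes\bm{\etaBar}_r[\Delta(g_1\otimes g_2)]\dd r$. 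Here one uses that $\bm{\etaBar}_r\in\mathrm{dom}(\Delta)\subset H^*$ a.s. with the stated integrability, so that $\bm{\etaBar}_r[\Delta g_j]$ is well-defined and in $L^1([0,T]\times\Omega)$, and the product $\bm{\etaBar}_r[g_1]\bm{\etaBar}_r[\Delta g_2]$ is integrable by Cauchy-Schwarz against $\sup_r|\bm{\etaBar}_r[g_1]|\in L^2$. Once this is in place the argument is a routine application of It\^o's product rule and polarisation, and the two conditions are seen to be equivalent.
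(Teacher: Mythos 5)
There is a genuine gap, and it concerns exactly the step you flagged as requiring care. You assert that $\Delta(g_1\otimes g_2) = (\Delta g_1)\otimes g_2 + g_1\otimes(\Delta g_2)$ with ``no cross term,'' i.e.\ that $\Delta$ in \eqref{eq:2} is the block Laplacian $\Delta_x + \Delta_y$ on $\bbR^2\times\bbR^2$. With that reading, trace through your own It\^o computation: the drift produced by the two $\dd Y$ terms exactly cancels the subtracted integral in $\UBar_t$, leaving
\begin{equ}
\UBar_t - \UBar_0 = \int_0^t Y^1_r\,\dd\bfM_r[g_2] + \int_0^t Y^2_r\,\dd\bfM_r[g_1] + \langle\bfM[g_1],\bfM[g_2]\rangle_t\,.
\end{equ}
Since $\langle\bfM[g_1],\bfM[g_2]\rangle$ is a continuous finite-variation process, the conclusion would then be that $\UBar$ is a martingale iff $\langle\bfM[g_1],\bfM[g_2]\rangle_t \equiv 0$ — not iff it equals $\varsigma^2\sum_i\int_0^t\partial_i\bm{\etaBar}_r[g_1]\partial_i\bm{\etaBar}_r[g_2]\,\dd r$. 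Your final sentence jumps to the correct statement of the lemma, but it does not follow from the decomposition you derived; there is a logical mismatch.

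The resolution is that $\Delta$ in \eqref{eq:2} must be interpreted as the ``diagonal'' Laplacian $\sum_{i=1}^2(\partial_{x_i} + \partial_{y_i})^2$, not the block Laplacian. This is forced by the structure of the SLTE: formally $\bm{\etaBar}_t = \bm{\etaBar}_0(\cdot + \varsigma\BBar_t)$, so $\bm{\etaBar}_t\otimes\bm{\etaBar}_t$ is transported along the diagonal $(z,z)$ of $\bbR^2\times\bbR^2$ and the relevant second-order operator is the one generated by that diagonal shift. On a pure tensor this gives
\begin{equ}
\Delta(g_1\otimes g_2) = (\Delta g_1)\otimes g_2 + g_1\otimes(\Delta g_2) + 2\sum_{i=1}^2 (\partial_i g_1)\otimes(\partial_i g_2)\,,
\end{equ}
and the extra cross term, once paired with $\bm{\etaBar}_r\otimes\bm{\etaBar}_r$ and multiplied by $\varsigma^2/2$, contributes precisely $-\varsigma^2\sum_i\int_0^t\partial_i\bm{\etaBar}_r[g_1]\partial_i\bm{\etaBar}_r[g_2]\,\dd r$ to $\UBar_t$. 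With this, your It\^o decomposition becomes $\UBar_t - \UBar_0 = (\text{local martingale}) + \langle\bfM[g_1],\bfM[g_2]\rangle_t - \varsigma^2\sum_i\int_0^t\partial_i\bm{\etaBar}_r[g_1]\partial_i\bm{\etaBar}_r[g_2]\,\dd r$, and the claimed equivalence does follow. The rest of your argument — the reduction via \cite[Proposition 3.13]{DaPratoZabczyk14_StochasticEquations}, the It\^o product rule, the integrability upgrade from local to true martingale using item~\ref{itm:2} and Burkholder--Davis--Gundy — is sound and is in the same spirit as the paper's proof, which reduces to showing $\UBar_t - \VBar_t$ (with $\VBar_t = \bfM_t[g_1]\bfM_t[g_2] - \varsigma^2\sum_i\int_0^t\partial_i\bm{\etaBar}_s[g_1]\partial_i\bm{\etaBar}_s[g_2]\,\dd s$) is a martingale and then cites \cite{CannGubiToni23_GaussianFluctuations} for the It\^o computation you carried out explicitly.
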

\begin{proof}
By definition of quadratic variation for $H^\ast$-valued martingales~\cite[Proposition 3.13]{DaPratoZabczyk14_StochasticEquations}, the formula at the right hand side of~\eqref{eq:79} is the quadratic variation 
of $\bfM$ if and only if for every $g_1, g_2 \in H$ the process 
\begin{equ}
\VBar_t \eqdef \bfM_t[g_1]\bfM_t[g_2]  - \varsigma^2\sum_{i=1}^2 \int_0^t \partial_i\bm{\etaBar}_s[g_1] \partial_i\bm{\etaBar}_s[g_2] ds 
\end{equ}
is a martingale. Therefore, the proof is completed provided that $\UBar_t - \VBar_t $ is a martingale. 
This in turn can be argued as in the proof of~\cite[Theorem 3.4]{CannGubiToni23_GaussianFluctuations}. 
\end{proof}

The following tells us that the martingale problem for SLTE is well posed.
\begin{proposition}
\label{prp:1}
For every $T>0$, the martingale problem in Definition~\ref{def:2} is well-posed, i.e. there exists a unique 
probability measure $\bfPBar$ on $(C_TH^*,\calB)$ satisfying items~\ref{itm:1}-\ref{itm:3}. 
\end{proposition}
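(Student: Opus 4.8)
The statement is a well-posedness result for the martingale problem associated to the stochastic linear transport equation \eqref{eq:131}, so I would split it into existence and uniqueness, exploiting the explicit ``Brownian transportation'' representation \eqref{e:BTGGFF} as the central tool. The key observation is that \eqref{eq:131} is a \emph{linear} transport equation driven by a \emph{finite-dimensional} (two-dimensional) Brownian motion $\BBar$, so its solution is just the initial datum rigidly translated along the path $\varsigma\BBar_t$. Concretely, I would first construct, on a probability space carrying a two-dimensional Brownian motion $(\BBar_t)_{t\ge0}$ and an independent $H^*$-valued random variable $\bm{\etaBar}_0\sim\bm{\piBar}$, the process $\bm{\etaBar}_t \eqdef \bm{\etaBar}_0(\cdot + \varsigma\BBar_t)$, where the translation of an $H^*$-distribution is defined by duality, $\bm{\etaBar}_t[g] \eqdef \bm{\etaBar}_0[g(\cdot - \varsigma\BBar_t)]$. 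One checks that $x\mapsto g(\cdot - x)$ is smooth from $\R^2$ into $H$ (using \eqref{e:H}, the weight is translation-friendly up to polynomial factors which are controlled since $\bm{\piBar}$-a.s. $\bm{\etaBar}_0$ lies in a fixed negative Sobolev space by the GFF covariance in Definition~\ref{def:5}), so $t\mapsto \bm{\etaBar}_t$ has continuous paths in $H^*$ and the required integrability in item~\ref{itm:2} holds. This is essentially the content of Lemma~\ref{lem:12}, which I would invoke.

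For \emph{existence}, I would then verify that the law of this explicitly constructed process solves the martingale problem. Item~\ref{itm:1} is immediate by construction. For item~\ref{itm:3}, apply the infinite-dimensional It\^o formula \cite[Chapter 4]{DaPratoZabczyk14_StochasticEquations} to the map $x\mapsto \bm{\etaBar}_0[g(\cdot-\varsigma x)]$ with $x=\BBar_t$: the first-order term produces the martingale $\sum_i \int_0^t \varsigma\,\partial_i\bm{\etaBar}_s[g]\,\dd\BBar^i_s$ and the second-order It\^o correction produces exactly $\tfrac{\varsigma^2}{2}\int_0^t \Delta\bm{\etaBar}_s[g]\,\dd s$, matching \eqref{eq:112}. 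The quadratic variation identity \eqref{eq:79} then follows from the explicit form of the martingale, or equivalently via Lemma~\ref{lem:11} by computing the bracket of $\bfM_t[g_1]\bfM_t[g_2]$ against the tensorised transport generator. Testing against the dense family $\{g^i\}$ and using \eqref{eq:8} to sum up gives the $H^*$-valued statements.

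For \emph{uniqueness}, the idea is that any solution must coincide in law with the transported GFF. Given a solution $\bfPBar$ with canonical process $\bm{\etaBar}$, extract the Brownian motion: for each fixed $g\in H$, $\bfM_t[g]$ is a continuous martingale with bracket $\varsigma^2\sum_i\int_0^t (\partial_i\bm{\etaBar}_s[g])^2\,\dd s$ by \eqref{eq:79}; a standard representation (choosing $g$'s approximating $\delta$-derivatives, or working directly with the $H^*$-valued martingale and its covariance operator) shows $\bfM$ is of the form $\sum_i\int_0^t \varsigma\,\partial_i\bm{\etaBar}_s\,\dd\BBar^i_s$ for a two-dimensional Brownian motion $\BBar$ adapted to the filtration. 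Then \eqref{eq:112} says $\bm{\etaBar}$ solves the SPDE \eqref{eq:131} in the analytically weak (mild) sense; since the transport semigroup along a \emph{given} path is explicit and the equation is linear with Lipschitz (indeed constant) coefficients in the sense of \cite{DaPratoZabczyk14_StochasticEquations}, pathwise uniqueness holds, forcing $\bm{\etaBar}_t = \bm{\etaBar}_0(\cdot+\varsigma\BBar_t)$ a.s., and hence the law of $\bm{\etaBar}$ is determined by that of $(\bm{\etaBar}_0,\BBar)$, i.e. by $\bm{\piBar}$ and $\varsigma$.

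\textbf{Main obstacle.} The routine-looking but genuinely technical point is justifying the translation action of $\R^2$ on $H^*$ and the application of It\^o's formula: one must control how the polynomial weight $(1+|x|^2)^2$ in \eqref{e:H} interacts with spatial shifts, and confirm that $\bm{\piBar}$-a.e.\ realisation of $\nabla\Phi$ lies in a space on which shifts act continuously with at most polynomially-growing operator norm — enough to dominate by a Brownian-motion-driven bound and invoke \cite[Theorem 4.33]{DaPratoZabczyk14_StochasticEquations}. Once this functional-analytic bookkeeping is in place, both existence and uniqueness follow from the explicit flow representation without further difficulty.
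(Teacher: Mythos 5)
Your proposal is correct and follows essentially the same route as the paper's Appendix~\ref{sec:well-posedn-mart}: the argument there also rests on the explicit flow representation $\bm{\etaBar}_t = T_{\varsigma B_t}\bm{\etaBar}_0$ (Lemmas~\ref{lem:31},~\ref{lem:29},~\ref{lem:12}), and uniqueness is likewise obtained by invoking the $H^*$-valued martingale representation theorem \cite[Theorem 8.2]{DaPratoZabczyk14_StochasticEquations} to extract the Brownian motion and then appealing to pathwise uniqueness for the transport flow. The only gap you gloss over is the pathwise-uniqueness step itself, which the paper handles carefully via mollification and Kunita's generalised It\^{o} formula in Lemma~\ref{lem:12} rather than by a bare appeal to linearity.
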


We believe the proof of the above proposition is classical and follows from standard arguments 
(see e.g.~\cite{DaPratoZabczyk14_StochasticEquations}). That said, we were unable to find a specific 
reference and therefore we provide a proof in Appendix~\ref{sec:well-posedn-mart}. 

\subsection{Proof of Theorem \ref{thm:2}, convergence to SLTE}
\label{sec:conv-cond-good}

The goal of this section is to complete the proof of Theorem \ref{thm:2}. As done for the SRBP with Theorem~\ref{thm:3}, 
we will first identify a set of conditions under which the statement holds (Theorem~\ref{thm:4}) and then focus  
on the main result. 

In the following, for a sequence of (not necessarily basis) elements 
$(g_i)_{i = 1}^m \subset H \setminus \{0\}$, $m \in \bbN$, and for $\varepsilon>0$, define 
$\frakh^\varepsilon \in \fock_m$ as 
\begin{equ}[e:frakh]
\frakh^\varepsilon(p_{1:m})= \frakh^\varepsilon_{g_1,\dots,g_m}(p_{1:m}) \eqdef \frac{1}{m!} \sum_{\sigma \in S_m} \frakg_1^\varepsilon(p_{\sigma(1)}) \ldots \frakg_m^\varepsilon(p_{\sigma(m)})
\end{equ}
where $\frakg_i\in\fock_1$ is the element associated to $g_i$ via the embedding $H \hookrightarrow \fock_1$ 
(see~\eqref{eq:42} with $n=1$) and, as above, $\frakg_i^\eps(\cdot)=\frakg_i(\eps^{-1}\cdot)$. 
Since we will need to invoke the estimates on the nuisance region, Proposition \ref{p:MainEstimates}, 
for $c\in(0,\half)$, we also define 
\begin{equ}[e:frakhcut]
\frakh^{\varepsilon,c}(p_{1:m}) \eqdef \frakh^\varepsilon(p_{1:m}) \1_{\{|\sum_{i=1}^m p_i| \ge c\sum_{i=1}^m|p_i|\}}\,,
\end{equ}
if $m\geq 2$, and $\frakh^{\varepsilon,c}=\frakh^{\varepsilon}=\frakg_1^\varepsilon$ if $m=1$.

\begin{theorem}
\label{thm:4}
Suppose that for each $m\in \N$ and $\frakh \in \fock_m$, there exists a collection of elements 
$\{v^{\lambda, n} \colon \lambda \in (0,1), n \in \bbN\} \subset \text{dom}(\calL)$ such that for each 
$\lambda \in (0,1), n \in \bbN$, the map $\frakh \mapsto v^{\lambda,n}[\frakh]$ is linear from 
$\fock_m \rightarrow\fock_{\le n}$. 

Assume further that for all $c \in (0,\half)$, $m\in\{1,2\}$, we have 
\begin{align}
\lim_{n\rightarrow\infty} \limsup_{\lambda\rightarrow0} \sup_{g_1, ..., g_m} \lVert v^{\lambda,n} - \frakh\rVert^2 &= 0\,,\label{eq:32} \\
\lim_{n\rightarrow\infty} \limsup_{\lambda\rightarrow0} \sup_{g_1, ..., g_m} \lambda^{-1} \lVert (\lambda+\calS)^{-\half} \big[-\calL v^{\lambda,n} - (1+\sigma^2(\alpha)) \calS  \frakh \big] \rVert^2 &= 0\label{eq:34}
\end{align}
where, for $g_1,\dots,g_m\in H$, 
$\frakh= \big(\prod_{i=1}^m \lVert g_i\rVert_{H^2(\bbR^2,\bbR^2)} \big)^{-1} \frakh^{\sqrt{\lambda},c}$ with 
$\frakh^{\sqrt{\lambda},c}=\frakh^{\sqrt{\lambda},c}_{g_1,\dots g_m}$ given as in~\eqref{e:frakhcut} and~\eqref{e:frakh}, 
$v^{\lambda,n}=v^{\lambda,n}[\frakh]$ and 
$\sigma^2(\alpha)$ is defined according to~\eqref{eq:87}. 
Then the conclusion of Theorem \ref{thm:2} holds.
\end{theorem}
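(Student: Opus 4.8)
\textbf{Proof strategy for Theorem~\ref{thm:4}.}
The plan is to mirror the architecture of the proof of Theorem~\ref{thm:3}, but now working at the level of the $H^*$-valued environment rather than a scalar additive functional. Given tightness in $C_TH^*$ from Proposition~\ref{lem:13} and well-posedness of the martingale problem from Proposition~\ref{prp:1}, it suffices to show that every subsequential limit $\bfPBar$ of the laws of $\bm{\eta}^\eps$ satisfies items~\ref{itm:1}--\ref{itm:3} of Definition~\ref{def:2} with $\varsigma^2=1+\sigma^2(\alpha)$. Item~\ref{itm:1} is exactly the initial-condition convergence already established in Proposition~\ref{lem:13}. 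For items~\ref{itm:2}--\ref{itm:3}, the idea is to test the limiting process against fixed $g\in H$ (and against $g_1\otimes g_2$ for the quadratic variation, via Lemma~\ref{lem:11}) and to identify the drift and the bracket by a martingale-approximation argument whose input is precisely the pair of hypotheses~\eqref{eq:32}--\eqref{eq:34}.

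First I would fix $g_1,\dots,g_m\in H\setminus\{0\}$ (with $m\in\{1,2\}$ the relevant cases), form $\frakh^{\sqrt{\lambda},c}$ as in~\eqref{e:frakhcut}, normalize, and feed it into the family $v^{\lambda,n}=v^{\lambda,n}[\frakh]$. Writing $\bm{v}^{\eps^2,n}$ for the $H^*$-valued lift of $v^{\eps^2,n}$ applied to the rescaled test function $g^\eps$ (using the embedding and scaling from~\eqref{e:scaling}--\eqref{e:frakh}), the Dynkin decomposition~\eqref{eq:119} gives
\begin{equ}
\bm{\eta}^\eps_t[g]=\bm{\eta}^\eps_0[g]+\eps\,v^{\eps^2,n}(\eta_{t/\eps^2})-\eps\,v^{\eps^2,n}(\eta_0)-\eps^3\!\int_0^{t/\eps^2}\!\!\calL v^{\eps^2,n}(\eta_s)\dd s+\text{(martingale)}+\text{error},
\end{equ}
where the "error" is $\eps\int_0^{t/\eps^2}(\calL v^{\eps^2,n}+ (1+\sigma^2(\alpha))\calS\frakh)(\eta_s)\dd s$ up to the discrepancy between $\frakh$ and the true rescaled $g$-observable. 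By the $H^*$-valued It\^o trick (Lemma~\ref{lem:2}) with $\lambda=\eps^2$, the $L^2$-norm of the error term is controlled by $\lambda^{-1}\|(\lambda+\calS)^{-\frac12}[-\calL v^{\lambda,n}-(1+\sigma^2(\alpha))\calS\frakh]\|^2$, which vanishes in the double limit $\eps\to0$ then $n\to\infty$ by~\eqref{eq:34}; the boundary terms $\eps v^{\eps^2,n}(\eta_{t/\eps^2})-\eps v^{\eps^2,n}(\eta_0)$ vanish in $L^2$ by stationarity and~\eqref{eq:32} (which bounds $\|v^{\lambda,n}\|$ uniformly, since $\|\frakh\|\lesssim 1$ by~\eqref{e:gepsB}-type estimates and the normalization), and the $\eps^3\int\calL v$ term is handled by another application of the It\^o trick plus~\eqref{eq:32}--\eqref{eq:34}. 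What survives is the identity $\bm{\eta}^\eps_t[g]-\bm{\eta}^\eps_0[g]-\frac{\varsigma^2}{2}\int_0^t\Delta\bm{\eta}^\eps_s[g]\dd s=\bm{N}^\eps_t[g]+o_{L^2}(1)$ for a martingale $\bm{N}^\eps$; here one uses that $(1+\sigma^2(\alpha))\calS$ acting on the embedded test function corresponds, after rescaling, to $\frac{\varsigma^2}{2}$ times the Laplacian, using $\calS\leftrightarrow -\frac12\Delta$ in Fourier via~\eqref{eq:ops}. Passing to the limit along the tight subsequence and using uniform integrability (Gaussian hypercontractivity as in Lemma~\ref{lem:2}) shows the limiting $\bfM_t[g]$ in~\eqref{eq:112} is a martingale, giving item~\ref{itm:3}'s martingale part; item~\ref{itm:2}'s integrability bounds come out of the same estimates since $\|\Delta\bm{\eta}^\eps_t\|_{H^*}$ and $\|\partial_i\bm{\eta}^\eps_t\|_{H^*}$ are controlled by $\eps^{-\delta}\calS^{\delta/2}\bfg^\eps$-type quantities bounded uniformly via~\eqref{e:gepsB}.

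For the quadratic variation~\eqref{eq:79}, by Lemma~\ref{lem:11} it is enough to show $\bm{\eta}^\eps_t\otimes\bm{\eta}^\eps_t[g_1\otimes g_2]-\frac{\varsigma^2}{2}\int_0^t\bm{\eta}^\eps_r\otimes\bm{\eta}^\eps_r[\Delta(g_1\otimes g_2)]\dd r$ converges to a martingale. This is where $m=2$ enters: the symmetrized tensor $\frakh^{\sqrt\lambda,c}_{g_1,g_2}\in\fock_2$ is the natural observable, and the cutoff $\1_{\{|p_1+p_2|\ge c\sum|p_i|\}}$ (i.e.\ the reverse-triangle condition~\eqref{a:RTI}) is exactly what allows Proposition~\ref{p:MainEstimates} — hence hypotheses~\eqref{eq:32}--\eqref{eq:34} — to apply; the removed region $\{|p_1+p_2|<c\sum|p_i|\}$ must be shown to contribute negligibly to the $H^*\otimes H^*$ norm, which follows because $\hat V$ is Schwartz, the $g_i$ are smooth, and the rescaling $g_i^\eps$ concentrates Fourier mass so that the excluded near-cancellation region has vanishing mass as $\eps\to0$ (uniformly after dividing by $\|g_1\|_{H^2}\|g_2\|_{H^2}$). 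One then runs the same Dynkin-plus-It\^o-trick argument for the $\fock_2$ observable: the drift produced by $\calL$ on the quadratic observable splits into an $\calS$-part that yields $\frac{\varsigma^2}{2}\Delta$ applied to $g_1\otimes g_2$ and an $\calA$-part plus remainders that vanish by~\eqref{eq:34}, leaving the desired martingale in the limit.

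\textbf{Main obstacle.} The delicate point is the interchange of limits and the uniformity over the test functions $g_1,\dots,g_m$ (note the $\sup_{g_1,\dots,g_m}$ in~\eqref{eq:32}--\eqref{eq:34} is stated precisely to make this work): one must ensure that the $o(1)$ errors from the Dynkin/It\^o-trick estimates are uniform over the (infinitely many) basis directions $g^i$ appearing in the $H^*$-norm~\eqref{eq:35}, so that summing over $i$ and then taking $\eps\to0$, $n\to\infty$ is legitimate. This is handled by the normalization built into $\frakh$ (dividing by $\prod\|g_i\|_{H^2}$) together with the Hilbert--Schmidt summability~\eqref{eq:8}, but bookkeeping the dependence of all constants on $n$ (and showing they are tamed by the $\eps\to0$ limit taken first) is the technically heaviest part. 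A secondary subtlety is justifying that the cutoff $\frakh^{\sqrt\lambda,c}$ versus the genuine rescaled observable $\bfg^\eps\otimes\bfg^\eps$ differ by something vanishing in $\bfL^2(\pi)$ uniformly — this is a Fourier-localization estimate that I expect to be routine but must be done carefully because $c$ is fixed while $\lambda=\eps^2\to0$.
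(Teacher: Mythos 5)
Your proposal takes essentially the same route as the paper: extract a subsequential limit via the tightness from Proposition~\ref{lem:13}, appeal to Proposition~\ref{prp:1} for uniqueness of the martingale problem, identify the drift via a Dynkin martingale built on $\bfv^{\eps^2,n}$ and the $H^*$-valued It\^o trick (Lemma~\ref{lem:2}), reduce the bracket to a scalar quadratic observable via Lemma~\ref{lem:11}, and use the cutoff $\frakh^{\sqrt\lambda,c}$ to gain access to the nuisance-region estimates (via the reverse triangle inequality~\eqref{a:RTI}) that power~\eqref{eq:32}--\eqref{eq:34}. The normalization by $\prod_i\lVert g_i\rVert_{H^2}$ combined with the Hilbert--Schmidt summability~\eqref{eq:8} is indeed what makes the sum over basis directions legitimate. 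Up to here your plan matches the paper.

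However, one step as you describe it would fail. You assert that the contribution of the removed region $\{|p_1+p_2|<c(|p_1|+|p_2|)\}$ is negligible because ``the rescaling $g_i^\eps$ concentrates Fourier mass so that the excluded near-cancellation region has vanishing mass as $\eps\to0$.'' This is incorrect: after the change of variables $p_i\mapsto p_i/\eps$, the quantities $\lVert\frakh^\eps-\frakh^{\eps,c}\rVert^2$ and $\eps^{-2}\lVert\calS^{\half}(\frakh^\eps-\frakh^{\eps,c})\rVert^2$ are bounded by integrals of $|\hat g_1(p_1)|^2|\hat g_2(p_2)|^2$ (times $|p_1+p_2|^2$ in the second case) over $\{|p_1+p_2|\le c(|p_1|+|p_2|)\}$ that are manifestly \emph{independent} of $\eps$; they do not vanish as $\eps\to 0$, but only in the limit $c\to 0$, by dominated convergence. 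Consequently the quadratic-variation argument must be structured as a three-fold limit: for each fixed $c$, run the Dynkin/It\^o-trick approximation in $\eps\to 0$ then $n\to\infty$ (this requires $c$ fixed, since it is what guarantees the reverse triangle inequality needed for Proposition~\ref{p:MainEstimates}), and only afterwards send $c\to 0$ to remove the cutoff. If you instead try to let $\eps\to 0$ do the work of removing the cutoff, the estimate degenerates and the step does not close.
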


\begin{proof}
By tightness, Lemma \ref{lem:13}, there exists a subsequence of $\{\bm{\eta}^\varepsilon\}_\eps$ (which, 
by a slight abuse of notation we will still denote by $\bm{\eta}^\varepsilon$) which converges 
almost surely in $C_T H^\ast$. Let $\bm{\etaBar}$ be the limit and $\bfPBar$ its law. 
Thanks to Proposition \ref{prp:1}, 
it suffices to show that $\bfPBar$ solves the martingale problem in Definition \ref{def:2} with 
$\varsigma^2=1+\sigma^2(\alpha)$. 
Item \ref{itm:1} is verified in Lemma \ref{lem:13}, while Item \ref{itm:2} follows immediately 
by the fact that, since $\bm{\eta}^\varepsilon\to\bm{\etaBar}$ in $C_TH^\ast$ and $\bm{\eta}^\varepsilon$ 
is stationary, so is $\bm{\etaBar}$ and $\text{law}(\bm{\etaBar})=\bm{\piBar}$. Hence, for $j=0,1,2$ and $i=1,2$,
\begin{equ}
\bfE\Big[\int_0^T \lVert\partial_i^j\bm{\etaBar}_t\rVert_{H^*}^2 \dd t\Big]=\int_0^T \bfE\lVert\partial_i^j\bm{\etaBar}_t\rVert_{H^*}^2 \dd t \lesssim T \sum_k\|g^k\|_{H^j(\R^2,\R^2)}^2 < \infty
\end{equ}
the last step being a consequence of~\eqref{eq:8}. 
For Item \ref{itm:3}, we set $\varsigma^2=1+\sigma^2(\alpha)$ and decompose its verification into two steps. 
\medskip

\noindent\textit{Step 1. The martingale property.} 
To see that the process in~\eqref{eq:112} is a martingale we are required to show that for any 
$0\le s\le t\le T$ and $G : C([0,s],H^*) \rightarrow \bbR$ bounded and continuous, we have 
\begin{equ} 
\bfEBar\Big[ \Big( \bm{\etaBar}_t - \bm{\etaBar}_s - \frac{\varsigma^2}{2} \int_s^t \Delta \bm{\etaBar}_r \dd r \Big) G((\bm{\etaBar}_r)_{r \in [0,s]})\Big] = 0\,, 
\end{equ}
which in turn follows upon proving that 
\begin{align}
\limsup_{\varepsilon\rightarrow0} \bfE \Big[ \Big\lVert &\big( \bm{\eta}^\varepsilon_t - \bm{\eta}^\varepsilon_s - \frac{\varsigma^2}{2} \int_s^t \Delta \bm{\eta}^\varepsilon_r \dd r \big) G((\bm{\eta}^\varepsilon_r)_{r \in [0,s]}) \Big\rVert_{H^*}^2 \Big] < \infty \label{eq:129}\\
\lim_{\varepsilon\rightarrow0} \Big\lVert \bfE \Big[ &\Big( \bm{\eta}^\varepsilon_t - \bm{\eta}^\varepsilon_s - \frac{\varsigma^2}{2} \int_s^t \Delta \bm{\eta}^\varepsilon_r \dd r \Big) G((\bm{\eta}^\varepsilon_r)_{r \in [0,s]}) \Big] \Big\rVert_{H^*} = 0\,. \label{eq:130}
\end{align}
The former follows by boundedness of $G$ together with stationarity of $\bm{\eta}^\varepsilon$ and~\eqref{e:gepsB}. 
Concerning the latter, define, for $n \in \bbN, \lambda \in (0,1)$, $\bfv^{\lambda,n}\in\bfL^2(\pi)$ 
according to $\bfv^{\lambda,n}[g] = v^{\lambda,n}[\frakg^{\sqrt{\lambda}}]$, $g\in H$. 
Let $(\bfM_t)_{t\ge0}$ be the $H^*$-valued Dynkin martingale associated to $\bfv^{\eps^2,n}$ 
as given in~\eqref{eq:119} and set $\bfM^\varepsilon_t = \bfM_{t/\varepsilon^2}$, 
which is a martingale adapted to $(\calF^\varepsilon_t)_{t \in [0,T]}$, where $\calF^\varepsilon_t = \calF_{t/\varepsilon^2}$. 

Since $\bfM^\varepsilon$ is a martingale, \eqref{eq:130} is implied by 
\begin{equation}
\label{eq:29}
\lim_{n\rightarrow\infty} \limsup_{\varepsilon\rightarrow0} \bfE \Big\lVert \bfM^\varepsilon_t - \bfM^\varepsilon_s - \Big( \bm{\eta}^\varepsilon_t - \bm{\eta}^\varepsilon_s - \frac{\varsigma^2}{2} \int_s^t \Delta \bm{\eta}^\varepsilon_r \dd r \Big) \Big\rVert_{H^*}^2 = 0
\end{equation}
By~\eqref{e:scaling} stationarity and Lemma \ref{lem:2} with the choice $p=2, \lambda = \varepsilon^2$, we obtain
\begin{align*}
\bfE & \Big\lVert \bfM^\varepsilon_t - \bfM^\varepsilon_s - \Big( \bfg^\eps(\eta^\varepsilon_t)- \bfg^\eps(\eta^\varepsilon_s) - \frac{\varsigma^2}{2} \int_s^t \Delta \bfg^\eps(\eta^\varepsilon_r) \dd r \Big) \Big\rVert_{H^*}^2 \\
&\lesssim_{s, t} \lVert \bfv^{\varepsilon^2,n} - \bfg^\varepsilon \rVert_{\bfL^2(\pi)}^2 + \varepsilon^{-2} \lVert (\varepsilon^2+\calS)^{-\half} \big(-\calL\bfv^{\varepsilon^2,n} - \varsigma^2\calS \bfg^\varepsilon \big) \rVert_{\bfL^2(\pi)}^2 \\
&= \sum_{i=1}^\infty \big( \lVert v^{\lambda,n}[\frakg^{i,\varepsilon}] - \frakg^{i,\varepsilon}\rVert^2 + \varepsilon^{-2} \lVert (\varepsilon^2+\calS)^{-\half} \big(-\calL v^{\lambda,n}[\frakg^{i,\varepsilon}] - \varsigma^2\calS \frakg^{i,\varepsilon}\big) \rVert^2 \big) 
\end{align*}
Conditions \cref{eq:32,eq:34} imply that, for each $i$, the $i$-th summand is bounded above by 
$ \lVert g_i\rVert_{H^2(\bbR^2,\bbR^2)}^2$, which is summable by~\eqref{eq:8}. 
Hence, by dominated convergence, the same conditions also imply~\eqref{eq:29} from which 
we conclude that the right hand side of~\eqref{eq:112} is a martingale. 
\medskip

\noindent \textit{Step 2. Quadratic variation.} In view of Lemma \ref{lem:11}, we are required to show that $\UBar$ 
in~\eqref{eq:2} is a martingale. 
Arguing as in the step above, this follows provided that for all $0\le s\le t\le T$ and 
$G : C([0,s],H^*) \rightarrow \bbR$ bounded continuous we have 
\begin{align}
\limsup_{\varepsilon\rightarrow0} \bfE | ( U^\varepsilon_t - U^\varepsilon_s ) G((\bm{\eta}^\varepsilon_r)_{r \in [0,s]}) |^2 &< \infty \label{eq:136} \\
\limsup_{\varepsilon\rightarrow0} | \bfE [( U^\varepsilon_t - U^\varepsilon_s ) G((\bm{\eta}^\varepsilon_r)_{r \in [0,s]})] | &= 0  \label{eq:137}
\end{align}
where $U^\varepsilon$ is defined as $\UBar$ but with $\bm{\eta}^\varepsilon$ replacing $\bm{\etaBar}$. 
Note that $U^\varepsilon_t = U_{t/\varepsilon^2}$, where $U_t$ is given by
\begin{equation}
\label{eq:138}
U_t = \frakh^\varepsilon(\eta_t) - \frakh^\varepsilon(\eta_0) + \varsigma^2\int_0^t\calS \frakh^\varepsilon(\eta_r) \dd r
\end{equation}
for $\frakh^\eps$ as in~\eqref{e:frakh} and more specifically, $\frakh^\varepsilon(\eta_{t/\varepsilon^2})= \bm{\eta}^\varepsilon_t\otimes\bm{\eta}^\varepsilon_t[g_1\otimes g_2]- \langle g_1^\varepsilon,g_2^\varepsilon\rangle
=\bm{\eta}^\varepsilon_t[g_1]\bm{\eta}^\varepsilon_t[g_2]-\langle\frakg_1^\varepsilon,\frakg_2^\varepsilon\rangle$. 

Now, the verification of~\eqref{eq:136} follows the same steps as those for~\eqref{eq:129}, and is therefore omitted. 
For~\eqref{eq:137}, define $(U^c_t)_{t\ge0}$ as in \eqref{eq:138} but with $\frakh^{\varepsilon,c}$ replacing $\frakh^\varepsilon$, and $(U^{\varepsilon,c}_t)_{t\ge0}$ according to $U^{\varepsilon,c}_t = U^c_{t/\varepsilon^2}$. 
We are therefore reduced to show that for all $c\in(0,\half)$, 
\begin{align}
\lim_{c\rightarrow0} \limsup_{\varepsilon\rightarrow0} \bfE | ( U^\varepsilon_t - U^\varepsilon_s ) - ( U^{\varepsilon,c}_t - U^{\varepsilon,c}_s ) |^2 &= 0\,, \label{eq:37} \\
\limsup_{\varepsilon\rightarrow0} | \bfE ( U^{\varepsilon,c}_t - U^{\varepsilon,c}_s ) G((\bm{\eta}^\varepsilon_r)_{r \in [0,s]}) | &= 0\,.   \label{eq:40} 
\end{align}
Let us begin with the latter. For $n\in\N$ and $c\in(0,\half)$, let $M^c$ be the Dynkin martingale as in~\eqref{e:Dynkin}
associated to $v^{\varepsilon^2, n}[\frakh^{\varepsilon,c}]$ and $M^{\varepsilon,c}_t \eqdef M_{t/\varepsilon^2}$. 
By stationarity and Lemma \ref{lem:1}, we have 
\begin{align*}
\bfE |(M^{\varepsilon,c}_t-&M^{\varepsilon,c}_s) - ( U^{\varepsilon,c}_t - U^{\varepsilon,c}_s )|^2  \\
&\lesssim_{s, t}\lVert v^{\varepsilon^2,n} - \frakh^{\varepsilon,c} \rVert^2 + \varepsilon^{-2} \lVert (\varepsilon^2+\calS)^{-\half} \big(-\calL v^{\varepsilon^2,n} - \varsigma^2\calS \frakh^{\varepsilon,c} \big) \rVert^2
\end{align*}
and the right hand side converges to $0$ in view of~\cref{eq:32,eq:34}. Hence,~\eqref{eq:40} follows. 

For~\eqref{eq:37}, we need to control both the boundary terms and the time integral, for which we will 
use Lemma~\ref{lem:2}. Summarising, we need to control   
\begin{equs}
\lVert \frakh^\varepsilon - \frakh^{\varepsilon,c} \rVert^2 &\lesssim \int_{|p_1+p_2| \le c(|p_1|+|p_2|)} |\gHat_1(p_1)|^2 |\gHat_2(p_2)|^2 \dd p _{1:2}\\
\varepsilon^{-2}\lVert \calS^\half (\frakh^\varepsilon - \frakh^{\varepsilon,c}) \rVert^2 &\lesssim  \int_{|p_1+p_2| \le c(|p_1|+|p_2|)}  |p_1 + p_2|^2 |\gHat_1(p_1)|^2 |\gHat_2(p_2)|^2 \dd p _{1:2}
\end{equs} 
where the bounds above are uniform in $\eps$ and converge to $0$ as $c\rightarrow0$ by dominated convergence. 
Therefore, the proof of~\eqref{eq:37}, and consequently that of the theorem, is concluded. 
\end{proof}

We are now ready to complete the proof of Theorem \ref{thm:2}.  

\begin{proof}[of Theorem \ref{thm:2}]
In view of Theorem~\ref{thm:4}, all we have to do is to identify a family of observables for which \cref{eq:32,eq:34} hold. 
For $m=1,2$, $\frakh \in \fock_m$, these will be given by the solution of the replacement 
equation $v^{\lambda,n} = v^{\lambda,n}[\frakh]$ in Definition~\ref{def:3} corresponding to 
$\frakf = (\lambda+\calS+\calS\calG^\lambda) \frakh \in \fock_m$. 
For $c\in(0,\half)$ and $g_1,\dots,g_m\in H$, set 
$\frakf^{\sqrt{\lambda},c}\eqdef (\lambda+\calS+\calS\calG^\lambda) \frakh^{\sqrt{\lambda},c}$ with 
$\frakh^{\sqrt{\lambda},c}$ defined according to~\eqref{e:frakh} and~\eqref{e:frakhcut}. 

By construction, $\frakf^{\sqrt{\lambda},c}$ satisfies the reverse triangle inequality with respect to $c$, 
therefore we may invoke Proposition \ref{p:MainEstimates}, according to which, for $\lambda\in(0,\bar\lambda)$, 
we have 
\begin{equ}[e:L2Final?]
\lVert v^{\lambda,n} - \frakh^{\sqrt{\lambda},c} \rVert \lesssim \sqrt{n} \coup \lambda^{-\half}\lVert (\lambda+\calS)^{\half} \frakh^{\sqrt{\lambda},c} \rVert
\end{equ}
and 
\begin{equs}[e:H1Final?]
\lambda^{-\half} \lVert &(\lambda+\calS)^{-\half} \big[-\calL v^{\lambda,n} - (1+\sigma^2(\alpha)) \calS  \frakh^{\sqrt{\lambda},c} \big] \rVert\\
\lesssim& \lVert v^{\lambda,n} - \frakh^{\sqrt{\lambda},c} \rVert+\lambda^{-\half}\|(\lambda+\calS)^{-\half}\calS\big[\calG^\lambda-\sigma^2(\alpha)\big]\frakh^{\sqrt{\lambda},c}\|\\
&+\lambda^{-\half}\lVert (\lambda+\calS)^{-\half} \big[ (\lambda-\calL)v^{\lambda, n} - \frakf^{\sqrt{\lambda},c} \big] \rVert\\
\lesssim&
\lambda^{-\half}\|\calS^\half\big[\calG^\lambda-\sigma^2(\alpha)\big]\frakh^{\sqrt{\lambda},c}\| \\
&+\sqrt{n}\lambda^{-\half}\vertiii{\frakf^{\sqrt{\lambda},c}} +\lambda^{-\half}\Big(\frac{1}{\sqrt{n}}+\gamma C\Big)\|(\lambda+\calS)^{\half}\frakh^{\sqrt{\lambda},c}\|
\end{equs}
where we further used~\eqref{e:L2Final?} and the seminorm $\vertiii{\cdot}$ was defined in~\eqref{e:VanNorm}. 
To conclude, we need to control the right hand sides of~\cref{e:L2Final?,e:H1Final?} in terms of the $H^2$-norms 
of $g_1,\dots,g_m$, and a constant which vanishes in the double limit $\lambda\to 0$ first and $n\to\infty$ after. 
We will focus on the case $m=2$ as the case $m=1$ follows similar steps and is simpler. 
Arguing as in~\eqref{e:gepsB2}, we have
\begin{equs}
\lambda^{-1}&\lVert (\lambda+\calS)^{\half} \frakh^{\sqrt{\lambda},c} \rVert^2\lesssim \lambda^{-1}\int \frac{\lambda+|p_{[1:2]}|^2}{|p_1|^2|p_2|^2}|\frakg_1^{\sqrt{\lambda}}(p_1)\frakg_2^{\sqrt{\lambda}}(p_2)|^2\dd p_{1:2}\\
&=\int \frac{1+|p_{[1:2]}|^2}{|p_1|^2|p_2|^2}|\frakg_1(p_1)\frakg_2(p_2)|^2\dd p_{1:2}\lesssim \lVert g_1\rVert_{H^1(\bbR^2,\bbR^2)}^2\lVert g_2\rVert_{H^1(\bbR^2,\bbR^2)}^2\,.
\end{equs}
The explicit form of the operator $\calG^\lambda$ in~\eqref{e:ApproxFPO} gives
\begin{equs}
\lambda^{-1}&\|\calS^\half\big[\calG^\lambda-\sigma^2(\alpha)\big]\frakh^{\sqrt{\lambda},c}\|^2\\
&\lesssim \lambda^{-1}\int |p_{[1:2]}|^2\frac{|g(\ell^\lambda(\tfrac12 |p_{[1:2]}|^2))-\sigma^2(\alpha)| }{|p_1|^2|p_2|^2}|\frakg_1^{\sqrt{\lambda}}(p_1)\frakg_2^{\sqrt{\lambda}}(p_2)|^2\dd p_{1:2}\\
&=\int |p_{[1:2]}|^2\frac{|g(\ell^\lambda(\tfrac{\lambda}2 |p_{[1:2]}|^2))-\sigma^2(\alpha)| }{|p_1|^2|p_2|^2}|\frakg_1(p_1)\frakg_2(p_2)|^2\dd p_{1:2}\\
&\lesssim \coup^2 \int \frac{|p_{[1:2]}|^4}{|p_1|^2|p_2|^2}|\frakg_1(p_1)\frakg_2(p_2)|^2\dd p_{1:2}\lesssim \coup^2\lVert g_1\rVert_{H^2(\bbR^2,\bbR^2)}^2\lVert g_2\rVert_{H^2(\bbR^2,\bbR^2)}^2
\end{equs}
where we applied mean value theorem and~\eqref{e:gbasic} in the third step. 
Therefore, we are left to consider $\lambda^{-\half}\vertiii{\frakf^{\sqrt{\lambda},c}}$ for which 
the result follows from the claim 
\begin{equ}
\lambda^{-1} \Big(\lVert (\lambda+\calS)^{-\half} \calA^N_+ \frakh^{\sqrt{\lambda},c}\rVert^2\vee \|(\lambda+\calS)^{-\half}\calA_- \frakh^{\sqrt{\lambda},c}\|^2\Big)  \lesssim \gamma^2 \prod_{i=1}^2 \lVert g_j\rVert_{H^1(\bbR^2,\bbR^2)}^2\,.
\end{equ}
The term containing $\calA^N_+$ can be treated using the decomposition \eqref{eq:ARi}, bounding the off-diagonal terms with the diagonal terms, we obtain the upper bound
\begin{align*}
\coup^2 \lambda^{-2} \int |p_{[1:2]}|^2 |\frakg_1^{\sqrt{\lambda}}(p_1)|^2 |\frakg_2^{\sqrt{\lambda}}(p_2)|^2 \big( \int N(p_3, p_{[1:2]}) \dd p_3 \big) \mu_2(\dd p _{1:2}) \, .
\end{align*}
where we have invoked the bound $(\lambda + \calS)^{-1} \le \lambda^{-1}$. Performing the inner integral in $p_3$ produces an additional factor of $|p_{[1:2]}|^2$, and the bound $\gamma^2 \prod_{i=1}^2 \lVert g_j\rVert_{H^1(\bbR^2,\bbR^2)}^2$ follows from a computation analogous to \eqref{e:gepsB2} in the case $\delta = 2$. For the other term, by~\eqref{eq:ops}, we have 
\begin{align*}
\lVert(\lambda+\calS)^{-\half}\calA_- \frakh^{\sqrt{\lambda},c} \rVert^2 &\lesssim \coup^2 \int |p_2| |p_3| |\frakg_1^{\sqrt{\lambda}}(p_1)|^2 |\frakg_2^{\sqrt{\lambda}}(p_2)| |\frakg_2^{\sqrt{\lambda}}(p_3)| \mu_3(\dd p _{1:3}) 
\end{align*}
where we omitted the symmetric term obtained by swapping $1$ and $2$ at the right hand side as it can be 
similarly bounded. 
We rewrite the above in terms of 
\begin{equ}
 \Phi = \Big(\frac{\lambda+|p_2|^2}{\lambda+|p_3|^2}\Big) |p_3| |\frakg_2^{\sqrt{\lambda}}(p_2)|, \qquad \Phi^\prime =  \Big(\frac{\lambda+|p_3|^2}{\lambda+|p_2|^2}\Big) |p_2| |\frakg_2^{\sqrt{\lambda}}(p_3)| 
 \end{equ}
and estimate their product via $2\Phi\Phi'\leq (\Phi)^2+(\Phi')^2$. The two summands we obtain are the same, 
and can be bounded by 
\begin{align*}
\coup^2 \int |\frakg_1^{\sqrt{\lambda}}(p_1)|^2 |\frakg_2^{\sqrt{\lambda}}(p_2)|^2 \Big(\frac{\lambda+|p_2|^2}{\lambda+|p_3|^2}\Big)^2|p_3|^2 \mu_3(\dd p _{1:3}) \lesssim \coup^2 \lambda \prod_{i=1}^2 \lVert g_j\rVert_{H^2(\bbR^2,\bbR^2)}^2
\end{align*}
where in the last line we have used Lemma~\ref{lem:22} with $q=r=0$. 
The statement then follows by collecting the bounds obtained so far.

\end{proof}

\appendix 

\section{Integral estimates} 
\label{sec:integral-estimates}
In this appendix, we collect estimates and integral computations which are used throughout the paper. 

\subsection{Basic Estimates}

The following lemma is a simplified version of what can be found in the appendix of \cite[Lemma A.3]{CannHaunToni22_SqrtLog}. 

\begin{lemma}
\label{lem:4}
Uniformly over $p,r \in \bbR^2$, we have 
\begin{equ}
 |p| \int_{\bbR^2} \frac{\VHat(q)}{|q+r|(\lambda+|q|^2+|p|^2)} \dd q \lesssim 1 \,.
 \end{equ}
\end{lemma}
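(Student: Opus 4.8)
The plan is to estimate the double integral $\int_{\bbR^2} \VHat(q)/(|q+r|(\lambda+|q|^2+|p|^2))\,\dd q$ by splitting the domain according to the size of $|q|$ relative to the ``effective mass'' $\mu \eqdef \sqrt{\lambda+|p|^2}$, and by treating the singularity of $|q+r|^{-1}$ at $q=-r$ separately. Since $\VHat$ is bounded and integrable (it is the Fourier transform of a Schwartz function, see Assumption~\ref{a:V}), the prefactor $\VHat(q)$ contributes its sup-norm on bounded regions and its integrability tail elsewhere; in particular the whole expression will be bounded by a constant depending only on $V$.

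First I would handle the region $\{|q| \ge 1\}$: there $\lambda+|q|^2+|p|^2 \ge |q|^2$ and so, using $\VHat \in L^1$ together with the uniform bound $\int_{\bbR^2} |q+r|^{-1}|q|^{-2}\1_{|q|\ge 1}\,\dd q \lesssim 1$ (the $|q+r|^{-1}$ singularity is locally integrable in $2d$ and the integrand decays like $|q|^{-3}$ at infinity, uniformly in $r$), we get a contribution of order $1$. Actually I would be slightly more careful and also use the factor $|p|$: on $\{|q|\ge 1\}$ one may simply bound $|p|/(\lambda+|q|^2+|p|^2)\le |p|/(|q|^2+|p|^2)\le \tfrac12|q|^{-1}$, which again gives an integrable-against-$|q+r|^{-1}$ bound. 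So the outer region is routine.

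The main work is the region $\{|q|\le 1\}$, where I would bound $\VHat(q)$ by $\|\VHat\|_\infty$ and reduce to controlling $|p|\int_{|q|\le 1} |q+r|^{-1}(\lambda+|q|^2+|p|^2)^{-1}\,\dd q$. Writing $\mu=\sqrt{\lambda+|p|^2}\ge |p|$, it suffices to show $\mu\int_{\bbR^2} |q+r|^{-1}(\mu^2+|q|^2)^{-1}\,\dd q \lesssim 1$ uniformly in $r$ and $\mu>0$. For this I would split into $\{|q|\le \mu\}$, where $(\mu^2+|q|^2)^{-1}\simeq \mu^{-2}$ so the contribution is $\mu^{-1}\int_{|q|\le\mu}|q+r|^{-1}\,\dd q \lesssim \mu^{-1}\cdot\mu = 1$ (using that $\int_{|q|\le\mu}|q+r|^{-1}\,\dd q \le \int_{|q|\le\mu}|q|^{-1}\,\dd q \lesssim \mu$ after translating, since the worst case is $r=0$); and $\{|q|>\mu\}$, where $(\mu^2+|q|^2)^{-1}\le |q|^{-2}$, giving $\mu\int_{|q|>\mu}|q+r|^{-1}|q|^{-2}\,\dd q$. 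This last integral I would again split at $|q|\simeq |r|$: for $|q|\le 2|r|$ one has the bound $\mu\int_{\mu<|q|\le 2|r|}|q+r|^{-1}|q|^{-2}\,\dd q$, and for $|q|>2|r|$ one has $|q+r|\ge |q|/2$ so the integrand is $\lesssim |q|^{-3}$ and $\mu\int_{|q|>\mu\vee 2|r|}|q|^{-3}\,\dd q \lesssim \mu\cdot(\mu\vee 2|r|)^{-1}\le 1$. The remaining piece with $|q|\le 2|r|$ is the genuinely delicate one: a dyadic decomposition in $|q+r|$ (or Hölder/rearrangement) shows $\int_{|q|\le 2|r|}|q+r|^{-1}|q|^{-2}\,\dd q \lesssim |r|^{-1}$, whence multiplying by $\mu$ and noting that on this piece we also have $|q|>\mu$ forces $2|r|>\mu$, giving $\mu/|r|\lesssim 1$.

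The step I expect to be the main obstacle is precisely this last estimate, $\int_{\mu<|q|\le 2|r|}|q+r|^{-1}|q|^{-2}\,\dd q\lesssim 1/|r|$: the two singularities ($|q|^{-2}$ at the origin and $|q+r|^{-1}$ at $q=-r$) are separated by distance $\simeq|r|$, but $|q|^{-2}$ is borderline non-integrable in $2d$, so one must carefully exploit the lower cutoff $|q|>\mu$ to avoid a logarithm blowing up, and then absorb it into the factor $\mu$. This is exactly the content of \cite[Lemma A.3]{CannHaunToni22_SqrtLog} cited in the statement, so in the write-up I would either invoke that reference directly or reproduce the short dyadic argument: decompose $\{|q|\le 2|r|\}=\bigcup_k \{2^{-k-1}\cdot 2|r| < |q+r| \le 2^{-k}\cdot 2|r|\}$ near $q=-r$ and, complementarily, in dyadic annuli in $|q|$ away from it, and sum the geometric series, keeping track that the number of relevant scales between $\mu$ and $|r|$ is compensated by the $\mu$ prefactor only up to the right power. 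Once this is in hand, collecting all the regional bounds yields the claimed uniform estimate.
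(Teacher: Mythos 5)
Your proof is correct in substance, but it takes a genuinely different (and somewhat longer) route than the paper. The paper splits $\R^2$ into only three regions according to whether $|q+r|<|p|$ and/or $|q|<|p|$: on the two ``small'' regions the factor $|p|$ is absorbed trivially, and on the remaining region $\{|q+r|\ge|p|\}\cap\{|q|\ge|p|\}$ a single Hölder step with exponents $(3,3/2)$ produces $|p|\cdot|p|^{-1/3}\cdot|p|^{-2/3}=1$ in one stroke, with no scaling substitution, no outer cutoff at $|q|\le 1$, and no logarithm to compensate. Your decomposition is more layered — a split at $|q|\gtrless 1$, then a rescaling to $\mu=\sqrt{\lambda+|p|^2}$, then nested dyadic cases $\{|q|\le\mu\}$, $\{\mu<|q|\le 2|r|\}$, $\{|q|>\mu\vee 2|r|\}$ — and the middle regime requires the $x\log(1/x)\lesssim 1$ trick, which Hölder sidesteps. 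One inaccuracy worth flagging: your displayed claim $\int_{\mu<|q|\le 2|r|}|q+r|^{-1}|q|^{-2}\,\dd q\lesssim 1/|r|$ is not quite right — the correct bound carries a logarithm, $\lesssim |r|^{-1}\log(|r|/\mu)$, and only after multiplying by $\mu$ and using $x\log(1/x)\lesssim 1$ does one get $O(1)$; you do acknowledge this in the surrounding prose, but the displayed intermediate inequality should not be asserted as stated. Finally, once you have reduced to $\mu\int_{\R^2}|q+r|^{-1}(\mu^2+|q|^2)^{-1}\,\dd q\lesssim 1$, a cleaner finish than your case analysis is the scaling substitution $q\mapsto\mu q'$, $r\mapsto\mu r'$, which reduces the claim to the $\mu$-free statement $\sup_{r'}\int_{\R^2}|q'+r'|^{-1}(1+|q'|^2)^{-1}\,\dd q'<\infty$, an elementary two-line check.
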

\begin{proof}
We split the integral in three regions, $R_1 = \{|q+r|<|p|\}$, $R_2 := \{|q|<|p|\}\cap\{|q+r| \ge |p|\}$ 
and the complement of their union, and we treat each of them separately. For the first, we have  
\begin{equ} 
|p| \int_{R_1} \frac{\VHat(q)}{|q+r|(\lambda+|q|^2+|p|^2)} \dd q \le \frac{1}{|p|} \int_{R_1} \frac{1}{|q+r|} \dd q \lesssim 1\,.
\end{equ}
For $R_2$ instead, 
\begin{equ}
|p| \int_{R_2} \frac{\VHat(q)}{|q+r|(\lambda+|q|^2+|p|^2)} \dd q \le \frac{1}{|p|^2} \int_{R_2} \VHat(q) \dd q \lesssim 1 \,.
\end{equ}
At last notice that $(R_1 \cup R_2)^c = \{|q+r| \ge |p|\} \cap \{|q|\ge|p|\}$, so that 
Holder's inequality with exponents $(\frac{1}{3},\frac{2}{3})$ gives 
\begin{align*}
|p| &\int_{(R_1 \cup R_2)^c} \frac{\VHat(q)}{|q+r|(\lambda+|q|^2+|p|^2)} \dd q \\
&\le |p| \Big( \int_{\{|q+r| \ge |p|\}} \frac{\VHat(q)}{|q+r|^3} \dd q \Big)^{\frac{1}{3}} \Big( \int_{\{|q| \ge |p|\}} \frac{\VHat(q)}{(\lambda+|q|^2)^{\frac{3}{2}}} \dd q \Big)^{\frac{2}{3}}\lesssim 1\,.
\end{align*}
\end{proof}

\begin{lemma}
\label{lem:10}
Let $\kappa\in(0,1)$ be fixed, $N^\kappa$ be the nuisance region in~\eqref{e:BNregions} and 
$\gamma$ be given by~\eqref{eq:Weak}. 
Then, uniformly over $\lambda \in (0,1)$ and $r \in \bbR^2$, we have
\begin{equation}
\label{eq:113}
\coup^2\int \frac{\VHat(p)\VHat(q)N^\kappa(q,p) |p+q| }{(\lambda+|p+q+r|^2)(\lambda+|q|^2+|r|^2)^{\frac32}} \dd p \dd q \lesssim 1\,.
\end{equation}
\end{lemma}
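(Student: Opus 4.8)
Here is how I would attack Lemma~\ref{lem:10}.

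The starting point is the geometry of the nuisance region recorded in~\eqref{e:BNregionsBound}: on the support of $(q,p)\mapsto N^\kappa(q,p)$ one has $|p+q|<\tfrac{\kappa}{1-\kappa}|q|$, $|p+q|<\kappa|p|$ and $|p|\asymp_\kappa|q|$, so in particular $|p+q|\lesssim_\kappa\min\{|p|,|q|\}$. Writing $c_\kappa:=2\tfrac{\kappa}{1-\kappa}$, the plan is to split the integral $I$ on the left of~\eqref{eq:113} into the two regions $\{|r|\le c_\kappa|q|\}$ and $\{|r|>c_\kappa|q|\}$; the choice of $c_\kappa$ ensures that on the second region $|p+q|<\tfrac12|r|$, hence $|p+q+r|\ge\tfrac12|r|$.

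On $\{|r|>c_\kappa|q|\}$ the bound is soft. There $\lambda+|q|^2+|r|^2\asymp\lambda+|r|^2$, $\lambda+|p+q+r|^2\gtrsim\lambda+|r|^2$, and $|p+q|\lesssim_\kappa|q|\le(\lambda+|q|^2+|r|^2)^{1/2}$, so the contribution to $I$ is at most a $\kappa$-dependent multiple of
\[
\coup^2(\lambda+|r|^2)^{-5/2}\int_{|q|\lesssim_\kappa|r|}\VHat(q)\,|q|\Big(\int_{\bbR^2}\VHat(p)\,N^\kappa(q,p)\,\dd p\Big)\dd q\,.
\]
Since $N^\kappa(q,\cdot)$ is supported in a ball of radius $\lesssim_\kappa|q|$ centred at $-q$, the inner integral is $\lesssim_\kappa\min\{|q|^2,1\}$, whence (using $\int\VHat(q)|q|^3\dd q<\infty$) the $q$-integral is $\lesssim_\kappa\min\{|r|^5,1\}$, and this region contributes $\lesssim_\kappa\coup^2(\lambda+|r|^2)^{-5/2}\min\{|r|^5,1\}\lesssim\coup^2\lesssim1$.

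The bulk of the work is the region $\{|r|\le c_\kappa|q|\}$, where $\lambda+|q|^2+|r|^2\asymp_\kappa\lambda+|q|^2$. I would change variables $u=p+q$; the constraint $N^\kappa(q,u-q)$ forces $|u|<R:=\tfrac{\kappa}{1-\kappa}|q|$ (so that $|r|\le2R$), and after bounding $\VHat(u-q)\le\lVert\VHat\rVert_\infty$ this region contributes at most a constant times $\coup^2\int_{\bbR^2}\frac{\VHat(q)}{(\lambda+|q|^2)^{3/2}}\,\widetilde K(q,r)\,\dd q$ with
\[
\widetilde K(q,r):=\int_{|u|<R}\frac{|u|}{\lambda+|u+r|^2}\,\dd u\,.
\]
Splitting the ball $\{|u|<R\}$ according to whether $|u+r|\gtrsim|u|$, or $|u+r|<\tfrac12|u|$ (which forces $|u|\lesssim|r|$), or $|u+r|<\tfrac12|r|$, elementary radial integrals give the key estimate $\widetilde K(q,r)\lesssim_\kappa|q|+|r|\log(1+|r|^2/\lambda)$. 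The $|q|$-term produces $\coup^2\int_{\bbR^2}\frac{\VHat(q)|q|}{(\lambda+|q|^2)^{3/2}}\dd q$, which is exactly a borderline-logarithmic integral of the type computed in~\eqref{e:UBWeak}: it is $\lesssim\coup^2\log(1+\lambda^{-1})=\alpha^2\lesssim1$ by the weak-coupling choice~\eqref{eq:Weak}. The $|r|\log(1+|r|^2/\lambda)$-term produces $\coup^2|r|\log(1+|r|^2/\lambda)\int_{|q|\ge|r|}\frac{\VHat(q)}{(\lambda+|q|^2)^{3/2}}\dd q$, where the last integral is $\lesssim(\lambda+|r|^2)^{-1/2}$ for $|r|\le1$ and $\lesssim_N|r|^{-N}$ for $|r|>1$ by the rapid decay of $\VHat$; in both cases the product is $\lesssim\coup^2\log(1+\lambda^{-1})=\alpha^2$, using $|r|(\lambda+|r|^2)^{-1/2}\le1$ and $\log(1+|r|^2/\lambda)\le\log(1+\lambda^{-1})$ when $|r|\le1$. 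Combining the two regions yields~\eqref{eq:113}.

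The step I expect to be the main obstacle is the estimate on $\widetilde K(q,r)$. The naive bound $|u|\le R$ followed by integrating $\frac{1}{\lambda+|u+r|^2}$ over $\{|u|<R\}$ produces a spurious factor $\log(1+R^2/\lambda)\asymp\log(1+|q|^2/\lambda)$, which, multiplied against the outer borderline-log integral, would give $\coup^2(\log\lambda^{-1})^2=\alpha^2\log\lambda^{-1}$ — divergent. The point is that any large contribution to $\widetilde K$ comes solely from $u$ near $-r$, where $|u|\asymp|r|$, so the relevant logarithm is $\log(1+|r|^2/\lambda)$, not $\log(1+|q|^2/\lambda)$; the constraint $|r|\le c_\kappa|q|$ (and, in the range that matters, $|r|\le1$) then controls it by $\log(1+\lambda^{-1})$, which is precisely what $\coup^2$ absorbs. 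Getting this localisation right in the $u$-integral is the heart of the argument; the rest is bookkeeping with the radial integrals above.
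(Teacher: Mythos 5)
Your proof is correct, but it follows a genuinely different route from the paper's. The paper splits according to whether $|p+q+r|\ge 1$ or $\le 1$: on the first set the factor $(\lambda+|p+q+r|^2)^{-1}$ is simply $\le 1$ and the remaining integral is $O(\gamma^2)$ via Lemma~\ref{lem:4}-type bounds; on the second, it performs the same change of variables $p+q\mapsto p$ as you, but then integrates over $q$ \emph{first}, obtaining a factor $(\lambda+|p|^2)^{-1/2}$ which cancels the $|p|$ in the numerator exactly, leaving the single borderline integral $\coup^2\int_{|p+r|\le 1}(\lambda+|p+r|^2)^{-1}\dd p\asymp\coup^2\log(1+\lambda^{-1})=\alpha^2$. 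Your decomposition instead compares $|r|$ to $|q|$ (soft region $|r|\gtrsim_\kappa|q|$, hard region $|r|\lesssim_\kappa|q|$), and in the hard region you integrate the $u=p+q$ variable first, which produces the intermediate kernel $\widetilde K(q,r)$ and requires the localisation argument ($|q|$-term versus $|r|\log$-term) to avoid the spurious $(\log\lambda^{-1})^2$. Both approaches use the change of variables and ultimately cash in $\coup^2\log(1+\lambda^{-1})=\alpha^2$ exactly once, but the paper's order of integration makes the cancellation $|p|/(\lambda+|p|^2)^{1/2}\le 1$ do the work automatically, so the whole argument fits in four lines; yours is more hands-on and somewhat longer, though it has the mild advantage of being entirely explicit radial computations with no appeal to the auxiliary Lemma~\ref{lem:4}.
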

\begin{proof}
Recall that, by~\eqref{e:BNregionsBound}, if $N^\kappa(p,q)=1$, then $|p+q|\lesssim |p|$ 
and $|q|\gtrsim |p|$. Now, consider first the restriction of the integral in~\eqref{eq:113} to the region $\{|p+q+r| \ge 1\}$. 
This is bounded above by
\begin{equ}
\coup^2 \int\dd p \VHat(p) |p|\int_{|q|\gtrsim |p|}\frac{\dd q}{(\lambda+|q|^2)^{\frac32}} \lesssim \gamma^2\,.
\end{equ}
Instead, on $\{|p+q+r| \le 1\}$, we apply the change of variables $p+q\mapsto p$ and get 
\begin{equ}
\coup^2  \int_{|p+r| \le 1}\frac{|p|\dd q}{(\lambda+|p+r|^2)} \int_{|q|\gtrsim |p|} \frac{\dd p}{(\lambda+|q|^2)^{\frac32}}   \lesssim 1 \,,
\end{equ}
from which~\eqref{eq:113} follows at once. 
\end{proof}

\begin{lemma}
\label{lem:22}
Uniformly over $\lambda \in (0,1)$, $q,r \in \bbR^2$, we have
\begin{equation}
\label{eq:51}
\lambda \int\frac{\VHat(p)}{(\lambda+|p+q|^2)(\lambda+|p+r|^2)} \dd p  \lesssim 1\,.
\end{equation}
\end{lemma}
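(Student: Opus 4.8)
The key point is that in $d=2$ neither factor $(\lambda+|p+q|^2)^{-1}$ nor $(\lambda+|p+r|^2)^{-1}$ is, on its own, integrable against $\VHat$ with a bound uniform in $\lambda$ (the relevant integral is $O(\log\tfrac1\lambda)$), so one must genuinely exploit the product. The plan is to first discard the role of $\VHat$ entirely: since $V$ is integrable, $\VHat$ is bounded, so
\begin{equ}
\lambda \int\frac{\VHat(p)}{(\lambda+|p+q|^2)(\lambda+|p+r|^2)} \dd p \le \|\VHat\|_\infty\, \lambda \int_{\bbR^2}\frac{\dd p}{(\lambda+|p+q|^2)(\lambda+|p+r|^2)}\,,
\end{equ}
and it then suffices to bound the right-hand integral uniformly over $\lambda\in(0,1)$ and $q,r\in\bbR^2$ (equivalently, over $\lambda$ and $a\eqdef q-r$, after a translation $p\mapsto p-q$).

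\textbf{Key steps.} Split $\bbR^2$ into the two regions $\{|p+q|\le|p+r|\}$ and $\{|p+q|>|p+r|\}$. On the first, $\lambda+|p+r|^2\ge\lambda+|p+q|^2$, so the integrand is dominated by $(\lambda+|p+q|^2)^{-2}$; on the second, symmetrically, by $(\lambda+|p+r|^2)^{-2}$. Extending each restricted integral to all of $\bbR^2$ and using a translation together with polar coordinates,
\begin{equ}
\lambda\int_{\bbR^2}\frac{\dd p}{(\lambda+|p+q|^2)^2} = 2\pi\lambda\int_0^\infty\frac{\rho\,\dd\rho}{(\lambda+\rho^2)^2} = \pi\,,
\end{equ}
and likewise for the $r$-centred integral. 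Adding the two contributions yields \eqref{eq:51} with implicit constant $2\pi\|\VHat\|_\infty$ (up to the Fourier-normalisation constant). No case analysis on the size of $|a|$ relative to $\sqrt\lambda$ is needed.

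\textbf{Main obstacle.} There is essentially none: the only pitfall to avoid is trying to estimate one denominator at a time, which fails logarithmically in $d=2$; the min/max splitting that dominates the product by a single square sidesteps this, and the resulting one-parameter integral $\lambda\int(\lambda+|p|^2)^{-2}\dd p$ is scale-invariant and hence bounded. The argument does not even require the integrability of $\VHat$, only its boundedness, so it is robust.
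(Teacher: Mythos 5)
Your proof is correct. It takes a slightly different route from the paper: the paper applies Cauchy--Schwarz to the two factors and then bounds each squared integral $\int \VHat(p)(\lambda+|p+q|^2)^{-2}\,\dd p$ by splitting into $\{|p+q|\le1\}$ (bounded $\VHat$) and its complement (integrable $\VHat$), obtaining $1+\lambda^{-1}$ for each, which after multiplying by $\lambda$ and taking square roots gives the result. You instead use the elementary observation that on $\{|p+q|\le|p+r|\}$ the product is dominated by $(\lambda+|p+q|^2)^{-2}$ (and symmetrically), reducing matters to the exact scale-invariant identity $\lambda\int_{\R^2}(\lambda+|p|^2)^{-2}\dd p=\pi$. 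Both arguments are short; yours is arguably cleaner in that it yields an explicit constant $2\pi\|\VHat\|_\infty$ and requires only the boundedness of $\VHat$, not its decay or integrability, whereas the paper's bound on each factor over large $|p+q|$ invokes $\int\VHat<\infty$.
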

\begin{proof}
Applying Cauchy-Schwarz to~\eqref{eq:51}, the integral is bounded by
\begin{equ}
\lambda \Big(\int\frac{\VHat(p)}{(\lambda+|p+q|^2)^2} \dd p \Big)^\half  \Big(\int\frac{\VHat(p)}{(\lambda+|p+r|^2)^2} \dd p \Big)^\half\,. 
\end{equ}
The two factors can be treated similarly, and can be controlled by
\begin{equ}
\int\frac{\VHat(p)}{(\lambda+|p+q|^2)^2} \dd p  \lesssim 1 + \int_{|p+q| \le 1}\frac{1}{(\lambda+|p+q|^2)^2} \dd p  \lesssim 1+\frac{1}{\lambda}\,.
\end{equ}
\end{proof}

\subsection{Estimates for the replacement lemma}
\label{sec:estim-repl-lemma}

The goal of this appendix is to show the crucial estimate in the proof of the Replacement Lemma~\ref{lem:3}, 
i.e.~\eqref{eq:10}. For the reader's convenience, let us recall some notation. 
Let $\cg^\lambda$ be the multiplier associated to the operator $\cG^\lambda$ in~\eqref{e:ApproxFPO}, 
and $\cm_\lambda$ be given as in~\eqref{eq:DiKer} with 
$\tau(p)=\tfrac12|p|^2\cg^\lambda(p)$, $p\in\R^2$ (see~\eqref{e:ApproxFPO}) and 
$R$ replaced by the bulk region $B=B^{\frac13}$ in~\eqref{e:BNregions}. 
More explicitly, for $p\in\R^2$, we have 
\begin{equs}
\cg^\lambda(p)&\eqdef g(\ell^\lambda(\tfrac12|p|^2))=\sqrt{4\pi\gamma^2 \log\Big(1+\frac{1}{\lambda +\frac12|p|^2}\Big)+1}-1\label{eq:cglambda}\\
\cm^\lambda(p)&\eqdef 2\coup^2 \int_{\bbR^2} \frac{\VHat(q) B(q, p) \cos^2 \theta}{\lambda+\half|p+q|^2[1 + \cg^\lambda(p+q)]} \dd q\label{eq:mlambdaApp}
\end{equs}
where $g$ and $\ell^\lambda$ are defined according to~\eqref{e:g}. 

We are now ready to state and prove the next proposition. The proof follows closely that of similar 
results as~\cite[Lemma 2.6]{CannGubiToni23_GaussianFluctuations} and we therefore limit ourselves to outline the main steps. 

\begin{lemma}\label{lem:ReplEst}
For $\lambda\in(0,1)$ and $\gamma$ as in~\eqref{eq:Weak}, let $\cg^\lambda$ and $\cm^\lambda$ 
be as in~\eqref{eq:cglambda} and~\eqref{eq:mlambdaApp} respectively. Then~\eqref{eq:10} holds, i.e. 
\begin{equ}
\sup_{p}|\cm^\lambda(p)-\cg^\lambda(p)|\lesssim \gamma^2\,.
\end{equ}
\end{lemma}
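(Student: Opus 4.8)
The plan is to split into the ranges $|p|\ge 1$ and $|p|<1$. For $|p|\ge 1$ one checks that $\cm^\lambda(p)$ and $\cg^\lambda(p)$ are \emph{each} $O(\gamma^2)$: since $\tfrac12|p|^2\ge\tfrac12$ we have $\ell^\lambda(\tfrac12|p|^2)\lesssim\gamma^2$, so by the elementary bound $g(z)=\sqrt{4\pi z+1}-1\le 2\pi z$ ($z\ge 0$) we get $\cg^\lambda(p)\lesssim\gamma^2$; and bounding $\cos^2\theta\le 1$, $\cg^\lambda\ge 0$, and $|p+q|\ge|p|/3\ge\tfrac13$ on the bulk region $B$, one gets $\cm^\lambda(p)\lesssim\gamma^2\int\VHat\lesssim\gamma^2$ (recall $\VHat\ge 0$ by Assumption~\ref{a:V}). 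So from now on one fixes $|p|<1$ and writes $y\eqdef\tfrac12|p|^2\in[0,\tfrac12)$.

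For $|p|<1$ the idea is to reduce $\cm^\lambda(p)$ to an explicit one-dimensional integral. Substituting $r=q+p$ in~\eqref{eq:mlambdaApp} gives $\cm^\lambda(p)=2\gamma^2\int_{|r|\ge|p|/3}\VHat(r-p)\cos^2\theta(r-p,p)\big[\lambda+\tfrac12|r|^2(1+\cg^\lambda(r))\big]^{-1}\dd r$, and one splits the domain $\{|r|\ge|p|/3\}$ into $\{|r|\ge 1\}$, the thin annulus $\{|p|/3\le|r|\le|p|\}$, and $\{|p|<|r|\le 1\}$. On the first, the denominator is $\gtrsim|r|^2\gtrsim 1$, so it contributes $\lesssim\gamma^2\int\VHat\lesssim\gamma^2$; on the annulus $\VHat,\cos^2\lesssim 1$ and the denominator is $\gtrsim|r|^2$, giving $\lesssim\gamma^2\int_{|p|/3}^{|p|}\rho^{-1}\dd\rho\lesssim\gamma^2$. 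For the main piece over $\{|p|<|r|\le 1\}$, I would write $\VHat(r-p)=1+(\VHat(r-p)-\VHat(0))$ with $\VHat(0)=1$, controlling the remainder by $|\VHat(r-p)-\VHat(0)|\lesssim|r-p|\lesssim|r|$ (valid since $|r|\ge|p|$ here) against the denominator $\gtrsim|r|^2$, which is $\lesssim\gamma^2\int_{|r|\le 1}|r|^{-1}\dd r\lesssim\gamma^2$. For the leading part I would integrate angularly first, using the \emph{exact} identity
\begin{equ}
\int_{\{|r|=\rho\}}\cos^2\theta(r-p,p)\,\dd\sigma(r)=\pi\rho\qquad\text{for }\rho>|p|,
\end{equ}
which follows from $\cos(2\arg w)=\operatorname{Re}(w/\bar w)$ and the fact that $z\mapsto(z-\varepsilon)/(1-\varepsilon z)$, $\varepsilon=|p|/\rho<1$, is holomorphic on the closed unit disc (so its contour integral over $|z|=1$ vanishes). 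Passing to polar coordinates and then to $s=\tfrac12\rho^2$, this yields $\cm^\lambda(p)=2\pi\gamma^2\int_y^{1/2}\big[\lambda+s(1+\cg^\lambda(s))\big]^{-1}\dd s+O(\gamma^2)$, where $\cg^\lambda(s)$ now denotes $g(\ell^\lambda(s))$.

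The last step is to compare this with $\cg^\lambda(p)$ via the ODE that $g$ was designed to solve. From $g'(z)=2\pi/(1+g(z))$ one gets $\tfrac{\dd}{\dd y}\cg^\lambda(y)=-2\pi\gamma^2\big[(1+\cg^\lambda(y))(\lambda+y)(\lambda+y+1)\big]^{-1}$; integrating from $y$ to $y_0=\tfrac12$ and using $\cg^\lambda(y_0)\le 2\pi\gamma^2\log 3=O(\gamma^2)$ gives $\cg^\lambda(p)=2\pi\gamma^2\int_y^{1/2}\big[(1+\cg^\lambda(s))(\lambda+s)(\lambda+s+1)\big]^{-1}\dd s+O(\gamma^2)$. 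Subtracting, with $a=\cg^\lambda(s)\ge 0$, $u=\lambda+s$, $v=u+1$, the numerator identity $u+sa-(1+a)uv=-(u^2(1+a)+a\lambda)$ yields
\begin{equ}
\Big|\tfrac{1}{\lambda+s(1+a)}-\tfrac{1}{(1+a)uv}\Big|=\tfrac{u}{v(u+sa)}+\tfrac{a\lambda}{(1+a)uv(u+sa)}\le\tfrac{1}{\lambda+s+1}+\tfrac{\lambda}{(\lambda+s)^2},
\end{equ}
and both terms integrate to $O(1)$ on $(0,\tfrac12)$ (using $\int_0^\infty\lambda(\lambda+s)^{-2}\dd s=1$). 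This gives $|\cm^\lambda(p)-\cg^\lambda(p)|\lesssim\gamma^2$, completing the argument. The main obstacle is the reduction of the leading part of $\cm^\lambda$: one must arrange the decomposition so that every discarded piece is genuinely $O(\gamma^2)$ rather than merely $O(\gamma^2\log\tfrac1\lambda)=O(1)$, which is precisely what the exact angular cancellation and the constant $4\pi$ in the definition of $g$ are there to ensure.
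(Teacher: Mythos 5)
Your proof is correct, but it takes a genuinely different route from the paper's. The paper keeps the integration variable $q$ and proceeds by a sequence of \emph{in-place replacements}, each with uniformly $O(\gamma^2)$ error: first $|p+q|^2\to|p|^2+|q|^2$ in the denominator (using the bulk restriction and Lemma~\ref{lem:4}), then $\cg^\lambda(p+q)\to g(\ell^\lambda(\tfrac12(|p|^2+|q|^2)))$ by the mean-value theorem, then dropping $B$, then $\VHat\to\1$ on $|q|\le 1$, and only at this point does the angular integral become the trivial $\int_0^{2\pi}\cos^2\theta\,\dd\theta=\pi$. Finally the paper inserts the factor $\lambda+\rho+1$ and changes variables to $y=\ell^\lambda(\rho)$ so that the resulting integral is \emph{exactly} $g(\ell^\lambda(\tfrac12|p|^2))$. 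You instead substitute $r=q+p$ at the start, which keeps the denominator $\lambda+\tfrac12|r|^2(1+\cg^\lambda(r))$ exact and avoids both the $|p+q|^2\to|p|^2+|q|^2$ replacement and the mean-value argument on $\cg^\lambda$; the price is that the angle $\theta(r-p,p)$ is no longer the polar angle, so you need the exact identity $\int_0^{2\pi}\cos^2\theta(r-p,p)\,\dd\phi=\pi$ for $\rho>|p|$, which you supply via Cauchy's theorem (the Blaschke-factor observation — and, to be precise, one really uses that $\oint_{|z|=1}\frac{z-\varepsilon}{1-\varepsilon z}\dd z=0$ together with $\dd\phi=\dd z/(iz)$). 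After that you compare the resulting one-dimensional integral with $\cg^\lambda$ not by a change of variables to $y=\ell^\lambda(\rho)$ but by integrating the ODE $\cg^\lambda{}'(y)=-2\pi\gamma^2[(1+\cg^\lambda)(\lambda+y)(\lambda+y+1)]^{-1}$ over $[y,\tfrac12]$ and bounding the difference of the two integrands pointwise via the algebraic identity $u+sa-(1+a)uv=-(u^2(1+a)+a\lambda)$. Both routes ultimately hinge on the same two exact structures — the angular average of $\cos^2\theta$ is $\tfrac12$, and $g'=2\pi/(1+g)$ — so the constant $4\pi$ in $g$ is consumed in exactly the same way; your version is more self-contained (no call to Lemma~\ref{lem:4}, no mean-value step, no extension to $[\tfrac12|p|^2,\infty)$, and the preliminary $|p|\ge 1$ split neatly removes a boundary term) at the cost of a slightly less elementary angular identity.
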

\begin{proof}
Since $\gamma$ is given according to weak coupling, the following properties of $\cg^\lambda$, 
which will be used throughout, hold uniformly over $\lambda\in(0,1)$, 
\begin{equs}[e:gbasic]
0 \le \cg^\lambda(p) \le \cg^\lambda(0) = \sqrt{4\pi \alpha^2 +1} - 1\,,\qquad &\text{for all $p\in\R^2$}\\
 |\partial_\rho g(\ell^\lambda(\rho))|  \lesssim \frac{\coup^2}{(\lambda+\rho)(1+g(\ell^\lambda(\rho)))}\qquad &\text{for all $\rho\geq 0$}\,.
\end{equs} 

The proof consists of massaging the expression for $\cm^\lambda$ by successive replacements until we obtain 
a consistency relation with $\cg^\lambda$, and more specifically with $g$. All the replacements will be such that 
the error made in the difference is bounded by $\gamma^2$, uniformly in $p\in\R^2$ and $\lambda\in(0,1)$. 
First, we claim that~\eqref{eq:mlambdaApp} can be substituted, up to an error of order $\gamma^2$, with 
\begin{equation}
\label{eq:11}
2\coup^2 \int\frac{\VHat(q)B(q,p)\cos^2\theta}{\lambda+\half (|p|^2+|q|^2)[1+\cg^\lambda( p+q)]} \dd q\,.
\end{equation}
Indeed, their difference is bounded above by 
\begin{equs}
\coup^2 &\int \frac{\VHat(q)B(q,p) |p| |q| [1+\cg^\lambda( p+q)]}{(\lambda+(|p|^2+|q|^2)[1+\cg^\lambda( p+q)])(\lambda+|p+q|^2[1+\cg^\lambda( p+q)])} \dd q \nonumber \\
 &\le \coup^2 \int\frac{\VHat(q)B(q,p)|p| |q|}{(\lambda+|p|^2+|q|^2)|p+q|^2}\dd q\lesssim\coup^2 \int\frac{\VHat(q)|p| }{|p+q|(\lambda+|p|^2+|q|^2)} \dd q \lesssim \coup^2 
\end{equs}
where we used that $\cg^\lambda$ is non-negative, that on $B$, $|p+q| \gtrsim |q|$, and Lemma \ref{lem:4}. 

Second, we replace~\eqref{eq:11}, up to an error of order $O(\gamma^4)$, with
\begin{equation}
\label{eq:13}
2\coup^2 \int\frac{\VHat(q)B(q,p)\cos^2\theta}{\lambda+\half (|p|^2+|q|^2)[1+g(\ell^\lambda(\half (|p|^2+|q|^2)))]} \dd q
\end{equation}
where $g$ and $\ell^\lambda$ are defined according to~\eqref{e:g}. 
The difference can be controlled by 
\begin{equs}
\coup^2& \int\frac{\VHat(q)B(q,p)|g(\ell^\lambda(\half (|p|+|q|^2)))-g(\ell^\lambda(\half |p+q|^2))|}{\lambda+|p|^2+|q|^2} \dd q\\
&\lesssim \coup^4 \int\frac{\VHat(q)|p||q| }{(\lambda+|p|^2+|q|^2)^2} \dd q \lesssim  \coup^4 |p| \int\frac{\VHat(q)}{|q|(\lambda+|p|^2+|q|^2)} \dd q \lesssim \coup^4\,.
\end{equs}
where in the last step we used Lemma \ref{lem:4}, while in the first, 
we applied the mean value theorem and~\eqref{e:gbasic}, which give
\begin{equs}
|g(\ell^\lambda(\half (|p|+|q|^2)))-g(\ell^\lambda(\half |p+q|^2))|&\lesssim |p||q|\sup_{\rho\in I_{p,q}}\frac{\coup^2}{(\lambda+\rho)(1+g(\ell^\lambda(\rho)))}
\end{equs}
for $I_{p,q}\eqdef[\half |p+q|^2\wedge\half (|p|^2+|q|^2),\half |p+q|^2\vee\half (|p|^2+|q|^2)]$. Now, since 
on $B$, $|p+q|^2 \gtrsim |p|^2+|q|^2$, the stated bound follows. 

Third, we remove the indicator function of the bulk region in~\eqref{eq:13}. 
By definition, the difference of the two has the same expression but with the nuisance region in place of the 
bulk one, and can be bounded by 
\begin{equ}
\coup^2 \int\frac{\VHat(q) N(q, p)}{\lambda+|p|^2+|q|^2} \dd q \lesssim \frac{\coup^2}{|p|^2}  \int \VHat(q) N(q, p) \dd q \lesssim \coup^2\,.
\end{equ}
Moreover, it is not hard to see that, again up to an error of order $\gamma^2$, we can restrict the integral to $|q|\leq 1$. 

Fourth, we first restrict the integral to $|q|\leq 1$ 
and then remove $\hat V$, which is possible since, by Assumption~\ref{a:V}, $\hat V$ is smooth and therefore 
satisfies $|\hat V(q)-1|\lesssim |q|$ uniformly withing $|q|\leq 1$. Both operations can be easily seen to produce an error of order $\gamma^2$. 
In this way, we obtained
\begin{equs}
2\coup^2& \int_{|q| \le 1} \frac{\cos^2\theta}{\lambda+\half (|p|^2+|q|^2)[1+g(\ell^\lambda(\half (|p|^2+|q|^2)))]} \dd q\\
&=2\coup^2\int_0^{2\pi} \cos^2\theta\dd \theta \int_0^1\frac{r\dd r}{\lambda +\half (|p|^2+r^2)[1+g(\ell^\lambda(\half (|p|^2+r^2)))]}\\
&=2\pi \gamma^2\int_{\frac12|p|^2}^{\frac12(1+|p|^2)} \frac{\dd\rho}{\lambda+\rho[1+g(\ell^\lambda(\rho))]}\,.
\end{equs}
Fifth, we replace the previous  with 
\begin{equation}
\label{eq:23}
2\pi \coup^2  \int_{\half |p|^2}^{\frac12(1+|p|^2)} \frac{\dd \rho}{(\lambda+\rho)(\lambda + \rho + 1)[1+g(\ell^\lambda(\rho))]} 
\end{equation}
which is allowed since the difference is bounded by 
\begin{equ}
\coup^2 \Big(  \int_{\half |p|^2}^{\frac12(1+|p|^2)} \frac{1}{\lambda + \rho + 1} d\rho + \lambda  \int_{\half |p|^2}^{\frac12(1+|p|^2)} \frac{1}{(\lambda+\rho)^2(\lambda + \rho + 1)} d\rho \Big) \lesssim \coup^2\,. 
\end{equ}
At last, we extend the integral to the interval $[\tfrac12|p|^2,\infty)$, which is possible since 
\begin{equ}
\coup^2  \int_{\frac12(1+|p|^2)}^\infty \frac{\dd \rho}{(\lambda+\rho)(\lambda + \rho + 1)[1+g(\ell^\lambda(\rho))]} \leq \gamma^2\int_{\half}\frac{\dd\rho}{\rho^2}\lesssim \gamma^2\,.
\end{equ}

Upon performing the additional change of variables $y\eqdef \ell^\lambda(\rho)$, we have shown 
\begin{equ}
\sup_p\Big|\cm^\lambda(p)-2\pi   \int_{0}^{\ell^\lambda(\tfrac12|p|^2)} \frac{\dd y}{1+g(y)} \Big|\lesssim\gamma^2
\end{equ}
so that the conclusion follows since $g$ satisfies $\partial g=2\pi/(1+g)$. 
\end{proof}

\subsection{Estimates for the nuisance region}\label{app:EstNregion}

The goal of this subsection is to complete the proof of Lemma~\ref{lem:nuisance} for $\sigma=+$ and 
show that the function $\calI$ in~\eqref{e:IBoundFour} on $\R^{2(n-1)}$ has bounded sup-norm. 
Let us recall its definition
\begin{equs}[e:IApp]
\calI(p_{3:n+1}) \eqdef& \coup^2\int \dd p_{1:2} \frac{\hat V(p_1)\hat V(p_2)}{|p_1||p_2||p_{[3:n+1]}|} \frac{N(p_1,p_{[2:n+1]}) B(p_2, p_{[3:n+1]})}{(\lambda +|p_{[1:n+1]}|^2)(\lambda +|p_2|^2 +|p_{[3:n+1]}|^2)^\half}\\
&\times \Big|\frac{ (p_1\cdot p_{[2:n+1]})(p_2\cdot p_{[3:n+1]})}{\lambda +\half |p_{[2:n+1]}|^2[1+\cg^\lambda(p_{[2:n+1]})]}\\
&\qquad+\frac{N(p_2,p_{[1:n+1\setminus 2]}) B(p_1, p_{[3:n+1]}) (p_2\cdot p_{[1:n+1\setminus2]})(p_1\cdot p_{[3:n+1]})}{\lambda +\half |p_{[1:n+1\setminus 2]}|^2[1+\cg^\lambda(p_{[1:n+1\setminus 2]})]}\Big| 
\end{equs}
where $p_{3:n+1}\in\R^{2(n-1)}$, $\lambda\in(0,1)$ and $\cg^\lambda$ is given by~\eqref{eq:cglambda}. 
Note that we removed the product of nuisance and bulk region in the second line since this is already present in the 
first. 

\begin{lemma}
\label{lem:35}
Let $\calI$ be defined according to~\eqref{e:IApp}. Then, uniformly over $\lambda\in(0,1)$, we have 
\begin{equ}
\sup_{p_{3:n+1}\in\R^{2(n-1)}} \calI(p_{3:n+1}) \lesssim 1\,.
\end{equ}
\end{lemma}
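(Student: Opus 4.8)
The plan is to bound $\calI$ by breaking the angular factors using Cauchy--Schwarz and then handling the two resulting pieces by the same integral lemmas used elsewhere in the paper. First I would write $\calI = \calI_1 + \calI_2$, where $\calI_1$ collects the first fraction in the modulus and $\calI_2$ the second, and observe that by the triangle inequality it suffices to bound each. Since $\cg^\lambda \ge 0$, in both terms the denominators of the form $\lambda + \tfrac12|p_{[2:n+1]}|^2[1+\cg^\lambda]$ can be lower bounded by $\tfrac12|p_{[2:n+1]}|^2$ (resp. by $\tfrac12|p_{[1:n+1\setminus 2]}|^2$). This gives, for each $i\in\{1,2\}$, a bound of the shape
\begin{equ}
\calI_i(p_{3:n+1}) \lesssim \coup^2 \int \dd p_{1:2}\, \frac{\hat V(p_1)\hat V(p_2)\, N(p_1, p_{[2:n+1]})\, B(p_2, p_{[3:n+1]})}{|p_{[3:n+1]}| \,(\lambda+|p_{[1:n+1]}|^2)(\lambda+|p_2|^2+|p_{[3:n+1]}|^2)^{1/2}} \cdot \frac{|p_1\cdot p_{[2:n+1]}|\,|p_2\cdot p_{[3:n+1]}|}{|p_1||p_2|\,D_i}
\end{equ}
where $D_1 = |p_{[2:n+1]}|^2$ and $D_2 = |p_{[1:n+1\setminus 2]}|^2$ (with the corresponding extra nuisance/bulk indicators for $i=2$), and one uses $|p_1\cdot p_{[2:n+1]}| \le |p_1|\,|p_{[2:n+1]}|$, $|p_2\cdot p_{[3:n+1]}|\le |p_2|\,|p_{[3:n+1]}|$.

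After cancelling $|p_1|,|p_2|$ against the numerators, for $\calI_1$ the $|p_{[2:n+1]}|$ factors cancel and for $\calI_2$ one is left with $|p_{[3:n+1]}|\,|p_{[2:n+1]}|/|p_{[1:n+1\setminus 2]}|^2$; in the nuisance region $N(p_1, p_{[2:n+1]})=1$ forces $|p_{[2:n+1]}|\lesssim|p_{[1:n+1\setminus 2]}|$ (both $\sim |p_1|$ away from $p_{[3:n+1]}$), so in all cases one reduces to controlling
\begin{equ}
\coup^2 \int \frac{\hat V(p_1)\hat V(p_2)\, N(p_1, p_{[2:n+1]})\, B(p_2, p_{[3:n+1]})}{(\lambda+|p_{[1:n+1]}|^2)(\lambda+|p_2|^2+|p_{[3:n+1]}|^2)^{1/2}}\, \dd p_1\,\dd p_2\,.
\end{equ}
Now I would integrate first in $p_1$. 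Since $\hat V$ is integrable and $\lambda+|p_{[1:n+1]}|^2 = \lambda + |p_1 + p_{[2:n+1]}|^2 \ge \tfrac12|p_{[2:n+1]}|^2$ on the part where $|p_1|$ is not too large, while the bound $\coup^2\int \hat V(p_1)/(\lambda + |p_1 + r|^2)\,\dd p_1 \lesssim 1$ of~\eqref{eq:16} applies directly with $r = p_{[2:n+1]}$, the $p_1$-integral is $O(1)$ uniformly. This leaves $\coup^2 \int \hat V(p_2) B(p_2, p_{[3:n+1]}) (\lambda + |p_2|^2 + |p_{[3:n+1]}|^2)^{-1/2}\,\dd p_2$, and invoking the bulk region $B$ (which gives $|p_2|\lesssim |p_2 + p_{[3:n+1]}|$, hence no lower bound issue) together with integrability of $\hat V$ and the fact that $\coup^2 \to 0$, this is $O(\coup^2) = O(1)$ uniformly in $p_{3:n+1}$ and $\lambda$. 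Collecting the two pieces proves the claim.

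The main obstacle I anticipate is the bookkeeping around the case $i=2$, where the indicators $N(p_1,p_{[2:n+1]})$, $N(p_2,p_{[1:n+1\setminus2]})$ and $B(p_1,p_{[3:n+1]})$, $B(p_2,p_{[3:n+1]})$ are all simultaneously active: one must carefully extract from these indicators the comparability $|p_1|\sim|p_2|\sim|p_{[1:n+1\setminus2]}|\sim|p_{[2:n+1]}|$ (up to the scale of $p_{[3:n+1]}$) needed to absorb the stray $|p_{[2:n+1]}|/|p_{[1:n+1\setminus2]}|^2$ factor, exactly as in the treatment of the term $\four$ in the proof of Lemma~\ref{lem:nuisance}. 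Apart from that, everything reduces to the elementary estimates~\eqref{eq:16} and Lemma~\ref{lem:4}, and the uniformity in $p_{3:n+1}$ comes out automatically because after the $p_1$- and $p_2$-integrations no dependence on $p_{3:n+1}$ survives except through quantities that only help (they appear in denominators).
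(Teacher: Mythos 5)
Your proposal fails at a critical step: bounding the two summands inside the absolute value \emph{separately} destroys the cancellation that makes $\calI$ finite, and this cancellation is not cosmetic but essential. Let me trace through why. Take your $\calI_1$. The outer prefactor is $\frac{1}{|p_1||p_2||p_{[3:n+1]}|}$, and after applying Cauchy--Schwarz to the scalar products in the numerator and $\lambda + \tfrac12|p_{[2:n+1]}|^2[1+\cg^\lambda] \ge \tfrac12|p_{[2:n+1]}|^2$ in the denominator, the ratio is
\begin{equation*}
\frac{|p_1|\,|p_{[2:n+1]}|\,|p_2|\,|p_{[3:n+1]}|}{|p_1|\,|p_2|\,|p_{[3:n+1]}|\cdot \tfrac12 |p_{[2:n+1]}|^2} \;=\; \frac{2}{|p_{[2:n+1]}|}\,,
\end{equation*}
so the $|p_{[2:n+1]}|$ factors do \emph{not} cancel; there is a leftover $|p_{[2:n+1]}|^{-1}$ that your claimed reduction drops. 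Reinserting it, after integrating $p_1$ over the nuisance ball $\{|p_1+p_{[2:n+1]}| < \tfrac13|p_{[2:n+1]}|\}$ the integral $\int \hat V(p_1)/(\lambda+|p_{[1:n+1]}|^2)\dd p_1$ is of order $\log(1+|p_{[2:n+1]}|^2/\lambda)\lesssim\gamma^{-2}$, and the $\coup^2$ in front of $\calI$ is exactly used up in absorbing this. One is then left with
\begin{equation*}
\int \frac{\hat V(p_2)\,B(p_2,p_{[3:n+1]})}{(\lambda + |p_2|^2 + |p_{[3:n+1]}|^2)^{1/2}\,|p_2+p_{[3:n+1]}|}\,\dd p_2\,,
\end{equation*}
and using $|p_2+p_{[3:n+1]}|\gtrsim\max\{|p_2|,|p_{[3:n+1]}|\}$ on the bulk region, the contribution from $|p_2|>|p_{[3:n+1]}|$ is $\int_{|p_{[3:n+1]}|}^{1}\frac{\dd r}{r}\sim\log(1/|p_{[3:n+1]}|)$, which diverges as $|p_{[3:n+1]}|\to 0$. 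So the bound you obtain is \emph{not} uniform in $p_{3:n+1}$, and no combination of the integral lemmas can rescue it as long as you have split the absolute value.

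What saves the paper's argument is precisely the structure you discard. After several replacements, the two summands inside $|\cdot|$ are brought to the common form $(p_1\cdot p_2)(p_1\cdot p_{[3:n+1]})/D$ and $(p_1\cdot p_2)(p_2\cdot p_{[3:n+1]})/D$, so their \emph{sum} factors as $(p_1\cdot p_2)\big[(p_1+p_2)\cdot p_{[3:n+1]}\big]/D$. The extra factor $|p_1+p_2|$ is exactly the quantity that is small on the nuisance region $N(p_1,p_{[2:n+1]})\approx N(p_1,p_2)$, and it upgrades the leftover $|p_{[2:n+1]}|^{-1}$ into something controllable by Lemma~\ref{lem:10} (which is the two-variable integral with an explicit $|p+q|$ factor in the numerator). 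Without exploiting the sign structure of the two terms, the logarithmic divergence above is unavoidable; this is exactly what the paper means when it singles out this term as ``the one where a non-trivial cancellation takes place.''
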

\begin{proof}
As in the proof of Lemma~\ref{lem:ReplEst}, we will massage the expression in~\eqref{e:IApp} 
so to ultimately see a cancellation between the terms in the absolute value. Our first goal is to show that the difference 
between $\calI$ and $\calI_{\rm fin}$ given by
\begin{equation}
\label{e:IFApp}
\begin{split}
&\calI_{\rm fin}(p_{3:n+1}) \eqdef \coup^2\int \dd p_{1:2} \frac{\hat V(p_1)\hat V(p_2)}{|p_1||p_2||p_{[3:n+1]}|}\times \\
& \frac{N(p_1,p_{[2:n+1]}) B\cap G(p_2, p_{[3:n+1]})}{(\lambda +|p_{[1:n+1]}|^2)(\lambda +|p_2|^2 +|p_{[3:n+1]}|^2)^\half} \Big|\frac{ (p_1\cdot p_{2})([p_1+p_2]\cdot p_{[3:n+1]})}{\lambda +\half |p_{[2:n+1]}|^2[1+\cg^\lambda(p_{[2:n+1]})]}\Big| 
\end{split}
\end{equation}
is order $1$, where $B\cap G(p_2, p_{[3:n+1]})\eqdef B(p_2, p_{[3:n+1]})G(p_2, p_{[3:n+1]})$ 
and the second factor 
is the indicator function on $(\R^2)^2$ of the set $G\eqdef \{(p,q)\colon |q|\leq \tfrac{1}{16}|p|\}\subset (\R^2)^2$. 
This will be achieved by subsequently (inserting or) replacing certain terms and controlling the error 
made at each step. 

Recall the definition of the bulk $B=B^{\frac13}$ and nuisance $N=N^{\frac13}$ regions from~\eqref{e:BNregions} 
and notice that, by triangle inequality, for any $i,j\in\{1,2\}$, $i\neq j$, we have 
\begin{equ}[e:Triangle1]
N(p_i,p_{[1:n+1\setminus i]}) B(p_j, p_{[1:n+1\setminus i,j]})\leq \1_{\{\frac16|p_j|<|p_i|\}}\,.
\end{equ}

\noindent {\it Step 1.} We want to insert the function $G(p_2,p_{[3:n+1]})$ in the first line 
at the right hand side of~\eqref{e:IApp}. Bounding each summand in the absolute value separately, 
using that $\cg^\lambda$ is non-negative and the bulk region in each case, 
we see that the difference can be controlled by 
\begin{equs}\label{e:Step1}
 &\coup^2\int \dd p_{1:2} \hat V(p_1)\hat V(p_2) \frac{N(p_1,p_{[2:n+1]}) B(p_2, p_{[3:n+1]}) (1-G(p_2, p_{[3:n+1]}))}{(\lambda +|p_{[1:n+1]}|^2)(\lambda +|p_2|^2 +|p_{[3:n+1]}|^2)^\half}\\
&\qquad\times \Big(\frac{ |p_{[2:n+1]}|}{\lambda +|p_2|^2 +|p_{[3:n+1]}|^2}+\frac{N(p_2,p_{[1:n+1\setminus 2]}) B(p_1, p_{[3:n+1]}) |p_{[1:n+1\setminus2]}|}{\lambda +|p_1|^2+|p_{[3:n+1]}|^2}\Big)\,.
\end{equs}
We argue that thanks to the regions under consideration, both of the terms $|p_{[2:n+1]}|, |p_{[1:n+1\setminus2]}|$ appearing in the the second line in \eqref{e:Step1} can be bounded by a constant times $|p_{[3:n+1]}|$. For the first term, we have $1-G(p_2, p_{[3:n+1]}) =  \1_{\{\tfrac1{16}|p_2|<|p_{[3:n+1]}|\}}$, therefore $|p_{[2:n+1]}|\leq |p_2|+|p_{[3:n+1]}|\lesssim |p_{[3:n+1]}|$. For the second, by~\eqref{e:Triangle1}, we deduce that 
\begin{equ}
\prod_{(i,j)=(1,2), (2,1)}N(p_i,p_{[1:n+1\setminus i]}) B(p_j, p_{[3:n+1]})\leq \1_{\{\frac16|p_1|<|p_2|< 6 |p_1|\}}
\end{equ}
and therefore, we also have $|p_{[1:n+1\setminus2]}|\leq |p_1|+|p_{[3:n+1]}|\lesssim |p_{[3:n+1]}|$. 
As a consequence,~\eqref{e:Step1} is bounded above by 
\begin{equs}[e:Bound32]
\coup^2|p_{[3:n+1]}|\int \frac{\hat V(p_1)\hat V(p_2)\dd p_{1:2}  }{(\lambda +|p_{[1:n+1]}|^2)(\lambda +|p_2|^2 +|p_{[3:n+1]}|^2)^{\frac32}}\lesssim 1
\end{equs}
where, in the last step, we first integrated in $p_1$ and then applied Lemma~\ref{lem:4} to the integral in $p_2$. 

Before proceeding, let us point out that the insertion of the indicator function of $G$ ensures that 
\begin{equ}[e:Triangle2]
N(p_1,p_{[2:n+1]}) B\cap G(p_2, p_{[3:n+1]}) \leq N^{\frac5{12}}(p_1,p_2)
\end{equ}
so that, in particular, $p_1$ and $p_2$ are comparable.  
\medskip

\noindent {\it Step 2.} We want to replace both scalar products $p_1\cdot p_{[2:n+1]}$ and 
$p_2\cdot p_{[1:n+1\setminus2]}$, with $p_1\cdot p_2$. To do so, we add and subtract 
the corresponding terms inside the absolute value and use triangle inequality. By~\eqref{e:Triangle2}, 
it is not hard to see that the error terms are bounded by~\eqref{e:Bound32}. 
\medskip

\noindent {\it Step 3.} In this and the next step, we focus on the second summand in the absolute value in~\eqref{e:IApp}. 
At first we want to replace its denominator with the denominator of the first summand, i.e. with 
$\lambda +\half |p_{[2:n+1]}|^2[1+\cg^\lambda(p_{[2:n+1]})]$, for which we add and subtract the corresponding term in the 
absolute value and bound the error. To do so, let $f$ be the map on $\R^2$ given by 
$f(p)\eqdef (\lambda +\half|p|^2[1+\cg^\lambda(p)])^{-1}$ and note that $| \nabla f(p)|\lesssim (\lambda +|p|^2)^{3/2}$. 
Hence, by mean value theorem, we get 
\begin{equs}
\Big|f(p_{[1:n+1\setminus 2]})-f(-p_{[2:n+1]})\Big|\lesssim |p_{[1:n+1\setminus 2]}+p_{[2:n+1]}|
\sup_{p\in I}\frac{1}{(\lambda+|p|^2)^{\frac32}}
\end{equs}
where $I\subset\R^2$ is the segment connecting $p_{[1:n+1\setminus 2]}$ and 
$-p_{[2:n+1]}$. Thanks to the presence of the indicator function $B(p_2,p_{[3:n+1]})$ we know that 
$|p_{[2:n+1]}|^2\gtrsim |p_2|^2+|p_{[3:n+1]}|^2$, while $B(p_1,p_{[3:n+1]})$ and~\eqref{e:Triangle2} give 
$|p_{[1:n+1\setminus 2]}|\gtrsim |p_1|^2+|p_{[3:n+1]}|^2\gtrsim |p_2|^2+|p_{[3:n+1]}|^2$. 
As a consequence, for any $p\in I$, we have $|p|^2\gtrsim |p_2|^2+|p_{[3:n+1]}|^2$, which 
ultimately delivers a bound on the error term of the form
\begin{equs}
 &\coup^2\int  \dd p_{1:2} \frac{\hat V(p_1)\hat V(p_2) N(p_1,p_{[2:n+1]}) B\cap G(p_2, p_{[3:n+1]})|p_1||p_{[1:n+1\setminus 2]}+p_{[2:n+1]}|}{(\lambda +|p_{[1:n+1]}|^2)(\lambda +|p_2|^2 +|p_{[3:n+1]}|^2)^2}\\
 &\qquad\qquad\qquad\qquad\lesssim \coup^2\int  \dd p_{1:2} \frac{\hat V(p_1)\hat V(p_2) N^{\frac5{12}}(p_1,p_2)(|p_1+p_2|+|p_{[3:n+1]}|)}{(\lambda +|p_{[1:n+1]}|^2)(\lambda +|p_2|^2 +|p_{[3:n+1]}|^2)^{3/2}}
\end{equs}
where we used~\eqref{e:Triangle2} once again. At this point we estimate the term containing $|p_1+p_2|$ 
with Lemma~\ref{lem:10}, while that with $|p_{3:n+1}|$ with Lemma~\ref{lem:4}. 
\medskip

\noindent {\it Step 4.} We want to remove the indicator functions $B(p_1, p_{[3:n+1]})$, $N(p_2,p_{[1:n+1\setminus 2]})$  
from the second summand, starting with the former. Notice that in both cases, we need to estimate a quantity of the form 
\begin{equs}[e:BoundStep4]
\int \dd p_{1:2}  \,\frac{\hat V(p_1)\hat V(p_2) N(p_1,p_{[2:n+1]}) B\cap G(p_2, p_{[3:n+1]}) \BF  |p_1|}{(\lambda +|p_{[1:n+1]}|^2)(\lambda +|p_2|^2 +|p_{[3:n+1]}|^2)^{\frac32}}
\end{equs}
where, to first remove $B(p_1, p_{[3:n+1]})$, we take $\BF=\BF_1=N(p_2,p_{[1:n+1\setminus 2]})$ $ N(p_1, p_{[3:n+1]})$, 
while to then remove $N(p_2,p_{[1:n+1\setminus 2]})$, $\BF=\BF_2=B(p_2,p_{[1:n+1\setminus 2]})$. 
In the first case, if $\BF_1=1$ then $N(p_1, p_{[3:n+1]})=1$, which implies that $|p_1|\lesssim |p_{[3:n+1]}|$ 
(see~\eqref{e:BNregionsBound}). Hence,~\eqref{e:BoundStep4} is bounded above by~\eqref{e:Bound32}.  

On the other hand, for $\BF_2\,N(p_1,p_{[2:n+1]}) B\cap G(p_2, p_{[3:n+1]})=1$, 
both $\BF_2$ and $G(p_2, p_{[3:n+1]})$ must be equal to $1$, therefore 
$\frac13|p_2|\leq |p_{[1:n+1]}|\leq |p_1+p_2|+|p_{[3:n+1]}|\leq |p_1+p_2|+\tfrac{1}{16}|p_2|$,  
so that $|p_2|\lesssim |p_1+p_2|$ and by~\eqref{e:Triangle2}, $|p_1|\lesssim |p_1+p_2|$. 
As a consequence,~\eqref{e:BoundStep4} can be controlled via Lemma~\ref{lem:10}.   
\medskip

\noindent{\it Conclusion. }Thanks to Steps 1-4, we showed that the difference between $\calI$ in~\eqref{e:IApp} and 
$\calI_{\rm fin}$~\eqref{e:IFApp} is uniformly bounded, so that to conclude, it suffices to 
show that the latter is also uniformly bounded. 
For this, we use first the indicator function of the bulk region $B(p_2,p_{[3:n+1]})$, and 
then~\eqref{e:Triangle2}, to get 
\begin{equs}
|\calI_{\rm fin}(p_{3:n+1})|\lesssim \gamma^2\int \dd p_{1:2} \frac{\hat V(p_1)\hat V(p_2) N^{\frac5{12}}(p_1,p_2) |p_1+p_2|}{(\lambda +|p_{[1:n+1]}|^2)(\lambda +|p_2|^2 +|p_{[3:n+1]}|^2)^{\frac32}}\lesssim 1
\end{equs}
where the last step follows by Lemma~\ref{lem:10}. Hence, the proof is concluded. 
\end{proof}

\section{Triangular martingale central limit theorem}
\label{sec:mart-centr-limit}

The following is a simple adaptation of the standard martingale central limit theorem to the case where the sequence 
of martingales depends on the scaling parameter (and therefore we refer to it as `triangular'). 
In the statement, to distinguish between microscopic and macroscopic scales, we use parentheses in the superscript. 

\begin{theorem}
\label{thm:clt}
Let $\{(\calM^{(\varepsilon)}_t)_{t \ge 0}\colon \eps\in(0,1)\}$ be a family of real valued mean zero 
martingales with stationary increments 
on a common filtered probability space $(\bm{\Omega}, \bm{\calF}, \bm{\calF}_t, \bfP)$. 
Let $\varsigma^2_\varepsilon \eqdef \bfE[ \langle \calM^{(\varepsilon)}\rangle_1]$ and suppose that for all $t \ge 0$
\begin{align}
\limsup_{\varepsilon \rightarrow 0} \varsigma^2_\varepsilon &\lesssim 1\,, \label{eq:97} \\
\limsup_{\varepsilon \rightarrow 0} \bfE[ \langle \calM^{(\varepsilon)}\rangle_1^2] &\lesssim 1\,, \label{eq:82} \\
\lim_{\varepsilon \rightarrow 0} \sup_{s \in [0, t/\varepsilon^2]} \varepsilon^4 \Var \langle\calM^{(\varepsilon)}\rangle_s &= 0\,. \label{eq:78} 
\end{align}
For $\varepsilon \in (0,1)$, define the scaled family of martingales $\{(\calM^{\varepsilon}_t)_{t \ge 0}\colon \eps\in(0,1)\}$ 
according to the diffusive rescaling $\calM^\varepsilon_t \eqdef \varepsilon\calM^{(\varepsilon)}_{t/\varepsilon^2}$. 
Then for all $\theta \in \bbR$ it holds that
\begin{equation}
\label{eq:99}
\lim_{\varepsilon \rightarrow 0} \bfE | \bfE \big[ e^{\iota \theta \calM^\varepsilon_t} \mid \bm{\calF}_0 \big] - e^{-\half \theta^2 \varsigma_\varepsilon^2 t} | \rightarrow 0
\end{equation}
\end{theorem}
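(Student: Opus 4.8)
The plan is to establish \eqref{eq:99} by a Lindeberg-type martingale argument carried out on a \emph{fixed macroscopic partition} of $[0,t]$, which reduces the statement to two quantitative facts extracted from the hypotheses: an $L^2$ control of the fluctuation of the rescaled quadratic variation over a macroscopic time (from \eqref{eq:78}), and an $L^4$ control of the rescaled martingale over a macroscopic time, derived from \eqref{eq:82} via Burkholder--Davis--Gundy. Throughout I fix $\theta\in\bbR$, $t>0$, and work with the rescaled filtration $\calF^\varepsilon_r\eqdef\bm{\calF}_{r/\varepsilon^2}$, so that $\calM^\varepsilon$ is an $\calF^\varepsilon$-martingale with $\calM^\varepsilon_0=0$, $\calF^\varepsilon_0=\bm{\calF}_0$, and $\langle\calM^\varepsilon\rangle_r=\varepsilon^2\langle\calM^{(\varepsilon)}\rangle_{r/\varepsilon^2}$; note that by stationarity of the increments $\bfE[\langle\calM^{(\varepsilon)}\rangle_u]=u\varsigma_\varepsilon^2$ for all $u\ge0$, so $\bfE[\langle\calM^\varepsilon\rangle_\delta]=\varsigma_\varepsilon^2\delta$.

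First I would telescope. For $m\in\bbN$ set $\delta\eqdef t/m$, $s_i\eqdef i\delta$, $\xi_i\eqdef\calM^\varepsilon_{s_i}-\calM^\varepsilon_{s_{i-1}}$, and $b\eqdef e^{-\frac12\theta^2\varsigma_\varepsilon^2\delta}$, so that $b^m=e^{-\frac12\theta^2\varsigma_\varepsilon^2 t}$ \emph{exactly}. Applying the identity $\prod_{i=1}^m a_i-b^m=\sum_{i=1}^m\big(\prod_{j=1}^{i-1}a_j\big)(a_i-b)\,b^{m-i}$ with $a_i=e^{\iota\theta\xi_i}$, using $|a_j|=1$, $|b|\le1$, and the tower property (conditioning the $i$-th summand on $\calF^\varepsilon_{s_{i-1}}$ and pulling out the $\calF^\varepsilon_{s_{i-1}}$-measurable factors), one gets
\[
\bfE\big|\bfE[e^{\iota\theta\calM^\varepsilon_t}\mid\bm{\calF}_0]-e^{-\frac12\theta^2\varsigma_\varepsilon^2 t}\big|\le\sum_{i=1}^m\bfE\big|\bfE[e^{\iota\theta\xi_i}-b\mid\calF^\varepsilon_{s_{i-1}}]\big|.
\]
By stationarity of the increments of $\calM^{(\varepsilon)}$ (jointly with those of $\langle\calM^{(\varepsilon)}\rangle$) each summand equals $\bfE|\bfE[e^{\iota\theta\calM^\varepsilon_\delta}\mid\bm{\calF}_0]-b|$, reducing everything to one macroscopic step $\delta$ at the price of a factor $m$.

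Next I would Taylor expand $e^{\iota\theta\calM^\varepsilon_\delta}=1+\iota\theta\calM^\varepsilon_\delta-\tfrac12\theta^2(\calM^\varepsilon_\delta)^2+\calR_\delta$ with $|\calR_\delta|\lesssim\theta^2(\calM^\varepsilon_\delta)^2\min\{1,|\theta\calM^\varepsilon_\delta|\}$. Since $\calM^\varepsilon_0=0$ we have $\bfE[\calM^\varepsilon_\delta\mid\bm{\calF}_0]=0$ and $\bfE[(\calM^\varepsilon_\delta)^2\mid\bm{\calF}_0]=\bfE[\langle\calM^\varepsilon\rangle_\delta\mid\bm{\calF}_0]$, while expanding $b$ to second order and using $\varsigma_\varepsilon^2\lesssim1$ (from \eqref{eq:97}) gives $b=1-\tfrac12\theta^2\varsigma_\varepsilon^2\delta+O_\theta(\delta^2)$ uniformly in $\varepsilon$; hence
\[
\bfE\big|\bfE[e^{\iota\theta\calM^\varepsilon_\delta}\mid\bm{\calF}_0]-b\big|\le\tfrac12\theta^2\,\bfE\big|\bfE[\langle\calM^\varepsilon\rangle_\delta-\varsigma_\varepsilon^2\delta\mid\bm{\calF}_0]\big|+\bfE|\calR_\delta|+O_\theta(\delta^2).
\]
For the first term, conditional Jensen and \eqref{eq:78} (applied at scale $\delta/\varepsilon^2\le t/\varepsilon^2$) give $\bfE|\bfE[\langle\calM^\varepsilon\rangle_\delta-\varsigma_\varepsilon^2\delta\mid\bm{\calF}_0]|\le(\Var\langle\calM^\varepsilon\rangle_\delta)^{1/2}\le\delta_\varepsilon^{1/2}$, where $\delta_\varepsilon\eqdef\sup_{s\le t/\varepsilon^2}\varepsilon^4\Var\langle\calM^{(\varepsilon)}\rangle_s\to0$. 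For the remainder I would use \eqref{eq:82}: writing $\langle\calM^{(\varepsilon)}\rangle_{\delta/\varepsilon^2}$ (up to rounding, which only affects constants) as a sum of $\lesssim\delta/\varepsilon^2$ stationary nonnegative increments, Minkowski and \eqref{eq:82} give $\|\langle\calM^\varepsilon\rangle_\delta\|_{L^2}\lesssim\delta$, whence BDG yields $\bfE[(\calM^\varepsilon_\delta)^4]\lesssim\bfE[\langle\calM^\varepsilon\rangle_\delta^2]\lesssim\delta^2$; combining this with $\bfE[(\calM^\varepsilon_\delta)^2]=\varsigma_\varepsilon^2\delta\lesssim\delta$ and splitting $\bfE[(\calM^\varepsilon_\delta)^2\min\{1,|\theta\calM^\varepsilon_\delta|\}]$ according to whether $|\calM^\varepsilon_\delta|\le c$ or not (optimizing over $c$) gives the Lindeberg bound $\bfE|\calR_\delta|\lesssim_\theta\delta^{4/3}$.

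Summing over $i$ then gives $\bfE|\bfE[e^{\iota\theta\calM^\varepsilon_t}\mid\bm{\calF}_0]-e^{-\frac12\theta^2\varsigma_\varepsilon^2 t}|\lesssim_\theta m\,\delta_\varepsilon^{1/2}+m\,\delta^{4/3}+m\,\delta^2=m\,\delta_\varepsilon^{1/2}+t^{4/3}m^{-1/3}+t^2m^{-1}$, and letting first $\varepsilon\to0$ (for fixed $m$, killing $m\,\delta_\varepsilon^{1/2}$) and then $m\to\infty$ proves \eqref{eq:99}. The step I expect to be the main obstacle is precisely the control of the Taylor remainder: performed directly at the macroscopic time $t$, the same estimates only yield $\bfE|\calR_t|\lesssim_\theta t^{4/3}=O(1)$, which does \emph{not} vanish, so the passage through $m$ sub-steps is genuinely necessary, and the demand that the $m$ remainders still sum to something small ($m\,\delta^{4/3}\to0$) is exactly what forces one to squeeze the fourth-moment bound $\bfE[(\calM^\varepsilon_\delta)^4]\lesssim\delta^2$ out of \eqref{eq:82} rather than merely its second-moment content. (A minor technical caveat: the BDG step uses $[\calM^{(\varepsilon)}]=\langle\calM^{(\varepsilon)}\rangle$, i.e.\ continuity of the martingales, which holds in every application of this theorem in the paper.)
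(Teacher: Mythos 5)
Your proof is correct and takes a genuinely different route from the paper's. Both arguments are Lindeberg-type, but the paper telescopes at the \emph{microscopic} scale (the $N=\lfloor 1/\varepsilon^2\rfloor$ unit-time increments of $\calM^{(\varepsilon)}$), handles the cubic Taylor remainder directly there (where $\bfE|Z^{(\varepsilon)}_j|^3\lesssim 1$ makes $\sum_j\varepsilon^3\bfE|Z^{(\varepsilon)}_j|^3\to0$ trivially), and then, to exploit \eqref{eq:78}, must re-aggregate the microscopic quadratic-variation fluctuations into mesoscopic blocks of length $K\approx V_\varepsilon^\alpha N$, leading to a one-shot limit $\varepsilon\to0$ with a free exponent $\alpha\in(0,1/2)$. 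You instead telescope at a \emph{fixed macroscopic} partition $\delta=t/m$ and take a double limit ($\varepsilon\to0$, then $m\to\infty$), which removes the blocking gymnastics entirely and lets you invoke \eqref{eq:78} in one step; the price is that the cubic remainder must be controlled at macroscopic times, which you pay by squeezing a fourth-moment bound $\bfE[(\calM^\varepsilon_\delta)^4]\lesssim\delta^2$ out of \eqref{eq:82} via Minkowski on the stationary increments of $\langle\calM^{(\varepsilon)}\rangle$ and then BDG, plus a truncation optimised at $c=\delta^{1/3}$. Your route is arguably cleaner and more in the spirit of the classical proof in Koml\'os--Landim--Olla; the paper's route avoids the fourth-moment manipulation at the cost of the mesoscopic block construction. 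Both rely on the same implicit assumptions (continuity of the martingales for BDG, and stationarity of increments in the conditional sense needed to reduce the $i$-th summand to the case $s_{i-1}=0$, a reduction the paper also performs).
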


Our proof follows closely that of \cite[Theorem 2.1]{KomoLandOlla12_FluctuationsMarkov}, 
but the dependence on $\eps$ of $\calM^{(\varepsilon)}$ makes their result not directly applicable so, 
for completeness we detail the argument below. 
For simplicity, we ask for a uniform control over the variance of the quadratic variation~\eqref{eq:78}, 
which is easy to get in the context of the SRBP, but could be weakened. 
Let us also point out that we are not assuming 
\begin{equation}
\label{eq:150}
\varsigma^2 = \lim_{\varepsilon \rightarrow 0} \varsigma^2_\varepsilon
\end{equation}
so that, strictly speaking, the above is not a semi quenched central limit theorem but it would be 
upon replacing~\eqref{eq:97} with~\eqref{eq:150}. 

\begin{proof}
To lighten the notation, let $\bfE_0[\cdot]$ denote the conditional expectation $\bfE[\cdot\mid \bm{\calF}_0]$. Choose $\varepsilon\in(0,1)$ small enough so that \eqref{eq:97}-\eqref{eq:82} hold. Set $\beta_\varepsilon = \half \theta^2 \varsigma_\varepsilon^2$, which is uniformly bounded by~\eqref{eq:97}. It is sufficient to show
\begin{equation}
\label{eq:98}
\lim_{\varepsilon \rightarrow 0} \bfE \big\lvert e^{\beta_\varepsilon}\bfE_0 \big[ e^{\iota \theta\varepsilon \calM^{(\varepsilon)}_{1/\varepsilon^2}} \big] - 1 \big\rvert = 0
\end{equation}
Indeed,~\eqref{eq:99} follows from applying Equation \eqref{eq:98} in the case $\calM^{(\varepsilon)} := \calM^{(\varepsilon\sqrt{t})}, \theta := \theta\sqrt{t}$ and changing variables $\varepsilon := \varepsilon / \sqrt{t}$.

Note that, letting $N\eqdef \lfloor 1/\varepsilon^2 \rfloor$, the previous condition is implied 
by the analogous condition in which the martingale $\calM^{(\varepsilon)}$ is evaluated at time $N\in\N$ instead of 
$1/\eps^2$. Indeed, since for any $a,b\in\R$, $|e^{\iota a}-e^{\iota b}|\leq |a-b|$ and the martingales are mean $0$, we have 
\begin{equ}
\bfE \Big\lvert e^{\beta_\varepsilon} \bfE_0 \Big[ e^{\iota \theta \eps\calM^{(\varepsilon)}_{1/\varepsilon^2}} \Big] - 1 \Big\rvert\lesssim \bfE \Big\lvert e^{\beta_\varepsilon}\bfE_0 \Big[ e^{\iota \theta \eps\calM^{(\varepsilon)}_{N}} \Big] - 1 \Big\rvert+\theta^2\eps^2 e^{\beta_\varepsilon}\bfE[(\calM^{(\varepsilon)}_{1/\varepsilon^2}-\calM^{(\varepsilon)}_{N})^2]
\end{equ}
and $\bfE[\langle \calM^{(\varepsilon)}\rangle_{1/\varepsilon^2} - \langle \calM^{(\varepsilon)}\rangle_N] = (\eps^{-2}-N)\varsigma_\varepsilon^2\lesssim 1$ so that the second summand vanishes. 
We now rewrite the expectation as a telescopic sum. To do so, for $j=0,\dots, N-1$ set 
\begin{equs}
\calX_j&\eqdef e^{\beta_\varepsilon\frac{j+1}{N}} \bfE_0\Big[e^{\iota \theta\varepsilon \calM^{(\varepsilon)}_{j+1}}\Big]-e^{\beta_\varepsilon\frac{j}{N}} \bfE_0\Big[e^{\iota \theta\varepsilon \calM^{(\varepsilon)}_j}\Big]\,,\quad\text{and}\quad\calZ^{(\eps)}_j&\eqdef  \calM^{(\varepsilon)}_{j+1}-\calM^{(\varepsilon)}_{j}\,,
\end{equs} 
so that 
\begin{equs}[e:sum4]
\bfE \Big\lvert e^{\beta_\varepsilon} \bfE_0 \Big[ e^{\iota \theta \eps\calM^{(\varepsilon)}_N} \Big] - 1 \Big\rvert=\bfE \Big\lvert \sum_{j=1}^{N-1}  \calX_j\Big\rvert\leq\sum_{i=1}^4\bfE \Big\lvert \sum_{j=1}^{N-1}  \calX^{(i)}_j\Big\vert
\end{equs}
where
\begin{align*}
\calX^{(1)}_j &= e^{\beta_\varepsilon\frac{j+1}{N}} \big(1-\beta_\varepsilon \tfrac{1}{N}-e^{-\beta_\varepsilon\frac{1}{N}}\big) \bfE_0\Big[e^{\iota \theta\varepsilon \calM^{(\varepsilon)}_j}\Big]\,, \\
\calX^{(2)}_j &= e^{\beta_\varepsilon\frac{j+1}{N}} \big(\beta_\varepsilon \tfrac{1}{N}-\beta_\varepsilon\varepsilon^2\big) \bfE_0\Big[e^{\iota \theta\varepsilon \calM^{(\varepsilon)}_j}\Big]\,, \\
\calX^{(3)}_j &= e^{\beta_\varepsilon\frac{j+1}{N}} \bfE_0\Big[r\big(\theta\varepsilon \calZ^{(\varepsilon)}_{j}\big)e^{\iota \theta\varepsilon \calM^{(\varepsilon)}_j}\Big]\,, \\
\calX^{(4)}_j &= e^{\beta_\varepsilon\frac{j+1}{N}} \bfE_0\Big[\Big(-\tfrac{\theta^2\varepsilon^2 }{2}(\calZ^{(\varepsilon)}_{j})^2 + \beta_\eps\varepsilon^2 \Big) e^{\iota \theta\varepsilon \calM^{(\varepsilon)}_j}\Big]\,.
\end{align*}
in which we used that  $e^{\iota a} = e^{\iota b}+\iota (a-b)-(a-b)^2/2+r(a-b)$, for $r$ such that $|r(x)| \lesssim |x|^3$, 
and any $a, b \in \bbR$. 
Now, in the three summands at the right hand side of~\eqref{e:sum4} corresponding to $i=1,2,3$, 
we bound the complex exponential by $1$, and respectively use that $|1-x-e^{-x}|\lesssim x^2$, 
$|N^{-1}-\eps^2|\lesssim N^{-2}$ and Burkholder-Davis-Gundy inequality which gives 
$\bfE|Z^{(\varepsilon)}_j|^3 = \bfE|Z^{(\varepsilon)}_1|^3 \lesssim \bfE[ \langle \calM^{(\varepsilon)}\rangle_1^{3/2}] \lesssim 1$, the last step being due to~\eqref{eq:82}. 
Therefore, we obtain
\begin{equs}
\sum_{i=1}^3\bfE \Big\lvert \sum_{j=1}^{N-1}  \calX^{(i)}_j\Big\vert \lesssim \sum_{i=1}^{N-1}\Big(2\frac{\beta_\eps^2}{N^2}+\theta^3\eps^3\Big)\lesssim \frac{1}{N}+\eps^3N
\end{equs}
and the right hand side converges to $0$ as $\eps\to 0$ since $N= \lfloor 1/\varepsilon^2 \rfloor$.  

At last we turn to the fourth summand in~\eqref{e:sum4}. Let $V_\eps\eqdef \sup_{s\leq \eps^{-2}} \eps^4 \Var \langle\calM^{(\varepsilon)}\rangle_s$, which, by~\eqref{eq:78}, vanishes, and, for $\alpha<\half$, 
define $K\eqdef \lfloor V^\alpha_\varepsilon N \rfloor$. 
Partition $\{0,...,N-1\}$ into $L\eqdef \lfloor N/K\rfloor$ groups $I_k$ of $K$ or 
$K+1$ consecutive numbers in such a way 
that $\{0,...,N-1\}= \cup_{k=1}^L I_k$.   

Upon defining 
$\calY^{(\eps)}_j\eqdef  \langle\calM^{(\varepsilon)}\rangle_{j+1}-\langle\calM^{(\varepsilon)}\rangle_{j}$, 
we have $\bfE[(\calZ^{(\varepsilon)}_{j})^2 \mid \bm{\calF}_j] = \bfE[\calY^{(\varepsilon)}_{j} \mid \bm{\calF}_j]$, 
so that the quantity we need to bound is 
\begin{equation}
\label{eq:102}
\bfE\Big\lvert\sum_{j=0}^{N-1} \calX^{(4)}_j\Big\lvert = \tfrac{\theta^2}{2}\bfE\Big\lvert\varepsilon^2\sum_{k=1}^\ell\sum_{j \in I_k} e^{\beta_\varepsilon\frac{j+1}{N}} \bfE_0\Big[\Big( \calY^{(\varepsilon)}_{j} - \varsigma_\varepsilon^2 \Big) e^{\iota \theta\varepsilon \calM^{(\varepsilon)}_j}\Big]\Big\lvert
\end{equation}
where we also used that $\beta_\varepsilon = \half \theta^2 \varsigma_\varepsilon^2$. 
Let $j_k\eqdef\min I_k$. We want to replace the $j$'s appearing at the exponentials with $j_k$. To do so, 
notice that 
\begin{equs}[e:ReplBoundCLT]
\Big|e^{\beta_\varepsilon\frac{j+1}{N}+\iota \theta\varepsilon \calM^{(\varepsilon)}_j}-&e^{\beta_\varepsilon\frac{j_k+1}{N}+\iota \theta\varepsilon \calM^{(\varepsilon)}_{j_k}}\Big| \lesssim \beta_\eps \frac{|j-j_k|}{N}+\eps|\calM^{(\varepsilon)}_{j}-\calM^{(\varepsilon)}_{j_k}|\\
&\lesssim \beta_\eps \frac{K}{N}+\eps|\calM^{(\varepsilon)}_{j}-\calM^{(\varepsilon)}_{j_k}|\lesssim \beta_\eps V_\eps^\alpha +\eps|\calM^{(\varepsilon)}_{j}-\calM^{(\varepsilon)}_{j_k}|\,.
\end{equs}
As a consequence,~\eqref{eq:102} is bounded above by 
\begin{equs}[e:Almost]
\frac{\theta^2}{2}\bfE\Big\lvert\varepsilon^2\sum_{k=1}^L e^{\beta_\varepsilon\frac{j_k+1}{N}} \bfE_0\Big[e^{\iota \theta\varepsilon \calM^{(\varepsilon)}_{j_k}}\sum_{j \in I_k} \Big( \calY^{(\varepsilon)}_{j} - \varsigma_\varepsilon^2 \Big) \Big]\Big\lvert +R_\eps\,,
\end{equs}
where, thanks to~\eqref{e:ReplBoundCLT}, $R_\eps$ is controlled by 
\begin{align*}
{} &\eps^2\sum_{k=1}^L\sum_{j \in I_k}\bfE \Big(|\calY^{(\varepsilon)}_{j} - \varsigma_\varepsilon^2| (\beta_\eps V_\eps^\alpha +\eps|\calM^{(\varepsilon)}_{j}-\calM^{(\varepsilon)}_{j_k}|)\Big)\\
&\lesssim \eps^2\sum_{k=1}^L\sum_{j \in I_k}\bfE [|\calY^{(\varepsilon)}_{j} - \varsigma_\varepsilon^2|^2]^{\half}\Big(\beta_\eps V_\eps^\alpha +\eps   \bfE[(\calM^{(\varepsilon)}_j - \calM^{(\varepsilon)}_{j_k})^2]^\half\Big) \\
&\lesssim V_\eps^\alpha+ \eps \sqrt{K}\lesssim V_\eps^\alpha+ V_\eps^{\frac\alpha2}
\end{align*}
and we used that the expectation involving $\calY^{(\varepsilon)}_{j}$ is bounded thanks to~\cref{eq:97,eq:82},  
stationary of the increments of $\calM^{(\eps)}$ to control the corresponding term, and the definition of $K$ and $N$. 
Since $R_\eps$ vanishes, we are left to consider the first summand in~\eqref{e:Almost}, which equals
\begin{equs}
\frac{\theta^2}{2}\bfE\Big\lvert\varepsilon^2\sum_{k=1}^L e^{\beta_\varepsilon\frac{j_k+1}{N}} \bfE_0\Big[e^{\iota \theta\varepsilon \calM^{(\varepsilon)}_{j_k}}\Big( \langle\calM^{(\varepsilon)}\rangle_{K} - K\varsigma_\varepsilon^2 \Big) \Big]\Big\lvert\lesssim  L V^{\half}_\eps \lesssim V^{\half-\alpha}_\eps
\end{equs}
where we applied Cauchy-Schwarz and the definition of $V_\eps$. Recall that we chose $\alpha<1/2$ and 
$V_\eps\to 0$, hence the right hand side goes to $0$ and the proof of the theorem is concluded. 
\end{proof}

\section{Well posedness of the martingale problem}
\label{sec:well-posedn-mart}

Let $H$ be the weighted Sobolev space in~\eqref{e:H} and $H^*$ its dual.  
For $z\in\R^2$, we define the translation map $\tau_z \colon H \rightarrow H$ according to $\tau_z\phi(x) \eqdef \phi(x+z)$, 
and its dual $T_z$ which acts on $H^\ast$. 
The next lemma states some easy properties of the translation map.

\begin{lemma}
\label{lem:31}
For $z\in\bbR^2$, we have the operator bound 
\begin{equation}
\label{eq:164}
\lVert T_z \rVert^2 \lesssim 1 + |z|^4
\end{equation}
Moreover, for fixed $\bfh \in H^*$, the map $T_\cdot \bfh : \bbR^2 \rightarrow H^*$ is Lipschitz-continuous.
\end{lemma}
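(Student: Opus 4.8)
\textbf{Plan for the proof of Lemma~\ref{lem:31}.}
The plan is to work directly with the definition of the norm on $H$ in~\eqref{e:H} and its dual. For the operator bound~\eqref{eq:164}, since $\|T_z\|_{H^*\to H^*} = \|\tau_z\|_{H\to H}$, it suffices to estimate $\|\tau_z\phi\|_H$ in terms of $\|\phi\|_H$. First I would note that $\tau_z$ commutes with $(1-\Delta)^{25}$, since the latter is a Fourier multiplier and translation acts by modulation on the Fourier side; hence $(1-\Delta)^{25}\tau_z\phi = \tau_z\big((1-\Delta)^{25}\phi\big)$. Writing $\psi \eqdef (1-\Delta)^{25}\phi$, the task reduces to bounding $\int (1+|x|^2)^2 |\psi(x+z)|^2\dd x = \int (1+|x-z|^2)^2|\psi(x)|^2\dd x$ by a constant times $(1+|z|^4)\int(1+|x|^2)^2|\psi(x)|^2\dd x$. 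The key step is the elementary pointwise inequality $(1+|x-z|^2)^2 \lesssim (1+|z|^4)(1+|x|^2)^2$, which follows from $|x-z|^2 \le 2|x|^2 + 2|z|^2$ and then expanding the square: $(1+|x-z|^2)^2 \le (1 + 2|x|^2 + 2|z|^2)^2 \lesssim (1+|x|^2)^2 + |z|^4 \lesssim (1+|z|^4)(1+|x|^2)^2$. Integrating gives $\|\tau_z\phi\|_H^2 \lesssim (1+|z|^4)\|\phi\|_H^2$, which is~\eqref{eq:164}.

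For the Lipschitz continuity of $z \mapsto T_z\bfh$, I would again pass through the predual and estimate, for $g \in H$ with $\|g\|_H \le 1$,
\[
\big|(T_z\bfh - T_{z'}\bfh)[g]\big| = \big|\bfh[\tau_z g - \tau_{z'} g]\big| \le \|\bfh\|_{H^*}\,\|\tau_z g - \tau_{z'} g\|_H\,.
\]
So the statement reduces to showing $\|\tau_z g - \tau_{z'}g\|_H \lesssim_{z,z'} |z-z'|\,\|g\|_H$ locally uniformly. By the group property $\tau_z g - \tau_{z'}g = \tau_{z'}(\tau_{z-z'}g - g)$ and the bound~\eqref{eq:164} just proved, it is enough to show $\|\tau_w g - g\|_H \lesssim |w|\,\|g\|_{H'}$ for some slightly stronger norm, or — more robustly — to first establish $\|\tau_w g - g\|_H \to 0$ as $w\to 0$ by density of smooth compactly supported functions (for which the difference quotient converges to $\nabla g$), and then upgrade to a Lipschitz bound. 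Concretely, for $g$ smooth and compactly supported one writes $\tau_w g(x) - g(x) = \int_0^1 w\cdot \nabla g(x+sw)\dd s$, so by Minkowski's integral inequality $\|\tau_w g - g\|_H \le |w|\int_0^1 \|\tau_{sw}\nabla g\|_H\dd s \lesssim |w|(1+|w|^2)\|\nabla g\|_H$, using~\eqref{eq:164} again. Since $H$ is continuously embedded in a space on which $\|\nabla\cdot\|$ is controlled by $\|\cdot\|_H$ (indeed $\|\nabla g\|_{H}\lesssim \|g\|_H$ is false as stated, but $\nabla$ maps $H = H^{50}_w$ into $H^{49}_w \supset H^{50}_w$, so $\|\nabla g\|_{H^{49}_w}\lesssim \|g\|_H$), one should instead phrase the Lipschitz estimate with the $H$ on the left replaced by a weaker weighted Sobolev norm; however, for the purposes of this lemma (continuity of $T_\cdot\bfh$ as an $H^*$-valued map, with $\bfh$ fixed) plain continuity suffices and is obtained from the density argument plus the uniform bound~\eqref{eq:164}, while the Lipschitz claim is used only on bounded sets of $z$ where~\eqref{eq:164} makes the constant uniform.

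The main obstacle is the mild mismatch in Sobolev indices: differentiating $g$ costs two derivatives in the $(1-\Delta)^{25}$ weight, so one cannot literally bound $\|\tau_w g - g\|_H$ by $|w|\,\|g\|_H$. The clean fix is to prove the estimate $\|\tau_w g - g\|_{H} \lesssim |w|\,\|g\|_{H^{51}_w}$ on a dense class and then observe that for the lemma's statement only $g$ ranging over the \emph{fixed} orthonormal basis $(g^i)$ of Schwartz-type functions is ever needed downstream (cf.~\eqref{eq:8}), so all relevant $g$ lie in every $H^k_w$ and the constants are harmless; alternatively, and most cleanly, one simply records that $T_\cdot\bfh$ is Lipschitz from $\bbR^2$ into $H^*$ because $z\mapsto \tau_z$ is a strongly continuous group of bounded operators on $H$ with $\|\tau_z\|\lesssim 1+|z|^2$, and strong continuity together with the resolvent/semigroup calculus upgrades to local Lipschitz continuity on the dense domain of the generator $\nabla$, which contains all Schwartz functions. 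I would present the argument in the second form: prove~\eqref{eq:164}, deduce strong continuity on Schwartz functions via the fundamental-theorem-of-calculus representation, and conclude local Lipschitz continuity of $z\mapsto T_z\bfh$ for fixed $\bfh\in H^*$.
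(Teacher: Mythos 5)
Your proof of the operator bound~\eqref{eq:164} is correct and is, in substance, what the paper does (the paper compresses it to ``an immediate consequence of the weights''): commute $\tau_z$ past $(1-\Delta)^{25}$, change variables, and use the elementary pointwise inequality $(1+|x-z|^2)^2 \lesssim (1+|z|^4)(1+|x|^2)^2$.

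For the Lipschitz claim you go down a road that genuinely does not close, and you say so yourself, but none of your proposed repairs actually fixes it. Writing $|(T_z\bfh - T_{z'}\bfh)[g]| \le \|\bfh\|_{H^*}\|\tau_z g - \tau_{z'}g\|_H$ forces you to bound $\|\tau_z g - \tau_{z'}g\|_H$ \emph{uniformly} over $g$ with $\|g\|_H\le 1$, which is impossible since translation loses a derivative. Restricting to the basis $(g^i)$ is not a valid substitute: $\|T_z\bfh-T_{z'}\bfh\|_{H^*}$ is a supremum over the unit ball, and even if one tries the Parseval-type identity, $\sum_i\|\nabla g^i\|_H^2$ is not controlled by $\sum_i\|g^i\|_H^2$ (indeed $g^i = (1-\Delta)^{-25}(1+|x|^2)^{-1}f^i$ with $(f^i)$ a generic $L^2$ orthonormal basis, so $\nabla g^i$ brings in $\nabla f^i$, which is uncontrolled). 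Strong continuity of the translation group likewise gives you only qualitative continuity, not a Lipschitz rate. The paper regularizes on the \emph{other} side: it fixes $\bfh$ and approximates it (by a density argument, together with the uniform operator bound~\eqref{eq:164}) by an element represented by a Schwartz function $h$ under the $L^2$-type pairing used to embed $\Omega$ into $H^*$. For such $\bfh$, the $H^*$-norm difference becomes the explicit integral
\[
\int (1+|x|^2)^{-2}\,\big|(1-\Delta)^{-25}(\tau_y h-\tau_z h)(x)\big|^2\,\dd x\,,
\]
on which the mean value theorem gives a bound $\lesssim_h |y-z|^2$. The point is that the Lipschitz constant is permitted to depend on the fixed $\bfh$ — that is exactly why the statement reads ``for fixed $\bfh$'' — and this dependence is realizable by smoothing $\bfh$, not the test function $g$. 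Your final paragraph gestures at this by invoking the ``domain of the generator'', but without spelling out that it is $\bfh$ that must be placed there, the argument as written does not establish the claim.
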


\begin{proof}
For~\eqref{eq:164}, by duality it suffices to show $\lVert \tau_{-z} \rVert^2 \lesssim 1 + |z|^4$, which in turn is an 
immediate consequence of the weights in the definition of the norm on $H$ (see~\eqref{e:H}).  
To show that $T_\cdot \bfh$ is Lipschitz-continuous, a density argument together with the operator bound~\eqref{eq:164} 
ensure that we can reduce to the case in which there is $h\in\calS(\R^2,\R^2)$ such that $\bfh=\langle h, \cdot\rangle_H$. 
Then, 
\begin{equs}
\lVert T_y \bfh - T_z \bfh \rVert^2 &\le \int (1+|x|^2)^{-2} | (1-\Delta)^{-25} (\tau_y h - \tau_z h)(x) |^2 \dd x\lesssim_h |z-y|^2
\end{equs}
where we used that $\tau_y$ commutes with $(1-\Delta)^{-25}$ and mean-value theorem. Hence, the proof is concluded. 
\end{proof}

We now turn to the main object of this appendix and begin by defining what it means to be an (analytically weak) 
solution of the stochastic linear transport equation in~\eqref{eq:131}. 

\begin{definition}
\label{def:1}
Let $(E,\calE,\bbP)$ be a probability space with a normal filtration $(\calE_t)_{t\ge0}$ carrying a Brownian motion 
$(B_t)_{t\ge0}$ and $\bm{\pi}$ be an arbitrary measure on $H^*$. 
For fixed $T>0$, we say that $(\bm{\eta}_t)_{t\ge0}$ is an (analytically weak) solution of the stochastic linear transport equation (SLTE) in~\eqref{eq:131} with diffusivity $\varsigma^2>0$ and initial distribution $\bm{\pi}$ 
if $(\bm{\eta}_t)_{t\ge0}$ is a continuous adapted $H^*$-valued process such that 
$\bm{\eta}_0$ is independent from $B$ and has law $\bm{\pi}$, and for all 
$g \in \calS(\bbR^2, \bbR^2)$, $\bbP$-a.s. for all $t \in [0,T]$
\begin{equation}
\label{eq:165}
\bm{\eta}_t[g] - \bm{\eta}_0[g] = \varsigma \int_0^t \bm{\eta}_s[-\nabla g] \dd B^i_s + \half \varsigma^2 \int_0^t  \bm{\eta}_s[\Delta g] \dd s\,.
\end{equation}
\end{definition}

We split existence and uniqueness for SLTE in the following two lemmas. 

\begin{lemma}
\label{lem:29}
Let $\bm{\pi}$ be a probability measure on $H^*$ for which $\int \lVert \bfh \rVert_{H^*}^2 \bm{\pi}(d \bfh) < \infty$. 
Then, for all $\varsigma^2>0$ and $T > 0$, there exists a solution to SLTE with diffusivity $\varsigma^2>0$ 
and initial distribution $\bm{\pi}$ given according to Definition \ref{def:1}.
\end{lemma}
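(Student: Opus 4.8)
The plan is to construct a solution to SLTE by the explicit "Brownian transport" formula~\eqref{e:BTGGFF}, namely by setting $\bm{\etaBar}_t \eqdef T_{\varsigma \BBar_t}\bm{\etaBar}_0$ where $\BBar$ is the driving Brownian motion and $\bm{\etaBar}_0 \sim \bm{\pi}$ is sampled independently. First I would check that this is a well-defined continuous adapted $H^*$-valued process: adaptedness and continuity follow from the Lipschitz continuity of $z\mapsto T_z\bfh$ and the operator bound $\lVert T_z\rVert^2 \lesssim 1+|z|^4$ of Lemma~\ref{lem:31}, together with the moment assumption $\int\lVert\bfh\rVert_{H^*}^2\bm{\pi}(\dd\bfh)<\infty$ and the fact that Brownian motion has moments of all orders (so $\bfE\lVert\bm{\etaBar}_t\rVert_{H^*}^2 \lesssim \bfE[(1+\varsigma^4|\BBar_t|^4)]\int\lVert\bfh\rVert_{H^*}^2\bm{\pi}(\dd\bfh) < \infty$, uniformly on $[0,T]$).

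Next I would verify the analytically weak formulation~\eqref{eq:165}. The point is that for a fixed test function $g\in\calS(\R^2,\R^2)$, the real-valued process $t\mapsto \bm{\etaBar}_t[g] = \bm{\etaBar}_0[\tau_{\varsigma\BBar_t}g]$ is, conditionally on $\bm{\etaBar}_0$, a smooth functional of the Brownian path. Writing $F(z)\eqdef\bm{\etaBar}_0[\tau_z g]$, one has $\partial_{z_i}F(z) = \bm{\etaBar}_0[\tau_z\partial_i g]$ and $\partial_{z_i}\partial_{z_j}F(z)=\bm{\etaBar}_0[\tau_z\partial_i\partial_j g]$, both continuous in $z$ and controlled using that $g$ is Schwartz and the weights in~\eqref{e:H}. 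Applying the (finite-dimensional) It\^o formula to $F(\varsigma\BBar_t)$ gives
\begin{equs}
\bm{\etaBar}_0[\tau_{\varsigma\BBar_t}g] - \bm{\etaBar}_0[g] &= \varsigma\sum_{i=1}^2\int_0^t \bm{\etaBar}_0[\tau_{\varsigma\BBar_s}\partial_i g]\,\dd\BBar^i_s + \frac{\varsigma^2}{2}\int_0^t \bm{\etaBar}_0[\tau_{\varsigma\BBar_s}\Delta g]\,\dd s\,,
\end{equs}
which is exactly~\eqref{eq:165} upon recognising $\bm{\etaBar}_0[\tau_{\varsigma\BBar_s}\partial_i g] = \bm{\etaBar}_s[\partial_i g] = -\bm{\etaBar}_s[-\partial_i g]$ and $\bm{\etaBar}_0[\tau_{\varsigma\BBar_s}\Delta g]=\bm{\etaBar}_s[\Delta g]$. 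One must also check that the stochastic integral is well-defined, i.e. $\bfE\int_0^T|\bm{\etaBar}_s[\partial_i g]|^2\dd s<\infty$, which again follows from the operator bound and the moment assumption on $\bm{\pi}$.

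I expect the main technical obstacle to be the justification of the It\^o formula in this infinite-dimensional-valued but test-function-smeared setting, in particular establishing enough regularity and integrability of $z\mapsto\bm{\etaBar}_0[\tau_z g]$ and its derivatives, uniformly enough to pass expectations through and to identify the It\^o correction term. This is where the Schwartz decay of $g$ and the polynomial weights in the definition~\eqref{e:H} of $H$ (hence the growth bound~\eqref{eq:164}) are essential: they guarantee that $\tau_z\partial^k g$ stays bounded in $H$ locally uniformly in $z$ with at-most-polynomial growth, so all the relevant quantities are integrable against the Gaussian law of $\BBar_s$. Everything else — measurability, continuity of paths, the independence of $\bm{\etaBar}_0$ and $\BBar$ — is routine given Lemma~\ref{lem:31}.
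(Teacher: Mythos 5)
Your construction is the same as the paper's, but your verification of~\eqref{eq:165} takes a genuinely different and more economical route. The paper applies It\^o's formula pointwise to $t\mapsto g(x-\varsigma B_t)$ for each fixed $x$, pairs against a finite-dimensional regularisation $\eta_0^{(m)}$ of $\bm{\eta}_0$, invokes a stochastic Fubini theorem \cite[Theorem 4.33]{DaPratoZabczyk14_StochasticEquations} to swap the spatial integral with the stochastic integral, and finally passes $m\to\infty$. You instead observe that, conditionally on $\bm{\etaBar}_0$, the scalar function $z\mapsto F(z)\eqdef\bm{\etaBar}_0[\tau_z g]$ is $C^2$ on $\bbR^2$ with $\partial_{z_i}F(z)=\bm{\etaBar}_0[\tau_z\partial_ig]$ and $\partial_{z_i}\partial_{z_j}F(z)=\bm{\etaBar}_0[\tau_z\partial_i\partial_jg]$ (since $g\in\calS(\bbR^2,\bbR^2)$ implies $z\mapsto\tau_zg$ is $C^2$ as an $H$-valued map, and $\bm{\etaBar}_0$ is continuous on $H$), so the ordinary finite-dimensional It\^o formula applied to $F(\varsigma\BBar_t)$ gives the weak formulation immediately. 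This bypasses both the mollification of $\bm{\eta}_0$ and the stochastic Fubini argument, and the polynomial-growth bound~\eqref{eq:164} together with the second-moment assumption on $\bm{\pi}$ supplies exactly the integrability needed to make the stochastic integral a true martingale. The two routes are both correct, but yours is shorter; what the paper's route buys is that it would survive in a setting where the pairing $\bm{\eta}_0[\tau_z g]$ were not smooth in $z$ but the pointwise It\^o expansion still made sense, which is not a concern here since $g$ is Schwartz.

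One small sign slip: from the paper's definition of $T_z$ as the dual of $\tau_z$, one has $\bm{\etaBar}_t[g]=\bm{\etaBar}_0[\tau_{\varsigma\BBar_t}g]$, and your It\^o expansion then produces $+\varsigma\int_0^t\bm{\etaBar}_s[\partial_ig]\,\dd\BBar^i_s$, which differs in sign from the $\bm{\eta}_s[-\nabla g]$ appearing in~\eqref{eq:165}; your rewrite $\bm{\etaBar}_s[\partial_ig]=-\bm{\etaBar}_s[-\partial_ig]$ does not actually close this gap. The discrepancy is a convention artefact already present in the paper (which writes $\bm{\eta}_t[g]=\bm{\eta}_0[\tau_{-\varsigma B_t}g]$, implicitly taking $T_z$ to be the push-forward of translation by $z$ rather than the literal dual of $\tau_z$), and since $\BBar$ and $-\BBar$ have the same law the existence claim is unaffected either way; it would just be cleaner to fix the convention at the outset rather than paper over the sign.
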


\begin{proof}
Let $(E,\calE,\bbP)$ be an arbitrary probability space supporting $\bm{\eta}_0$
with law $\bm{\pi}$, and an independent Brownian motion $(B_t)_{t\ge0}$, and $(\calE_t)_{t\ge0}$ be the usual 
augmented filtration. 
Fix $T>0$ and let $(\bm{\eta}_t)_{t \in [0,T]}$ be the $H^*$-valued process defined by 
\begin{equ}
\bm{\eta}_t \eqdef T_{\varsigma B_t}\bm{\eta}_0\,.
\end{equ}
Our goal is to show that $\bm{\eta}$ solves SLTE. 
Clearly, $(\bm{\eta}_t)_{t\in[0,T]}$ is adapted, continuous (by Lemma \ref{lem:31} and the continuity of Brownian motion) 
and $\bm{\eta}_0$ is distributed according to $\bm{\pi}$. It remains to verify~\eqref{eq:165}. 

Let $g \in \calS(\bbR^2, \bbR^2)$, and note that, by definition, $\bm{\eta}_t[g] = \bm{\eta}_0[\tau_{-\varsigma B_t} g]$. 
It\^o's formula gives that for each $x\in\R^2$, up to $\bbP$-indistinguishability, 
for all $t\in[0,T]$ we have  
\begin{equ}[e:Itog]
\tau_{-\varsigma B_t} g(x) = g(x) - \varsigma\int_0^t \nabla g(x-\varsigma B_s) \cdot \dd B_s + \half\varsigma^2\int_0^t\Delta g(x-\varsigma B_s) \dd s\,.
\end{equ}
Let us now consider a regularised version of $\bm{\eta}_0$, given by 
$\eta_0^{(m)}(x)\eqdef \sum_{i=1}^m  \bm{\eta}_0[ g^i] g^i(x)$, for $(g^i)_{i\in\N}$ the basis of $H$ 
introduced at the beginning of Section~\ref{sec:Env} and $m\in\N$. Denote by $\bm{\eta}_0^{(m)}$ the 
element in $H^\ast$ corresponding to $\eta^{(m)}$. 
We multiply both sides of~\eqref{e:Itog} by $\eta_0^{(m)}(x)$, which is $\calE_0$-measurable, 
and integrate in $x$. To swap the integral in $x$ and the stochastic integral, we invoke 
stochastic version of Fubini's theorem \cite[Theorem 4.33]{DaPratoZabczyk14_StochasticEquations}, 
which is applicable since
\begin{equs}
\int \Big( \bbE\Big[\int_0^T |\eta_0^{(m)}(x)\nabla g(x-\varsigma B_t)|^2\dd t\Big] \Big)^\half \dd x\lesssim_{T,m, g} \Big(\int\|\bfh\|^2_{H^\ast}\bm{\pi}(\dd \bfh)\Big)^\half<\infty
\end{equs}
where we used that we chose the elements of the basis of $H$ to decay at $\infty$ faster than any polynomial. 
Therefore we conclude that, up to $\bbP$-indistinguishability, for all $t \in [0,T]$
\begin{equ}
T_{\varsigma B_t} \bm{\eta_0}^{(m)}[g] - \bm{\eta_0}^{(m)}[g] = - \varsigma \int_0^t T_{\varsigma B_s} \bm{\eta_0}^{(m)}[\nabla g] \cdot \dd B_s + \frac{\varsigma^2}{2}\int_0^t T_{\varsigma B_s}\bm{\eta_0}^{(m)}[\Delta g] \dd s.
\end{equ}
Since by dominated convergence, $\bfE \lVert \bm{\eta_0} - \bm{\eta_0}^{(m)}\rVert^2$ converges to $0$ as $m\to\infty$, 
Doob's and Jensen's inequalities and~\eqref{eq:164} ensure that we can take the limit as $m\to\infty$ in 
the previous equality, thus deducing~\eqref{eq:165} and therefore concluding the proof. 
\end{proof}

\begin{lemma}
\label{lem:12}
Fix an initial distribution $\bm{\pi}$ on $H^\ast$, and $\varsigma^2,\,T>0$, and 
let $(\bm{\eta}_t)_{t\ge0}$ be a solution to SLTE as given in Definition \ref{def:1}. Then, 
$(\bm{\eta}_t)_{t \in [0,T]}$ satisfies $\bm{\eta}_t = T_{\varsigma B_t}\bm{\eta}_0$. 
In particular, solutions to SLTE are unique in law on $C_TH^\ast$.
\end{lemma}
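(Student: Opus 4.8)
The plan is to prove the asserted pathwise identity $\bm{\eta}_t = T_{\varsigma B_t}\bm{\eta}_0$ directly; uniqueness in law then follows at once. The identity is established by showing that, for each fixed test function, the solution paired against that function transported along the Brownian characteristics is constant in time, which in turn rests on an exact cancellation between the transport noise and the Laplacian in~\eqref{eq:165}.

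First I would fix $g\in\calS(\R^2,\R^2)$ and introduce the adapted, $\calS$-valued semimartingale $g_t \eqdef \tau_{\varsigma B_t}g$, that is $g_t(x) = g(x+\varsigma B_t)$, for which It\^o's formula (pointwise in $x$) gives $\dd g_t(x) = \varsigma\, \nabla g_t(x)\cdot\dd B_t + \tfrac12\varsigma^2\Delta g_t(x)\,\dd t$. I would then apply the It\^o product rule to the real-valued process $t\mapsto \bm{\eta}_t[g_t]$, combining the three contributions: the differential of $\bm{\eta}_t$ tested against the frozen function $g_t$, given by~\eqref{eq:165} as $-\varsigma\,\bm{\eta}_t[\nabla g_t]\cdot\dd B_t + \tfrac12\varsigma^2\bm{\eta}_t[\Delta g_t]\,\dd t$; the term $\bm{\eta}_t[\dd g_t] = \varsigma\,\bm{\eta}_t[\nabla g_t]\cdot\dd B_t + \tfrac12\varsigma^2\bm{\eta}_t[\Delta g_t]\,\dd t$; and the quadratic cross-variation $-\varsigma^2\bm{\eta}_t[\Delta g_t]\,\dd t$. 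The two Brownian terms cancel exactly, and the three $\dd t$-terms sum to zero, so $\dd\big(\bm{\eta}_t[g_t]\big) = 0$ and hence $\bm{\eta}_t[g_t] = \bm{\eta}_0[g_0] = \bm{\eta}_0[g]$ almost surely, simultaneously for all $t\in[0,T]$ by path-continuity. Since $\tau$ is a group, replacing $g$ by $\tau_{-\varsigma B_t}g$ yields $\bm{\eta}_t[g] = \bm{\eta}_0[\tau_{-\varsigma B_t}g] = (T_{\varsigma B_t}\bm{\eta}_0)[g]$.

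The two routine points to discharge are the interchange of the $H^\ast$-pairing with the stochastic integral --- which I would justify exactly as in the proof of Lemma~\ref{lem:29}, via the stochastic Fubini theorem~\cite[Theorem 4.33]{DaPratoZabczyk14_StochasticEquations} applied after replacing $\bm{\eta}_0$ by its finite-rank truncations along the basis $(g^i)_i$ and passing to the limit --- and, in the absence of an a priori moment bound on a generic solution, a localization along the stopping times $\tau_R \eqdef \inf\{t : \lVert\bm{\eta}_t\rVert_{H^\ast} > R\}$, which increase to $T$ by continuity of $t\mapsto\bm{\eta}_t$. Having $\bm{\eta}_t[g] = (T_{\varsigma B_t}\bm{\eta}_0)[g]$ a.s.\ for each fixed $g$, I would upgrade it to equality in $C([0,T],H^\ast)$: first for $g$ in the countable family $(g^i)_i\subset\calS$ (which is dense in $H$), for all $t$ simultaneously by continuity, and then for all $g\in H$ by density together with the operator bound $\lVert T_z\rVert^2\lesssim 1+|z|^4$ of Lemma~\ref{lem:31}. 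This gives $\bm{\eta}_t = T_{\varsigma B_t}\bm{\eta}_0$ on $C([0,T],H^\ast)$.

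Uniqueness in law is then immediate: by Lemma~\ref{lem:31} the map $(\bm{\eta}_0,B)\mapsto (T_{\varsigma B_t}\bm{\eta}_0)_{t\in[0,T]}$ is continuous from $H^\ast\times C([0,T],\R^2)$ into $C([0,T],H^\ast)$, and the joint law of $(\bm{\eta}_0,B)$ --- namely $\bm{\pi}$ independent of Wiener measure --- is the same for any solution, so all solutions induce the same law. The only genuinely delicate step is the It\^o product rule for the $H^\ast$-valued semimartingale $\bm{\eta}$ against the random test function $g_t$; once the interchange with the stochastic integral is legitimized, the cancellation is algebraic and no commutator estimate is needed, which is precisely why this characteristics-based argument is preferable to an $H^\ast$-energy estimate.
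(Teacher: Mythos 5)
Your overall strategy is the same as the paper's: show that a solution, tested against a test function transported along the Brownian characteristics, is constant in time, by exhibiting a cancellation between the transport noise and the Laplacian, and then read off $\bm{\eta}_t = T_{\varsigma B_t}\bm{\eta}_0$. The algebraic cancellation you carry out (two martingale terms cancelling, and the three $\dd t$-terms summing to zero) is correct, and so is the final rearrangement with the paper's sign convention $(T_z\bfh)[g]=\bfh[\tau_{-z}g]$.

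However, there is a genuine gap at exactly the step you flag as ``delicate'' and then set aside as routine. To write $\dd\big(\bm{\eta}_t[g_t]\big)$ as the sum of $(\dd\bm{\eta}_t)[g_t]$, $\bm{\eta}_t[\dd g_t]$ and a cross-variation, you need a product rule for the duality pairing of the process $\bm{\eta}_t$ with an \emph{adapted, random} $H$-valued semimartingale $g_t=\tau_{\varsigma B_t}g$. Equation~\eqref{eq:165} provides the semimartingale decomposition of $\bm{\eta}_t[g]$ only for a \emph{fixed, deterministic} $g$; to plug in $g_t$ you need an It\^o--Wentzell-type formula. Your claim that this is handled ``exactly as in the proof of Lemma~\ref{lem:29}, via the stochastic Fubini theorem'' does not hold up: in Lemma~\ref{lem:29} stochastic Fubini is used to interchange a deterministic spatial integral over $x$ with a stochastic integral, for the explicitly constructed solution $T_{\varsigma B_t}\bm{\eta}_0$, which is a different operation from legitimizing the substitution of a random test-function path into~\eqref{eq:165} for an a priori arbitrary solution. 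Note also that Definition~\ref{def:1} does not even assert that $\bm{\eta}_t$ is an $H^\ast$-valued semimartingale in the strong (Da Prato--Zabczyk) sense; the drift ``$\Delta\bm{\eta}_t$'' is only defined through pairings with Schwartz $g$ and need not lie in $H^\ast$, so the abstract Hilbert-space product rule is not directly available either.

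The paper sidesteps precisely this issue by mollifying: it sets $\eta^\delta_t(x)\eqdef\bm{\eta}_t[\varrho^\delta_x]$, derives from~\eqref{eq:165} (with the deterministic test function $\varrho^\delta_x$) a classical SPDE for the smooth random field $(t,x)\mapsto\eta^\delta_t(x)$, and then invokes Kunita's It\^o--Wentzell formula \cite[Theorem~3.3.1]{Kunita90_StochasticFlows} for $C^2$ semimartingale random fields to conclude $\eta^\delta_t(x-\varsigma B_t)=\eta^\delta_0(x)$. Your argument would go through if you either (i) established the infinite-dimensional It\^o--Wentzell formula for the pairing $\bm{\eta}_t[g_t]$, or (ii) mollified and applied Kunita's theorem as the paper does; as written, though, the key step is not justified.
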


\begin{proof}
By continuity, it is enough to show that for fixed $t \in [0,T]$, $T_{-\varsigma B_t}\bm{\eta}_t = \bm{\eta}_0$ $\bbP$-a.s.. 
It can be easily checked that this is equivalent to show that 
\begin{equation}
\label{eq:18}
\eta^\delta_t(x-\varsigma B_t) = \eta^\delta_0(x)
\end{equation}
where we set $\eta^\delta(x)\eqdef \varrho^\delta\ast\bm{\eta}(x)=\bm{\eta}[\varrho^{\delta}_x]$ for 
$\varrho\in\calS(\R^2,\R^2)$ whose components are non-negative, compactly supported and of mass one, 
and $\varrho_x^\delta(y)=\delta^{-2} \varrho((y-x)/\delta)$. 
Using~\eqref{eq:165} with $g = \varrho^{\delta}_x$, and integration by parts, we deduce that, for $x \in \bbR^2$,
\begin{equ}
\eta^\delta_t(x) = \eta^\delta_0(x) + \varsigma  \int_0^t \nabla \eta^\delta_s(x) \cdot \dd B_s + \half \varsigma^2 \int_0^t  \Delta\eta^\delta_s(x) \dd s\,.
\end{equ}
Since $\eta^\delta(x)$ solves the above, we can apply the generalised It\^{o}'s formula, 
\cite[Theorem 3.3.1]{Kunita90_StochasticFlows}, to $t\mapsto \eta^\delta_t(x-\varsigma B_t)$ 
and immediately see that it agrees with the right hand side of~\eqref{eq:18}. 
\end{proof}

Finally, we may put these pieces together and prove Proposition \ref{prp:1}. 

\begin{proof}[of Proposition \ref{prp:1}]
Existence follows easily by Lemma \ref{lem:29} and a standard approximation procedure. 
For uniqueness, let $\bfPBar$ be as in Definition \ref{def:2}. 
The representation theorem of  \cite[Theorem 8.2]{DaPratoZabczyk14_StochasticEquations} implies that 
on an augmented probability space,
there exists a standard Brownian motion $(B_t)_{t \in [0, T]}$ such that 
\begin{equ}
\bfM_t = \varsigma\int_0^t \nabla \bm{\etaBar}_s \cdot \dd B_s\,. 
\end{equ}
This means that $\bm{\etaBar}$ is a solution of SLTE as given in Definition \ref{def:1} whose law is 
uniquely determined by Lemma \ref{lem:12} and is given by the Brownian transportation of the GFF.
\end{proof}

\section{The role of the Nuisance region and the DCGFF}
\label{a:nuisance}

The goal of this section is to provide some insight regarding how we were lead (and why it is necessary) to distinguish 
between bulk and nuisance region and why in the context of diffusions in divergence free vector fields 
this is not necessary. Our analysis finds its roots in the work of T\'oth and Valk\'o~\cite{TothValko12_SuperdiffusiveBounds}, 
where the authors determine superdiffusive bounds not only for the SRBP but also for a model 
called Diffusion in the Curl of the Gaussian Free Field (DCGFF). 
Despite the similarities of the two models, which we will shortly discuss, the study of the SRBP 
is already seen  therein to be more challenging in that a prototype
of our bulk/nuisance ($B/N$) split is needed, whereas when treating the DCGFF this is not the case. 

The DCGFF is defined as 
\begin{equ}[e:DCGFF]
 \dd \XTil_t = \dd B_t + \gamma \omegaTil(\XTil_t) \dd t \,,\qquad \XTil_0 = 0 
 \end{equ}
where $B$ is a two-dimensional Brownian motion and $\omegaTil$ is an element of $\OmegaTil$, 
which is a space of divergence free (as opposed to $\Omega$, whose elements are rotation free) 
smooth vector fields. We consider the probability measure $\piTil(d \omegaTil)$ on $\OmegaTil$, 
which is the law of $\nabla \times(\sqrt{V}\ast\Phi)$ for $\Phi$ a $2d$ GFF.  
Even though seemingly unrelated, it turns out that the generator of the environment process $\tilde\eta$ associated to 
the DCGFF is exactly the same as that for the SRBP as given in~\eqref{eq:ops} but for a crucial 
difference, namely, the scalar products in the definition of $\calA_\pm$ is replaced by the cross 
products~\cite[eq. (3.10)]{CannHaunToni22_SqrtLog}
From a technical viewpoint, this is the reason why the off-diagonal terms 
are lower order in one case but not on the other. Let us verify the previous statement 
in a simple but informative example concerning the function $f_1$ defined in \eqref{e:fdef}. 

\begin{proposition}\label{prop:Off}
Let $\phi = (\lambda+\calS)^{-\half} \calA_+ \calS^{-\half} \psi$, where $\psi = \gamma (\lambda+\calS)^{-1/2} f_1$ and $f_1 \in \fock_1$ be given as 
\begin{equ}
\text{SRBP:} \quad f_1 \eqdef -\frac{\iota}{2\pi} e_1p_2\,,\qquad\qquad
\text{DCGFF:} \quad f_1 \eqdef -\frac{\iota}{2\pi} e_2p_2\,.
\end{equ}
Consider the decomposition $\phi = \phi[1]+\phi[2]$ as given in  \eqref{e:phii} with $\ST\equiv0$ and $R=\R^2$. 
Then it holds that 
\begin{align}
\text{SRBP:}& \quad \langle\phi[1],\phi[2]\rangle = -\frac{1}{32} + O(\gamma^2)\,, \label{eq:58} \\
\text{DCGFF:}& \quad \langle\phi[1],\phi[2]\rangle = O(\gamma^2)\,. \label{eq:43} 
\end{align}
\end{proposition}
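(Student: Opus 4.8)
The goal is a direct computation of the single off-diagonal inner product $\langle\phi[1],\phi[2]\rangle$ in both cases, so the plan is essentially to write out the Fock-space integral explicitly and extract the leading term. First I would record the explicit representation of $\phi[i]$ from~\eqref{eq:AR8} with $\calT\equiv0$: since $\psi=\gamma(\lambda+\calS)^{-1/2}f_1\in\fock_1$, we have $\psi(p)=\gamma\,\frakf_1(p)/(\lambda+\tfrac12|p|^2)^{1/2}$, and hence, writing $P\eqdef p_1+p_2$,
\begin{equ}
\phi[i](p_1,p_2)=\frac{\sqrt2\,\gamma}{2}\,\frac{(p_i\cdot p_{3-i})\,\psi(p_{3-i})}{(\lambda+\tfrac12|P|^2)^{1/2}\,|p_{3-i}|}
= \frac{\gamma^2}{\sqrt2}\,\frac{(p_i\cdot p_{3-i})\,\frakf_1(p_{3-i})}{(\lambda+\tfrac12|P|^2)^{1/2}\,|p_{3-i}|\,(\lambda+\tfrac12|p_{3-i}|^2)^{1/2}}\,.
\end{equ}
Then $\langle\phi[1],\phi[2]\rangle=\int\overline{\phi[1](p_1,p_2)}\,\phi[2](p_1,p_2)\,\mu_2(\dd p_{1:2})$, and plugging in $\mu_2(\dd p_{1:2})=2\prod_{i=1,2}\frac{\VHat(p_i)}{|p_i|^2}\dd p_{1:2}$ and the definition $\frakf_1(p)=-\iota e_1p/(2\pi)$ (resp. $-\iota e_2p/(2\pi)$) gives, after collecting constants, an integral of the shape
\begin{equ}
\langle\phi[1],\phi[2]\rangle=\frac{\gamma^4}{(2\pi)^2}\int \frac{(p_1\cdot p_2)^2\,(e_\ast p_1)(e_\ast p_2)\,\VHat(p_1)\VHat(p_2)}{(\lambda+\tfrac12|P|^2)\,|p_1|^3|p_2|^3\,(\lambda+\tfrac12|p_1|^2)^{1/2}(\lambda+\tfrac12|p_2|^2)^{1/2}}\,\dd p_1\dd p_2\,,
\end{equ}
where $e_\ast=e_1$ for SRBP and $e_\ast=e_2$ for DCGFF (the sign works out because of the $(-\iota)^2$ and the conjugation, so it is the product $\frakf_1(p_2)\overline{\frakf_1(p_1)}$ that appears — I would double-check this sign at the end).

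The main point is the asymptotics as $\gamma\to0$. Because of the explicit $\gamma^4$ prefactor, the bound $O(\gamma^2)$ for DCGFF will follow as soon as the corresponding integral is shown to be $O(\gamma^{-2})$, i.e. logarithmically divergent at worst; and for the SRBP one needs to show the integral equals $\tfrac{c}{\gamma^2}+O(1)$ for an explicit constant $c$ with $\tfrac{\gamma^4}{(2\pi)^2}c=-\tfrac1{32}$. The divergence comes entirely from the region $|P|=|p_1+p_2|\approx0$, i.e. $p_2\approx-p_1$ (this is precisely the nuisance region). So the plan is: (i) split the integral into $|P|\le1$ and $|P|>1$; on the latter region all denominators are bounded below and $\VHat$ is integrable, so this piece is $O(1)$, contributing $O(\gamma^4)$. (ii) On $|P|\le1$ change variables $(p_1,p_2)\mapsto(p_1,P)$; since $p_2=P-p_1$, on this region $p_2=-p_1+O(1)$, and one may replace $\VHat(p_2)\to\VHat(p_1)$, $|p_2|\to|p_1|$, $(\lambda+\tfrac12|p_2|^2)\to(\lambda+\tfrac12|p_1|^2)$, $(p_1\cdot p_2)\to-|p_1|^2$, $(e_\ast p_2)\to-(e_\ast p_1)$ up to errors that are integrable uniformly in $\lambda$ and contribute only $O(\gamma^4)$ (each such replacement costs a factor $|P|$ which kills the $\int\frac{\dd P}{\lambda+\tfrac12|P|^2}$ logarithm). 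After these replacements the leading term is
\begin{equ}
\frac{\gamma^4}{(2\pi)^2}\Big(\int_{|P|\le1}\frac{\dd P}{\lambda+\tfrac12|P|^2}\Big)\Big(\int \frac{-(e_\ast p_1)^2\,|p_1|^4\,\VHat(p_1)^2}{|p_1|^6\,(\lambda+\tfrac12|p_1|^2)}\,\dd p_1\Big) + O(\gamma^4)\,,
\end{equ}
and using $\int_{|P|\le1}\frac{\dd P}{\lambda+\tfrac12|P|^2}=2\pi\log(1+\tfrac1\lambda)+O(1)$ together with the weak-coupling choice $\gamma^2\log(1+\lambda^{-1})=\alpha^2$ turns the $\gamma^4\log(1+\lambda^{-1})$ into $\gamma^2\alpha^2\,(2\pi)$, i.e. into an $O(\gamma^2)$ quantity \emph{unless} the remaining $p_1$-integral also diverges.

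Here is the crux, and the expected main obstacle: evaluating the inner $p_1$-integral $\int\frac{(e_\ast p_1)^2\,\VHat(p_1)^2}{|p_1|^2\,(\lambda+\tfrac12|p_1|^2)}\dd p_1$ and seeing the difference between the two models. For the DCGFF, $e_\ast=e_2$ while the natural axis is $e_1$; but the product structure came from $\frakf_1(p_2)\overline{\frakf_1(p_1)}$, so after $p_2\to-p_1$ one gets $(e_2p_1)^2\ge0$ and the $p_1$-integral is a \emph{convergent} constant times $\log(1+\lambda^{-1})$ — wait, that is the same log. The actual mechanism distinguishing the two must be more subtle: in the DCGFF case the angular integral over $p_1$ of the relevant tensor vanishes at the leading order (the cross-product structure makes the analogue of $(p_1\cdot p_2)^2(e_\ast p_1)(e_\ast p_2)$ odd or traceless in a way that integrates to zero against the rotationally invariant $\VHat(p_1)^2/|p_1|^2$), so the would-be divergent contribution cancels and only $O(\gamma^4)=O(\gamma^2)$ survives; whereas for the SRBP the angular integral $\int_0^{2\pi}\cos^2\theta\,\dd\theta\cdot\big(\text{something}\big)$ is strictly positive. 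Concretely I would pass to polar coordinates $p_1=r\omega$, do the $\omega$-integral first — for SRBP this yields a nonzero multiple of $\int_0^\infty\frac{r\,\VHat(r\omega)^2}{\lambda+\tfrac12 r^2}\dd r$ whose small-$r$ part is again $\sim\log(1+\lambda^{-1})$, producing a \emph{double} logarithm $\gamma^4(\log(1+\lambda^{-1}))^2=\alpha^4$ which is $O(1)$, not $O(\gamma^2)$ — and I would track the exact rational constant through these two polar integrals to land on $-1/32$. For the DCGFF the $\omega$-integral of the leading tensor is zero, the double log never appears, and what remains is $O(\gamma^4)\vee O(\gamma^4\log(1+\lambda^{-1}))=O(\gamma^2)$. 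So the real work, and the only delicate part, is (a) getting every constant right in the chain of replacements so the final number is exactly $-\tfrac1{32}$, and (b) verifying carefully that the error terms from replacing $p_2$ by $-p_1$ each gain a genuine factor of $|P|$ (equivalently $|p_1+p_2|$) and hence are integrable uniformly in $\lambda$ — for this I would invoke the mean-value-theorem estimates and the integral bounds of Appendix~\ref{sec:integral-estimates} (e.g. Lemma~\ref{lem:4} and Lemma~\ref{lem:22}) exactly as in the proof of Lemma~\ref{lem:9} and Lemma~\ref{lem:ReplEst}.
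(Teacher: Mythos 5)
Your plan for the SRBP half is broadly the right one — write out $\langle\phi[1],\phi[2]\rangle$ explicitly, identify the logarithmic divergence coming from $p_2\approx-p_1$, carry out a chain of replacements costing only $O(\gamma^2)$, and collapse the result to a double logarithm that the weak-coupling relation turns into an $O(1)$ constant. This is indeed what the paper does.

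However, there is a genuine gap in your treatment of the DCGFF case, and it is visible in your own proposal: you write the integrand with $(p_1\cdot p_2)^2(e_\ast p_1)(e_\ast p_2)$ in both cases, with only $e_\ast\in\{e_1,e_2\}$ differing. But then the two models would be literally identical by rotational invariance, which is precisely the ``wait, that is the same log'' confusion you run into. The point you are missing is that for the DCGFF the operator $\calA_+$ itself is different: the scalar products $p_i\cdot p_{[1:n+1\setminus i]}$ in~\eqref{eq:ops} are replaced by \emph{cross products} $p_i\times p_{[1:n+1\setminus i]}$ (see the discussion in Appendix~\ref{a:nuisance}). So the DCGFF integrand has $(p_1\times p_2)^2$ where the SRBP has $(p_1\cdot p_2)^2$, and this is the structural difference that matters. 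Your tentative explanation — an angular cancellation of a tensor integrated against a rotationally invariant measure — is not the operative mechanism. What actually happens is a \emph{pointwise} smallness: on the nuisance region $p_2\approx -p_1$ one has $(p_1\times p_2)^2 = |p_1|^2|p_2|^2\sin^2\theta$ with $\sin^2\theta\le |p_1+p_2|^2/|p_1|^2$, so the extra factor of $|p_1+p_2|^2$ in the numerator exactly cancels the singular $(\lambda+\half|p_1+p_2|^2)^{-1}$ and no double log ever forms; the remainder is then $O(\gamma^2)$ using the same single-log estimate~\eqref{e:UBWeak}. By contrast $(p_1\cdot p_2)^2$ is \emph{maximal} (in magnitude) on the nuisance region, which is why the SRBP case develops the double log.

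There is also a secondary issue in the SRBP computation: you split by $|P|\lessgtr 1$ and then factorize the leading term as $\bigl(\int_{|P|\le1}\tfrac{\dd P}{\lambda+\tfrac12|P|^2}\bigr)\bigl(\int\ldots\dd p_1\bigr)$. But the replacements $\VHat(p_2)\to\VHat(p_1)$, $(p_1\cdot p_2)\to -|p_1|^2$, etc., are only leading-order correct when $|P|\lesssim|p_1|$ (the nuisance region $N$), and this $p_1$-dependent cutoff on the inner $P$-integral is what produces $\log\bigl(\tfrac{\lambda+|p_1|^2/9}{\lambda}\bigr)$ rather than the full $\log(1+\lambda^{-1})$. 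Carrying the computation to completion then gives $\int_0^1 \tfrac{\dd\rho}{\lambda+\rho}\bigl(\log(\lambda+\rho)-\log\lambda\bigr)=\tfrac12\log^2(1+\lambda^{-1})$; your factorized version yields $\log^2(1+\lambda^{-1})$, i.e. misses the crucial factor $\tfrac12$ and would not land on $-\tfrac1{32}$. In the paper this is handled precisely by splitting on the nuisance/bulk regions $N,B$ from~\eqref{e:BNregions} rather than on $|P|\lessgtr 1$.
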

\begin{proof}
We begin with \eqref{eq:58}. By definition, we have 
\begin{align}
&\langle\phi[1],\phi[2]\rangle = \frac{\gamma^4}{32 \pi^2} \int \frac{(p_1 \cdot p_2)^2 e_1p_1 e_1p_2 \mu_2(\dd p_{1:2})}{(\lambda+\half|p_1+p_2|^2)(\lambda+\half|p_1|^2)^\half(\lambda+\half|p_2|^2)^\half|p_1||p_2|} \nonumber \\
&= \frac{\gamma^4}{64 \pi^2} \int \frac{(p_1 \cdot p_2)^3 \mu_2(\dd p_{1:2})}{(\lambda+\half|p_1+p_2|^2)(\lambda+\half|p_1|^2)^\half(\lambda+\half|p_2|^2)^\half |p_1||p_2|} \nonumber
\end{align}
where the last step is a consequence of the fact that, by rotating both variables $p_1,p_2$ by $\pi/2$, 
the integrand in the previous line is the same but with $e_2p_1 e_2p_2$ in place of $e_1p_1 e_1p_2$. 
Then, it is not hard to see that, by Lemma~\ref{lem:4}, the previous is equal, up to $O(\gamma^2)$, to 
\begin{equation}
\label{eq:63}
\frac{\gamma^4}{64 \pi^2} \int \frac{(p_1 \cdot p_2)^3 N(p_2, p_1) \mu_2(\dd p_{1:2})}{(\lambda+\half|p_1+p_2|^2)(\lambda+\half|p_1|^2)^\half(\lambda+\half|p_2|^2)^\half |p_1||p_2|}+O(\gamma^2) \,.
\end{equation}
Moreover, a sequence of basic computations utilising both the properties of $V$ as stated in Assumption \ref{a:V} and 
the fact that in the nuisance region $p_1 \approx -p_2$, show that \eqref{eq:63} reduces to
\begin{align*}
&\frac{\gamma^4}{64 \pi^2} \int \frac{(p_1 \cdot p_2)^3 N(p_2, p_1) }{(\lambda+\half|p_1+p_2|^2)(\lambda+\half|p_1|^2) |p_1|^2} \mu_2(\dd p_{1:2}) + O(\gamma^2)\\
&= -\frac{\gamma^4}{64 \pi^2} \int \frac{|p_1|^4 |p_2|^2 N(p_2, p_1)}{(\lambda+\half|p_1+p_2|^2)(\lambda+\half|p_1|^2) |p_1|^2}\mu_2(\dd p_{1:2}) + O(\gamma^2) \\
&=  -\frac{\gamma^4}{64 \pi^2} \int \frac{\VHat(p_1)^2}{\lambda+\half|p_1|^2} \int_{N(p_2, p_1)} \frac{1}{\lambda+\half|p_1+p_2|^2} dp_2 \Big) \dd p_1 + O(\gamma^2) \\
&=  -\frac{\gamma^4}{32 \pi} \int \frac{\VHat(p_1)^2}{\lambda+\half|p_1|^2} \Big( \log( \lambda + \tfrac{1}{9} |p_1|^2) - \log \lambda \Big) \dd p_1+ O(\gamma^2) \\
&=  -\frac{\gamma^4}{16} \int_0^1 \frac{1}{\lambda+\rho} \Big( \log( \lambda + \rho) - \log \lambda \Big) \dd \rho + O(\gamma^2) = - \frac{1}{32} + O(\gamma^2)
\end{align*}
which implies the result for the SRBP. 

Turning to \eqref{eq:43}, we may adapt the proof of Lemma \ref{lem:6} to the case of DCGFF, 
simply using the bound $|p \times q | \le |p| |q|$ instead of the bound $|p \cdot q | \le |p| |q|$, 
and we obtain $\langle\phi^B[1],\phi^B[2]\rangle = O(\gamma^2)$, where we have also invoked Lemma \ref{lem:5}. Moreover, the cross terms $\langle\phi^B[1],\phi^N[2]\rangle, \langle\phi^B[2],\phi^N[1]\rangle$ are seen to be negligible, for example, swapping the nuisance regions, similarly to what was done in \eqref{e:Intermediate}
\begin{align*}
|\langle\phi^B[1],\phi^N[2]\rangle| \le \gamma^4 \int \frac{(p_1 \times p_2)^2 B(p_1, p_2) N(p_2, p_1)}{(\lambda+|p_1+p_2|^2)(\lambda+|p_1|^2)^\half(\lambda+|p_2|^2)^\half} \mu_2(\dd p_{1:2}) \\
= \gamma^4 \int \frac{(p_1 \times p_2)^2 B(p_1, p_2) N(p_1, p_2)}{(\lambda+|p_1+p_2|^2)(\lambda+|p_1|^2)^\half(\lambda+|p_2|^2)^\half} \mu_2(\dd p_{1:2}) + O(\gamma^2) 
\end{align*}
and this final term is zero because $B(p_1, p_2) N(p_1, p_2) = 0$. 

Therefore, it remains to consider
\begin{equ}
|\langle\phi^N[1],\phi^N[2]\rangle| \le \gamma^4 \int \frac{(p_1 \times p_2)^2 N(p_1, p_2) N(p_2, p_1)}{(\lambda+|p_1+p_2|^2)(\lambda+|p_1|^2)^\half(\lambda+|p_2|^2)^\half} \mu_2(\dd p_{1:2})\,.
\end{equ}
We have $(p_1 \times p_2)^2 = |p_1|^2|p_2|^2\sin^2(\theta)$ where $\theta$ is the angle between $p_1$ and $p_2$ 
and  $\sin^2(\theta) \le |p_1+p_2|^2/|p_1|^2$. 
By symmetry, we can restrict to $|p_1| \le |p_2|$, in which case we obtain the upper bound 
\begin{equ}
\gamma^4 \int  \frac{\VHat(p_1)}{(\lambda+|p_1|^2)|p_1|^2} \Big(\int_{|p_1+p_2|< \frac{1}{3} |p_1|} \frac{\VHat(p_2) |p_1+p_2|^2}{\lambda+|p_1+p_2|^2} \dd p_2\Big) \dd p_1 = O(\gamma^2)
\end{equ}
and the proof of~\eqref{eq:43} is completed. 
\end{proof}

What the previous statement together with the observations made above concerning the DCGFF hints at, is that 
the nuisance region is not relevant for the dynamics of DCGFF, while it is for that of the SRBP. 
Therefore, we expect not only that our techniques would  work for~\eqref{e:DCGFF} but further, that it would 
be much simpler to obtain the analogue of Theorem~\ref{thm:1} (and Theorem~\ref{thm:2}) in that setting,
since the whole of Section~\ref{sec:contr-nusi-regi} could be avoided. 
\endappendix

\section*{Acknowledgements} 
The authors would like to thank B. T\`oth for elucidating discussions concerning the SRBP and his work on it, 
and M. Maurelli for explaining the solution theory for stochastic linear transport equation. 
G.~C. gratefully acknowledges financial support
via the UKRI FL fellowship ``Large-scale universal behaviour of Random
Interfaces and Stochastic Operators'' MR/W008246/1.  

\bibliography{srbp_refs,srbp_refs_manual}
\bibliographystyle{Martin}

\end{document}